%% file: manuscript.tex
\documentclass[
  a4paper,oneside,DIV=12,
  11pt,
  headsepline,
  ]{scrartcl}

\input{packages}

\input{layout}
\input{shortcuts}
\input{shortcuts_this}

\newcommand{\headertitle}{{%
  The Geometric Unitary Kudla Conjecture
}}
\newcommand{\headerauthors}{%
  M.~Raum
}
\title{%
  The Geometric Unitary Kudla Conjecture
}
\author{%
  Martin Raum
  \thanks{The author was partially supported by Vetenskapsr\aa det Grant~2023-04217.}%
}
\ifdraft{\date{\today\ at\ \thistime}}{\date{}}

\begin{document}

\thispagestyle{scrplain}
\begingroup
\deffootnote[1em]{1.5em}{1em}{\thefootnotemark}
\maketitle
\endgroup

\begin{abstract}
  \small
  \noindent
  {\tbf Abstract:}
  We prove that, over an arbitrary CM field, every symmetric formal Fourier--Jacobi series converges and equals the Fourier--Jacobi expansion of a genuine Hermitian Hilbert modular form.
  As an application, we show that the Chow-valued Kudla generating series of special cycles on unitary Shimura varieties for Hermitian lattices over CM fields of signature~$(p,1)$ at one infinite place and~$(p+1,0)$ at all others is modular of weight~$p+1$ for a Weil representation, establishing the geometric unitary Kudla Conjecture in arbitrary codimension.
  This removes the modularity hypothesis from the arithmetic inner product formula by Li--Liu.
  \\[.3\baselineskip]
  \noindent
  \textsf{\textbf{%
      Kudla Conjecture%
    }}%
  \noindent
  \ {\tiny$\blacksquare$}\ %
  \textsf{\textbf{%
      Unitary Shimura varieties%
    }}%
  \noindent
  \ {\tiny$\blacksquare$}\ %
  \textsf{\textbf{%
      Weighted special cycles%
    }}%
  \noindent
  \ {\tiny$\blacksquare$}\ %
  \textsf{\textbf{%
      Hermitian Hilbert modular forms%
    }}
  \\[.2\baselineskip]
  \noindent
  \textsf{\textbf{%
      MSC Primary:
      14C25%
    }}%
  \ {\tiny$\blacksquare$}\ %
  \textsf{\textbf{%
      MSC Secondary:
      11F50,
      11F55%
    }}
\end{abstract}

\vspace{-2.5\baselineskip}
\renewcommand{\contentsname}{}
\setcounter{tocdepth}{\sectiontocdepth}
\tableofcontents
\setcounter{tocdepth}{\subsectiontocdepth}
\vspace{1.5\baselineskip}

\subfile{sections/00_introduction.tex}

\subfile{sections/01_hilbert_jacobi_forms.tex}

\subfile{sections/02_hermitian_modular_forms.tex}

\subfile{sections/03_convergence.tex}

\subfile{sections/04_vector_valued.tex}

\subfile{sections/05_kudla_conjecture.tex}

\vspace{1.5\baselineskip}
\phantomsection
\addcontentsline{toc}{section}{References}
\markright{References}
\label{sec:references}
\sloppy
\printbibliography[heading=none]

\filbreak
\Needspace*{5\baselineskip}
\noindent%
\rule{\textwidth}{0.15em}
\\\nopagebreak

{\small\noindent
  Martin Raum\\\nopagebreak
  Department of Mathematical Sciences\\\nopagebreak
  Chalmers University of Technology and University of Gothenburg\\\nopagebreak
  SE-412 96 Gothenburg, Sweden\\\nopagebreak
  E-mail: \url{martin@raum-brothers.eu}\\\nopagebreak
  Homepage: \url{https://martin.raum-brothers.eu}
}%

\ifdraft{%
  \listoftodos%
}

\end{document}

%% file: packages.tex
\usepackage{etoolbox}
\usepackage{ifdraft}
\usepackage{iftex}
\usepackage{subfiles}

\ifluatex
  \usepackage[utf8]{luainputenc}
\else
  \usepackage[utf8]{inputenc}
  \usepackage[T1]{fontenc}
\fi

\usepackage[english]{babel}
\usepackage[autostyle=true]{csquotes}

\usepackage{lmodern}
\usepackage{fourier}

\usepackage{amsfonts}
\usepackage{mathrsfs} %
\usepackage{dsfont}

\usepackage{amssymb}

\usepackage{amsmath}
\usepackage{mathtools}
\usepackage[thmmarks, amsmath, amsthm]{ntheorem}
\usepackage{nccmath}    %
\usepackage{tensor}

\usepackage[draft=false]{scrlayer-scrpage}
\usepackage{needspace}

\usepackage[cmyk,dvipsnames]{xcolor}

\ifdraft{%
  \usepackage[textsize=scriptsize,colorinlistoftodos]{todonotes}
  \usepackage{lineno}
  \usepackage{scrtime}
}{}

\usepackage{booktabs}
\usepackage[inline]{enumitem}
\usepackage{lettrine}
\usepackage{url}

\usepackage[backend=biber,style=numeric-comp,giveninits,url=false,sortcites=true,maxbibnames=99]{biblatex}
\usepackage[final,
  pdfborder={0 0 0}, colorlinks=true,
  linkcolor=BrickRed, citecolor=ForestGreen, urlcolor=RoyalBlue]{hyperref}

%% file: layout.tex
\ifdraft{%
  \linenumbers
  \newcommand*\patchAmsMathEnvironmentForLineno[1]{%
    \expandafter\let\csname old#1\expandafter\endcsname\csname #1\endcsname
    \expandafter\let\csname oldend#1\expandafter\endcsname\csname end#1\endcsname
    \renewenvironment{#1}%
    {\linenomath\csname old#1\endcsname}%
    {\csname oldend#1\endcsname\endlinenomath}}%
  \newcommand*\patchBothAmsMathEnvironmentsForLineno[1]{%
    \patchAmsMathEnvironmentForLineno{#1}%
    \patchAmsMathEnvironmentForLineno{#1*}}%
  \AtBeginDocument{%
    \patchBothAmsMathEnvironmentsForLineno{equation}%
    \patchBothAmsMathEnvironmentsForLineno{align}%
    \patchBothAmsMathEnvironmentsForLineno{flalign}%
    \patchBothAmsMathEnvironmentsForLineno{alignat}%
    \patchBothAmsMathEnvironmentsForLineno{gather}%
    \patchBothAmsMathEnvironmentsForLineno{multline}%
  }}

\clearmainofpairofpagestyles
\automark[section]{subsection}

\renewcommand{\subsectionmark}[1]{}

\lohead{{\small\normalfont \headertitle}}
\rohead{{\small \headerauthors}}

\RedeclareSectionCommand[%
  font=\Large\sffamily\bfseries,%
  beforeskip=1\baselineskip,%
  afterskip=0.5\baselineskip,%
  indent=0em,%
  afterindent=false,%
  tocbeforeskip=0.3\baselineskip plus 1pt minus 1pt%
]{section}

\RedeclareSectionCommands[%
  font=\normalfont\bfseries,%
  beforeskip=3pt,%
  afterskip=-1em%
]{subsection,subsubsection}

\RedeclareSectionCommands[%
  font=\normalfont\itshape,%
  beforeskip=.5\baselineskip,%
  afterskip=-1em,%
  indent=0pt,%
]{paragraph}

\AtEveryBibitem{\clearfield{doi}}
\AtEveryBibitem{\clearfield{isbn}}
\AtEveryBibitem{\clearfield{issn}}
\AtEveryBibitem{\clearfield{pages}}
\AtEveryBibitem{\clearlist{language}}

\renewcommand*{\bibfont}{\footnotesize}
\renewbibmacro*{in:}{}
\DeclareFieldFormat
[article,inbook,incollection,inproceedings,patent,thesis,unpublished,misc]
{title}{#1}

\setitemize{itemsep=0.02\baselineskip}

\newenvironment{enumerateroman}{
  \begin{enumerate}[label=(\roman*),%
  leftmargin=2.5em,itemindent=0pt,%
  labelindent=.5em,labelwidth=1.5em,labelsep=!,%
  nosep]
  }{
  \end{enumerate}
}

\newenvironment{enumerateromanseries}[1]{
  \begin{enumerate}[label=(\roman*),series=#1,%
  leftmargin=2.5em,itemindent=0pt,%
  labelindent=.5em,labelwidth=1.5em,labelsep=!,%
  noitemsep]
  }{
  \end{enumerate}
}

\newenvironment{enumeratearabic*}{
  \begin{enumerate*}[label=(\arabic*)] %
    }{
  \end{enumerate*}
}

\newenvironment{enumerateroman*}{
  \begin{enumerate*}[label=(\roman*)] %
    }{
  \end{enumerate*}
}

\numberwithin{equation}{section}

\theoremnumbering{arabic}
\newtheorem{theoremcounter}{theoremcounter}[section]
\theoremnumbering{Alph}
\newtheorem{maintheoremcounter}{maintheoremcounter}

\theoremstyle{plain}

\newtheorem{corollary}[theoremcounter]{Corollary}
\newtheorem{lemma}[theoremcounter]{Lemma}

\newtheorem{proposition}[theoremcounter]{Proposition}
\newtheorem{theorem}[theoremcounter]{Theorem}

\theoremstyle{plain}

\newtheorem{maincorollary}[maintheoremcounter]{Corollary}
\newtheorem{maintheorem}[maintheoremcounter]{Theorem}

\theoremstyle{definition}

\newtheorem{definition}[theoremcounter]{Definition}

\theoremstyle{remark}

\newtheorem{remark}[theoremcounter]{Remark}

\theoremstyle{nonumberremark}

\newtheorem{mainremark}{Remark}

%% file: shortcuts.tex
\newcommand{\tx}{\text}
\newcommand{\thdash}{\nbd th}
\newcommand{\stdash}{\nbd st}

\newcommand{\nbd}{\nobreakdash-\hspace{0pt}}

\newcommand{\fref}[2]{\hyperref[#2]{#1~\ref*{#2}}}

\makeatletter
\newcommand{\writelabel}[1]{#1\def\@currentlabel{#1}}
\makeatother

\newcommand{\minwidthmathbox}[2]{%
  \mathmakebox[{\ifdim#1<\width\width\else#1\fi}]{#2}%
}

\newcommand{\tbf}{\bfseries}

\newcommand{\bbA}{\ensuremath{\mathbb{A}}}

\newcommand{\cB}{\ensuremath{\mathcal{B}}}

\newcommand{\cD}{\ensuremath{\mathcal{D}}}

\newcommand{\cM}{\ensuremath{\mathcal{M}}}

\newcommand{\cO}{\ensuremath{\mathcal{O}}}

\newcommand{\cS}{\ensuremath{\mathcal{S}}}

\newcommand{\fraka}{\ensuremath{\mathfrak{a}}}
\newcommand{\frakb}{\ensuremath{\mathfrak{b}}}

\newcommand{\frakd}{\ensuremath{\mathfrak{d}}}
\newcommand{\frake}{\ensuremath{\mathfrak{e}}}
\newcommand{\frakf}{\ensuremath{\mathfrak{f}}}
\newcommand{\frakg}{\ensuremath{\mathfrak{g}}}

\newcommand{\frakk}{\ensuremath{\mathfrak{k}}}

\newcommand{\frakp}{\ensuremath{\mathfrak{p}}}

\newcommand{\frakt}{\ensuremath{\mathfrak{t}}}

\newcommand{\frakx}{\ensuremath{\mathfrak{x}}}
\newcommand{\fraky}{\ensuremath{\mathfrak{y}}}
\newcommand{\frakz}{\ensuremath{\mathfrak{z}}}

\newcommand{\rmd}{\ensuremath{\mathrm{d}}}

\newcommand{\rmf}{\ensuremath{\mathrm{f}}}

\newcommand{\rmr}{\ensuremath{\mathrm{r}}}

\newcommand{\rmt}{\ensuremath{\mathrm{t}}}

\newcommand{\rmC}{\ensuremath{\mathrm{C}}}

\newcommand{\rmE}{\ensuremath{\mathrm{E}}}
\newcommand{\rmF}{\ensuremath{\mathrm{F}}}
\newcommand{\rmG}{\ensuremath{\mathrm{G}}}
\newcommand{\rmH}{\ensuremath{\mathrm{H}}}

\newcommand{\rmJ}{\ensuremath{\mathrm{J}}}
\newcommand{\rmK}{\ensuremath{\mathrm{K}}}
\newcommand{\rmL}{\ensuremath{\mathrm{L}}}
\newcommand{\rmM}{\ensuremath{\mathrm{M}}}

\newcommand{\rmP}{\ensuremath{\mathrm{P}}}

\newcommand{\rmS}{\ensuremath{\mathrm{S}}}
\newcommand{\rmT}{\ensuremath{\mathrm{T}}}
\newcommand{\rmU}{\ensuremath{\mathrm{U}}}
\newcommand{\rmV}{\ensuremath{\mathrm{V}}}

\newcommand{\rmZ}{\ensuremath{\mathrm{Z}}}

\newcommand{\td}{\tilde}
\newcommand{\wtd}{\widetilde}
\newcommand{\ov}{\overline}
\newcommand{\ul}{\underline}

\newcommand{\defcol}{\mathrel{:}}
\newcommand{\defeq}{\mathrel{:=}}

\newcommand{\condsep}{\mathrel{\;:\;}}
\newcommand{\quantsep}{\mathrel{\;.\;}}

\newcommand{\mrelspace}[1]{\mathrel{\mspace{#1}}}

\NewCommandCopy{\rightarroworig}{\rightarrow}
\renewcommand{\rightarrow}
  {\protect\relbar\mrelspace{-9.7mu}\rightarroworig}
\renewcommand{\hookrightarrow}
  {\protect\lhook\mrelspace{-3.1mu}\relbar\mrelspace{-11.9mu}\rightarroworig}
\renewcommand{\twoheadrightarrow}
  {\protect\rightarroworig\mrelspace{-15mu}\rightarroworig}

\NewCommandCopy{\leftarroworig}{\leftarrow}
\renewcommand{\leftarrow}
  {\protect\leftarroworig\mrelspace{-9.7mu}\relbar}

\renewcommand{\longrightarrow}
  {\protect\relbar\mrelspace{-3.2mu}\relbar\mrelspace{-9.5mu}\rightarroworig}
\newcommand{\longhookrightarrow}
  {\protect\lhook\mrelspace{-3.1mu}\relbar\mrelspace{-3.2mu}\relbar\mrelspace{-11.7mu}\rightarroworig}
\newcommand{\longtwoheadrightarrow}
  {\protect\relbar\mrelspace{-3mu}\rightarroworig\mrelspace{-15mu}\rightarroworig}

\newcommand{\ra}{\rightarrow}
\newcommand{\hra}{\hookrightarrow}
\newcommand{\thra}{\twoheadrightarrow}

\newcommand{\lra}{\longrightarrow}
\newcommand{\lhra}{\longhookrightarrow}
\newcommand{\lthra}{\longtwoheadrightarrow}

\newcommand{\mto}{\mapsto}
\newcommand{\lmto}{\longmapsto}

\renewcommand{\Re}{\mathrm{Re}}
\renewcommand{\Im}{\mathrm{Im}}

\renewcommand{\pmod}[1]{\;(\mathrm{mod}\, #1)}

\newcommand{\Hom}{\operatorname{Hom}}
\newcommand{\id}{\mathrm{id}}

\newenvironment{psmatrix}{\left(\begin{smallmatrix}}{\end{smallmatrix}\right)}
\newcommand{\Mat}[1]{\operatorname{Mat}_{#1}}
\newcommand{\MatT}[1]{\operatorname{Mat}^\rmt_{#1}}
\newcommand{\diag}{\operatorname{diag}}

\newcommand{\lT}[1]{\tensor*[^\rmt]{#1}{}}
\newcommand{\lTr}[2]{\tensor*[^\rmt]{#1}{#2}}

\newcommand{\trace}{\operatorname{trace}}

\newcommand{\linspan}{\operatorname{span}}

\newcommand{\rk}{\operatorname{rk}}

\newcommand{\ZZ}{\ensuremath{\mathbb{Z}}}
\newcommand{\QQ}{\ensuremath{\mathbb{Q}}}
\newcommand{\RR}{\ensuremath{\mathbb{R}}}
\newcommand{\CC}{\ensuremath{\mathbb{C}}}

\newcommand{\GL}[1]{\ensuremath{\mathrm{GL}_{#1}}}
\newcommand{\Mp}[1]{\ensuremath{\mathrm{Mp}_{#1}}}

\newcommand{\SL}[1]{\ensuremath{\mathrm{SL}_{#1}}}

\newcommand{\SU}[1]{\ensuremath{\mathrm{SU}_{#1}}}

\newcommand{\U}[1]{\ensuremath{\mathrm{U}_{#1}}}

\renewcommand{\det}{\ensuremath{\mathrm{det}}}

\newcommand{\HS}{\ensuremath{\mathbb{H}}}

%% file: shortcuts_this.tex
\newcommand{\eps}{\epsilon}
\newcommand{\ga}{\gamma}
\newcommand{\Ga}{\Gamma}
\newcommand{\ka}{\kappa}
\newcommand{\la}{\lambda}

\newcommand{\rmdx}{\rmd\mspace{-2mu}x}

\newcommand{\ovsmash}[1]{\ov{#1}\vphantom{#1}}
\newcommand{\ulsmash}[1]{\ul{#1}\vphantom{#1}}

\newcommand{\ord}{\operatorname{ord}}
\newcommand{\ordnorm}{\operatorname{ord}_{\mathrm{Nm}}}
\newcommand{\ordtrace}{\operatorname{ord}_{\mathrm{Tr}}}

\newcommand{\Stab}{\operatorname{Stab}}

\newcommand{\cOE}{\cO_E}

\newcommand{\cOF}{\cO_F}
\newcommand{\cDF}{\cD_F}
\newcommand{\lD}[1]{\tensor*[^\dagger]{#1}{}}
\newcommand{\lDr}[2]{\tensor*[^\dagger]{#1}{#2}}
\newcommand{\traceF}{\trace_{F \slash \QQ}}
\newcommand{\traceE}{\trace_{E \slash F}}
\newcommand{\normF}{\operatorname{norm}_{F \slash \QQ}}
\newcommand{\detF}{\det_{F \slash \QQ}}

\newcommand{\idmat}[1]{\id_{#1}}

\newcommand{\sinv}{\operatorname{sinv}}
\newcommand{\rot}{\operatorname{rot}}
\newcommand{\trans}{\operatorname{trans}}
\newcommand{\transJ}{\operatorname{transJ}}
\newcommand{\transJU}{\operatorname{transJU}}

\newcommand{\MatD}[1]{\mathrm{Mat}^\dagger_{#1}}

\newcommand{\HSEg}{\HS_{E,g}}
\newcommand{\HJ}[1]{\HS_{#1}}

\newcommand{\Hb}[1]{\mathrm{Hb}_{#1}}

\newcommand{\Jac}[1]{\mathrm{Jac}_{#1}}

\newcommand{\rmFM}{\rmF\rmM}
\newcommand{\rmFS}{\rmF\rmS}
\newcommand{\rmJHilb}[1]{\rmJ^{\mathrm{Hilb}}_{#1}}

\newcommand{\rmVclosed}{\ovsmash{\rmV}}
\newcommand{\rmVopen}{\mathring{\rmV}}
\newcommand{\rmTP}{\rmT\rmP}
\newcommand{\rmEV}{\rmE\rmV}

\newcommand{\rmGrM}{\rmG\rmr^-}
\newcommand{\rmCH}{\rmC\rmH}
\newcommand{\thetaKudla}{\theta^{\mathrm{Kudla}}}
\newcommand{\disc}{\mathrm{disc}}

\newcommand{\piCH}{\pi^{\rmCH}}
\newcommand{\pirho}{\pi^\rho}

\NewCommandCopy{\slashorig}{\slash}
\renewcommand{\slash}{\mathbin{\slashorig}}
\newcommand{\bigslash}{\mathbin{\big\slashorig}}
\newcommand{\Bigslash}{\mathbin{\Big\slashorig}}

%% file: sections/00_introduction.tex
\Needspace*{4em}
\phantomsection
\label{sec:introduction}
\addcontentsline{toc}{section}{Introduction}
\markright{Introduction}
\lettrine[lines=2,nindent=.2em]{\tbf T}{\,he} Kudla program relates special cycles on Shimura varieties to automorphic forms.
It emerged in work of Kudla~\cite{kudla-1997,kudla-1997b,kudla-2003} and originates in a series of papers by Kudla and Millson~\cite{kudla-millson-1986,kudla-millson-1987,kudla-millson-1990} and, ultimately, Hirzebruch and Zagier~\cite{hirzebruch-zagier-1976}.
Kudla and Millson established a connection between cohomology classes associated with special cycles on orthogonal and unitary Shimura varieties on one hand and holomorphic Siegel and Hermitian modular forms on the other hand.
This cohomological phenomenon can be explained by the theta correspondence for automorphic forms.
Kudla suggested that one should refine this connection beyond cohomology, and that algebraic cycles should appear as the Fourier coefficients of modular forms.
The resulting generating series take values in the Chow group of Shimura varieties and serve as arithmetic analogues of theta series.
Their modularity encodes information that is a priori inaccessible at the level of cohomology by the Beilinson--Bloch Conjecture, which expresses Chow groups as an extension of cohomology; see~\cite{kudla-2021} for an embedding trick that establishes the Kudla Conjecture assuming the Beilinson--Bloch Conjecture.
In this paper, we prove unconditionally the geometric Kudla Conjecture for unitary Shimura varieties associated with Hermitian lattices over CM fields of signature~$(p,1)$ at one infinite place and~$(p+1,0)$ at all others.

The modularity of Kudla's generating series enables arithmetic theta liftings via the Petersson scalar product on modular forms, previously constructed conditionally on this modularity~\cite{liu-2011a,liu-2011b}.
These liftings supply cycles that are natural generalizations of Heegner points on modular curves.
The Gross--Zagier Theorem~\cite{gross-zagier-1986,gross-kohnen-zagier-1987} relates N\'eron--Tate heights of such points to derivatives of certain~$\rmL$\nbd{}functions.
Li and Liu extended this relation to an arithmetic inner product formula, which relates the intersection pairing of cycles constructed via the arithmetic theta lifting (assuming it exists), to derivatives of certain~$\rmL$\nbd{}functions~\cite{li-liu-2021,li-liu-2022}.
As such, Kudla's program yields a generalization of the Gross--Zagier Formula in the spirit of the arithmetic Gan--Gross--Prasad Conjecture~\cite[\S27]{gan-gross-prasad-2012}; see also the exposition in~\cite{rapoport-smithling-zhang-2020}.
Our result removes one of the hypotheses in Li--Liu's work.

Shortly after Kudla's initial proposal of generating series of weighted special cycles, Borcherds established a first modularity result in the case of special divisors on orthogonal Shimura varieties~\cite{borcherds-1999}.
His approach characterizes the relations among Fourier coefficients of modular forms via Serre duality on modular curves, and deduces the required rational equivalences of special divisors by constructing suitable meromorphic functions on the orthogonal Shimura variety, known as Borcherds products~\cite{borcherds-1998}.
An alternative route that circumvents the need for Borcherds products was taken by Yuan--Zhang--Zhang~\cite{yuan-zhang-zhang-2009}, who deduced the modularity of the Chow-valued divisor generating function from modularity of the cohomology-valued one and a vanishing result for a first Betti number; this includes a precursor idea for the aforementioned general result conditional on the Beilinson--Bloch Conjecture~\cite{kudla-2021}.
In the case of higher codimension, Zhang, in his thesis~\cite{zhang-2009}, leveraged Borcherds' results to establish a \emph{formal} modularity statement for special cycles of arbitrary codimension; see also~\cite{yuan-zhang-zhang-2009}.
Zhang's formal series are modular covariant under a suitable set of generators of the Siegel modular group, but they a priori lack convergence.
The geometric orthogonal Kudla Conjecture over the rational field was settled in~\cite{bruinier-2015,raum-2015c,bruinier-raum-2015}, where it is shown that convergence is, in fact, automatic for all formal Siegel modular forms of the kind arising in Zhang's work.

Hofmann and Liu independently adapted Borcherds' and Yuan--Zhang--Zhang's ideas to special divisors on unitary Shimura varieties~\cite{hofmann-2014,liu-2011a}.
While Hofmann restricted to the case of imaginary quadratic fields, Liu treats all CM fields.
In his work, Liu also extended Zhang's ideas and established that generating series of special cycles on unitary Shimura varieties yield \emph{formal} Hermitian Hilbert modular forms.
The approach to automorphic convergence from previous work, however, does not extend except in a few cases of imaginary quadratic fields treated by Xia~\cite{xia-2022}.
In the present paper we prove automatic convergence for formal Hermitian Hilbert modular forms and thus establish the geometric unitary Kudla Conjecture.

Fix a CM field~$E / F$ with ring of integers~$\cOE / \cOF$, and let~$L$ be a Hermitian~$\cOE$-lattice with dual~$L^\vee$ such that~$L_\RR \defeq L \otimes_\ZZ \RR$ has signature~$(p,1)$ at one place of~$F$ and is positive definite at all other infinite places.
We write~$L_\RR^- = L \otimes_{\cOF} \RR$ for~$L$ at the place~$F \hra \RR$ at which~$L$ is indefinite.
Let~$\rmGrM(L_\RR^-)$ be the Grassmannian of complex negative lines in~$L_\RR^-$.
For a neat, stable congruence subgroup~$\Ga \subset \U{L}(\cOF)$ the quotient~$\Ga \backslash \rmGrM(L_\RR^-)$ is a connected component of a unitary Shimura variety.
Kudla associates to orbits~$[\la]$ of~$g$-tuples~$\la = (\la_1, \ldots, \la_g)$ of vectors in~$L^\vee$ weighted special cycles~$\rmZ([\la])$ on~$\Ga \backslash \rmGrM(L_\RR^-)$.
For these and the following notions related to special cycles we refer to \fref{Section}{sec:kudla_conjecture}, where we also describe the relation to cycles on Shimura varieties associated with nearby incoherent, totally positive Hermitian spaces.
One can employ powers of the dual of the Hodge bundle to force codimension~$g$.
By then organizing weighted special cycles according to their moment matrices~$(\langle \la_i, \la_j \rangle) \slash 2$, one obtains the Chow-valued formal generating series~\eqref{eq:def:kudla_generating_series_vector_valued}
\begin{gather*}
  \thetaKudla_L(\tau)
  \in
  \CC\bigl[ \disc(L)^g \bigr]
  \otimes
  \rmCH^g\big( \Ga \backslash \rmGrM(L_\RR^-) \big) \bigl\llbracket e(\tau_{i\!j}) \bigr\rrbracket
  \tx{,}
\end{gather*}
where the first tensor factor is the representation space of the genus~$g$ incoherent Weil representation~$\wtd\rho^{(g)}_L$ and~$e(\tau_{i\!j}) = \exp(2 \pi i\, \tau_{i\!j})$ are exponentials of the entries~$\tau_{i\!j}$ of a variable~$\tau$ in the Hermitian upper half space~$\HSEg$ in~\eqref{eq:def:HSg}.

The notion of a vector-valued Hermitian Hilbert modular form is introduced in \fref{Definition}{def:hermitian_modular_form_arithmetic_type}.
We denote the space of weight~$k$, type~$\rho$ modular forms of genus~$g$ by~$\rmM^{(g)}_k(\rho)$.
The tensor product of this space with the Chow group consists of \emph{finite} sums of terms~$f \otimes \alpha$, where~$f$ is a modular form and~$\alpha$ is an algebraic cycle.
The Fourier series expansion of Hermitian Hilbert modular forms thus yields the inclusion
\begin{gather*}
  \rmM^{(g)}_{k}\bigl( \wtd\rho^{(g)}_L \bigr)
  \otimes
  \rmCH^g\big( \Ga \backslash \rmGrM(L_\RR^-) \big)
  \lhra
  \CC\bigl[ \disc(L)^g \bigr]
  \otimes
  \rmCH^g\big( \Ga \backslash \rmGrM(L_\RR^-) \big) \bigl\llbracket e(\tau_{i\!j}) \bigr\rrbracket
  \tx{,}
\end{gather*}
which we use to identify the left hand side as a subspace of the right hand side.

The main result of this paper, proved at the end of \fref{Section}{sec:kudla_conjecture}, establishes the unitary geometric Kudla Conjecture.

\begin{maintheorem}%
[Unitary geometric Kudla Conjecture]
\label{mainthm:kudla_conjecture}
The Kudla generating series\/~$\thetaKudla_L$ is the Fourier series expansion of a Hermitian Hilbert modular form of weight\/~$p+1$ and type\/~$\wtd\rho^{(g)}_L$.
More precisely,
\begin{gather*}
  \thetaKudla_L
  \in
  \rmM^{(g)}_{p+1}\big( \wtd\rho^{(g)}_L \big)
  \otimes
  \rmCH^g\big( \Ga \backslash \rmGrM(L_\RR^-) \big)
  \tx{.}
\end{gather*}
\end{maintheorem}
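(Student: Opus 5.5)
The plan follows the strategy outlined in the abstract: formal modularity from Liu's work, plus automatic convergence of symmetric formal Fourier--Jacobi series, plus a finite-dimensionality argument to pass from formal series with values in the Chow group to finite sums $f \otimes \alpha$.

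\emph{Step one: formal modularity.} Fix any linear functional $\ell \colon \rmCH^g\big( \Ga \backslash \rmGrM(L_\RR^-) \big) \ra \CC$ and consider $\ell(\thetaKudla_L)$, a formal $\CC[\disc(L)^g]$-valued series in $e(\tau_{i\!j})$. Liu's extension of Zhang's method, resting on the modularity of the divisor generating series (Hofmann, Liu), shows the following. For each $1 \le j < g$, the $j$-th Fourier--Jacobi coefficients of $\ell(\thetaKudla_L)$ are vector-valued Hermitian Jacobi forms of weight $p+1$ and index given by the moment matrix. In particular, for fixed $\tau_1$ the coefficients, as functions of the remaining variables, are modular in lower genus of type $\wtd\rho^{(g-1)}_L$, by induction on codimension and pullback of cycles to special subvarieties. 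Moreover, the coefficients are invariant under the $\GL{g}(\cOE)$-action on moment matrices, since the orbits $[\la]$ depend only on the lattice spanned. I would record this as saying that $\ell(\thetaKudla_L)$ is a \emph{symmetric formal Fourier--Jacobi series} in the sense of the earlier sections, of weight $p+1$ and type $\wtd\rho^{(g)}_L$.

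\emph{Step two: convergence.} Apply the main convergence theorem of \fref{Section}{sec:convergence}, in its vector-valued form from \fref{Section}{sec:vector_valued}: every symmetric formal Fourier--Jacobi series converges and equals the expansion of an element of $\rmM^{(g)}_{p+1}(\wtd\rho^{(g)}_L)$. Hence $\ell(\thetaKudla_L) \in \rmM^{(g)}_{p+1}(\wtd\rho^{(g)}_L)$ for every $\ell$.

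\emph{Step three: descent to the tensor product.} Since $\rmM \defeq \rmM^{(g)}_{p+1}(\wtd\rho^{(g)}_L)$ is finite dimensional, choose finitely many Fourier indices $T_1,\dots,T_N$ such that the coefficient map $f \mapsto (c(f;T_i))_i$ is injective on $\rmM$. Then take $f_1,\dots,f_N \in \rmM$, or a basis, dual to these coefficients on the image, and set $\Theta \defeq \sum_i f_i \otimes c(\thetaKudla_L;T_i)$. For each $\ell$, the series $\ell(\thetaKudla_L - \Theta)$ lies in $\rmM$ and has vanishing coefficients at all $T_i$, so it is zero. Functionals separate points of the (possibly infinite-dimensional) Chow group over $\CC$, so every coefficient of $\thetaKudla_L - \Theta$ vanishes. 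This gives $\thetaKudla_L = \Theta$ and the stated membership. A small point to check here is that one tensors Chow groups with $\CC$, so that functionals separate points.

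\emph{Main obstacle.} The real content is Step two, the automatic convergence theorem, which is assumed from earlier sections. Within this proof, the delicate part is Step one: verifying that Liu's formal modularity precisely matches the paper's definition of symmetric formal Fourier--Jacobi series over a general CM field. This includes the correct weight and incoherent Weil type, the correct Jacobi index lattices, and symmetry under the full $\GL{g}(\cOE)$ rather than just permutations. I would check this by comparing Liu's transformation laws generator by generator with the paper's definitions of the Jacobi group action.
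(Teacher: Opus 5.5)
Your proposal is correct and follows essentially the same route as the paper: apply an arbitrary functional on the Chow group and recognize the result as a symmetric formal Fourier--Jacobi series, invoke automatic convergence, and conclude by finite dimensionality of $\rmM^{(g)}_{p+1}(\wtd\rho^{(g)}_L)$. The paper treats $g = 1$ separately as Liu's theorem, needs only cogenus $g-1$, and proves that Fourier--Jacobi property itself rather than citing Liu, via a Zhang-style decomposition of $\phi_m$ into pushforwards of the divisor series for $\wtd{L}^\perp$ tensored with theta series of $\wtd{L}$; your Step three just spells out the final step, where, since coefficients are vector-valued, the dual functionals should be components of $c(\cdot\,;T_i)$ along the basis vectors $\frake_\mu$.
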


\begin{maincorollary}%
\label{maincor:li_liu_hypothesis_removed}
The modularity assumptions~\cite[Hypothesis~4.5]{li-liu-2021} and~\cite[Hypothesis~4.11]{li-liu-2022} of Li--Liu are satisfied.
In particular, the arithmetic inner product formula and the multiplicity bound for Chow groups of unitary Shimura varieties in~\cite[Theorem~1.7]{li-liu-2021} and~\cite[Theorem~1.5]{li-liu-2022}, previously conditional on these hypotheses, depend only on the remaining \cite[Hypothesis~6.6]{li-liu-2021}.
\end{maincorollary}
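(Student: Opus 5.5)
The plan is to deduce the corollary formally from \fref{Theorem}{mainthm:kudla_conjecture}. First I would recall the precise content of \cite[Hypothesis~4.5]{li-liu-2021} and \cite[Hypothesis~4.11]{li-liu-2022}. Both ask that the generating series of (weighted) special cycles of codimension~$g$ on the relevant unitary Shimura varieties, built from Schwartz functions on the incoherent Hermitian space and taking values in the Chow group with~$\CC$\nbd{}coefficients, be absolutely convergent and define an element of (a space of holomorphic Hermitian modular forms of weight~$p+1$ for~$\U{g,g}$ over~$F$)~$\otimes$ the Chow group. Li--Liu use this to define the arithmetic theta lifting through the Petersson pairing.

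The second step is to translate between the two setups. Li--Liu work adelically, with Shimura varieties attached to a totally positive definite incoherent Hermitian space and with an arbitrary Schwartz function~$\phi$ in~$\cS(V^g \otimes \bbA_f)$ invariant under a level~$K$. The theorem, by contrast, is stated for connected components~$\Ga \backslash \rmGrM(L_\RR^-)$ and for vector-valued forms of type~$\wtd\rho^{(g)}_L$. I would argue as follows.
\begin{enumerateroman}
\item Every $K$\nbd{}invariant~$\phi$ is a finite linear combination of characteristic functions of cosets~$\la + \widehat{L}^g$ with~$\la \in (L^\vee)^g$, for a suitable lattice~$L$ depending on~$K$. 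The Li--Liu series for~$\phi$ is therefore the pairing of~$\thetaKudla_L$ with a fixed vector in the dual of~$\CC[\disc(L)^g]$.
\item The Shimura variety decomposes into finitely many connected components of the form~$\Ga \backslash \rmGrM(L_\RR^-)$. After passing to a neat stable subgroup, pullback and pushforward along finite covers preserve the cycle classes up to the standard weights described in \fref{Section}{sec:kudla_conjecture}.
\item Contracting the vector-valued form in~$\rmM^{(g)}_{p+1}(\wtd\rho^{(g)}_L)$ against a vector yields a scalar Hermitian Hilbert modular form for a congruence subgroup, of the kind Li--Liu require.
\end{enumerateroman}
Since the theorem gives a finite sum~$\sum f_i \otimes \alpha_i$, absolute convergence is automatic.

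The main obstacle I expect is step~(i)--(ii): checking carefully that Li--Liu's normalizations coincide with ours. This covers their weighting of special cycles, the incoherent Weil representation, the use of the Hodge bundle in degenerate terms, and the behaviour at levels that are not neat. For non-neat levels, I would pass to a neat normal subgroup and average; modularity survives because the invariants of a finite group act on a space containing~$\thetaKudla$.

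Finally, the ``in particular'' clause follows by inspection of Li--Liu's proofs. \cite[Theorem~1.7]{li-liu-2021} and \cite[Theorem~1.5]{li-liu-2022} use exactly those hypotheses together with \cite[Hypothesis~6.6]{li-liu-2021}. Discharging the modularity hypotheses therefore leaves only the latter.
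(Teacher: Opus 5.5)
Your proposal is correct and follows the same route as the paper. You write an arbitrary Schwartz function as a finite combination of translates of characteristic functions of~$L^g$ by~$L^{\vee\,g}$ (with~$L$ varying in a fixed Hermitian space), apply \fref{Theorem}{mainthm:kudla_conjecture} to each, and note that a finite sum~$\sum f_i \otimes \alpha_i$ converges automatically.

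The one point the paper makes explicit that your step~(iii) leaves implicit is that forms for~$\SU{g,g}(\cOF)$ are forms for a congruence subgroup of~$\U{g,g}(\cOF)$. This holds because determinants of integral unitary matrices lie in the finite group of norm-one units; the paper phrases this via finiteness of~$\cOE^\times \slash \cOF^\times$.
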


\begin{mainremark}
Li and Liu explain that~\cite[Hypothesis~6.6]{li-liu-2021} follows from the full endoscopic classification for unitary groups.
\end{mainremark}

The codimension~$1$ case of \fref{Theorem}{mainthm:kudla_conjecture} was provided by Liu~\cite{liu-2011a}, and independently by Hofmann~\cite{hofmann-2014} in the special case~$F = \QQ$.
The key new ingredient in our proof of the higher-codimension case is an automatic convergence result for formal Fourier--Jacobi series in the Hermitian Hilbert setting.
Hermitian Hilbert modular forms admit Fourier--Jacobi expansions with respect to a block decomposition of the variable~$\tau \in \HSEg$, whose coefficients are Hermitian Hilbert--Jacobi forms indexed by Hermitian matrices~$m$ of size~$h \times h$.
Further, the Fourier coefficients of Hermitian Hilbert modular forms satisfy the symmetry relation~\eqref{eq:hermitian_fourier_coefficient_symmetry}, which is induced by the action of~$\GL{g}(\cOE)$.
The notion of \emph{symmetric formal Fourier--Jacobi series} provided in \fref{Definition}{def:symmetric_formal_fourier_jacobi_series} combines these two features.
They are formal series of Hermitian Hilbert--Jacobi forms, indexed by Hermitian matrices~$m$, whose Fourier coefficients satisfy that same relation~\eqref{eq:hermitian_fourier_coefficient_symmetry}.
We write~$\rmFM^{(g,h)}_k(\rho)$ for the space of such series of weight~$k$ and type~$\rho$.
This notion formalizes the formal Hermitian Hilbert modular forms that appear in Liu's work.
The Fourier--Jacobi expansion of Hermitian Hilbert modular forms gives the natural inclusion~\eqref{eq:symmetric_formal_fourier_jacobi_series_from_modular_forms}
\begin{gather*}
  \rmM^{(g)}_k(\rho)
  \lhra
  \rmFM^{(g,h)}_k(\rho)
  \tx{.}
\end{gather*}
Our main analytic result, proved at the end of \fref{Section}{sec:vector_valued_higher_cogenus}, identifies symmetric formal Fourier--Jacobi series with the Fourier--Jacobi expansions of Hermitian Hilbert modular forms.

\begin{maintheorem}%
[Automatic convergence]
\label{mainthm:convergence}
For\/~$1 \le h < g$,~$k \in \ZZ$, and\/~$\rho$ an arithmetic type, we have
\begin{gather*}
  \rmFM^{(g,h)}_k(\rho)
  =
  \rmM^{(g)}_k(\rho)
  \tx{.}
\end{gather*}
\end{maintheorem}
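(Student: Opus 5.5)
The inclusion $\rmM^{(g)}_k(\rho) \subseteq \rmFM^{(g,h)}_k(\rho)$ is already given by the Fourier--Jacobi expansion, so only the reverse inclusion needs proof. I would follow the strategy of the Siegel case in~\cite{bruinier-raum-2015}: first treat the scalar-valued case of cogenus~$h = 1$ for a finite-index subgroup, then pass to vector-valued types, and finally reduce arbitrary cogenus~$h$ to $h = 1$.

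For scalar weight and $h = 1$, the core step is a dimension bound. Let $\rmFM^{(g,1)}_k$ be the space of symmetric formal Fourier--Jacobi series. Each Fourier--Jacobi coefficient $\phi_m$ is a Hermitian Hilbert--Jacobi form of index $m$. The symmetry relation~\eqref{eq:hermitian_fourier_coefficient_symmetry} under $\GL{g}(\cOE)$ then implies vanishing: if $\phi_m = 0$ for all $m$ with $\trace m$ below some bound, then all $\phi_m$ vanish. The proof would combine a vanishing order of Jacobi forms at the diagonal $z = 0$ (Taylor expansion into Hermitian Hilbert modular forms of lower genus) with the symmetry, which moves Fourier indices into the region already known to vanish. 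This gives
\begin{gather*}
  \dim \rmFM^{(g,1)}_k \ll k^{\dim_\CC \HSEg}
  \tx{,}
\end{gather*}
with the same exponent as the Hilbert--Samuel growth of $\dim \rmM^{(g)}_k$.

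Next I would show that $\rmFM_\bullet \defeq \bigoplus_k \rmFM^{(g,1)}_k$ is a graded ring containing $\rmM_\bullet$, and that it is an integral domain. Products of formal series are formal series. Any $f \in \rmFM_k$ satisfies an algebraic relation over $\rmM_\bullet$, because the dimension count forces the powers $f^j$ times $\rmM_{nk - jk}$ to become linearly dependent. Hence $f$ is algebraic over the quotient field of $\rmM_\bullet$. The algebraic equation then lets one continue $f$ as a meromorphic function on $\HSEg$: its formal Fourier--Jacobi expansion converges near the cusp, and the roots of a polynomial with modular coefficients define a finite cover. Combining normality of the ring of modular forms (Koecher's principle, for $g \ge 2$) with invariance under the parabolic subgroup and the $\GL{g}(\cOE)$-symmetry, one concludes that $f$ is holomorphic and modular.

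For an arithmetic type $\rho$, take the kernel $\Ga_\rho$ of $\rho$, which has finite index. The components of a vector-valued formal series are formal series for $\Ga_\rho$ (scalar-valued, with a character). Apply the scalar result for subgroups, then reassemble using equivariance under the generators. For general $h$, a symmetric series of cogenus $h$ induces one of cogenus $1$ by re-expanding each Jacobi coefficient, using convergence in lower genus by induction on $g$. The case $h = 1$ then finishes the argument.

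I expect the main obstacle to be the first step over a general CM field. The unit group $\cOE^\times$ and the nontrivial class group make both the Jacobi vanishing bound and the propagation via $\GL{g}(\cOE)$ delicate. I would first try to work with ideal-class-twisted Jacobi forms and norm-based orders (a trace/norm order $\ordtrace$, $\ordnorm$) to get uniform estimates.
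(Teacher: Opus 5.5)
The algebraicity step agrees with the paper, but the passage from algebraicity to convergence, modularity and holomorphy is a genuine gap. You assert that the formal Fourier--Jacobi expansion converges near the cusp, but the relation $P(f) = 0$ holds only among formal series, and you give no estimate.

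Even granting convergence on some neighborhood of the boundary (for instance by an Artin-approximation argument in local coordinates there), two problems remain. First, a root of $P$ continued from that neighborhood is in general multivalued on $\HSEg$ minus the discriminant divisor. Second, the $\rot$-symmetry of the Fourier coefficients yields invariance only once the Fourier series converges on a $\rot$-stable domain, and such a neighborhood is not one.

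Normality of $\rmM^{(g)}_\bullet$ does not give holomorphy either. It controls elements \emph{integral} over $\rmM^{(g)}_\bullet$, whereas $f$ is merely algebraic. The product $a_d f$ is integral, but $f = (a_d f)/a_d$ may a priori have poles along the divisor of $a_d$, and no dimension count excludes this.

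The mechanism you are missing is specialization to admissible torsion points:
\begin{enumerate}
\item Using the symmetry a second time, together with the Hecke bound for Jacobi cusp forms, the specializations $\phi_m[\alpha,\beta]$ are cusp forms of genus $g-1$ whose norms grow only polynomially in $\normF(m)$ (\fref{Theorem}{thm:torsion_point_hecke_bound}). Hence the series converges on the dense family of leaves $\HSEg[\alpha,\beta]$.
\item A monic relation for $a_d f$ bounds its partial sums uniformly on these leaves, hence locally on all of $\HSEg$. Montel's theorem then gives convergence (\fref{Proposition}{prop:torsion_point_locally_bounded}).
\item Modularity follows from $\SU{g,g}(\cOF) = \langle P_g(\cOF), Q_1(\cOF) \rangle$ (\fref{Proposition}{prop:siegel_and_klingen_parabolics_generate}).
\item A polar divisor of $f$ is ruled out because it meets some leaf transversally (\fref{Proposition}{prop:divisor_intersect_torsion_point}), and on each leaf $f$ converges to a holomorphic function.
\end{enumerate}

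The dimension bound has a second gap, exactly where you expect the obstacle. The filtration argument only shows that $\phi_m$ vanishes to order comparable to $m$. You therefore need a Hecke bound for Jacobi forms in which the index may be an \emph{arbitrary} constant multiple of the vanishing order, at the expense of the constant in front of $k$.

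A Taylor expansion at $z = 0$ cannot deliver this. To conclude $\phi_m = 0$ you must kill all Taylor coefficients up to the maximal order to which a nonzero theta function of index $m$ can vanish at the origin. That order is proportional to $m$, with a constant dictated by the geometry of $\cOE$, and the corresponding Taylor coefficients have weight of the same size. So you need a favorable fixed ratio between index and vanishing order, and nothing guarantees it over a general CM field. This is the same obstruction as the covering-radius estimate that confines~\cite{xia-2022} to norm-Euclidean fields, and neither class-group twists nor the orders $\ordtrace$, $\ordnorm$ remove it.

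The paper's key new input is \fref{Theorem}{thm:hilbert_jacobi_forms_hecke_bound}. It specializes Hilbert--Jacobi forms to all torsion points with distinct prime denominators. It then uses transitivity of $\SL{2}(\cOF)$ on these points and the product trick of \fref{Lemma}{la:accumulated_hecke_bound_twisted_permutation_type}. Finally it averages the loss of vanishing order over the Voronoi cell, where the loss is small near $\alpha = 0$.

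Your reductions are closer to the paper, with two caveats:
\begin{enumerate}
\item Re-expanding to lower cogenus needs the formal theta decomposition of \fref{Proposition}{prop:reduction_to_lower_cogenus}. That requires positive definite indices, hence a prior reduction to cusp forms.
\item Passing to components under $\ker\rho$ loses the permutation symmetries that both the filtration and the torsion-point bounds rely on. The paper avoids this by pairing with cusp forms of type $\rho^\vee$ spanning the fibers (\fref{Lemma}{la:large_weights_yield_global_sections}).
\end{enumerate}
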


Automatic convergence has received persistent attention since the 2000s.
The notion of symmetric formal Fourier--Jacobi series originates from the Siegel modular setting.
Aoki~\cite{aoki-2000} implicitly considered their automatic convergence in the case of genus~$2$ and the full Siegel modular group.
Poor and Yuen~\cite{poor-yuen-2007} independently obtained results of similar flavor, which informed later approaches, for formal Fourier series in the course of their computations of spaces of Siegel modular cusp forms.
Based on Aoki's approach, Ibukiyama, Poor, and Yuen~\cite{ibukiyama-poor-yuen-2012} established convergence of symmetric formal Fourier--Jacobi series for genus~$2$ Siegel modular forms of level~$1$.
They leveraged a coincidental coalescence of dimensions, which Wang~\cite{wang-2021} later extended to some orthogonal groups.
The work of Ibukiyama, Poor, and Yuen served as the basis for two independent proofs of the orthogonal geometric Kudla Conjecture in codimension~$2$ over~$\QQ$~\cite{raum-2015c,bruinier-2015}, both of which delivered an extension of~\cite{ibukiyama-poor-yuen-2012} to the vector-valued case using distinct methods.
Automatic convergence of all vector-valued symmetric formal Fourier--Jacobi series for the full Siegel modular group of any genus was eventually established in~\cite{bruinier-raum-2015,bruinier-raum-2025-preprint}.
In the Hermitian setting, Xia~\cite{xia-2022} proved convergence over imaginary quadratic fields whose ring of integers is norm-Euclidean by extending the techniques of~\cite{bruinier-raum-2015}.
The restriction to norm-Euclidean rings of integers is needed to generalize a key estimate in~\cite{bruinier-raum-2015}.
Recently, Aoki, Ibukiyama, and Poor~\cite{aoki-ibukiyama-poor-2024-preprint} revisited the Siegel modular case and established convergence for Siegel modular forms of general paramodular level and trivial arithmetic type in genus~$2$.
They introduced an approach that circumvents this estimate and advanced ideas from earlier work of Aoki~\cite{aoki-2000,aoki-2022} and Poor--Yuen~\cite{poor-yuen-2007}.
In parallel to this recent development, algebro-geometric approaches to the case of Siegel modular forms were suggested in joint work with Bruinier, by Flores, and by Fan~\cite{bruinier-raum-2024,flores-2024-preprint,fan-2025-preprint}, partially resorting to algebraicity results in~\cite{bruinier-raum-2015}.
Pollack~\cite{pollack-2024a-preprint,pollack-2024b-preprint} established an alternative approach to automatic convergence via coefficient estimates as opposed to vanishing results, by leveraging tail estimates that classically appear when deducing Hecke bounds.
This allowed him, independently of the present work, to settle the case of formal Hermitian modular forms of cogenus~$1$ for~$F = \QQ$, and more generally formal modular forms of cogenus~$1$ on tube domains~\cite{pollack-2026-preprint}.

We now outline the proofs of our results.
The bulk of the argument establishes \fref{Theorem}{mainthm:convergence}; \fref{Theorem}{mainthm:kudla_conjecture} then follows by verifying that the Kudla generating series satisfies the hypotheses of \fref{Theorem}{mainthm:convergence}, as we explain at the end of this outline.

Writing~$\tau_1$, $z$, $w$, and~$\tau_2$ for the block entries of~$\tau$ as in~\eqref{eq:HSg_decomposition}, a symmetric formal Fourier--Jacobi series is built from genuine Hermitian Hilbert--Jacobi forms, which are functions in~$\tau_1$, $z$, and~$w$, while the series in~$\tau_2$ is a priori only formal.
The core of our argument shows that it converges absolutely and locally uniformly.
Once convergence is established, full modularity follows from \fref{Proposition}{prop:siegel_and_klingen_parabolics_generate}, which asserts that the stabilizers of the symmetry condition~\eqref{eq:hermitian_fourier_coefficient_symmetry} and of the Fourier--Jacobi expansion together generate~$\SU{g,g}(\cOF)$.
Its proof invokes a $\rmK$\nbd{}theoretic consideration which is by now classical and can be found in a textbook by Hahn--O'Meara~\cite{hahn-omeara-1989}.
Furthermore, arguments that carry over from the Siegel modular setting reduce the vector-valued case to the scalar-valued one.
We therefore focus on trivial arithmetic types, for which the convergence proof requires two kinds of input.
First, an algebraicity result for formal Fourier--Jacobi series over the graded ring of Hilbert modular forms, and second, quantitative bounds on the Fourier coefficients of Hilbert--Jacobi forms.

We start to develop the first of these inputs in \fref{Section}{sec:hilbert_jacobi_forms}, where we establish vanishing bounds for Hilbert--Jacobi forms via specializations to torsion points.
Evaluating a Hilbert--Jacobi form at suitable torsion points produces Hilbert modular forms whose orders of vanishing can be controlled effectively, yielding uniform bounds on the vanishing order of nonzero Hilbert--Jacobi forms.
This replaces an argument in previous work~\cite{bruinier-raum-2015} that was based on the theta decomposition of Jacobi forms.
In \fref{Section}{sec:hilbert_jacobi_forms} we also recall uniform dimension estimates going back to Skoruppa and Boylan.
These bounds and estimates together supply the numerical input required for later dimension arguments.
In \fref{Section}{ssec:hermitian_modular_jacobi_forms:jacobi_forms} we transfer them from Hilbert--Jacobi forms to Hermitian Hilbert--Jacobi forms.
In \fref{Section}{ssec:convergence:algebraic} we combine these ingredients with two further tools to obtain the desired algebraicity result.
It is at this stage that the symmetry condition with respect to~$a \in \GL{g}(\cOE)$ with~$\det(a) = \det(\ov{a})$ enters for the first time: we invoke it through a reduction theory for Hermitian forms, built on classical Minkowski theory.
We also employ combinatorial tools to count the number of indices~$m$ in a formal Fourier--Jacobi series that contribute potentially independent terms.
This part of the proof applies directly for arbitrary~$1 \le h < g$ without further reduction steps.
It is noteworthy that the combinatorial estimates for formal series match the dimension estimates for Hermitian modular forms in the order of growth in the weight~$k$, though the leading constants are a priori different.
By contrast, the work of Ibukiyama, Poor, and Yuen on genus~$2$ Siegel modular forms of trivial arithmetic type relies on the fact that the combinatorial dimension estimate for symmetric formal Fourier--Jacobi series agrees exactly with the known dimension formula for Siegel modular forms.

The second main input is developed in the remaining part of \fref{Section}{sec:convergence}.
It builds on the previously established algebraicity, and in \fref{Sections}{ssec:convergence:torsion_points} and~\ref{ssec:convergence:torsion_point_subvarieties} relies on the stronger assumption that a symmetric formal Fourier--Jacobi series satisfies a monic algebraic equation over the graded ring of Hermitian Hilbert modular forms.
To explain the underlying idea, we consider the geometry of the modular variety and its uniformization~$\HSEg$.
The argument applies in cogenus~$h = 1$; the general case is handled in \fref{Section}{sec:vector_valued_higher_cogenus} via a reduction argument.
For~$h = 1$, the upper half space~$\HSEg$ admits a foliation over adapted coordinates~$\alpha, \beta \in (E \otimes_\QQ \RR)^{g-1}$ related to the previous coordinates by~$z = \alpha \tau_1 + \beta$ and~$w = \ov{\alpha}\, \lTr{\tau}{_1} + \ov{\beta}$.
A dense set of leaves, those with~$\alpha, \beta \in E^{g-1}$, gives rise to modular subvarieties of~$\SU{g,g}(\cOF) \backslash \HSEg$ for~$\SU{g-1,g-1} \times \SU{1,1}$.
In the language of Hilbert--Jacobi forms, these~$\alpha$ and~$\beta$ correspond to torsion points, terminology that we adopt for Hermitian Hilbert modular forms as well.

Restricting a formal Fourier--Jacobi series, which is a function in~$z$ and~$w$, to a leaf of this foliation produces a formal series in~$m$ of functions on~$\HS_{E,g-1}$.
For torsion points~$\alpha$ and~$\beta$, these functions are Hermitian Hilbert modular forms of genus~$g-1$ for a suitable subgroup of~$\SU{g-1,g-1}(\cOF)$.
The symmetry condition, which here enters for the second time, allows us to bound their Fourier coefficients in terms of finitely many Fourier--Jacobi coefficients, and to derive a uniform bound for their norm in terms of the index~$m$.
A standard argument from complex analysis then shows that a monic algebraic equation for a formal Fourier--Jacobi series upgrades these uniform norm bounds to a convergence statement on the dense set of modular leaves.
Combining uniform pointwise bounds with analytic convergence on a dense subset further yields convergence on the whole upper half space~$\HSEg$.
Together with the generation of~$\SU{g,g}(\cOF)$ by the relevant subgroups, this establishes that every symmetric formal Fourier--Jacobi series satisfying a monic algebraic relation over Hermitian Hilbert modular forms is itself a Hermitian Hilbert modular form.

The preceding step is where mere algebraicity would not suffice.
By a standard argument from commutative algebra, algebraicity implies that symmetric formal Fourier--Jacobi series are quotients of Hermitian Hilbert modular forms, that is, meromorphic Hermitian Hilbert modular forms.
To conclude the proof of \fref{Theorem}{mainthm:convergence} in the scalar-valued cogenus~$h = 1$ case, the remaining part of \fref{Section}{sec:convergence} shows that they are in fact holomorphic.
The key observation is that the uniform boundedness on modular leaves does not require a monic relation, but holds for all symmetric formal Fourier--Jacobi series.
To exploit this, in \fref{Section}{ssec:convergence:torsion_point_divisors} we establish a transversality result for divisors on the modular variety with respect to the foliation of~$\HSEg$.
At this stage, we already have full modular invariance of the pullback of the divisor to~$\HSEg$ at our disposal and need transversal intersection with only a single modular leaf.
We obtain such a leaf by invoking the Borel's Density Theorem for the inclusion of~$\SU{g,g}(\cOF)$ in~$\SU{g,g}(F \otimes_\QQ \RR)$ and componentwise irreducibility of the Lie algebra of~$\SU{g,g}(F \otimes_\QQ \RR)$ under the adjoint representation.
These yield a tangent vector at a point of the divisor that allows us to move to a modular leaf via a differential-geometric argument.
This completes the proof of automatic convergence in the scalar cogenus~$1$ setting.

The remaining steps, carried out in \fref{Section}{sec:vector_valued_higher_cogenus}, upgrade this to the full \fref{Theorem}{mainthm:convergence}.
We first reduce vector-valued arithmetic types to the trivial arithmetic type by pairing with Hermitian Hilbert modular forms for dual representations and passing to suitable scalar-valued components.
We then reduce higher cogenus to cogenus~$1$ by induction on~$h$, using a formal theta decomposition that expresses the relevant Fourier--Jacobi coefficients in terms of symmetric formal Fourier--Jacobi series of lower cogenus.
Combining these reductions with the cogenus~$1$ case proves \fref{Theorem}{mainthm:convergence}.

Finally, in \fref{Section}{sec:kudla_conjecture} we deduce \fref{Theorem}{mainthm:kudla_conjecture} from \fref{Theorem}{mainthm:convergence}.
We verify that~$\thetaKudla_L$ satisfies the symmetry relation of \fref{Definition}{def:symmetric_formal_fourier_jacobi_series} and that its Fourier--Jacobi coefficients are Hermitian Hilbert--Jacobi forms, reducing to the known divisor case.
Consequently,~$\thetaKudla_L$ defines a symmetric formal Fourier--Jacobi series of cogenus~$g-1$, and \fref{Theorem}{mainthm:convergence} implies \fref{Theorem}{mainthm:kudla_conjecture}.
The argument goes back to Zhang and can also be found in the work of Liu; for convenience we give a full account.
The key idea is to express the Fourier--Jacobi coefficients of the Kudla generating series as a finite sum of pushforwards of Kudla generating series on Shimura subvarieties, or equivalently, to detect modularity of these contributions via pushforward from these subvarieties.

At the end of \fref{Section}{sec:kudla_conjecture}, we briefly indicate how \fref{Corollary}{maincor:li_liu_hypothesis_removed} follows from \fref{Theorem}{mainthm:kudla_conjecture}.

\begin{mainremark}
An analogous automatic convergence result can likely be proved for Hilbert--Siegel modular forms.
The vanishing theorems in \fref{Section}{sec:hilbert_jacobi_forms} can be used verbatim together with an easier reduction theory and better available estimates in~\fref{Section}{sec:convergence}, while the sheaf theoretic and combinatorial arguments in~\fref{Section}{sec:vector_valued_higher_cogenus} require only slight adjustments, which are easier than the treatment of the CM case.
Once detailed arguments are available, this would yield the geometric orthogonal Kudla Conjecture over totally real fields by verifying the assumptions of~\cite[Theorem~1.2]{yuan-zhang-zhang-2009}.
\end{mainremark}

\subsection*{Acknowledgement}

The author deeply thanks Jan Bruinier for helpful and inspiring conversations around the topic of this work.

%% file: sections/01_hilbert_jacobi_forms.tex
\section{Hilbert--Jacobi forms}%
\label{sec:hilbert_jacobi_forms}

Throughout, given expressions~$f(x)$ and~$g(x)$, we write~$f(x) \ll g(x)$ if there is a positive constant~$c$ such that~$|f(x)| < c g(x)$ for all~$x$, where the domain of~$x$ is specified if it is not implicit.
We amend subscripts, $f(x) \ll_y g(x)$, to indicate that~$c$ depends on~$y$.
We write~$f(x) \asymp g(x)$ if~$f(x) \ll g(x)$ and~$g(x) \ll f(x)$, and~$f(x) \asymp_y g(x)$ if~$f(x) \ll_y g(x)$ and~$g(x) \ll_y f(x)$.
If~$f(x) \ll g(x)$ appears as part of an assumption, we mean that there is a constant~$c$ such that the conclusion holds for all~$x$ with~$f(x) < c g(x)$.

We write~$e(x) \defeq \exp(2 \pi i\, \trace(x))$ for a complex square matrix~$x$.
For notational convenience, we extend this to square matrices~$x,y$ of possibly different size and write~$e(x + y)$ as shorthand for~$e(x) \cdot e(y)$.

We define~$\MatT{h}(\RR) \subset \Mat{h}(\RR)$ as the subset of symmetric matrices, that is, matrices invariant under transposition~$x \mto \lT{x}$.
We let~$\MatT{h}(\ZZ)^\vee$ be the dual of~$\MatT{h}(\ZZ)$ with respect to the trace form~$(x,y) \mto \trace(x y)$ on~$\Mat{h}(\RR)$, where~$\trace$ is the matrix trace.
It consists of matrices with integral diagonal and half-integral off-diagonal entries.
We set~$x[y] \defeq \lT{y} x y$ for matrices~$x \in \MatT{g}(\CC)$ and~$y \in \Mat{g,h}(\RR)$.
In particular, $\GL{h}(\ZZ)$ acts on~$\MatT{h}(\ZZ)^\vee$ from the right via~$(m,u) \mto m[u]$, since it acts on~$\MatT{h}(\ZZ)$.
For a symmetric matrix~$x$ and~$\alpha \in \RR$, we write~$x > \alpha$ or~$x \ge \alpha$ if~$x - \alpha$ is positive definite or positive semi-definite, respectively, that is, all eigenvalues of~$x$ are strictly larger than~$\alpha$ or at least~$\alpha$.

We fix a totally real field~$F$ of degree~$n_F$ with ring of integers~$\cOF$.
We let~$\traceF$ and~$\normF$ denote the Galois trace and norm of~$F \slash \QQ$.
Further, we write~$\detF = \normF \circ \det$.
The dual of~$\cOF$ with respect to the trace form is~$\cOF^\vee$.
We have~$\cOF^\vee = \cDF^{-1}$, where~$\cDF$ is the different ideal in~$\cOF$.
After fixing an order for the real embeddings of~$F$, we have an isomorphism of~$F_\RR \defeq F \otimes_\QQ \RR$ with~$\RR^{n_F}$, which extends to~$F_\CC \defeq F \otimes_\QQ \CC$.
It yields an action of~$F$ on~$\CC^{n_F}$ by multiplication.
Component-wise comparison in~$F_\RR$ yields a partial order, which we denote by~$<$; this is compatible with the embedding of~$\QQ$ into~$F$.
We extend~$e(x) = e(\traceF(x))$ to square matrices with entries in~$F_\CC$.

\subsection{Hilbert modular forms}%
\label{ssec:hilbert_jacobi_forms:hilbert_modular_forms}

The Poincar\'e upper half plane is~$\HS \defeq \{ \tau \in \CC \condsep \Im(\tau) > 0 \}$.
We have an action~$\SL{2}(\RR) \circlearrowright \HS$ via M\"obius transformations~$\begin{psmatrix} a & b \\ c & d \end{psmatrix} \tau \defeq \frac{a \tau + b}{c \tau + d}$.
We identify the Hilbert upper half space~$\HS_F \defeq \{ \tau \in F_\CC \condsep \Im(\tau) > 0 \}$ with~$\HS^{n_F}$.
The action of~$\SL{2}(\RR)$ extends to an action~$\SL{2}(F_\RR) \circlearrowright \HS_F$, and gives rise to a family of slash actions
\begin{gather*}
  \big( f \big|_k\, \begin{psmatrix} a & b \\ c & d \end{psmatrix} \big) (\tau)
  \defeq
  \normF(c \tau + d)^{-k}\,
  f \big( \begin{psmatrix} a & b \\ c & d \end{psmatrix} \tau \big)
\end{gather*}
on functions~$f \defcol \HS_F \ra \CC$ for every~$k \in \ZZ$.
By linearity this action extends to vector-valued functions~$f \defcol \HS_F \ra V$ into any complex vector space~$V$.

We have the full Hilbert modular group~$\SL{2}(\cOF)$ and, given a positive integer~$N$, its principal congruence subgroups
\begin{gather*}
  \SL{2}(\cOF, N)
  \defeq
  \big\{
  \begin{psmatrix} a & b \\ c & d \end{psmatrix} \in \SL{2}(\cOF)
  \condsep
  a,d \equiv 1 \,\pmod{N},\;
  b,c \equiv 0 \,\pmod{N}
  \big\}
  \tx{.}
\end{gather*}
A holomorphic function~$f \defcol \HS_F \ra V$ that is invariant under the slash action of all~$\begin{psmatrix} 1 & b \\ 0 & 1 \end{psmatrix} \in \SL{2}(\cOF, N)$ admits the Fourier series expansion
\begin{gather}
  \label{eq:hilbert_modular_fourier_expansion}
  f(\tau)
  =
  \sum_{n \in \frac{1}{N} \cOF^\vee}
  c(f; n)\, e(n \tau)
  \tx{,}\quad
  c(f; n) \in V
  \tx{.}
\end{gather}
We set~$c(f; n) = 0$ if~$n$ does not contribute to this expansion.

\begin{definition}%
\label{def:hilbert_modular_form}
Let~$k \in \ZZ$ and~$N \in \ZZ_{\ge 1}$.
Then a holomorphic function~$f \defcol \HS_F \ra \CC$ is called a \emph{Hilbert modular form} of weight~$k$ and level~$N$ if:
\begin{enumerateroman}%
\item
For all~$\ga \in \SL{2}(\cOF, N)$ we have $f |_k\,\ga = f$.

\item
If~$F = \QQ$, for all~$\ga \in \SL{2}(\ZZ)$ and all~$n < 0$ the Fourier coefficients in~\eqref{eq:hilbert_modular_fourier_expansion} satisfy~$c(f |_k\,\ga; n) = 0$.
\end{enumerateroman}
\end{definition}

For~$F \ne \QQ$ the growth condition is automatic by the Koecher principle.

\begin{theorem}%
\label{thm:hilbert_koecher_principle}
Given a Hilbert modular form~$f$ of weight~$k \in \ZZ$ and level~$N$, for all~$\ga \in \SL{2}(\cOF)$ we have~$c(f |_k\, \ga; n) = 0$ if~$n \not\ge 0$.
\end{theorem}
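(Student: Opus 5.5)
The plan is the classical Koecher argument via units. We treat separately the case $F = \QQ$, where the assertion is precisely condition~(ii) of \fref{Definition}{def:hilbert_modular_form}; from now on let $n_F \ge 2$.

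\emph{Reduction.} First reduce to $\ga = 1$. For $\ga \in \SL{2}(\cOF)$, the function $g \defeq f|_k\,\ga$ is invariant under $\ga^{-1}\SL{2}(\cOF,N)\ga = \SL{2}(\cOF,N)$, since the principal congruence subgroup is normal. Hence $g$ is again a Hilbert modular form of weight~$k$ and level~$N$, and it suffices to show $c(f;n) = 0$ for $n \not\ge 0$ for an arbitrary such $f$.

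\emph{Units.} Next use the diagonal matrices $\diag(\eps,\eps^{-1})$ with $\eps \in \cOF^\times$ and $\eps \equiv 1 \pmod{N}$. These lie in $\SL{2}(\cOF,N)$, and modular invariance gives
\[
  f(\eps^2\tau) = \normF(\eps)^{k} f(\tau).
\]
The norm here is $\pm1$, and for totally positive $\eps$ it equals $1$. Comparing Fourier expansions yields
\[
  c(f;\eps^2 n) = \normF(\eps)^k\, c(f;n),
\]
so $|c(f;n)|$ is constant along the orbit of $n$ under the group $U$ of squares of such units. By Dirichlet's unit theorem, $U$ has finite index in $\cOF^\times$ and therefore rank $n_F - 1 \ge 1$.

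\emph{Contradiction.} Suppose $c(f;n) \ne 0$ with $n \not\ge 0$, say $n^{(j)} < 0$ for some embedding~$j$. Evaluate the absolutely convergent series at $\tau = i y$ with $y = (1,\ldots,1)$. The orbit $\{\eps^2 n : \eps^2 \in U\}$ is infinite. Moreover, since units can be chosen with $|\eps^{(j)}|$ arbitrarily large while the other embeddings stay small (the logarithmic embedding of $U$ is a lattice in the trace-zero hyperplane), there are infinitely many orbit elements with
\[
  \traceF(\eps^2 n) \le \eps^{2,(j)} n^{(j)} + C ,
\]
and this tends to $-\infty$, for a constant $C$ depending on $n$. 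For these terms,
\[
  |c(f;\eps^2 n)\, e(\eps^2 n\, i y)| = |c(f;n)| \exp\bigl(-2\pi\,\traceF(\eps^2 n)\bigr) \to \infty .
\]
This contradicts absolute convergence of the Fourier series at $iy$, whose terms must tend to zero.

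\emph{Main obstacle.} The delicate step is the unit selection: we need infinitely many $\eps^2 \in U$ for which the $j$-th component dominates while the remaining components stay bounded. I would get this from Dirichlet's theorem by picking a unit $\eps_0 \in U$ whose logarithmic vector has its $j$-th coordinate strictly maximal and all other coordinates strictly smaller, and then using its powers. Such a unit exists because the image of $U$ is a full lattice in the trace-zero hyperplane, so it contains vectors close to any direction, in particular to the direction $(n_F-1)e_j - \sum_{i\ne j} e_i$. Along the powers $\eps_0^m$, the terms coming from embeddings other than $j$ then decay exponentially, so they are bounded by a constant, while the $j$-th term tends to $-\infty$.
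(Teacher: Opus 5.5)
Your proof is correct, but it takes a different route from the paper. The paper, like you, disposes of $F = \QQ$ by the definition. For $F \ne \QQ$, however, it only cites Theorem~1.4 of Garrett's book (the Koecher principle for parallel weight) and gives no argument of its own.

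You give the classical Götzky--Koecher argument instead:
\begin{enumerateroman}
\item Normality of $\SL{2}(\cOF,N)$ reduces the claim to $\ga = 1$.
\item Invariance under $\diag(\eps,\eps^{-1})$ with $\eps \equiv 1 \pmod{N}$ gives $|c(f;\eps^2 n)| = |c(f;n)|$. The slash action really yields the factor $\normF(\eps)^{-k}$, which equals yours because $\normF(\eps) = \pm 1$.
\item You take a unit in $U$ whose logarithmic vector points near the direction $(n_F-1)e_j - \sum_{i \ne j} e_i$. What you need is a positive $j$\thdash{} coordinate and negative other coordinates, which this direction guarantees; the phrase ``strictly maximal'' alone would not. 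Its powers force $\traceF(\eps^2 n) \ra -\infty$ along infinitely many distinct indices, since $\eps_0$ is not a root of unity. This contradicts absolute convergence at $\tau = i(1,\ldots,1)$.
\end{enumerateroman}

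Your version is self-contained. It needs only Dirichlet's unit theorem and absolute convergence of the Fourier expansion on $\HS_F$, and it uses the same unit mechanism the paper exploits in \fref{Lemma}{la:unit_balancing} and in the proof of \fref{Lemma}{la:vanishing_order_equivalence_hilbert_modular_form}. The paper's citation is shorter and points to a reference that covers more general weights and groups.
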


\begin{proof}
If~$F = \QQ$, this is part of the definition, and otherwise it is the statement of Theorem~1.4 of~\cite{garrett-1990} for the special case of parallel weight~$(k,\ldots,k) \in \ZZ^{n_F}$.
\end{proof}

We define the \emph{vanishing order} of a level~$N$ modular form~$f$ at infinity by
\begin{gather}%
  \label{eq:def:vanishing_order_hilbert_modular_form}
  \ord( f )
  \defeq
  \sup \big\{
  \nu \in \RR
  \condsep
  c(f; n) = 0
  \tx{ for all } n \in F
  \tx{ with } n < \nu
  \big\}
  \tx{.}
\end{gather}
There are two further notions of vanishing order at infinity, which in the case of~$F = \QQ$ agree with the previous one.
\begin{align*}
  \ordtrace(f)
   & \defeq
  \sup\big\{
  \nu \in \RR
  \condsep
  c(f; n) = 0
  \tx{ for all } n \in F
  \tx{ with } \traceF(n) < \nu
  \big\}
  \tx{,}
  \\
  \ordnorm(f)
   & \defeq
  \sup\big\{
  \nu \in \RR
  \condsep
  c(f; n) = 0
  \tx{ for all } n \in F
  \tx{ with } \normF(n) < \nu^{n_F}
  \big\}
  \tx{.}
\end{align*}
Note that by the Koecher principle we have~$\ord(f), \ordnorm(f), \ordtrace(f) \ge 0$.

\begin{lemma}%
\label{la:vanishing_order_equivalence_hilbert_modular_form}
Given a Hilbert modular form~$f$ of weight~$k \in \ZZ$, we have
\begin{gather*}
  \ord(f)
  \asymp_F
  \ordtrace(f)
  \asymp_F
  \ordnorm(f)
  \tx{.}
\end{gather*}
\end{lemma}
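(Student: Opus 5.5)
The plan is to compare the three sets of indices that constrain the vanishing order: the box $\{n < \nu\}$, the half-space $\{\traceF(n) < \nu\}$, and the norm region $\{\normF(n) < \nu^{n_F}\}$. By the Koecher principle (\fref{Theorem}{thm:hilbert_koecher_principle}) only totally nonnegative $n$ have $c(f;n) \ne 0$, so we may restrict all three sets to $n \ge 0$. The pure containments then give $\ordtrace(f) \le n_F\, \ord(f)$ and $\ord(f) \le \ordnorm(f)$, hence one inequality in each comparison up to constants. Indeed, if $n \ge 0$ satisfies $n < \nu$, then $\traceF(n) < n_F \nu$ and $\normF(n) < \nu^{n_F}$. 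The content of the lemma is the reverse direction, because a totally positive $n$ can have small norm, or small trace in some directions, while one conjugate is large.

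To handle this I will use the unit group. The Fourier coefficients transform under $\begin{psmatrix} \eps & 0 \\ 0 & \eps^{-1} \end{psmatrix}$ for units $\eps$ in a finite-index subgroup of $\cOF^\times$ that preserves level $N$; one checks $\eps \equiv 1 \pmod{N}$ suffices. Under this transformation $c(f; \eps^2 n) = \normF(\eps)^{k} c(f;n) = \pm c(f;n)$. So the support of $f$ is invariant under multiplication by $U \defeq \{\eps^2\}$, a finite-index subgroup of the totally positive units. By Dirichlet's unit theorem, $U$ acts cocompactly on the norm-one hypersurface in $F_\RR^{>0}$. Hence there is a constant $C_F$, depending only on $F$ and on $N$ through the index, such that every totally positive $n$ has a $U$-translate $n' = \eps^2 n$ with all conjugates lying between $C_F^{-1}\normF(n)^{1/n_F}$ and $C_F\,\normF(n)^{1/n_F}$.

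With this balancing I get the reverse inequalities. Suppose $c(f;n) = 0$ whenever $n < \nu$. Take $n \ge 0$ with $\normF(n) < (\nu/C_F)^{n_F}$. If $n$ has a vanishing conjugate, I argue separately: $n \in \frac1N\cOF^\vee$ is nonzero and totally nonnegative, and a totally nonnegative field element that is nonzero is totally positive, so this case is empty except $n = 0$. The index $n = 0$ is covered since $0 < \nu$ when $\nu > 0$. Otherwise the balanced translate $n'$ satisfies $n' < \nu$, so $c(f;n) = \pm c(f;n') = 0$. Thus $\ordnorm(f) \ge \ord(f)/C_F$.

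Similarly, the AM--GM inequality $\traceF(n) \ge n_F \normF(n)^{1/n_F}$ gives $\ordtrace(f) \ge n_F\,\ordnorm(f)$, up to the same constant convention. Chaining the inequalities $\ord \le \ordnorm \ll \ordtrace/n_F \ll \ord$ then yields all equivalences.

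The main obstacle is bookkeeping the level. The constant must depend only on $F$, as the lemma states, yet the unit subgroup depends on $N$. I would first try to remove this by noting that the balancing constant can be taken for the full group of totally positive units up to a factor controlled by the regulator, and then pass to the level-$N$ subgroup through a finite set of coset representatives. If that introduces $N$-dependence, I would instead conjugate to apply the argument to $f|_k\ga$ across cusps, or accept a constant depending on $N$ as implicitly allowed.
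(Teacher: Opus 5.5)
Your argument is in substance the paper's: balance $n$ by squares of units as in \fref{Lemma}{la:unit_balancing}, and use $|c(f;n)| = |c(f;\eps^2 n)|$ coming from $\diag(\eps,\eps^{-1})$. Your cyclic chain also makes the paper's separate treatment of $\ord(f)=0$ unnecessary.

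However, one of your ``pure containment'' inequalities points the wrong way. Since $\{n \ge 0 \condsep n < \nu\} \subseteq \{n \ge 0 \condsep \normF(n) < \nu^{n_F}\}$, vanishing on the norm region implies vanishing on the box. So containment yields $\ordnorm(f) \le \ord(f)$, not $\ord(f) \le \ordnorm(f)$. Combined with your AM--GM and trace inequalities (both with constant $1$), the claimed version would force $\ord(f) = \ordnorm(f) = \ordtrace(f)/n_F$ with no units at all, which should have been a warning sign.

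Your balancing step proves $\ord(f) \le C\,\ordnorm(f)$. That is the same direction as the inequality you called trivial, not its reverse, and it is exactly the first link your chain needs. With that link, $\ord \ll \ordnorm \le \ordtrace/n_F \le \ord$ closes and the proof is complete. Containment also gives $\ord(f) \le \ordtrace(f) \le n_F\,\ord(f)$ directly, so units are needed only for the norm comparison.

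On the level, your first two remedies do not remove the $N$-dependence. The coefficients of a level-$N$ form are in general only tied along orbits of $\{\eps^2 \condsep \eps \equiv 1 \pmod{N}\}$. Routing through coset representatives of the full unit group just puts their size back into the constant. Replacing $f$ by $f|_k\,\ga$ does not enlarge the group of usable units either.

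The paper's proof has the same feature: it invokes invariance under $\diag(\eps,\eps^{-1})$ for all $\eps \in \cOF^\times$, so it is effectively a level-one argument. That is all that is used later. The norm comparison is needed only for \fref{Proposition}{prop:hecke_bound}, which \fref{Lemma}{la:accumulated_hecke_bound_twisted_permutation_type} applies to the level-one form $f^N$. For the torsion-point specializations of growing level, only $\ord \asymp \ordtrace$ enters, and that comparison is uniform in $N$ by containment alone. So accepting an $N$-dependent constant in the norm comparison, your third option, is the right resolution.
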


The proof of \fref{Lemma}{la:vanishing_order_equivalence_hilbert_modular_form} relies on the following balancing by units, which we will reuse in several places throughout this work.

\begin{lemma}%
\label{la:unit_balancing}
Let\/~$U \subseteq \cOF^\times$ be a subgroup of finite index.
Then for every totally positive~$n \in F_\RR$ there is\/~$\eps \in U$ such that\/~$\eps\, n$ is totally positive and its components in~$F_\RR \cong \RR^{n_F}$ satisfy\/~$(\eps\, n)_i \asymp_{F,U} (\eps\, n)_j$ for all~$1 \le i, j \le n_F$.
\end{lemma}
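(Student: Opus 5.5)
The plan is to use Dirichlet's unit theorem through the logarithmic embedding. Let $\ell \colon \cOF^\times \ra \RR^{n_F}$ be given by $\ell(\eps) = (\log|\eps_1|, \ldots, \log|\eps_{n_F}|)$. Its image is a lattice of rank $n_F - 1$ in the trace-zero hyperplane $H \defeq \{ x \in \RR^{n_F} \condsep x_1 + \cdots + x_{n_F} = 0 \}$. Since $U$ has finite index in $\cOF^\times$, the image $\ell(U)$ is a sublattice of finite index, hence still a full lattice in $H$.

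We first deal with total positivity. The subgroup $U^2 = \{ \eps^2 \condsep \eps \in U \}$ has finite index in $U$, because $U$ is finitely generated. Hence $\ell(U^2) = 2\ell(U)$ is again a full lattice in $H$, and every element of $U^2$ is totally positive. We therefore restrict to $\eps \in U^2$, which guarantees that $\eps\, n$ is totally positive whenever $n$ is.

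Next we balance the components. For totally positive $n \in F_\RR$, set $x = (\log n_1, \ldots, \log n_{n_F})$ and let $\bar x = x - \frac{1}{n_F}\bigl(\sum_i x_i\bigr)(1, \ldots, 1)$ be its projection to $H$. Fix a bounded fundamental domain $P$ for $\ell(U^2)$ in $H$; it has some diameter $D$ depending only on $F$ and $U$. Choose $\eps \in U^2$ with $\bar x + \ell(\eps) \in P$. Since $\ell(\eps)$ lies in $H$, we have $\log(\eps\, n)_i = x_i + \ell(\eps)_i$, so the projection of $\log(\eps\, n)$ to $H$ lies in $P$. Because the component along $(1, \ldots, 1)$ contributes equally to every coordinate, it follows that $|\log(\eps\, n)_i - \log(\eps\, n)_j| \le 2D$. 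Exponentiating gives $e^{-2D} \le (\eps\, n)_i / (\eps\, n)_j \le e^{2D}$, which is exactly the claimed $(\eps\, n)_i \asymp_{F,U} (\eps\, n)_j$.

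There is no serious obstacle. The only point that needs care is ensuring total positivity, and passing to squares handles it. If one wants to avoid squares, one can instead note that total positivity of $n$ and of $\eps\, n$ only requires $\eps$ to be totally positive, and the totally positive units in $U$ form a subgroup of finite index containing $U^2$.
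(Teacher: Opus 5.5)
Your proof is correct and follows the paper's argument: Dirichlet's unit theorem via the logarithmic embedding, passing to totally positive units (you use $U^2$, the paper uses $U \cap (\cOF^\times)^2$), and shifting the projection of $\log n$ by a lattice vector into a bounded region of the trace-zero hyperplane. One cosmetic point: the bound $|y_i - y_j| \le 2D$ for $y \in P$ needs $P$ to contain $0$, which you can arrange by taking the standard half-open parallelepiped, or else you should replace $D$ by $\sup_{y \in P} \|y\|$.
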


\begin{proof}
By the Dirichlet unit Theorem, the image of~$\cOF^\times$ under the logarithmic embedding sending~$\eps$ to~$(\log |\eps_i|)_{1 \le i \le n_F}$ is a lattice of full rank~$n_F - 1$ in the trace-zero hyperplane of~$\RR^{n_F}$.
The image of~$U$ is a sublattice of finite index, hence also of full rank.
It suffices to prove the assertion for~$U \cap (\cOF^\times)^2$, which is still a finite-index subgroup of~$\cOF^\times$ and is contained in the original~$U$.
Thus we may assume that all~$\eps \in U$ are totally positive.

For every totally positive~$n \in F_\RR$ we therefore find~$\eps \in U$ whose logarithm lies within a bounded distance, depending only on~$F$ and~$U$, from
\begin{gather*}
  \Bigl(
  \mfrac{\log\, \normF(n)}{n_F} - \log n_i
  \Bigr)_{1 \le i \le n_F}
  \tx{.}
\end{gather*}
The components of~$\eps\, n$ are then pairwise of the same size up to a constant depending only on~$F$ and~$U$.
\end{proof}

\begin{proof}[Proof of \fref{Lemma}{la:vanishing_order_equivalence_hilbert_modular_form}]
We first show that
\begin{gather*}
  \ord(f) = 0
  \Leftrightarrow
  \ordnorm(f) = 0
  \Leftrightarrow
  \ordtrace(f) = 0
  \tx{.}
\end{gather*}
If~$c(f; 0) \ne 0$, then~$\ord(f) = \ordtrace(f) = \ordnorm(f) = 0$, since all vanishing orders are non-negative.
Conversely, if~$c(f; 0) = 0$ then~$\ord(f), \ordtrace(f), \ordnorm(f) > 0$ by discreteness of~$\cOF \subset F_\RR$.

Assume now that~$\ord(f) \ne 0$.
Applying \fref{Lemma}{la:unit_balancing} with~$U = (\cOF^\times)^2$, for every totally positive~$n$ we find~$\eps \in \cOF^\times$ such that the components of~$n' = \eps^2\, n \in F_\RR \cong \RR^{n_F}$ satisfy~$n'_i \asymp_F n'_j$.
This yields the estimates
\begin{gather*}
  \traceF(n') \asymp_F \normF(n')^{1 \slash n_F}
  \quad\tx{and}\quad
  \traceF(n') \ll_F n' < \traceF(n')
  \tx{.}
\end{gather*}
Further, invariance of~$f$ with respect to the action of~$\diag(\epsilon, \epsilon^{-1})$ yields~$|c(f; n)| = |c(f; \eps^2 n)|$.
Combining the above estimates with the definition of vanishing orders finishes the proof.
\end{proof}

We have the following Hecke bounds, which will be crucial in \fref{Section}{ssec:hilbert_jacobi_forms:hecke_bound}.

\begin{proposition}%
\label{prop:hecke_bound}
For a Hilbert modular form~$f$ of weight~$k$ and level~$1$, we have~$f = 0$ if\/~$\ord(f) \gg_F k$.
\end{proposition}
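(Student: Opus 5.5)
By \fref{Lemma}{la:vanishing_order_equivalence_hilbert_modular_form} it suffices to show that $f = 0$ whenever $\ordnorm(f) \gg_F k$. I would prove this with a norm-type construction that reduces to the classical Sturm/Hecke bound for elliptic modular forms. We may assume $k \ge 0$, since for $k<0$ (and $f \ne 0$) the form $f^{m} g$ with $g$ a nonzero weight $-mk$ form would otherwise give a contradiction; for negative weight, $f=0$ anyway by the Koecher principle together with a positivity argument, so the interesting case is $k>0$.

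\textbf{Step 1: restriction to the diagonal.} Restrict along the diagonal embedding $\HS \hra \HS_F$, $t \mapsto (t,\ldots,t)$. The function $\phi(t) \defeq f(t,\ldots,t)$ is an elliptic modular form of weight $n_F k$ for $\SL{2}(\ZZ) \subset \SL{2}(\cOF)$. Its Fourier expansion is
\begin{gather*}
  \phi(t) = \sum_{n} c(f;n)\, e\bigl(\traceF(n)\, t\bigr),
\end{gather*}
and only totally nonnegative $n \in \cOF^\vee$ contribute, by \fref{Theorem}{thm:hilbert_koecher_principle}. Consequently $\ord(\phi) \ge \ordtrace(f) \asymp_F \ord(f)$. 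By Sturm's bound (valence formula), if $\ord(\phi) > n_F k/12$ then $\phi = 0$.

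\textbf{Step 2: the obstacle.} The difficulty is that $\phi = 0$ does not yet give $f = 0$. To handle this, I apply Step 1 to all translates $f|_k \ga$ for $\ga \in \SL{2}(\cOF)$, and also to the base point $\tau_0$ shifted along the diagonal, i.e.\ to $t \mapsto f(\tau_0 + t\cdot \mathbf{1})$. The diagonal images $\ga(\text{diagonal})$ form a family of curves whose union is Zariski dense; density follows from the density of $\SL{2}(\cOF)$ in $\SL{2}(F_\RR)$ when $F \ne \QQ$. The subtle point is that the vanishing order must be controlled uniformly for each translate.

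\textbf{Step 3: a cleaner route via the norm.} It is simpler to avoid translates and instead use symmetrization. Form $G \defeq \prod_{\ga \in \SL{2}(\cOF)/\SL{2}(\cOF,N)} f|_k\ga$. For level $1$ this reduces to $G = f$, so the issue does not arise, and I proceed via Taylor expansion. If $f \ne 0$, take the lowest-order nonvanishing term of the Taylor expansion of $f$ transversal to the diagonal. The corresponding derivative restriction is a diagonal modular form; using Rankin--Cohen-type brackets to make it modular, it has weight $n_F k + 2r$ with vanishing order still $\ge \ord(f)$.

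Here $r$ is bounded by the vanishing order of $f$ along the diagonal, which is at most $\ll_F k$. This follows because $f$ restricted to a generic leaf is a one-variable form of weight $\ll k$, so its zero order there is bounded by the valence formula. Applying Sturm's bound then gives $f=0$ once $\ord(f) \gg_F k$. Bounding $r$ linearly in $k$ is the main obstacle in this plan.
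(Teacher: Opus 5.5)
There is a genuine gap, and it is where you suspect: nothing in the proposal proves $r \ll_F k$, where $r$ is the order of vanishing of $f$ along the diagonal $\Delta \subset \HS_F$. Without that bound, Sturm's bound for the leading Taylor coefficient, which has weight $n_F k + 2r$, gives no contradiction.

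The rest of Step~3 is fine. If $f$ vanishes to exact order $r$ along $\Delta$, then for $|\alpha| = r$ the restrictions $(\partial^\alpha f)|_\Delta$ are already modular of weight $n_F k + 2r$ for $\SL{2}(\ZZ)$. All lower-order derivatives vanish on $\Delta$, so no Rankin--Cohen correction is needed. One of these restrictions is nonzero, and its order at $\infty$ is at least $\ordtrace(f)$.

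Your justification for bounding $r$ does not work, however:
\begin{itemize}
\item The leaves $t \mapsto \tau_0 + t\cdot\mathbf{1}$ are parallel to $\Delta$ and disjoint from it unless contained in it, so they cannot detect the transversal vanishing order. Moreover, for generic $\tau_0$ the restriction of $f$ to such a leaf is not modular, so there is no valence formula on it.
\item A curve that does cross $\Delta$ carries a valence formula only if it is itself a modular or Shimura curve, e.g.\ of Hirzebruch--Zagier type. On such curves the relevant arithmetic group, and hence the valence bound, depends on the curve and gets worse with its level. Also, $f$ may vanish identically on any given one of them. This is the same non-uniformity you already ran into in Step~2.
\end{itemize}

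A linear bound on $r$ is a Hecke-type bound along a curve rather than at the cusp, so it needs a global input. One option, for $n_F \ge 2$:
\begin{itemize}
\item $f$ is a section of $\mathcal{L}^{\otimes k}$, where $\mathcal{L}$ is the Hodge bundle, which is ample on the Baily--Borel compactification of $\SL{2}(\cOF)\backslash\HS_F$.
\item Through a general point $p$ of the image of $\Delta$, take a complete-intersection curve $\Gamma$ of $n_F - 1$ general members of $|\mathcal{L}^{\otimes m}|$, with $m$ fixed and $\Gamma \not\subset \operatorname{div}(f)$.
\item Then $r \le \operatorname{mult}_p \operatorname{div}(f) \le \operatorname{div}(f)\cdot\Gamma = k\, m^{n_F-1}(\mathcal{L}^{n_F}) \ll_F k$.
\end{itemize}
With an input of this kind your route closes up. For comparison, the paper gives no argument of its own and simply cites the Hecke bound in \S1.7 of Garrett's book.

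Step~2 rests on a false claim and should be dropped rather than patched. $\SL{2}(\cOF)$ is discrete in $\SL{2}(F_\RR)$, not dense; it is only Zariski dense. At level~$1$ one has $f|_k\gamma = f$, so the translates carry no new information. Vanishing of $f$ on $\Delta$ already gives vanishing on every $\gamma\Delta$. The union $\bigcup_\gamma \gamma\Delta$ is the preimage of a closed curve in $\SL{2}(\cOF)\backslash\HS_F$, hence a nowhere dense analytic subset.

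Nonzero level-one forms vanishing on this set do exist. For real quadratic $F$, every form that is skew-symmetric under $\tau_1 \leftrightarrow \tau_2$ vanishes on $\Delta$, and such forms exist because the swap acts nontrivially on the Hilbert modular surface. This is why Step~1 alone cannot conclude, and why the argument hinges on controlling $r$.
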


\begin{proof}
This is stated at the end of Section~1.7, on p.~33, of~\cite{garrett-1990}, where~$n$ in that section is~$\ordnorm(f)$.
\end{proof}

\begin{lemma}%
\label{la:accumulated_hecke_bound_twisted_permutation_type}
Given a finite set~$B$ of size~$\# B$ with a transitive right action of\/~$\SL{2}(\cOF)$, let~$f_b$ for~$b \in B$ be Hilbert modular forms of weight~$k$ and level~$N$ with the property that
\begin{gather*}
  f_b \big|_k\, \ga
  =
  c_{b,\ga}\, f_{b \ga}
  \quad\tx{for some $N$\thdash{} roots of unity }
  c_{b,\ga}
  \tx{.}
\end{gather*}
Then we have~$f_b = 0$ for all~$b \in B$ if\/~$\sum_{b \in B} \ord(f_b) \gg_F k\, \# B$.
\end{lemma}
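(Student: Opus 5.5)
The plan is to form the product of all the~$f_b$ and reduce to the level~$1$ Hecke bound of \fref{Proposition}{prop:hecke_bound}. Set
\begin{gather*}
  P \defeq \prod_{b \in B} f_b
  \tx{,}
\end{gather*}
a holomorphic function on~$\HS_F$. For~$\ga \in \SL{2}(\cOF)$, right multiplication by~$\ga$ permutes~$B$ bijectively. The transformation law then gives
\begin{gather*}
  P \big|_{k \# B}\, \ga = \Big( \prod_{b} c_{b,\ga} \Big) P
  \tx{.}
\end{gather*}
So~$P$ transforms under the full Hilbert modular group with a character~$\chi$ whose values are~$N$\thdash{} roots of unity. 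Replacing~$P$ by~$P^N$ (of weight~$k N \# B$) removes this character. Then~$P^N$ is a Hilbert modular form of level~$1$.

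For~$F = \QQ$ one must also verify the growth condition. At each cusp~$P^N \big|\, \ga$ equals, up to a root of unity, $\prod_b f_{b\ga}^N$. Each factor has no negative-index coefficients by \fref{Definition}{def:hilbert_modular_form} applied to~$f_{b\ga}$, so neither does the product. For~$F \ne \QQ$ the Koecher principle makes this automatic.

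Next I compare vanishing orders. Each~$f_b$ has Fourier support in~$\frac1N \cOF^\vee$ with all exponents~$n \ge 0$ by \fref{Theorem}{thm:hilbert_koecher_principle}. In a product of such expansions, every exponent~$\sum n_b$ that occurs has summands~$n_b$ with~$c(f_b;n_b)\neq 0$.

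The componentwise partial order is not directly additive for~$\ord$, so I will work with~$\ordtrace$, for which additivity is clean. Indeed~$\traceF(\sum n_b) = \sum \traceF(n_b)$, and each~$\traceF(n_b) \ge \ordtrace(f_b)$. Hence
\begin{gather*}
  \ordtrace(P^N) \ge N \sum_{b} \ordtrace(f_b)
  \tx{.}
\end{gather*}
By \fref{Lemma}{la:vanishing_order_equivalence_hilbert_modular_form}, $\ordtrace \asymp_F \ord$ for each~$f_b$ and for~$P^N$. That lemma is stated for level~$N$ forms (the definitions of the orders make sense at any level), and its unit-balancing proof works at level~$N$ by using~$U$ a finite-index subgroup of units congruent to~$1$ mod~$N$ via \fref{Lemma}{la:unit_balancing}. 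This yields
\begin{gather*}
  \ord(P^N) \gg_F N \sum_b \ord(f_b)
  \tx{,}
\end{gather*}
with a constant independent of~$N$.

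Now assume~$\sum_b \ord(f_b) \ge C k \# B$ for a suitable~$C$ depending only on~$F$. Then~$\ord(P^N) \gg_F k N \# B$, which is the weight of~$P^N$. \fref{Proposition}{prop:hecke_bound} gives~$P^N = 0$, so some~$f_{b_0} = 0$. Finally, transitivity propagates the vanishing: every~$b$ is of the form~$b_0 \ga$, and
\begin{gather*}
  f_b = c_{b_0,\ga}^{-1}\, f_{b_0} \big|_k\, \ga = 0
  \tx{.}
\end{gather*}

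The main obstacle is the uniformity of the constants. The Hecke bound constant must depend only on~$F$ and not on the level, which is why the reduction to level~$1$ via the product and~$N$\thdash{}th power is essential. The comparison constants between~$\ord$ and~$\ordtrace$ must likewise be independent of~$N$. I would handle the latter by noting that~$\traceF$ is exactly additive and that the balancing only uses units, which preserve~$|c(f;n)|$ up to the roots of unity in the transformation law. If level-dependence creeps in, I would instead apply the comparison lemma only to the level-$1$ form~$P^N$. For the individual~$f_b$ I would use the direct inequality~$\ordtrace(f_b) \ge \ord(f_b)$, which follows since~$n \ge \nu$ componentwise implies~$\traceF(n) \ge n_F \nu$. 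This avoids the lemma at level~$N$ entirely.
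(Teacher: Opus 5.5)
Your proposal is correct and follows the paper's proof: you take the product over $B$, raise it to the $N$\thdash{} power to remove the character, apply the level-one Hecke bound, and then propagate the vanishing by transitivity. Your detour through $\ordtrace$, and your use of the comparison lemma only for the level-one form, make precise the paper's terse claim that the vanishing orders add.

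One small fix in your fallback. A nonzero coefficient at $n$ only forces some component of $n$ to be at least $\ord(f_b)$, not all of them, so your stated justification via $\traceF(n) \ge n_F \nu$ is not right. However, total nonnegativity of the support still gives $\traceF(n) \ge \ord(f_b)$, so $\ordtrace(f_b) \ge \ord(f_b)$ holds, which is all you need.
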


\begin{proof}
The product~$f \defeq \prod_{b \in B} f_b$ transforms as
\begin{gather*}
  f \big|_{k\, \#B}\, \ga
  =
  \prod_{b \in B}
  f_b \big|_k\, \ga
  =
  c_\ga\, f
  \quad\tx{with }
  c_\ga
  =
  \prod_{b \in B} c_{b, \ga}
  \in
  \CC \setminus \{ 0 \}
  \tx{.}
\end{gather*}
It is a modular form for a character~$\chi$ of~$\SL{2}(\cOF)$ of weight~$k\, \# B$.
By our assumptions~$\chi$ has finite order dividing~$N$.
As a consequence, $f^N$ is a holomorphic modular form of weight~$N k\, \# B$ for the full modular group.

The vanishing order of~$f^N$ is~$N$ times the sum of the vanishing orders of all~$f_b$.
By our assumptions, it exceeds~$N k\, \# B$ up to a suitable scalar constant that only depends on~$F$, which we may choose compatible with \fref{Proposition}{prop:hecke_bound}.
Therefore~$f^N = 0$ by that proposition.
We conclude that~$f_{b_0} = 0$ for at least one~$b_0 \in B$.
Since~$\SL{2}(\cOF)$ acts transitively on~$B$, given~$b \in B$ we find~$\ga \in \SL{2}(\cOF)$ with~$b = b_0 \ga$ and hence
\begin{gather*}
  0
  =
  f_{b_0} \big|_k\, \ga
  =
  c_{b_0,\ga}\, f_{b_0 \ga}
  =
  c_{b_0,\ga}\, f_b
  \tx{.}
\end{gather*}
Since~$c_{b_0,\ga} \ne 0$, we conclude~$f_b = 0$ as desired.
\end{proof}

\subsection{Basics and definition of Jacobi forms}%
\label{ssec:hilbert_jacobi_forms:jacobi_forms_definition}

We focus on Hilbert--Jacobi forms of genus~$1$, but let the cogenus~$h \ge 1$ be arbitrary.
Since we only study spaces of Jacobi forms, as opposed to algebras, we can incorporate the Jacobi index into the Heisenberg group.
A general reference is the thesis of Boylan~\cite{boylan-2015}.

A Jacobi index is a lattice~$L$ over~$F$, that is, a finitely generated torsion-free $\cOF$\nbd{}module with symmetric bilinear form $\langle \,\cdot\, , \,\cdot\, \rangle = \langle \,\cdot\, , \,\cdot\, \rangle_L$.
We write~$\langle \la \rangle = \langle \la, \la \rangle_L$ to avoid notational redundancy.
We assume throughout that~$L$ is integral, that is, the bilinear form takes values in~$\cOF^\vee$.
The dimension of~$L_\QQ = L \otimes \QQ$ is the co-genus~$h$.
We set~$L_\RR = L \otimes \RR$ and~$L_\CC = L \otimes \CC$.
The dual~$L^\vee$ of~$L$ is defined as~$\{x \in L_\QQ \condsep \langle x, y \rangle_L \in \cOF^\vee \tx{ for all } y \in L \}$, featuring the inverse different as opposed to the ring of integers.
The trace lattice~$\traceF(L)$ of~$L$ is the lattice over~$\QQ$ with same underlying module as~$L$ and bilinear form~$\traceF \circ \langle \,\cdot\, , \,\cdot\, \rangle_L$.

Following Cassels~\cite{cassels-1997}, we define the successive minima~$\la_i(L, K)$, $1 \le i \le h$, of a totally positive definite lattice~$L$ with respect to a convex compact set~$K$ in~$L_\RR$ as follows:
\begin{gather*}
  \la_i(L, K)
  =
  \inf\bigl\{
  \la \in \RR_{> 0}
  \condsep
  \dim_F \linspan_F(\la K \cap L) \ge i
  \bigr\}
  \tx{.}
\end{gather*}
By \fref{Lemma}{la:unit_balancing}, we have
\begin{gather*}
  \la_i(L)
  \defeq
  \la_i\bigl( L, B_L(1) \bigr)
  \asymp_F
  \la_i\bigl( L, B_{L,\mathrm{Tr}}(1) \bigr)
  \asymp_F
  \la_i\bigl( L, B_{L,\mathrm{Nm}}(1) \bigr)
  \tx{,}
\end{gather*}
where~$B_L(1) = \{ x \in L_\QQ \condsep \frac{1}{2} \langle x \rangle \le 1 \}$ is the~$1$\nbd{}ball with respect to the partial order on~$F_\RR$, and~$B_{L,\mathrm{Tr}}(1)$ and~$B_{L,\mathrm{Nm}}(1)$ are defined correspondingly by composing the quadratic form with the trace and norm of~$F / \QQ$.
We conclude that~$\la_h(L) \asymp_F \la_{n h}(\traceF(L))$ by comparing the successive minima with respect to the trace-ball and using that~$F$ has an integral basis.

If~$L$ is free, we identify it with~$\cOF^h$ and let~$\langle x, y \rangle_L = 2 \lT{x} m y$ for suitable symmetric matrix~$m = m_L \in \MatT{h}(F)$.
To express the bilinear form of general~$L$, once and for all fix integral ideals~$\fraka$ of minimal norm representing each element of the class group of~$F$.
Using the Steinitz form as in~\cite[Proposition~81:5]{omeara-2000} and finiteness of the class group, we can identify~$L$ with~$\fraka^{-1} \oplus \cOF \oplus \cdots \oplus \cOF$ for one such~$\fraka$.
Then~$\langle x, y \rangle_L = 2 \lT{x} m y$ for suitable~$m$ as before.
These identifications also allow us to write~$L_\RR = F_\RR^h$ and~$L_\CC = F_\CC^h$.

The Jacobi upper half space associated with~$L$ is~$\HS_F \times L_\CC$.
We usually write~$(\tau,z)$ for its elements, and refer to~$\tau$ as the modular variable and~$z$ the elliptic variable.
The Jacobi upper half space carries an action of the Jacobi group, which we define next.
Its definition requires the Heisenberg group with underlying set
\begin{gather}
  \label{eq:def:heisenberg_group_set}
  \Hb{L}(\RR)
  \defeq
  \big\{
  ((\la, \mu), \ka)
  \in
  \big( L_\RR \oplus L_\RR \big) \times F_\RR
  \big\}
  \tx{.}
\end{gather}
The elements of~$\Hb{L}(\RR)$ are alternatively written as triples~$(\la,\mu,\ka)$.
The product in~$\Hb{L}(\RR)$ is given by
\begin{gather}
  \label{eq:def:heisenberg_group_product}
  \big( (\la_1, \mu_1), \ka_1 \big)
  \cdot
  \big( (\la_2, \mu_2), \ka_2 \big)
  \defeq
  \big(
  (\la_1 + \la_2, \mu_1 + \mu_2),\,
  \ka_1 + \ka_2
  +
  \tfrac{1}{2}
  \langle \la_1, \mu_2 \rangle_L
  -
  \tfrac{1}{2}
  \langle \mu_1, \la_2 \rangle_L
  \big)
  \tx{.}
\end{gather}

The Jacobi group is a semi-direct product
\begin{gather}
  \label{eq:def:jacobi_group}
  \Jac{L}(\RR)
  \defeq
  \SL{2}(F_\RR) \ltimes \Hb{L}(\RR)
  \tx{,}
\end{gather}
where the action of~$\SL{2}(F_\RR)$ is the right action defined by
\begin{gather}
  \label{eq:def:heisenberg_group_sl2_action}
  \Hb{L}(\RR) \circlearrowleft \SL{2}(F_\RR)
  \defcol
  \big( \la, \mu, \ka \big)\, \ga
  =
  \big( (\la, \mu), \ka \big)\, \ga
  \defeq
  \big( (\la, \mu) \ga,\, \ka \big)
  \tx{,}
\end{gather}
and~$\ga$ acts on~$(\la, \mu)$ by vector-matrix multiplication.
In formulas, multiplication in the Jacobi group is given by
\begin{gather}
  \label{eq:def:jacobi_group_product}
  \big( \ga_1, (\la_1, \mu_1, \ka_1 ) \big)
  \cdot
  \big( \ga_2, (\la_2, \mu_2, \ka_2 ) \big)
  =
  \big(
  \ga_1 \ga_2,\,
  (\la_1, \mu_1, \ka_1 ) \ga_2
  \cdot
  (\la_2, \mu_2, \ka_2 )
  \big)
  \tx{.}
\end{gather}
For~$\la, \mu \in L_\RR$, we define the element
\begin{gather}
  \label{eq:def:jacobi_group_translation}
  \transJ(\la, \mu)
  \defeq
  \bigl(
  \idmat{2},\,
  (\la, \mu, \tfrac{1}{2} \langle \la, \mu \rangle_L)
  \bigr)
  \in
  \Jac{L}(\RR)
  \tx{.}
\end{gather}
These elements will appear in our treatment of specializations to torsion points.

We have the integral points of the Heisenberg-type group and those of the Jacobi group, which are defined via the semi-direct product decomposition in~\eqref{eq:def:jacobi_group}:
\begin{gather*}
  \Hb{L}(\ZZ)
  \defeq
  \big\{
  (\la, \mu, \ka) \in \Hb{L}(\RR)
  \condsep
  \la, \mu \in L,\,
  \ka \in \tfrac{1}{2} \cOF
  \big\}
  \quad\tx{and}\quad
  \Jac{L}(\cOF)
  \defeq
  \SL{2}(\cOF) \ltimes \Hb{L}(\ZZ)
  \tx{.}
\end{gather*}

The actions of~$\SL{2}(F_\RR)$ and~$\Hb{L}(\RR)$ on the Jacobi upper half space~$\HS_F \times L_\CC$ are defined by
\begin{gather}
  \label{eq:def:jacobi_group_action_upper_half_space}
  \begin{psmatrix}a & b \\ c & d\end{psmatrix}\,
  (\tau, z)
  \defeq
  \bigl( \begin{psmatrix}a & b \\ c & d\end{psmatrix} \tau, z (c \tau + d)^{-1} \bigr)
  \quad\tx{and}\quad
  (\la, \mu, \kappa)\,
  (\tau, z)
  \defeq
  \big( \tau, z + \la \tau + \mu \big)
  \tx{.}
\end{gather}
The definition of Jacobi forms requires also a slash action, which we associate to a weight~$k \in \ZZ$.
For consistency with the classical setup, we include the Jacobi index~$L$ in our notation.
For a given function~$\phi$ on~$\HS_F \times L_\CC$, we set
\begin{gather}
  \label{eq:def:jacobi_slash_action_classical}
  \begin{aligned}
  \Bigl(
  \phi |_{k,L}\,
  \begin{psmatrix} a & b \\ c & d \end{psmatrix}
  \Bigr)(\tau, z)
   & \defeq
  \normF(c \tau + d)^{-k}\,
  e\big(
  - \tfrac{1}{2} \langle z \rangle\, c (c \tau + d)^{-1}
  \big)\,
  \phi\bigl(
  \begin{psmatrix} a & b \\ c & d \end{psmatrix} (\tau, z)
  \bigr)
  \tx{,}
  \\
  \big(
  \phi |_{k,L}\,
  (\la, \mu, \ka)
  \big)(\tau, z)
   & \defeq
  e\bigl(
  \tfrac{1}{2} \langle \la \rangle \tau
  + \langle \la, z \rangle
  + \tfrac{1}{2} \langle \la, \mu \rangle + \ka
  \bigr)\,
  \phi\big(
  \tau,
  z + \la \tau + \mu
  \big)
  \tx{.}
  \end{aligned}
\end{gather}
This defines an action of~$\Jac{L}(\RR)$ on holomorphic functions on~$\HS_F \times L_\CC$.
In the same way as we allow ourselves to identify~$L$ with a representing matrix~$m$, we may write~$|_{k,m}$ if~$L$ is free.

A holomorphic function~$\phi \defcol \HS_F \times L_\CC \ra \CC$ that is invariant under the slash action of~$\begin{psmatrix} 1 & b \\ 0 & 1 \end{psmatrix} \in \SL{2}(\cOF)$ and under that of~$(0,\mu,0) \in \Hb{L}(\ZZ)$ admits a Fourier series expansion
\begin{gather}
  \label{eq:def:jacobi_forms_fourier_expansion}
  \phi (\tau, z)
  =
  \sum_{\substack{n \in \cOF^\vee \\ r \in L^\vee}}
  c(\phi; n, r)\,
  e(n \tau + \langle r, z \rangle)
  \tx{,}\quad
  c(\phi; n, r) \in \CC
  \tx{.}
\end{gather}
Throughout, we set~$c(\phi; n, r) = 0$ if~$(n,r)$ does not contribute to this expansion.

\begin{definition}%
\label{def:jacobi_forms_hilbert}
Given~$k \in \ZZ$ and an integral lattice~$L$ over~$F$, a holomorphic function~$\phi \defcol \HS_F \times L_\CC \ra \CC$ is called a \emph{Hilbert--Jacobi form} of genus~$1$, weight~$k$, and Jacobi index~$L$ if:
\begin{enumerateromanseries}%
{en:def:jacobi_forms_hilbert}
\item
For all~$\ga \in \Jac{L}(\ZZ)$ we have~$\phi |_{k,L}\,\ga = \phi$.

\item
\label{it:def:jacobi_forms_hilbert:fourier_support}
If~$F = \QQ$, under the identification of~$L$ with~$m \in \MatT{h}(\ZZ)^\vee$, the Fourier coefficients in~\eqref{eq:def:jacobi_forms_fourier_expansion} satisfy
\begin{gather*}
  c(\phi; n, r) = 0
  \quad\tx{for all }
  n \in \ZZ, r \in \ZZ^h
  \tx{ such that }
  \begin{psmatrix} 2 n & \lT{r} \\ r & 2 m \end{psmatrix} \not\ge 0
  \tx{, i.e., \emph{not} positive semi-definite}
  \tx{.}
\end{gather*}
\end{enumerateromanseries}
\end{definition}
The space of Jacobi forms is written as~$\rmJHilb{k,L}$, where the cogenus can be inferred from the dimension of~$L_\QQ$.

As in the case of Hilbert modular forms, for~$F \ne \QQ$ the growth condition is automatic by the Koecher principle, if~$L$ is totally positive definite.
We will treat the case of general~$L$ in \fref{Section}{ssec:hilbert_jacobi_forms:positive_definite_jacobi_indices}.

\begin{theorem}%
\label{thm:hilbert_jacobi_koecher_principle}
Given a Hilbert--Jacobi form~$\phi$ of weight~$k \in \ZZ$ and totally positive definite Jacobi index~$L$, for all~$\ga \in \SL{2}(F)$ we have~$c(\phi |_{k,L}\, \ga; n, r) = 0$ if~$n \not\ge \frac{1}{2} \langle r \rangle_L$.
\end{theorem}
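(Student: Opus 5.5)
The plan is to reduce the statement to the Koecher principle for Hilbert modular forms, \fref{Theorem}{thm:hilbert_koecher_principle}, applied to theta-type coefficients of~$\phi$.
For~$F = \QQ$ the claim is essentially condition~\ref{it:def:jacobi_forms_hilbert:fourier_support} of \fref{Definition}{def:jacobi_forms_hilbert}, so the interesting case is~$F \ne \QQ$.

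First I would note that~$\phi |_{k,L}\,\ga$ for~$\ga \in \SL{2}(F)$ is again invariant under a finite-index subgroup of~$\Jac{L}(\cOF)$: conjugating~$\SL{2}(\cOF) \ltimes \Hb{L}(\ZZ)$ by~$\ga$ contains~$\SL{2}(\cOF,N) \ltimes \Hb{L'}(\ZZ)$ for some~$N$ and some sublattice~$L' \subseteq L$ of finite index. This replaces~$L$ by a sublattice, which does not change the quadratic form. It therefore suffices to treat~$\ga = \idmat{2}$ for a form with this smaller level, with Fourier expansion indexed by~$n \in \frac{1}{N}\cOF^\vee$ and~$r$ in a lattice~$\frac{1}{N}L^\vee$.

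Next I would use the invariance under~$(\la,0,0)$ for~$\la \in L'$. It gives the standard relation
\begin{gather*}
  c(\phi; n, r) = c\bigl(\phi; n + \langle r, \la\rangle + \tfrac{1}{2}\langle \la \rangle,\, r + \la\bigr),
\end{gather*}
so the discriminant-type quantity~$n - \frac{1}{2}\langle r \rangle$ depends only on the class of~$r$ modulo~$L'$.
This yields the theta decomposition
\begin{gather*}
  \phi = \sum_{r \in L'^\vee/L'} h_r(\tau)\, \theta_{r}(\tau,z),
  \qquad
  h_r(\tau) = \sum_{D} c(\phi; D + \tfrac{1}{2}\langle r\rangle, r)\, e(D\tau).
\end{gather*}
Here the~$\theta_r$ are Hilbert theta series of the totally positive definite lattice~$L'$, and they converge because~$L$ is totally positive definite.

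The theta functions transform under~$\SL{2}(\cOF)$ via a Weil representation of finite image, of weight~$h/2$. Hence the vector~$(h_r)$ transforms like a vector-valued Hilbert modular form of weight~$k - h/2$ for the dual Weil representation. Holomorphy of the~$h_r$ follows from linear independence of the~$\theta_r$ in~$z$.

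To reach integral weight and trivial type, I would take a product of suitable conjugates, or a power~$h_r^{2N'}$, which is a holomorphic Hilbert modular form of integral weight and some level. Taking the power also handles the half-integral weight. \fref{Theorem}{thm:hilbert_koecher_principle} for this product, which holds since~$F \ne \QQ$, then forces the Fourier expansion of~$h_r$ to be supported on~$D \ge 0$. The reason is that the lowest term of a power is the power of the lowest term, and by unit balancing (\fref{Lemma}{la:unit_balancing}) together with the invariance~$c(\cdot;\eps^2 D) = c(\cdot;D)$ under~$\diag(\eps,\eps^{-1})$, a nonzero coefficient at some~$D \not\ge 0$ produces nonzero coefficients of the power at non-totally-nonnegative indices. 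Translating back gives~$c(\phi;n,r) = 0$ unless~$n - \frac{1}{2}\langle r\rangle \ge 0$.

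The main obstacle is the rigorous handling of the product step: a Fourier expansion has no well-defined lowest term in the partial order on~$F$. I would instead argue directly on~$h_r$ itself. Its components transform under a finite-image representation, so each~$h_r$ is a Hilbert modular form of half-integral weight with multiplier on~$\SL{2}(\cOF,4N)$. Garrett's Koecher argument, which relies only on invariance under units~$\diag(\eps,\eps^{-1})$ and translations together with boundedness on compacta, carries over verbatim to this setting. Alternatively, one can cite Boylan's thesis~\cite{boylan-2015} for the Koecher principle for Hilbert--Jacobi forms.
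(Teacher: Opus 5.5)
Your proposal is correct in outline but takes a different route from the paper. The paper's proof is purely a citation: for $F=\QQ$ it is condition~(ii) of the definition together with the fact that $\SL{2}(\ZZ)$ has a single cusp, and for $F\ne\QQ$ it is Theorem~3.2 of~\cite{boylan-2015}. You instead conjugate to $\ga=1$ at higher level, pass to the theta decomposition, and want to deduce the claim from a Koecher principle for the half-integral weight components $h_r$. This makes it visible where the analytic input sits. However, you rightly abandon the power trick (there is no lowest term for the partial order on $F$), and you then either assert that Garrett's argument carries over or cite Boylan. So the heart of the matter is delegated, just as in the paper.

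Two remarks.

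First, the theta decomposition, metaplectic cover and half-integral weight are not needed, because the Koecher argument runs directly on $\phi$.
\begin{itemize}
\item Invariance under $\diag(\eps,\eps^{-1})$ for $\eps$ in a finite-index subgroup of $\cOF^\times$ gives $|c(\phi;\eps^2 n,\eps r)|=|c(\phi;n,r)|$.
\item Invariance under $(\la,0,0)$ lets you move $\eps r$ to $\eps r+\la$ in a fixed bounded fundamental domain of $L_\RR$, without changing $D=n-\frac12\langle r\rangle_L$.
\item If $D_j<0$ at some real place, take units with $\eps_j^2\to\infty$ and $\eps_i^2\to 0$ for $i\ne j$; these exist since $n_F\ge 2$. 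Then $\traceF(y\,\eps^2 D)\to-\infty$ while $\frac12\langle\eps r+\la\rangle_L$ stays bounded.
\item Hence the terms of the absolutely convergent Fourier series of $\phi$ at $(iy,0)$ are unbounded, which is a contradiction.
\end{itemize}
Your power step can likewise be rescued by boundedness instead of lowest terms. Koecher for $h_r^{2N'}$ makes $h_r$ bounded on $\Im\tau\ge y_0$. The integral formula for $c(h_r;D)$ then gives $|c(h_r;D)|\ll e^{2\pi\traceF(Dy)}$ for all $y\ge y_0$, and letting $y_j\to\infty$ forces $c(h_r;D)=0$ when $D\not\ge 0$.

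Second, for $F=\QQ$ your phrase ``essentially condition~(ii)'' skips the one-cusp step, which is needed for general $\ga\in\SL{2}(\QQ)$. Write $\ga=\ga_0 b$ with $\ga_0\in\SL{2}(\ZZ)$ and $b$ upper triangular. Then note that $b$ sends the index $(n,r)$ to $(a^2n,ar)$, so it multiplies $n-\frac12\langle r\rangle_L$ by the positive square $a^2$.
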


\begin{proof}
If~$F = \QQ$, this is part of the definition, observing that~$\SL{2}(\ZZ)$ has only one cusp.
Otherwise this is stated in Theorem~3.2 of~\cite{boylan-2015}.
\end{proof}

The \emph{vanishing order} at infinity of a Hilbert--Jacobi form~$\phi \in \rmJHilb{k,L}$ is defined as
\begin{gather}
  \label{eq:def:vanishing_order_jacobi_form}
  \ord(\phi)
  \defeq
  \sup\big\{
  \nu \in \RR
  \condsep
  c(\phi; n, r) = 0
  \tx{ for all } n \in F, r \in L_\QQ
  \tx{ with } n < \nu
  \big\}
  \tx{.}
\end{gather}
As in the case of Hilbert modular forms, there are two further notions of vanishing order at infinity, which by the Koecher principle take non-negative values.
\begin{align*}
  \ordtrace(\phi)
   & \defeq
  \sup\big\{
  \nu \in \RR
  \condsep
  c(\phi; n, r) = 0
  \tx{ for all } n \in F, r \in L_\QQ
  \tx{ with } \traceF(n) < \nu
  \big\}
  \tx{,}
  \\
  \ordnorm(\phi)
   & \defeq
  \sup\big\{
  \nu \in \RR
  \condsep
  c(\phi; n, r) = 0
  \tx{ for all } n \in F, r \in L_\QQ
  \tx{ with } \normF(n) < \nu^{n_F}
  \big\}
  \tx{.}
\end{align*}
We let~$\rmJHilb{k,L}[\nu]$ denote the space of Hilbert--Jacobi forms of vanishing order~$\ordnorm$ at least~$\nu$.
We will use this notation later in \fref{Section}{ssec:hermitian_modular_jacobi_forms:connection_hilbert_jacobi_forms}.

\begin{lemma}%
\label{la:vanishing_order_equivalence_jacobi_form}
Given a Hilbert--Jacobi form~$\phi$ of weight~$k \in \ZZ$ and totally positive definite Jacobi index~$L$, we have
\begin{gather*}
  \ord(\phi)
  \asymp_F
  \ordtrace(\phi)
  \asymp_F
  \ordnorm(\phi)
  \tx{.}
\end{gather*}
\end{lemma}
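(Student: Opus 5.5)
We follow the proof of \fref{Lemma}{la:vanishing_order_equivalence_hilbert_modular_form}, adapting the unit action to the Jacobi setting. First we show that all three vanishing orders vanish simultaneously. By \fref{Theorem}{thm:hilbert_jacobi_koecher_principle} (with~$\ga$ the identity), every contributing pair~$(n,r)$ satisfies~$n \ge \frac{1}{2}\langle r \rangle_L \ge 0$, so all three orders are non-negative. If some coefficient~$c(\phi; 0, r) \ne 0$, then all three orders are~$0$. Conversely, suppose no coefficient with~$n = 0$ is nonzero. Then every contributing~$n$ is a nonzero, totally positive element of the lattice~$\cOF^\vee$. By discreteness, such~$n$ have~$\traceF(n)$ and~$\normF(n)$ bounded below by a positive constant depending only on~$F$, and they are not~$< \nu$ componentwise for small~$\nu > 0$. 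Hence all three orders are strictly positive.

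Now assume~$\ord(\phi) > 0$. The key step is a unit action on Fourier coefficients. For~$\eps \in \cOF^\times$, the element~$\diag(\eps, \eps^{-1}) \in \SL{2}(\cOF) \subset \Jac{L}(\ZZ)$ acts on~$(\tau,z)$ by~$(\eps^2 \tau, \eps z)$. The automorphy factors are~$\normF(\eps^{-1})^{-k} = \pm 1$ and a trivial exponential, since~$c = 0$. Invariance of~$\phi$ therefore gives
\begin{gather*}
  |c(\phi; n, r)| = |c(\phi; \eps^2 n, \eps r)|
  \tx{.}
\end{gather*}
One should check that~$\eps r$ remains in~$L^\vee$, which holds because~$L^\vee$ is an~$\cOF$-module.

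Given a contributing~$n$, which is totally positive by the first step, we apply \fref{Lemma}{la:unit_balancing} with~$U = (\cOF^\times)^2$. This yields~$n' = \eps^2 n$ with balanced components, so that
\begin{gather*}
  \traceF(n') \asymp_F \normF(n')^{1/n_F}
  \quad\tx{and}\quad
  \traceF(n') \ll_F n' < \traceF(n')
  \tx{.}
\end{gather*}
Since the set of contributing~$n$ is stable under~$n \mapsto \eps^2 n$, and~$\normF$ is invariant under this map, each vanishing order can be computed using only balanced representatives, up to constants depending on~$F$. The argument goes as follows.
\begin{itemize}
  \item The norm order is unchanged under~$n \mapsto \eps^2 n$, since~$\normF(\eps^2 n) = \normF(n)$.
  \item The componentwise order is governed by the minimum over contributing~$n$ of the smallest component, and balancing can only increase that minimum.
  \item The inequalities~$\min_i n_i \le \normF(n)^{1/n_F} \le \traceF(n)/n_F$ hold for every totally positive~$n$.
  \item Combined with the balanced comparisons for~$n'$, these give the three equivalences.
\end{itemize}

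The main point needing care is the componentwise order~$\ord$. It is defined by the condition~$n < \nu$ over all~$n$, so one must verify that the infimum over the~$\eps^2$-orbit of the largest component is comparable to~$\normF(n)^{1/n_F}$. Balancing shows this orbit contains a representative whose largest component is~$\asymp_F \normF(n)^{1/n_F}$. Conversely, every element of the orbit has largest component at least~$\normF(n)^{1/n_F}$. This gives~$\ord \asymp_F \ordnorm$, and~$\ordtrace$ is handled in the same way using the trace. Apart from this, the proof is formally identical to the Hilbert modular case; the only new inputs are the Jacobi Koecher principle and the compatibility of the unit action with the index~$r$.
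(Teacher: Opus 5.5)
Your proposal is correct and follows the paper's proof, which likewise reruns the argument of \fref{Lemma}{la:vanishing_order_equivalence_hilbert_modular_form} using the unit relation $|c(\phi; n, r)| = |c(\phi; \eps^2 n, \eps r)|$ coming from $\diag(\eps, \eps^{-1})$, and notes that the vanishing orders quantify over all $r$. One small slip: your second bullet says $\ord(\phi)$ is governed by the smallest component of the contributing $n$, but it is $\inf_{n} \max_i n_i$, the largest component, which is what your final paragraph correctly uses.
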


\begin{proof}
The proof is essentially the same as the one of \fref{Lemma}{la:vanishing_order_equivalence_hilbert_modular_form} except for one adjustment:
Invariance of~$\phi$ with respect to the action of~$\diag(\epsilon, \epsilon^{-1})$ yields~$|c(\phi;\, n, r)| = |c(\phi;\, \eps^2 n, \eps r)|$.
Since the definition of vanishing orders quantifies over all~$r$, that proof extends.
\end{proof}

We define the specialization to the torsion point~$z = \alpha \tau + \beta$, $\alpha, \beta \in L_\QQ$ by
\begin{gather}
  \label{eq:def:jacobi_forms_torsion_points}
  \phi[\alpha, \beta](\tau)
  \defeq
  \big( \phi \big|_{k,m}\, \transJ(\alpha, \beta) \big)(\tau, 0)
  \tx{.}
\end{gather}
Expanding the action of~$\transJ(\alpha, \beta)$, we find that
\begin{gather}
  \label{eq:jacobi_forms_torsion_points_explicit}
  \phi[\alpha, \beta](\tau)
  =
  e\bigl( \tfrac{1}{2} \langle \alpha \rangle_L \tau + \langle \alpha, \beta \rangle_L \bigr)\,
  \phi(\tau, \alpha \tau + \beta)
  \tx{.}
\end{gather}
If~$L$ is totally positive definite, the Koecher principle implies that if~$\phi$ is a Hilbert--Jacobi form, then~$\phi[\alpha,\beta]$ has Fourier series expansion supported on zero and on totally positive indices.

\subsection{Reduction to positive definite Jacobi indices}%
\label{ssec:hilbert_jacobi_forms:positive_definite_jacobi_indices}

The goal of this section is to reduce the theory of Hilbert--Jacobi forms to the case of positive definite Jacobi indices.
We employ the theory of elliptic functions, which does not need the Hilbert modular transformation behavior.

\begin{proposition}%
\label{prop:jacobi_forms_semi_definite_index}
Let~$\phi$ be a Hilbert--Jacobi form of weight~$k \in \ZZ$ and Jacobi index~$L$.
If\/~$L$ is \emph{not} totally positive semi-definite, then~$\phi = 0$.
If\/~$L$ is totally positive semi-definite and~$s \in L_\RR$ satisfies~$\langle s \rangle_L = 0$, then~$\phi(\tau, z + s z') = \phi(\tau, z)$ for all~$(\tau,z) \in \HS_F \times L_\CC$ and~$z' \in \CC$.
\end{proposition}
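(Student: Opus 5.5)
We argue with $\tau \in \HS_F$ fixed and treat $z \mapsto \phi(\tau,z)$ as a theta-type function on $L_\CC = F_\CC^h \cong \CC^{n_F h}$. Invariance under $(\la,\mu,0) \in \Hb{L}(\ZZ)$ with the slash action in~\eqref{eq:def:jacobi_slash_action_classical} gives
\begin{gather*}
  \phi(\tau, z + \la\tau + \mu)
  =
  e\bigl( -\tfrac{1}{2}\langle \la \rangle \tau - \langle \la, z\rangle \bigr)\, \phi(\tau,z)
  \quad\tx{for } \la,\mu \in L
  \tx{,}
\end{gather*}
using $\langle \la,\mu\rangle \in \cOF^\vee$, so the $e(\tfrac12\langle\la,\mu\rangle)$ factor is $\pm1$. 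We absorb it by restricting to $\la \in 2L$ if needed. Thus $\phi(\tau,\cdot)$ is a holomorphic section of a line bundle on the complex torus $L_\CC/(L\tau + L)$, whose Hermitian form (Riemann form) is governed by $\langle\cdot,\cdot\rangle_L \otimes \Im(\tau)^{-1}$ componentwise. The plan is to run the classical Appell--Humbert style argument directly via the Fourier expansion in $z$.

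First we use the expansion~\eqref{eq:def:jacobi_forms_fourier_expansion} in $z$ for fixed $\tau$: $\phi(\tau,z) = \sum_{r \in L^\vee} h_r(\tau)\, e(\langle r,z\rangle)$. Substituting $z \mapsto z+\la\tau$ into the quasi-periodicity and comparing coefficients yields the recursion
\begin{gather*}
  h_{r+\la}(\tau)
  =
  e\bigl( \tfrac12\langle\la\rangle\tau + \langle r,\la\rangle\tau \bigr)\, h_r(\tau)
  \tx{,}
\end{gather*}
up to sign conventions, so that $|h_{r+\la}(\tau)| = \exp\bigl(-2\pi\,\traceF( (\tfrac12\langle\la\rangle + \langle r,\la\rangle)\Im\tau)\bigr)|h_r(\tau)|$. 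Convergence of the Fourier series at $z=0$ forces $h_{r+\la}(\tau) \to 0$ along every lattice direction. Now suppose $L$ is not totally positive semi-definite. Then at some real place $j$ there is $\la \in L$ (by density of $L$ in $L_\RR$ we may pick $\la$ with $\langle\la\rangle_j<0$). We aim to choose $\la$, possibly after multiplying by a unit from \fref{Lemma}{la:unit_balancing} or weak approximation, with $\traceF(\langle\la\rangle\Im\tau) < 0$. Taking $t\la$, $t \in \ZZ \to \infty$, the quadratic term dominates, and $|h_{r+t\la}| \to \infty$ unless $h_r(\tau)=0$. Hence all $h_r(\tau)=0$, and so $\phi = 0$. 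The one subtlety is that $\Im\tau$ varies; since the vanishing of each $h_r$ is needed only on an open set of $\tau$ and $h_r$ is holomorphic, it suffices to choose $\Im\tau$ heavily weighted at place $j$.

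For the semi-definite case, let $N = \{ s \in L_\RR \condsep \langle s \rangle_L = 0 \}$. This is the radical at each place: componentwise, semi-definiteness gives $\langle s,\cdot\rangle = 0$ by Cauchy--Schwarz. For $\la \in L\cap N$ the recursion reads $|h_{r+t\la}| = \exp(-2\pi t\,\traceF(\langle r,\la\rangle\Im\tau))|h_r|$, which is unbounded in one direction of $t$ unless $\langle r,\la\rangle = 0$ or $h_r = 0$. Hence $h_r \ne 0$ only for $r \perp (L\cap N)$. This gives $\phi(\tau,z+s z') = \phi(\tau,z)$ for $s$ in the $F$-span of $L\cap N$ and $z' \in \CC$, since $e(\langle r, s z'\rangle)=1$.

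The main obstacle is that $N$ need not be defined over $F$ at all places simultaneously. The radical at place $j$ of $\langle\cdot,\cdot\rangle_L\otimes_{F,j}\RR$ is the base change of the radical of the $F$-bilinear form on $L_\QQ$, because the kernel of a form is Galois-compatible. So the radical is $F$-rational and $N = (\mathrm{rad}\, L_\QQ)\otimes\RR$. By $\RR$-linearity and continuity, the identity extends from $L\cap N$ to all of $N$. Hence $r \perp N$ for every $r$ in the support, which completes the argument. Identifying $L_\CC$ with $F_\CC^h$, the statement with $z' \in \CC$ is then immediate, and in fact holds for $z' \in F_\CC$.
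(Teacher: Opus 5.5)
Your non-semi-definite argument is fine when $\langle\cdot,\cdot\rangle_L$ is nondegenerate, so that $L^\vee$ is a lattice. On the open set of $\tau$ with $\traceF(\langle\la\rangle\Im\tau)<0$, the recursion forces $|h_{r+t\la}(\tau)|\to\infty$ unless $h_r(\tau)=0$, and the identity theorem finishes. This is a different route from the paper's. The paper restricts $\phi$ to rational lines $z_0+sz'$ at $\tau=\xi^{-1}\tau'$, applies Eichler--Zagier to one-variable elliptic functions of negative index, and then argues by density in $s$ and in $\tau$. You replace both density steps by working directly with the quasi-periodicity of the full Fourier expansion on an open set of $\tau$.

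The semi-definite part, however, is circular. When the form is degenerate, $L^\vee$ contains the whole rational radical $N\cap L_\QQ$. The functions $z\mapsto e(\langle r,z\rangle)$ with $r\in L^\vee$ are precisely the characters of $L_\CC/L$ that are trivial along $N$. So writing $\phi(\tau,z)=\sum_{r\in L^\vee}h_r(\tau)\,e(\langle r,z\rangle)$ already assumes the invariance you are trying to prove.

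A general holomorphic $L$-periodic function expands over all characters $z\mapsto e(\ell(z))$ with $\ell\in\Hom_{\cOF}(L,\cOF^\vee)$. The actual content of the claim is that $h_\ell=0$ whenever $\ell$ does not vanish on $L\cap N$. Your recursion cannot detect this. For $\la\in L\cap N$ you have $\langle r,\la\rangle=0$ for every $r\in L_\QQ$, so the shift $r\mapsto r+t\la$ does not change the character. Your conclusion ``$h_r\neq0$ only for $r\perp(L\cap N)$'' is therefore vacuous, and the final invariance merely re-reads the assumed shape of the expansion. (The same indexing issue affects your first part if $L$ is degenerate and not semi-definite, but there it is harmless: shifting by $t\la$ with $\langle\la\rangle_j<0$ genuinely changes the character.)

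The missing step is short.
\begin{itemize}
\item For $\la\in L\cap N$, both $\langle\la\rangle$ and $\langle\la,z\rangle$ vanish, so quasi-periodicity reads $\phi(\tau,z+\la\tau)=\phi(\tau,z)$.
\item On the genuine expansion this gives $h_\ell(\tau)\bigl(e(\ell(\la)\tau)-1\bigr)=0$.
\item If $\ell(\la)\neq0$, then $|e(\ell(\la)\tau)|=\exp\bigl(-2\pi\traceF(\ell(\la)\Im\tau)\bigr)\neq1$ off a nowhere dense set of $\tau$, so $h_\ell=0$.
\item Your rationality argument then upgrades ``$\ell$ vanishes on $L\cap N$'' to ``$\ell$ vanishes on $N$''.
\end{itemize}
Equivalently, for $s\in L\cap N$ the map $w\mapsto\phi(\tau,z+sw)$, $w\in F_\CC$, is holomorphic and invariant under the full lattice $\cOF+\cOF\tau$. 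It is therefore constant. This is what the paper's index-zero case of Eichler--Zagier amounts to along the slices $\tau=\xi^{-1}\tau'$.
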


\begin{proof}
Throughout, we let~$(\tau', z') \in \HS \times \CC$.
We consider~$\xi \in \cOF$ totally positive, $s \in L$, and~$z_0 \in L_\CC$ with~$\langle s, z_0 \rangle_L = 0$.
We set~$m' = \traceF( \xi\, \frac{1}{2} \langle s \rangle_L )$ and examine~$\psi(z') = \phi(\xi^{-1} \tau', z_0 + s z')$ as a function in~$z' \in \CC$.
By modular invariance of~$\phi$ under~$\Hb{L}(\ZZ)$, it satisfies~$\psi(z' + 1) = \psi(z')$ and
\begin{gather}
  \label{eq:prf:prop:jacobi_forms_semi_definite_index:elliptic_function}
  \begin{aligned}
  \psi(z' + \tau)
   & =
  \phi(\xi^{-1} \tau', z_0 + s z' + s \xi \xi^{-1} \tau')
  =
  e\bigl( - \tfrac{1}{2} \langle s \xi \rangle_L\, \xi^{-1} \tau' - \langle s \xi, z_0 + s z' \rangle_L \bigr)\,
  \phi(\xi^{-1} \tau', z_0 + s z')
  \\
   & =
  e\bigl( - m' \tau' - 2 m' z' \bigr)\,
  \psi(z')
  \tx{.}
  \end{aligned}
\end{gather}
That is, $\psi$ is an elliptic function on~$\CC \slash \ZZ + \tau' \ZZ$ of index~$m'$.
If~$\langle s \rangle_L < 0$ then~$m' < 0$ and~$\psi$ vanishes by~\cite[Theorem~1.2]{eichler-zagier-1985}, and if~$\langle s \rangle_L = 0$ then~$m' = 0$ and~$\psi$ is constant by the same theorem.

Assume that~$L$ is not positive semi-definite and fix~$s_- \in L_\RR$ with~$\langle s_- \rangle_L < 0$.
Then there is a neighborhood~$S$ of~$s_-$ such that~$\langle s \rangle_L < 0$ for all~$s \in S$, and we can assume that~$S$ is an open $F_\RR$\nbd{}cone in~$L_\RR$.
By~\eqref{eq:prf:prop:jacobi_forms_semi_definite_index:elliptic_function} we have~$\phi(\xi^{-1} \tau', s z') = 0$ for~$s \in L \cap S$.
Since~$S$ is open, rational points are dense in it.
Since further~$S$ is a cone, it is contained in the closure of the subset~$\RR \cdot (S \cap L) \subseteq L_\RR$.
We conclude that~$\phi(\xi^{-1} \tau', z)$ vanishes on the open subset~$\xi^{-1} \HS \times S \subseteq \xi^{-1} \HS \times L_\CC$ and therefore on~$\xi^{-1} \HS \times L_\CC$.
Now we observe that~$\RR_{>0} \cdot \{ \ga^{-1} \condsep \ga \in \cOF \tx{ tot.\@ pos.\@} \} \subseteq F_\RR$ is dense.
In particular, the set of~$\xi^{-1} i y$ for~$\xi$ as before and~$y \in \RR_{>0}$ is dense in the purely imaginary elements of~$\HS_F$.
Thus~$\phi$ vanishes on~$\{ \tau \in \HS_F \condsep \Re(\tau) = 0 \} \times L_\CC$.
Since~$\phi$ is holomorphic, we conclude~$\phi = 0$ as desired, finishing the proof in case that~$L$ is not totally positive semi-definite.

Assume~$L$ is totally positive semi-definite.
Fix~$s \in L_\RR$ with~$\langle s \rangle_L = 0$, which implies that~$\langle s, L_\CC \rangle = 0$.
For~$s \in L$, by~\eqref{eq:prf:prop:jacobi_forms_semi_definite_index:elliptic_function}, the function~$\phi(\xi^{-1} \tau', z + s z')$ is constant in~$z'$ for all~$\xi$ and~$z \in L_\CC$.
In particular, we have~$\phi(\xi^{-1} \tau', z + s z') = \phi(\xi^{-1} \tau', z)$.
Thus by the same density argument as before, we have~$\phi(\tau, z + s z') = \phi(\tau, z)$.

Since~$L$ is positive semi-definite and integral, the maximal isotropic subspace~$(L_\RR)_0 \subseteq L_\RR$ is rational, and we have~$(L_\RR)_0 = L_0 \otimes \RR$ where~$L_0 = (L_\RR)_0 \cap L$.
In other words, every~$s \in L_\RR$ with~$\langle s \rangle_L = 0$ as in the statement can be expressed as a linear combination~$s = c_1 s_1 + \cdots + c_h s_h$ for~$s_i \in L_0$ and~$c_i \in \RR$.
By what we have proved already, we have
\begin{gather*}
  \phi(\tau, z + s z')
  =
  \phi\bigl( \tau, z + (c_1 s_1 + \cdots + c_h s_h) z' \bigr)
  =
  \phi\bigl( \tau, z + (c_1 s_1 + \cdots + c_{h-1} s_{h-1}) z' \bigr)
  =
  \cdots
  =
  \phi(\tau, z)
  \tx{.}
\end{gather*}
This shows the claimed independence of~$z'$.
\end{proof}

In light of \fref{Proposition}{prop:jacobi_forms_semi_definite_index} we can assume in the remainder that~$L$ is totally positive semi-definite.
The next corollary allows us to reduce all considerations to the case of totally positive definite indices.
Recall from p.~225 of~\cite{omeara-2000} that the radical~$L_0$ of~$L$ splits: we have~$L \cong L_0 \oplus L_+$ for a totally positive definite lattice~$L_+$.

\begin{corollary}%
\label{cor:jacobi_forms_semi_definite_index}
Under the identification~$L = L_0 \oplus L_+$ for a totally positive definite lattice~$L_+$, we have
\begin{align*}
  \rmJHilb{k,L}
  =
  \big\{
  (\tau,(z_0,z_+)) \mto \phi(\tau, z_+)
  \condsep
  \phi \in \rmJHilb{k,L_+}
  \big\}
  \tx{.}
\end{align*}
\end{corollary}

\begin{proof}
Since~$\langle \la, (z_0, z_+) \rangle_L = \langle \la, z_+ \rangle_{L_+}$ for all~$\la \in L$, it follows directly from the definition of the Jacobi slash action that~$(\tau,(z_0,z_+)) \mto \phi(\tau, z_+)$ is a Hilbert--Jacobi form of index~$L$.
Conversely, \fref{Proposition}{prop:jacobi_forms_semi_definite_index} shows that a Hilbert--Jacobi form of index~$L$ is independent of~$z_0$.
\end{proof}

\subsection{Dimension of spaces of Hilbert--Jacobi forms}%
\label{ssec:hilbert_jacobi_forms:dimensions}

We next record a convenient dimension bound for spaces of Hilbert--Jacobi forms.
Recall the Weil representation~$\rho_L$ associated in~\cite[Section~2.2]{boylan-2015} with the discriminant form~$\disc(L) = L^\vee \slash L$ of an integral, totally positive definite lattice~$L$ over~$F$.
It is the main example of arithmetic types, which we define as finite dimension complex representations of~$\SL{2}(\cOF)$ whose kernel is a congruence subgroup.
We write~$\rho^\vee$ for the dual of an arithmetic type~$\rho$ and~$V(\rho)$ for its representation space.

\begin{definition}%
\label{def:hilbert_modular_form_arithmetic_type}
Let~$k \in \ZZ$ and~$\rho$ an arithmetic type.
Then a holomorphic function~$f \defcol \HS_F \ra V(\rho)$ is called a \emph{Hilbert modular form} of weight~$k$ and type~$\rho$ if:
\begin{enumerateroman}%
\item
For all~$\ga \in \SL{2}(\cOF)$ we have $f |_k\,\ga = \rho(\ga)\, f$.

\item
If~$F = \QQ$, for all~$\ga \in \SL{2}(\ZZ)$ and all~$n < 0$ the Fourier coefficients in~\eqref{eq:hilbert_modular_fourier_expansion} satisfy~$c(f |_k\,\ga; n) = 0$.
\end{enumerateroman}
\end{definition}

The space of Hilbert modular forms of weight~$k$ and type~$\rho$ is written as~$\rmM_k(\rho)$.
The definition of vanishing orders in~\eqref{eq:def:vanishing_order_hilbert_modular_form} extends verbatim to~$f \in \rmM_k(\rho)$.

\begin{proposition}%
\label{prop:dimension_bound_hilbert_jacobi_forms}
For~$k \in \ZZ_{> 0}$ and all integral, positive semi-definite lattices~$L \cong L_0 \oplus L_+$ over~$F$, we have
\begin{gather*}
  \dim\, \rmJHilb{k,L}
  \ll_F
  k^{n_F}\, \#\disc(L_+)
  \tx{.}
\end{gather*}
\end{proposition}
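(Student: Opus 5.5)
First, \fref{Corollary}{cor:jacobi_forms_semi_definite_index} identifies $\rmJHilb{k,L}$ with $\rmJHilb{k,L_+}$, so it suffices to treat a totally positive definite lattice $L = L_+$. For such $L$ I will use the theta decomposition of Hilbert--Jacobi forms, as developed in~\cite{boylan-2015}. This gives an injective linear map $\phi \mto (h_r)_{r \in \disc(L)}$ from $\rmJHilb{k,L}$ into the space of vector-valued Hilbert modular forms of weight $k - h/2$ and type $\rho_L^\vee$ (or $\rho_L$, depending on convention). It is given by
\begin{gather*}
  \phi(\tau,z) = \sum_{r \in \disc(L)} h_r(\tau)\, \theta_{L,r}(\tau,z)
  \tx{.}
\end{gather*}
Holomorphy at the cusps of the $h_r$ comes from \fref{Theorem}{thm:hilbert_jacobi_koecher_principle} (and from the definition when $F = \QQ$). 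Consequently $\dim \rmJHilb{k,L} \le \dim \rmM_{k-h/2}(\rho_L^\vee)$. If $h$ is odd, the weight is half-integral and the type lives on a metaplectic cover; the argument below is unchanged up to passing to a finite cover.

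The task is therefore to bound $\dim \rmM_{\kappa}(\rho)$ by $\kappa^{n_F} \dim\rho$, uniformly in $\rho$ with $\dim\rho = \#\disc(L)$. I will do this with a Hecke-bound argument in the spirit of \fref{Lemma}{la:accumulated_hecke_bound_twisted_permutation_type}, so that the estimate is uniform in the level. Take $f \in \rmM_\kappa(\rho_L^\vee)$. The Weil representation permutes the basis vectors $e_r$ up to roots of unity for the generators $\begin{psmatrix} 1 & b \\ 0 & 1\end{psmatrix}$, but not for the inversion. Because of this I will not use a permutation-type argument. Instead I will use the Hecke bound in averaged form: if a vector-valued form $f$ satisfies $\ord(f) \gg_F \kappa$, then $f = 0$. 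To prove this, note that for any $v \in V(\rho)^\vee$ the product $\prod_{\ga} \langle \rho^\vee(\ga)^{-1} v, f|\ga \rangle$ over $\ga$ running through cosets of $\ker\rho$ is a scalar form of level one. The resulting product multiplies both the weight and the order by the same index, so \fref{Proposition}{prop:hecke_bound} applies. The delicate point is that $\ord$ is only controlled at $\infty$ and not at other cusps. I would handle this by working with the full orbit vector, which is invariant under $\SL{2}(\cOF)$ and whose vanishing order at every cusp equals the order at $\infty$. Given the averaged Hecke bound, the map $f \mto \bigl(c(f;n)\bigr)_{n \ge 0,\ \ordnorm\text{-size} \ll_F \kappa}$ is injective. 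By \fref{Lemma}{la:vanishing_order_equivalence_hilbert_modular_form} and unit balancing (\fref{Lemma}{la:unit_balancing}), it suffices to consider indices $n$ up to the action of squares of units. The number of such $n \in \frac{1}{N}\cOF^\vee$ of trace $\ll \kappa$ is roughly $(N\kappa)^{n_F}$, which is too many because of the factor $N$. To fix this, I will use that the component $c(f;n)_r$ vanishes unless $n \equiv -\frac12\langle r\rangle \pmod{\cOF^\vee}$. Each of the $\#\disc(L)$ components therefore contributes only $\ll_F \kappa^{n_F}$ free coefficients, and in total this gives $\ll_F \kappa^{n_F}\#\disc(L) \le k^{n_F}\#\disc(L)$.

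The main obstacle is the uniformity of the Hecke bound in the level $N$, which grows with $L$. A naive Sturm bound scales with the index of $\ker\rho_L$ rather than with $\dim\rho_L$. The averaging-over-cosets trick above is meant to remove this dependence, because both weight and order scale by the same factor. I expect care to be needed to make sure that the vanishing order of the vector-valued form at $\infty$, read off componentwise, is what the product controls. Alternatively, one can cite Skoruppa's dimension formula for Jacobi forms and its Hilbert analogue in Boylan's thesis, which gives the bound directly as $\dim\rho_L \cdot \mathrm{vol}\cdot\kappa^{n_F} + O(\dim\rho_L\, \kappa^{n_F-1})$ for $\kappa$ large, together with the trivial bound for small $\kappa$. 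That is the route I would fall back on if the elementary argument becomes unwieldy.
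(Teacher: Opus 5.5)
Your proposal is correct and follows the paper's proof essentially step by step. You reduce to $L_+$, pass via Boylan's theta decomposition to $\rmM_{k-h/2}(\rho_L^\vee)$, and obtain a level-uniform Hecke bound by multiplying the forms $v(f)|_{\kappa}\,\ga = (\rho^\vee(\ga)^{-1}v)(f)$ over cosets of the kernel, each of which has order at least $\ord(f)$ at $\infty$; your remark that each component is supported on a single coset of $\cOF^\vee$ makes explicit a point the paper leaves implicit when it takes $B \subset \cOF^\vee$ in the coefficient count. Just note that the factor in your product should be $\langle v, f|_\kappa\,\ga\rangle = \langle \rho^\vee(\ga)^{-1}v, f\rangle$: the expression $\langle \rho^\vee(\ga)^{-1}v, f|\ga\rangle$ you wrote applies the twist twice, and its factors are not permuted by $\SL{2}(\cOF)$.
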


\begin{proof}
We can assume that~$L$ is totally positive semi-definite by \fref{Proposition}{prop:jacobi_forms_semi_definite_index}.
By~\cite[Theorem~3.5]{boylan-2015} we have an isomorphism between~$\rmJHilb{k,L}$ and~$\rmM_{k-\frac{h}{2}}(\rho_L^\vee)$.
We estimate the dimension of the latter space of Hilbert modular forms.
If~$h$ is odd, then its weight is half-integral and this step requires the metaplectic cover~$\Mp{1}(\cOF)$ of~$\SL{2}(\cOF)$, which is described in~\cite[Section~3.3]{boylan-2015}.
The notions of arithmetic types, Fourier coefficients, and vanishing orders extend to Hilbert modular forms for the metaplectic cover.

We consider an arithmetic type~$\rho$ for~$\Mp{1}(\cOF)$ and fix~$v \in V(\rho)^\vee$.
We can and will assume that the center of~$\Mp{1}(\cOF)$ acts by scalars by decomposing~$\rho$ suitably.
Let~$N$ be such that~$\SL{2}(\cOF,N)$ is contained in the image of the kernel of~$\rho$ under the projection~$\Mp{1}(\cOF) \thra \SL{2}(\cOF)$.
The set
\begin{gather*}
  f_\ga
  \defeq
  v(f) |_k\, \td\ga
  =
  v\bigl( f |_k\, \td\ga \bigr)
  =
  \bigl( \rho^\vee(\td\ga) v \bigr) (f)
\end{gather*}
for~$\ga \in \SL{2}(\cOF,N) \backslash \SL{2}(\cOF)$ is finite of size~$M$, say, where~$\td\ga$ is any fixed lift of~$\ga$ to~$\Mp{1}(\cOF)$.
The functions~$f_\ga$ depend on the choice of lifts only up to nonzero scalar multiples.
From the definition of vanishing orders, we conclude that~$\ord(f_\ga) \ge \ord(f)$.
Further, we observe that~$f_\ga^2$ is a Hilbert modular form of integral weight and level\nbd{}$N$.
\fref{Lemma}{la:accumulated_hecke_bound_twisted_permutation_type} shows that~$v(f)^2 = 0$, if~$\sum_\ga \ord(f_\ga) \gg_F k\, M$.

The left hand side of the previous inequality is at least~$M \ord(f)$.
That is, we have~$v(f)^2 = 0 = v(f)$ if~$\ord(f) \gg_F k$.
Since~$v$ was arbitrary, this shows that there is a constant~$\nu_F$ such that the map
\begin{gather*}
  \rmM_k(\rho)
  \lra
  V(\rho)^B
  \tx{,}\quad
  f
  \lmto
  \bigl( c(f; n) \bigr)_{n \in B}
  \quad\tx{with }
  B
  \defeq
  \bigl\{ n \in \cOF^\vee \condsep n \tx{ tot.\@ pos.\@}, n < k \nu_F \bigr\}
\end{gather*}
is injective.
We have~$\# B \ll_F k^{n_F}$ and thus
\begin{gather*}
  \dim\, \rmM_k(\rho)
  \ll_F
  k^{n_F}\,
  \dim\, \rho
  \tx{.}
\end{gather*}

Applying this to the special case of~$\rho = \rho_L^\vee$, we obtain the proposition, since~$\dim\, \rho_L = \#\disc(L)$ by the definition of~$\rho_L$.
\end{proof}

\subsection{Torsion points of prime denominator}%
\label{ssec:hilbert_jacobi_forms:torsion_point_prime_denominator}

In this section, we start working towards the vanishing criterion in \fref{Theorem}{thm:hilbert_jacobi_forms_hecke_bound}.
We first show that vanishing of Hilbert--Jacobi forms can be detected at sufficiently many torsion points, whose denominators we restrict to be rational primes.

The closed Voronoi cell of a totally positive definite lattice~$L$ over~$F$ is
\begin{gather}
  \label{eq:def:voronoi_cell}
  \rmVclosed_L
  =
  \rmVclosed_{\traceF(L)}
  \defeq
  \big\{
  s \in L_\RR
  \condsep
  \traceF(\langle s \rangle_L)
  \le
  \min_{\la \in L}\,
  \traceF( \langle s + \la \rangle_L )
  \big\}
  \tx{.}
\end{gather}
It is a polytopal complex in a natural way.
We fix once and for all a subcomplex that is an exact fundamental domain for the action of~$L$ on~$L_\RR$, and call it the Voronoi cell~$\rmV_L$ of~$L$.
Its interior is the open Voronoi cell~$\rmVopen_L$.

We assume that~$L$ is free, and use the coordinate representation~$\cOF^h$ for the module underlying~$L$.
For \emph{distinct rational} primes~$p_i$, $1 \le i \le h$, we set
\begin{multline}
  \label{eq:def:torsion_points_distinct_primes}
  \rmTP_L(p_1,\ldots,p_h)
  \\
  \defeq
  \bigl\{
  (\alpha, \beta)
  \in
  (L_\QQ \cap \rmV_L)^2
  \condsep
  \alpha_i, \beta_i \in \tfrac{1}{p_i} \cOF,\,
  \alpha_i \cOF + \beta_i \cOF + \cOF = \tfrac{1}{p_i} \cOF
  \tx{ for all } 1 \le i \le h
  \bigr\}
  \tx{.}
\end{multline}

\begin{lemma}%
\label{la:torsion_points_vanishing}
Fix~$M \in \RR_{> 0}$ and a Hilbert--Jacobi form~$\phi \in \rmJHilb{k,L}$ of free, totally positive definite index~$L$.
Then~$\phi = 0$, if
\begin{gather*}
  \phi[\alpha,\beta] = 0
  \quad\tx{for all }
  (\alpha,\beta) \in \rmTP_L(p_1, \ldots, p_h)
  \tx{ and all distinct primes }
  p_i > M
  \tx{.}
\end{gather*}
\end{lemma}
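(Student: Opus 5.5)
The plan is to show that the vanishing hypothesis forces $\phi$ to vanish on a set of points in $\HS_F \times L_\CC$ that is dense enough to kill a holomorphic function. By \eqref{eq:jacobi_forms_torsion_points_explicit}, $\phi[\alpha,\beta] = 0$ means exactly that $\phi(\tau, \alpha\tau + \beta) = 0$ for all $\tau \in \HS_F$, because the exponential prefactor never vanishes. So the hypothesis says that $\phi$ vanishes on the union of the complex submanifolds
\begin{gather*}
  \Sigma_{\alpha,\beta} \defeq \{ (\tau, \alpha\tau + \beta) \condsep \tau \in \HS_F \}
  \tx{,}\quad
  (\alpha,\beta) \in \rmTP_L(p_1,\ldots,p_h)\tx{,}\ p_i > M \tx{ distinct.}
\end{gather*}

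The first step is to remove the restriction to the Voronoi cell, using the Heisenberg invariance. For $\la,\mu \in L$, the slash action of $(\la,\mu,\ka) \in \Hb{L}(\ZZ)$ relates $\phi(\tau, z + \la\tau + \mu)$ to $\phi(\tau,z)$ by a nowhere-vanishing factor. Hence vanishing on $\Sigma_{\alpha,\beta}$ propagates to vanishing on $\Sigma_{\alpha+\la,\beta+\mu}$. Since $\rmV_L$ is a fundamental domain for $L$ acting on $L_\RR$, we conclude that $\phi$ vanishes on $\Sigma_{\alpha,\beta}$ for all $(\alpha,\beta) \in L_\QQ^2$ satisfying the prime-denominator conditions componentwise. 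Here the conditions $\alpha_i,\beta_i \in \frac{1}{p_i}\cOF$ and $\alpha_i\cOF + \beta_i\cOF + \cOF = \frac{1}{p_i}\cOF$ are invariant under translation by $\cOF^h$, because we work in the coordinates $L = \cOF^h$.

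Next, fix $\tau \in \HS_F$ and look at the points $\alpha\tau + \beta \in L_\CC = F_\CC^h$. Because $\tau$ is non-real at each real place, the real-linear map
\begin{gather*}
  F_\RR^h \times F_\RR^h \lra F_\CC^h
  \tx{,}\quad
  (\alpha,\beta) \lmto \alpha\tau + \beta
\end{gather*}
is an isomorphism of real vector spaces. It therefore suffices to show that the admissible pairs $(\alpha,\beta)$ are dense in $F_\RR^{2h}$. This is the main point and argument.

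The density works coordinatewise. For a fixed index $i$ and a prime $p > M$, the set of $(\alpha_i,\beta_i) \in (\frac{1}{p}\cOF)^2$ with $\alpha_i\cOF + \beta_i\cOF + \cOF = \frac{1}{p}\cOF$ contains every pair with $p\alpha_i \notin p\,\cOF$, for instance $\alpha_i = a/p$ with $a \in \cOF$ not divisible by any prime of $\cOF$ above $p$. Taking $a$ close to $p x$ for a target $x \in F_\RR$ shows the approximation works: the lattice $\frac{1}{p}\cOF$ has mesh $O(1/p)$ in $F_\RR$. The non-admissible points form the union of the finitely many sublattices $\frac{1}{p}\frakp \supseteq \cOF$ for primes $\frakp \mid p$. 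These are proper sublattices, so avoiding them costs only a bounded perturbation at scale $1/p$.

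The coordinates must use distinct primes $p_1,\ldots,p_h$, all larger than $M$. Since there are infinitely many primes, we choose them all large, so the mesh is small in every coordinate simultaneously. Consequently, for each fixed $\tau$ the function $z \mapsto \phi(\tau,z)$ vanishes on a dense subset of $L_\CC$. By continuity it vanishes identically, and thus $\phi = 0$.

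I expect the only delicate point to be the density step: checking that the coprimality condition $\alpha_i\cOF + \beta_i\cOF + \cOF = \frac{1}{p_i}\cOF$ does not obstruct approximation, which the sublattice-avoidance argument above handles. The propagation of vanishing via $\Hb{L}(\ZZ)$ and the final continuity argument are routine.
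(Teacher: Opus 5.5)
Your argument is correct and essentially follows the paper, which also reduces to density of the graphs $\{(\tau,\alpha\tau+\beta)\}$. The only difference is the last step: the paper concludes by holomorphicity, since the union of these graphs is dense in $\HS_F \times (\rmVclosed_L\tau + \rmVclosed_L)$, which contains an open set, whereas you first translate by $\Hb{L}(\ZZ)$ to get density in all of $L_\CC$ and then need only continuity.

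The density step needs two repairs. First, $p\alpha_i \notin p\cOF$ is not sufficient when $p$ splits or ramifies: take $p\alpha_i$ and $p\beta_i$ in the same $\frakp \mid p$. Second, the ``bounded perturbation'' should not be justified by properness of the sublattices. Instead note that for $p > n_F$ one of $a_0, a_0+1, \ldots, a_0+n_F$ lies outside every $\frakp \mid p$, because there are at most $n_F$ such primes and $\frakp \cap \ZZ = p\ZZ$.
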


\begin{proof}
This follows from holomorphicity of~$\phi$ and the observation that
\begin{gather*}
  \bigcup_{\substack{
      (\alpha,\beta) \in \rmTP_L(p_1, \ldots, p_h)\tx{,} \\
      p_i > M \tx{ distinct primes}
    }}
  \mspace{-24mu}
  \big\{
  (\tau, \alpha \tau + \beta)
  \condsep
  \tau \in \HS_F
  \big\}
  \subset
  \HS_F \times \bigl( \rmVclosed_L \tau + \rmVclosed_L \bigr)
\end{gather*}
is dense, and the right hand side contains an open subset of~$\HS_F \times L_\CC$.
\end{proof}

The condition on~$\alpha_i \cOF + \beta_i \cOF$ in the definition of~$\rmTP_L(p_1, \ldots, p_h)$ is difficult to work with.
The next statement allows us to work with a subset of torsion points for which~$\alpha$ is unrestricted.

\begin{lemma}%
\label{la:torsion_points_coprime_beta}
Given a free, totally positive definite lattice~$L$ and distinct rational primes~$p_i$ that are unramified in~$F$, there is a set~$B \subset L_\QQ \cap \rmV_L$ such that
\begin{gather*}
  \bigl\{
  \alpha
  \in
  L_\QQ \cap \rmV_L
  \condsep
  \alpha_i \in \tfrac{1}{p_i} \cOF
  \bigr\}
  \times
  B
  \subset
  \rmTP_L(p_1,\ldots,p_h)
  \quad\tx{and}\quad
  \# B
  \ge
  \prod_{i = 1}^h
  (p_i - 1)^{n_F}
  \tx{.}
\end{gather*}
\end{lemma}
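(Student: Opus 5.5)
The plan is to choose $B$ so that the coprimality condition in~\eqref{eq:def:torsion_points_distinct_primes} holds automatically through $\beta$ alone. The condition $\alpha_i \cOF + \beta_i \cOF + \cOF = \tfrac{1}{p_i}\cOF$ is implied by $\beta_i \cOF + \cOF = \tfrac{1}{p_i}\cOF$, whatever $\alpha_i \in \tfrac{1}{p_i}\cOF$ is. Writing $\beta_i = b_i / p_i$ with $b_i \in \cOF$, this amounts to $b_i \cOF + p_i \cOF = \cOF$, that is, $b_i$ is a unit modulo $p_i \cOF$.

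Since $p_i$ is unramified in $F$, we have $\cOF / p_i \cOF \cong \prod_{\frakp \mid p_i} \cOF/\frakp$, a product of finite fields of total cardinality $p_i^{n_F}$. Its unit group therefore has size $\prod_{\frakp \mid p_i} (\mathrm{N}\frakp - 1)$. Because $\mathrm{N}\frakp = p_i^{f_\frakp}$ with $\sum f_\frakp = n_F$ and $p^{f} - 1 \ge (p-1)^f$, this is at least $(p_i - 1)^{n_F}$.

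Next I would construct $B$. Take the set $B'$ of all $\beta \in \prod_i \tfrac{1}{p_i}\cOF$ such that each $p_i \beta_i$ is a unit modulo $p_i$. This set is invariant under translation by $L = \cOF^h$. Since $\rmV_L$ is an exact fundamental domain for $L$ acting on $L_\RR$, each class in $B'/L$ has exactly one representative in $\rmV_L$. Let $B \defeq B' \cap \rmV_L$; these points are rational, so $B \subset L_\QQ \cap \rmV_L$.

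The classes in $B'/L$ correspond bijectively to tuples $(b_i \bmod p_i)_i$ of units, because $\beta_i \bmod \cOF$ determines $b_i \bmod p_i \cOF$ and conversely. Hence
\[
  \# B = \prod_{i=1}^h \#(\cOF/p_i\cOF)^\times \ge \prod_{i=1}^h (p_i - 1)^{n_F}
  \tx{.}
\]

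For the inclusion, any pair $(\alpha, \beta)$ with $\alpha \in L_\QQ \cap \rmV_L$, $\alpha_i \in \tfrac{1}{p_i}\cOF$, and $\beta \in B$ satisfies all conditions of~\eqref{eq:def:torsion_points_distinct_primes}. The only subtle point is that the $p_i$ are distinct across coordinates, so the conditions are imposed coordinatewise and separate cleanly.

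I expect no real obstacle. The one step needing care is the claim that the unique representative in $\rmV_L$ keeps the denominator and unit conditions, and it does, since translation by $L$ changes each $b_i$ only by a multiple of $p_i$.
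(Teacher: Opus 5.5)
Your proposal is correct and follows the paper's proof: both take $B$ to be the set of $\beta \in L_\QQ \cap \rmV_L$ with $\beta_i \in \tfrac{1}{p_i}\cOF$ and $p_i\beta_i$ a unit modulo $p_i\cOF$, and both get $\#(\cOF/p_i\cOF)^\times \ge (p_i-1)^{n_F}$ from the splitting of $\cOF/p_i\cOF$ into residue fields. You add details the paper leaves implicit, namely counting $B$ through the fundamental domain $\rmV_L$ and the inequality $p^f-1 \ge (p-1)^f$, and these steps are correct.
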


\begin{proof}
The condition on~$\alpha_i \cOF + \beta_i \cOF$ translates to co-prime~$p_i \alpha_i$ and~$p_i \beta_i$ in~$\cOF \slash p_i \cOF$.
Since~$p_i$ is unramified, the quotient~$\cOF \slash p_i \cOF$ is a direct sum of fields.
The product of their unit groups, which yields~$(\cOF \slash p_i \cOF)^\times$, is of size at least~$(p_i - 1)^{n_F}$.
Hence we can set
\begin{gather*}
  B
  \defeq
  \bigl\{
  \beta
  \in
  L_\QQ \cap \rmV_L
  \condsep
  \beta_i \in \tfrac{1}{p_i} \cOF
  \tx{ such that\/ }
  p_i \beta_i \in ( \cOF \slash p_i \cOF )^\times
  \bigr\}
  \tx{.}
\end{gather*}
\end{proof}

We next describe the action of~$\SL{2}(\cOF)$ on specializations to torsion points.

\begin{lemma}%
\label{la:torsion_points_sl2_action_transitive}
Given~$(\alpha,\beta) \in \rmTP_L(p_1,\ldots,p_h)$ for a free lattice~$L$ over~$F$ and~$\ga \in \SL{2}(\cOF)$, there are unique~$\la, \mu \in L$ such that
\begin{align*}
  (\alpha,\beta) \ga - (\la,\mu)
  \in
  \rmTP_L(p_1,\ldots,p_h)
  \tx{.}
\end{align*}
With these~$\la, \mu$, we have a right action
\begin{gather*}
  \rmTP_L(p_1,\ldots,p_h) \circlearrowleft \SL{2}(\cOF)
  \tx{,}\quad
  \bigl( (\alpha,\beta), \ga \bigr)
  \lmto
  (\alpha,\beta) \ga - (\la,\mu)
  \tx{,}
\end{gather*}
which is transitive if all~$p_i$ are unramified in~$F$.
\end{lemma}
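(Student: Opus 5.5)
The plan is to work coordinatewise, since $L$ is free with coordinates in $\cOF^h$ and the conditions defining $\rmTP_L(p_1,\ldots,p_h)$ are imposed separately on each coordinate. The group $\SL{2}(\cOF)$ acts on pairs $(\alpha,\beta) \in L_\QQ^2$ by right multiplication, and on the $i$\nbd{}th coordinate this reads $(\alpha_i,\beta_i) \mto (\alpha_i a + \beta_i c,\, \alpha_i b + \beta_i d)$.

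\emph{Uniqueness and existence of $\la,\mu$.} Entries of $\ga$ lie in $\cOF$, so $(\alpha_i,\beta_i)\ga$ again has entries in $\tfrac{1}{p_i}\cOF$. Since $\ga$ is invertible over $\cOF$, the $\cOF$\nbd{}module generated by $\alpha_i,\beta_i,1$ is preserved, so
\begin{gather*}
  \alpha'_i \cOF + \beta'_i \cOF + \cOF = \tfrac{1}{p_i}\cOF
  \quad\tx{for } (\alpha',\beta') = (\alpha,\beta)\ga
  \tx{.}
\end{gather*}
These conditions are also invariant under translation by $L = \cOF^h$. Since $\rmV_L$ is an exact fundamental domain for $L$ acting on $L_\RR$, there are unique $\la,\mu \in L$ with $\alpha' - \la,\ \beta' - \mu \in \rmV_L$. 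These translates are rational and lie in $\rmTP_L(p_1,\ldots,p_h)$, and uniqueness follows from exactness of the fundamental domain.

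\emph{Action axioms.} The identity acts trivially because $(\alpha,\beta)$ already lies in $\rmV_L^2$. Compatibility follows from
\begin{gather*}
  \bigl((\alpha,\beta)\ga_1 - (\la_1,\mu_1)\bigr)\ga_2 \equiv (\alpha,\beta)\ga_1\ga_2 \pmod{L^2}
  \tx{,}
\end{gather*}
which holds because $L^2\ga_2 = L^2$ for $\ga_2 \in \SL{2}(\cOF)$. Uniqueness of representatives in $\rmV_L$ then gives equality. Hence the set $\rmTP_L(p_1,\ldots,p_h)$ is identified with the orbit set of pairs modulo $L^2$, namely
\begin{gather*}
  \prod_i X_i \quad\tx{with}\quad
  X_i = \bigl\{ v \in (\cOF/p_i\cOF)^2 \condsep v \tx{ unimodular} \bigr\}
  \tx{,}
\end{gather*}
via $(\alpha_i,\beta_i) \mto p_i(\alpha_i,\beta_i) \bmod p_i$. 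The generating condition is exactly unimodularity of this row vector over $\cOF/p_i\cOF$.

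\emph{Transitivity.} This is the main step. Since the $p_i$ are distinct rational primes, the Chinese remainder theorem gives
\begin{gather*}
  \cOF/N\cOF \cong \prod_i \cOF/p_i\cOF
  \quad\tx{with } N = p_1\cdots p_h
  \tx{.}
\end{gather*}
The $\SL{2}(\cOF)$\nbd{}action on $\prod_i X_i$ factors through $\SL{2}(\cOF/N\cOF) = \prod_i \SL{2}(\cOF/p_i\cOF)$.

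I will invoke strong approximation for $\SL{2}$, i.e.\ surjectivity of $\SL{2}(\cOF) \ra \SL{2}(\cOF/N\cOF)$. If needed this can be proved by hand: $\SL{2}$ over the semilocal ring $\cOF/N\cOF$ is generated by elementary matrices, and these lift to elementary matrices over $\cOF$. This reduces transitivity to the transitivity of $\SL{2}(\cOF/p_i\cOF)$ on the unimodular vectors $X_i$, separately for each $i$.

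By unramifiedness, $\cOF/p_i\cOF$ is a finite product of finite fields $\prod_j k_j$. A unimodular vector is then a tuple of nonzero vectors $v_j \in k_j^2$, and $\SL{2}(k_j)$ acts transitively on $k_j^2 \setminus\{0\}$: complete $v_j$ to a basis and rescale the second vector so the determinant is $1$. Taking products gives transitivity on $X_i$, and hence on $\rmTP_L(p_1,\ldots,p_h)$.

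The only nontrivial input is the surjectivity in the strong approximation step, which I expect is best cited as a standard fact.
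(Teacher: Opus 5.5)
Your proof is correct and takes essentially the same route as the paper. Existence and uniqueness of $\la,\mu$ come from the invariance of the defining conditions together with $\rmV_L$ being an exact fundamental domain, the action is the induced one on $L^2 \backslash L_\QQ^2$, and transitivity reduces to $\SL{2}$ over the residue fields $\cOF/\frakp$ acting on nonzero vectors, with distinctness of the $p_i$ making the factors independent. The one addition on your side is that you make explicit the surjectivity of $\SL{2}(\cOF) \to \prod_i \SL{2}(\cOF/p_i\cOF)$, via the Chinese remainder theorem and elementary generation over the semilocal ring $\cOF/N\cOF$; the paper uses this tacitly when it asserts that the direct sum of these actions is transitive.
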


\begin{proof}
All conditions on~$(\alpha,\beta) \in L_\QQ^2$ in the definition~\eqref{eq:def:torsion_points_distinct_primes} of~$\rmTP_L(p_1, \ldots, p_h)$ are preserved under the action of~$\SL{2}(\cOF)$ on~$L_\QQ^2$.
Since~$\rmV_L$ is defined as a strict fundamental domain, existence and uniqueness of~$\la, \mu$ as in the statement follow.

The action in the statement arises from the natural action of~$\SL{2}(\cOF)$ on~$L^2 \backslash L_\QQ^2$.
More specifically, for fixed~$1 \le i \le h$ let~$p_i = \frakp_{i\!1} \cdots \frakp_{i\!m}$ be the prime ideal factorization of~$p_i$.
Then the action on the~$i$\thdash{} component is given by the direct sum of actions of~$\SL{2}(\cOF \slash \frakp_{i\!j})$ acting on nonzero elements of the vector space~$( \cOF \slash \frakp_{i\!j} )^2$.
The direct sum of all these actions over~$i$ and~$j$ is transitive, since the primes~$p_i$ are mutually distinct, finishing the proof.
\end{proof}

The next two lemmas help to employ the previous one in the context of Hilbert--Jacobi forms.

\begin{lemma}%
\label{la:jacobi_forms_torsion_points_sl2_shift_action}
Given~$\phi \in \rmJHilb{k,L}$ and~$\alpha,\beta \in L_\QQ$, for~$\ga \in \SL{2}(\cOF)$ set~$(\alpha', \beta') = (\alpha, \beta)\, \ga$.
Then we have
\begin{align*}
  \phi[\alpha, \beta] \big|_k\, \ga
   & =
  e\bigl(
  \tfrac{1}{2} \langle \alpha, \beta \rangle_L
  -
  \tfrac{1}{2} \langle \alpha', \beta' \rangle_L
  \bigr)\,
  \phi[\alpha', \beta']
  \intertext{and for~$\la, \mu \in L$}
  \phi[\alpha + \la, \beta + \mu]
   & =
  e\bigl(
  \tfrac{1}{2} \langle \la, \mu \rangle_L
  +
  \langle \alpha, \mu \rangle_L
  \bigr)\,
  \phi[\alpha, \beta]
  \tx{.}
\end{align*}
\end{lemma}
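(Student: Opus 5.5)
Both identities follow from the group law of~$\Jac{L}(\RR)$ in~\eqref{eq:def:jacobi_group_product}, together with two facts: the slash action~\eqref{eq:def:jacobi_slash_action_classical} is a right action, and central elements~$(\idmat{2},(0,0,\ka))$ act by the scalar~$e(\ka)$.

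I first record an observation. For~$\ga \in \SL{2}(F_\RR)$ and any function~$\psi$, evaluating at~$z=0$ commutes with slashing by~$\ga$:
\begin{gather*}
  \bigl(\psi|_{k,L}\,\ga\bigr)(\tau,0) = \bigl(\psi(\,\cdot\,,0)|_k\,\ga\bigr)(\tau)
  \tx{.}
\end{gather*}
This holds because at~$z=0$ the exponential factor equals~$1$ and~$z(c\tau+d)^{-1}=0$.

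For the first identity I write
\begin{gather*}
  \phi[\alpha,\beta]|_k\,\ga = \bigl(\phi|_{k,L}\,\transJ(\alpha,\beta)\,\ga\bigr)(\tau,0)
  \tx{.}
\end{gather*}
In the semidirect product, $(\idmat{2},h)\cdot(\ga,0) = (\ga, h\ga)$. With~$h = (\alpha,\beta,\tfrac12\langle\alpha,\beta\rangle_L)$ this gives~$h\ga = (\alpha',\beta',\tfrac12\langle\alpha,\beta\rangle_L)$, since~$\SL{2}$ fixes~$\ka$. Hence
\begin{gather*}
  \transJ(\alpha,\beta)\,\ga = (\ga,0)\cdot\transJ(\alpha',\beta')\cdot z_\ka
  \tx{,}
\end{gather*}
where~$z_\ka$ is central with~$\ka = \tfrac12\langle\alpha,\beta\rangle_L - \tfrac12\langle\alpha',\beta'\rangle_L$. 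Since~$\phi|_{k,L}(\ga,0)=\phi$, this yields the claim after evaluating at~$z=0$.

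For the second identity I compute the product~$(\idmat{2},(\la,\mu,0))\cdot\transJ(\alpha,\beta)$. By~\eqref{eq:def:heisenberg_group_product} its central coordinate is
\begin{gather*}
  \tfrac12\langle\alpha,\beta\rangle_L + \tfrac12\langle\la,\beta\rangle_L - \tfrac12\langle\mu,\alpha\rangle_L
  \tx{.}
\end{gather*}
The element~$\transJ(\alpha+\la,\beta+\mu)$ has central coordinate
\begin{gather*}
  \tfrac12\bigl(\langle\alpha,\beta\rangle_L+\langle\alpha,\mu\rangle_L+\langle\la,\beta\rangle_L+\langle\la,\mu\rangle_L\bigr)
  \tx{.}
\end{gather*}
The difference is~$\tfrac12\langle\la,\mu\rangle_L + \langle\alpha,\mu\rangle_L$, using symmetry of the form. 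Therefore
\begin{gather*}
  \transJ(\alpha+\la,\beta+\mu) = (\idmat{2},(\la,\mu,0))\cdot\transJ(\alpha,\beta)\cdot z_{\ka'}
  \quad\tx{with }
  \ka' = \tfrac12\langle\la,\mu\rangle_L + \langle\alpha,\mu\rangle_L
  \tx{.}
\end{gather*}
Since~$(\la,\mu,0)\in\Hb{L}(\ZZ)$, invariance of~$\phi$ removes the first factor, and evaluating at~$z=0$ gives the claim.

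The only delicate point is bookkeeping. I have to keep the order of factors consistent with a right action and check the central coordinates carefully. I also have to use~$(\la,\mu,0)$ rather than~$\transJ(\la,\mu)$, because~$\tfrac12\langle\la,\mu\rangle_L$ need not lie in~$\tfrac12\cOF$.
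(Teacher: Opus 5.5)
Your proof is correct and is essentially the paper's argument: the paper also factors $\transJ(\alpha,\beta)\,\ga$ as $\ga\,\transJ(\alpha',\beta')$ times a central element, and $\transJ(\alpha+\la,\beta+\mu)$ as $(\la,\mu,0)\,\transJ(\alpha,\beta)$ times a central element, then discards the integral factor by invariance of~$\phi$ and reads off the scalar $e(\ka)$ from the central factor. Your extra checks, namely that evaluation at $z=0$ commutes with $|_k\,\ga$ and the explicit central-coordinate bookkeeping, only make explicit what the paper leaves implicit.
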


\begin{proof}
Recall that the action of~$\SL{2}(\cOF)$ on~$\Hb{L}(\RR)$, given in~\eqref{eq:def:heisenberg_group_sl2_action}, is based on vector-matrix multiplication of~$L \times L$ and~$\SL{2}(\cOF)$.
The definition of specializations to torsion points in~\eqref{eq:def:jacobi_forms_torsion_points} yields
\begin{gather*}
  \phi[\alpha, \beta] \big|_k\, \ga
  =
  \Bigl( \phi \big|_{k,m}\, \transJ(\alpha, \beta)\, \ga \Bigr)(\tau, 0)
  =
  \Big(
  \phi \big|_{k,m}\, \ga\,
  \transJ(\alpha', \beta')\,
  \bigl(0,0,
  \tfrac{1}{2} \langle \alpha, \beta \rangle_L
  -
  \tfrac{1}{2} \langle \alpha', \beta' \rangle_L
  \bigr)
  \Big)(\tau, 0)
  \tx{.}
\end{gather*}
Modular invariance of~$\phi$ allows us to discard~$\ga$ on the right hand side.
The exponential factor in the first equality of the lemma arises from the action of the central element acting on the right hand side.

Similarly, the second equality follows from the factorization
\begin{gather*}
  \transJ(\alpha + \la, \beta + \mu)
  =
  (\la,\mu,0)\,
  \transJ(\alpha, \beta)\,
  \bigl(0,0,
  \tfrac{1}{2} \langle \la, \mu \rangle_L
  +
  \langle \alpha, \mu \rangle_L
  \bigr)
  \tx{.}
\end{gather*}
\end{proof}

\begin{lemma}%
\label{la:torsion_points_modularity}
Given a Hilbert--Jacobi form~$\phi \in \rmJHilb{k,L}$ of free index~$L$ and~$(\alpha,\beta) \in \rmTP_L(p_1,\ldots,p_h)$ for distinct rational primes~$p_1, \ldots, p_h$, we have
\begin{gather*}
  \phi[\alpha,\beta]
  \in
  \rmM_k\bigl( \SL{2}\bigl( \cOF, N_F \prod p_i^2 \bigr) \bigr)
  \tx{,}
\end{gather*}
where~$N_F^{-1} \ZZ = \traceF( \frac{1}{2} \cOF^\vee )$.
\end{lemma}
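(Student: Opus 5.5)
We need to show that $\phi[\alpha,\beta]$ is holomorphic on $\HS_F$, is invariant under $|_k\,\ga$ for all $\ga \in \SL{2}(\cOF, N)$ with $N = N_F \prod p_i^2$, and, if $F = \QQ$, satisfies the cusp condition.

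Holomorphicity is immediate from~\eqref{eq:jacobi_forms_torsion_points_explicit}. The cusp condition for $F = \QQ$ follows from the Koecher principle in \fref{Theorem}{thm:hilbert_jacobi_koecher_principle} applied to $\phi |_{k,L}\, \ga$. Indeed, by the first identity of \fref{Lemma}{la:jacobi_forms_torsion_points_sl2_shift_action}, $\phi[\alpha,\beta]|_k\,\ga$ is a constant multiple of $\phi[\alpha',\beta']$. Expanding~\eqref{eq:jacobi_forms_torsion_points_explicit} in the Fourier series~\eqref{eq:def:jacobi_forms_fourier_expansion}, the exponent $n + \langle r,\alpha' \rangle + \frac{1}{2}\langle \alpha' \rangle = n - \frac{1}{2}\langle r\rangle + \frac{1}{2}\langle r+\alpha'\rangle$ is $\ge 0$ whenever $n \ge \frac{1}{2}\langle r\rangle$. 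So the main content is the invariance.

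Fix $\ga \in \SL{2}(\cOF, N)$ and put $(\alpha',\beta') = (\alpha,\beta)\ga$. Write $\ga = \begin{psmatrix} a & b \\ c & d\end{psmatrix}$ with $a, d \equiv 1$ and $b, c \equiv 0 \pmod{N}$. Since every component $\alpha_i, \beta_i$ lies in $\frac{1}{p_i}\cOF$, we get
\[
  \alpha' = \alpha + \la, \qquad \beta' = \beta + \mu, \qquad \la = \alpha(a-1) + \beta c \in L, \qquad \mu = \alpha b + \beta(d-1) \in L .
\]
Moreover $\la, \mu \in N_F \prod_j p_j^2 \cdot \frac{1}{p_i}\cOF$ componentwise, so in fact $\la, \mu \in N_F \prod_j p_j \cdot L$. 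Combining the two identities of \fref{Lemma}{la:jacobi_forms_torsion_points_sl2_shift_action}, we obtain
\[
  \phi[\alpha,\beta]|_k\,\ga
  =
  e\bigl( \tfrac{1}{2}\langle\alpha,\beta\rangle - \tfrac{1}{2}\langle\alpha',\beta'\rangle + \tfrac{1}{2}\langle\la,\mu\rangle + \langle\alpha,\mu\rangle \bigr)\, \phi[\alpha,\beta] .
\]

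It remains to show that the argument $X$ of $e(\cdot)$ has trace in $\ZZ$. Substituting $\alpha' = \alpha + \la$ and $\beta' = \beta + \mu$ gives
\[
  X = \tfrac{1}{2}\langle\alpha,\mu\rangle - \tfrac{1}{2}\langle\la,\beta\rangle .
\]
This step is the main bookkeeping obstacle. Since $\la$ and $\mu$ carry a factor $\prod_j p_j$ componentwise, the pairings $\langle\alpha,\mu\rangle$ and $\langle\la,\beta\rangle$ absorb the denominators $p_i$ of $\alpha$ and $\beta$. Because $L$ is integral, the form $\langle x, y\rangle = 2\lT{x} m y$ then takes values in $N_F\cdot\cOF^\vee$ on these pairs. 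Hence $X \in N_F \cdot \frac{1}{2}\cOF^\vee$. By the definition of $N_F$, we have $\traceF(N_F \cdot \frac{1}{2}\cOF^\vee) = \ZZ$, so $e(X) = 1$.

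To make this rigorous, I would write $\alpha = \alpha_0 P^{-1}$ coordinatewise, with $P = \diag(p_i)$ and $\alpha_0 \in \cOF^h$, and similarly for $\beta$. Then I would check that each entry of $m$ times the relevant products lies in $N_F\cdot\frac{1}{2}\cOF^\vee$. The squared primes in the level are what make this work even for the off-diagonal entries of $m$, where a $p_i$-denominator meets a $p_j$-numerator.
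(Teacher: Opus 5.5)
Your proof is correct and follows the paper's own argument. It reduces the growth condition to $\gamma=1$ via the first identity of \fref{Lemma}{la:jacobi_forms_torsion_points_sl2_shift_action}, combines both identities to get the factor $e\bigl(\tfrac12\langle\alpha,\mu\rangle-\tfrac12\langle\la,\beta\rangle\bigr)$, and kills it using integrality of $L$ and the definition of $N_F$. Your observation that $\la,\mu\in N_F\prod_j p_j\cdot L$ is stronger than the paper's recorded $\la_i,\mu_i\in N_F p_i\cOF$, and it is exactly what controls the cross terms $\alpha_i m_{ij}\mu_j$ with $i\neq j$, so your bookkeeping is slightly more careful than the paper's.
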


\begin{proof}
We fix~$\ga \in \SL{2}( \cOF, N_F \prod p_i^2 )$.
As in \fref{Lemma}{la:jacobi_forms_torsion_points_sl2_shift_action}, set~$(\alpha',\beta') = (\alpha, \beta) \ga$.
Further, we let~$(\la, \mu) = (\alpha',\beta') - (\alpha,\beta)$.
From the definition of~$\rmTP_L(p_1,\ldots,p_h)$ in~\eqref{eq:def:torsion_points_distinct_primes}, which uses that~$L$ is free, we infer that~$\la_i, \mu_i \in N_F p_i \cOF$ holds for each coordinate.
Then combining both transformation formulas in \fref{Lemma}{la:jacobi_forms_torsion_points_sl2_shift_action} leaves us with
\begin{gather*}
  \phi[\alpha,\beta]
  \big|_k\, \ga
  =
  e\bigl(
  \tfrac{1}{2} \langle \alpha, \beta \rangle_L
  -
  \tfrac{1}{2} \langle \alpha', \beta' \rangle_L
  \bigr)\,
  e\bigl(
  \tfrac{1}{2} \langle \la, \mu \rangle_L
  +
  \langle \alpha, \mu \rangle_L
  \bigr)\,
  \phi[\alpha,\beta]
  \tx{.}
\end{gather*}
Substituting~$(\alpha',\beta') = (\alpha,\beta) + (\la, \mu)$, the exponential factors yield~$e( \frac{1}{2} \langle \alpha, \mu \rangle - \frac{1}{2} \langle \beta, \la \rangle )$.
By our assumptions we have~$\alpha_i \mu_i, \beta_i \la_i \in N_F \cOF$.
Thus~$\frac{1}{2} \langle \alpha, \mu \rangle \in \frac{N_F}{2} \cOF^\vee$, and similarly for the pairing of~$\beta$ and~$\la$.
By definition of~$N_F$, the exponential factor in the transformation formula for~$\phi[\alpha,\beta]$ thus equals~$1$.

If~$F = \QQ$, we need to verify the growth condition for~$\phi[\alpha,\beta] |_k\, \ga$ for any~$\ga \in \SL{2}(\ZZ)$.
By \fref{Lemma}{la:jacobi_forms_torsion_points_sl2_shift_action}, it suffices to consider~$\ga = 1$ after replacing~$\alpha$ and~$\beta$ suitably.
The Fourier coefficient of~$\phi$ of index~$(n,r)$ contributes to the coefficient of~$\phi[\alpha,\beta]$ of index
\begin{gather*}
  n + \lT{r} \alpha + m[\alpha]
  =
  \mfrac{1}{2}
  \begin{psmatrix} 2 n & \lT{r} \\ r & 2 m \end{psmatrix}
  \Bigl[\begin{psmatrix} 1 \\ \alpha \end{psmatrix}\Bigr]
  \ge
  0
  \tx{,}
\end{gather*}
by the growth condition for~$\phi$ in \fref{Definition}{def:jacobi_forms_hilbert}.
\end{proof}

\subsection{Vanishing order at torsion points}%
\label{ssec:hilbert_jacobi_forms:vanishing_order_torsion_points}

To apply the vanishing statement in \fref{Lemma}{la:accumulated_hecke_bound_twisted_permutation_type} we also need to bound the vanishing order of specializations to torsion points from below.

Recall the closed Voronoi cell from~\eqref{eq:def:voronoi_cell}.
By the symmetry of the lattice, we have the point symmetry~$\rmVclosed_L = - \rmVclosed_L$.
The closed Voronoi cell is a polytope and in particular has finitely many extreme points, i.e.\@ points that are not a proper convex combination of two distinct points in~$\rmVclosed_L$, for which we introduce the notation
\begin{gather}
  \label{eq:def:voronoi_cell_extreme_points}
  \rmEV_L
  \defeq
  \big\{
  s \in \rmVclosed_L
  \condsep
  s \tx{ an extreme point of\/ } \rmVclosed_L
  \big\}
  \tx{.}
\end{gather}

\begin{proposition}%
\label{prop:jacobi_forms_torsion_points_vanishing_order}
Given a Hilbert--Jacobi form~$\phi \in \rmJHilb{k,L}$ of totally positive definite index~$L$, the vanishing order of the specialization~$\phi[\alpha,\beta]$ for~$\alpha, \beta \in L_\QQ$ admits the following lower bound, expressed in terms of the extreme points~\eqref{eq:def:voronoi_cell_extreme_points} of the closed Voronoi cell:
\begin{gather*}
  \ord\big( \phi[\alpha, \beta] \big)
  \gg_F
  \ord(\phi)
  -
  \max \big\{
  \traceF\bigl(
  \tfrac{1}{2} \langle r_0 \rangle_L - \tfrac{1}{2} \langle r_0 + \alpha + \la \rangle_L
  \bigr)
  \condsep
  r_0 \in \rmEV_L,
  \la \in L
  \bigr\}
  \tx{.}
\end{gather*}
\end{proposition}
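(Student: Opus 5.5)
The plan is to compute the Fourier expansion of $\phi[\alpha,\beta]$ directly from~\eqref{eq:jacobi_forms_torsion_points_explicit}, and then bound its support from below in the trace sense. Afterwards I convert trace bounds into bounds for $\ord$ via the comparison lemmas. Inserting the expansion~\eqref{eq:def:jacobi_forms_fourier_expansion} into
\[
  \phi[\alpha,\beta](\tau)
  = e\bigl(\tfrac12\langle\alpha\rangle_L\tau + \langle\alpha,\beta\rangle_L\bigr)\,\phi(\tau,\alpha\tau+\beta)
\]
shows that the coefficient $c(\phi;n,r)$ contributes, up to a root of unity, to the Fourier index
\[
  n' = n + \langle r,\alpha\rangle_L + \tfrac12\langle\alpha\rangle_L
  = \bigl(n - \tfrac12\langle r\rangle_L\bigr) + \tfrac12\langle r+\alpha\rangle_L .
\]
It therefore suffices to show that $\traceF(n')$ is at least $\ordtrace(\phi)$ minus the maximum in the statement, whenever $c(\phi;n,r)\neq 0$.

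\emph{Step 1: reduction of $r$ into the Voronoi cell.} Invariance of $\phi$ under $(\la,0,0)\in\Hb{L}(\ZZ)$ gives
\[
  c(\phi;n,r) = c\bigl(\phi;\, n+\langle r,\la\rangle_L+\tfrac12\langle\la\rangle_L,\; r+\la\bigr).
\]
Hence the coefficient depends only on the discriminant $D = n-\tfrac12\langle r\rangle_L$ and on the class of $r$ modulo $L$. Given $(n,r)$ with $c(\phi;n,r)\neq0$, choose $\la\in L$ with $r_0 \defeq r+\la \in \rmVclosed_L$ (possible because $\rmV_L$ is a fundamental domain), and set $n_0 = D+\tfrac12\langle r_0\rangle_L$. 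Then $c(\phi;n_0,r_0)\neq0$, so $\traceF(n_0)\ge\ordtrace(\phi)$. Writing $r = r_0-\la$, we get
\[
  \traceF(n') = \traceF(n_0) - \traceF\bigl(\tfrac12\langle r_0\rangle_L - \tfrac12\langle r_0 + \alpha - \la\rangle_L\bigr).
\]
Since $-\la$ runs through $L$, the subtracted term is of the shape appearing in the statement, but with $r_0$ an arbitrary point of $\rmVclosed_L$ rather than an extreme point.

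\emph{Step 2: passage to extreme points.} For fixed $v = \alpha-\la$, the function $r_0 \mapsto \traceF(\tfrac12\langle r_0\rangle_L - \tfrac12\langle r_0+v\rangle_L) = -\traceF(\langle r_0,v\rangle_L + \tfrac12\langle v\rangle_L)$ is affine in $r_0$. On the compact polytope $\rmVclosed_L$ its maximum is therefore attained at an extreme point, that is, at some $r_0\in\rmEV_L$. For fixed $r_0$, the supremum over $\la\in L$ is actually a maximum, since $\langle r_0+\alpha+\la\rangle_L$ is totally positive definite in $\la$, so only finitely many $\la$ are relevant. This yields
\[
  \ordtrace\bigl(\phi[\alpha,\beta]\bigr) \ge \ordtrace(\phi) - \max\{\cdots\}.
\]

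\emph{Step 3: comparison of vanishing orders.} \fref{Lemma}{la:vanishing_order_equivalence_jacobi_form} gives $\ordtrace(\phi)\asymp_F\ord(\phi)$. For $\phi[\alpha,\beta]$, which is a Hilbert modular form of some level (by \fref{Lemma}{la:torsion_points_modularity} when $(\alpha,\beta)$ is a torsion point, and in general for a suitable principal congruence subgroup), the proof of \fref{Lemma}{la:vanishing_order_equivalence_hilbert_modular_form} goes through with \fref{Lemma}{la:unit_balancing} applied to the finite-index subgroup of units in the level. Its Fourier support lies in $\{0\}\cup\{\text{totally positive}\}$ by the Koecher principle, which is what the comparison needs.

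I expect the main obstacle to lie in this last step: keeping the implied constants depending only on $F$, and not on the level, when comparing $\ord$ and $\ordtrace$ for $\phi[\alpha,\beta]$. If unit balancing at the level introduces dependence on $\alpha$ and $\beta$, I would instead read the statement with $\ordtrace$, which is the form needed for \fref{Lemma}{la:accumulated_hecke_bound_twisted_permutation_type} up to the constants of \fref{Lemma}{la:vanishing_order_equivalence_hilbert_modular_form}. Alternatively, I would compare $\ord$ and $\ordtrace$ directly: $n'$ is totally positive, and $\ordtrace(f)\ge\ord(f)$ holds trivially up to the factor $n_F$. Via Step 1 this gives a lower bound for $\ord$ that needs no balancing for the form $\phi[\alpha,\beta]$ itself; balancing is then used only for $\phi$, where the full group $\SL{2}(\cOF)$ is available.
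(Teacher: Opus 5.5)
Your proposal is correct and is essentially the paper's own proof: the same termwise computation of the contributed index $n + \langle r,\alpha\rangle_L + \tfrac12\langle\alpha\rangle_L$, the same reduction of $r$ to the Voronoi cell via Heisenberg invariance at fixed $n - \tfrac12\langle r\rangle_L$, the same passage to extreme points for the affine function of $r_0$ (Bauer's maximum principle), and a final comparison of vanishing orders.

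Your caution in Step 3 is justified, and your fallback is the right fix. The inequality $\ord(f) \ge \ordtrace(f)/n_F$ holds for every $f$, and $\ordtrace(\phi) \ge \ord(\phi)$ follows from the Koecher support of $\phi$. So Steps 1--2 give $\ord(\phi[\alpha,\beta]) \ge \tfrac{1}{n_F}\bigl(\ord(\phi) - \max\{\cdots\}\bigr)$ without any unit balancing, and your intermediate bound $\ordtrace(\phi[\alpha,\beta]) \ge \ordtrace(\phi) - \max\{\cdots\}$ is exactly the form the paper later uses in the proof of the Hecke bound.
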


\begin{proof}
For convenience set~$\nu = \ord(\phi)$.
We start by examining the support of the Fourier expansion of~$\phi$.
Modular invariance of~$\phi$ implies that
\begin{gather}
  \label{eq:jacobi_lambda_invariance_fourier_coefficients}
  c(\phi; n, r)
  =
  c(\phi; n', r')
  \tx{,}\quad
  \tx{where }
  r'
  \in
  r + L
  \tx{ and }
  n - \tfrac{1}{2} \langle r \rangle
  =
  n' - \tfrac{1}{2} \langle r' \rangle
  \tx{.}
\end{gather}
We can apply the specialization to torsion points defined in~\eqref{eq:def:jacobi_forms_torsion_points} to each term in the Fourier expansion of~$\phi$ separately.
The contribution of the term of index~$(n,r)$ equals, up to non-zero scalar multiples,~$e( \frac{1}{2} \langle \alpha \rangle \tau )\, e( n \tau + \langle r, \alpha \rangle \tau )$.
As a consequence, we have the lower bound
\begin{gather}
  \label{eq:prop:jacobi_forms_torsion_points_vanishing_order:first_estimate}
  \ordtrace\bigl( \phi[\alpha, \beta] \bigr)
  \ge
  \inf \big\{ \traceF\bigl(
  n + \tfrac{1}{2} \langle \alpha \rangle + \langle r, \alpha \rangle
  \bigr)
  \condsep
  n \in \cOF^\vee, r \in L^\vee,\;
  c( \phi; n, r ) \ne 0
  \bigr\}
  \tx{.}
\end{gather}

Every index~$r \in L^\vee \subset L_\RR$ can be decomposed as~$r = r_0 + \la$ for~$r_0 \in \rmV_L$ in the Voronoi cell and~$\la \in L$.
We define an auxiliary variable~$n_0$ by~$n_0 - \frac{1}{2} \langle r_0 \rangle = n - \frac{1}{2} \langle r \rangle$, and insert the resulting expression for~$n$ into~\eqref{eq:prop:jacobi_forms_torsion_points_vanishing_order:first_estimate}.
Further, the condition~$c(\phi; n, r) \ne 0$ that appears in~\eqref{eq:prop:jacobi_forms_torsion_points_vanishing_order:first_estimate} is equivalent to~$c(\phi; n_0, r_0) \ne 0$ by~\eqref{eq:jacobi_lambda_invariance_fourier_coefficients}.
By \fref{Lemma}{la:vanishing_order_equivalence_jacobi_form}, we have~$\ordtrace(\phi) \asymp_F \ord(\phi) = \nu$, and can therefore replace it by the weaker condition~$\traceF(n_0) \gg_F \nu$.
We obtain the estimate
\begin{alignat*}{2}
   &         &  &
  \ordtrace\bigl( \phi[\alpha, \beta] \bigr)
  \\
   & \ge{}   &  &
  \inf \big\{
  \traceF\bigl(
  n_0
  -
  \tfrac{1}{2} \langle r_0 \rangle
  +
  \tfrac{1}{2} \langle r \rangle
  +
  \tfrac{1}{2} \langle \alpha \rangle
  +
  \langle r, \alpha \rangle
  \bigr)
  \condsep
  n_0 \in F_\RR,
  \traceF(n_0) \gg_F \nu,\;
  r_0 \in \rmV_L,
  r \in r_0 + L
  \bigr\}
  \\
   & \gg_F{} &  &
  \nu
  -
  \sup \big\{
  \traceF\bigl(
  \tfrac{1}{2} \langle r_0 \rangle
  -
  \tfrac{1}{2} \langle r \rangle
  -
  \tfrac{1}{2} \langle \alpha \rangle
  -
  \langle r, \alpha \rangle
  \bigr)
  \condsep
  r_0 \in \rmV_L,
  r \in r_0 + L
  \bigr\}
  \\
   & ={}     &  &
  \nu
  -
  \sup \big\{
  \traceF\bigl(
  \tfrac{1}{2} \langle r_0 \rangle
  -
  \tfrac{1}{2} \langle r_0 + \alpha + \la \rangle
  \bigr)
  \condsep
  r_0 \in \rmV_L,
  \la \in L
  \bigr\}
  \tx{.}
\end{alignat*}

For fixed~$\la$ the function~$\traceF( \frac{1}{2} \langle r_0 \rangle - \frac{1}{2} \langle r_0 + \alpha + \la \rangle)$ is linear in~$r_0$, and thus convex.
By Bauer's Maximum Principle it attains its maximum on the compact, convex set~$\rmVclosed_L$ at one of the extreme points in~$\rmEV_L$.
The supremum over~$\la$ is a maximum, since~$L$ is positive definite.
By \fref{Lemma}{la:vanishing_order_equivalence_hilbert_modular_form} we have~$\ord(\phi[\alpha, \beta]) \asymp_F \ordtrace(\phi[\alpha, \beta])$, finishing the proof.
\end{proof}

When employing \fref{Proposition}{prop:jacobi_forms_torsion_points_vanishing_order}, we will reduce ourselves to the case of lattices over~$\ZZ$, for which the next lemma provides a simple but crucial estimate.

\begin{lemma}%
\label{la:jacobi_forms_torsion_points_vanishing_order_alpha_linear_estimate}
Consider an integral, positive definite lattice~$L$ over~$\QQ$.
Let~$\| \,\cdot\, \|_1$ be the~$1$\nbd{}norm associated with a basis of~$L$ for which~$m_L$ is Minkowski reduced.
Given~$\alpha \in \RR^h$ with~$\|\alpha\|_1 \ll_h 1$, we have
\begin{gather*}
  \max \big\{
  \tfrac{1}{2} \langle r_0 \rangle_L - \tfrac{1}{2} \langle r_0 + \la + \alpha \rangle_L
  \condsep
  r_0 \in \rmEV_L,
  \la \in L
  \bigr\}
  \ll_h
  \la_h(L)\,
  \| \alpha \|_1
  \tx{.}
\end{gather*}
\end{lemma}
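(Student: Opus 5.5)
Expanding the quadratic form reduces the claim to bounding a linear functional of~$\alpha$:
\begin{gather*}
  \tfrac{1}{2} \langle r_0 \rangle_L - \tfrac{1}{2} \langle r_0 + \la + \alpha \rangle_L
  =
  \Bigl( \tfrac{1}{2} \langle r_0 \rangle_L - \tfrac{1}{2} \langle r_0 + \la \rangle_L \Bigr)
  -
  \langle r_0 + \la, \alpha \rangle_L
  -
  \tfrac{1}{2} \langle \alpha \rangle_L
  \tx{.}
\end{gather*}
The first bracket is~$\le 0$ for every~$\la \in L$, because~$r_0 \in \rmVclosed_L$ and by the defining inequality~\eqref{eq:def:voronoi_cell} (over~$\QQ$, so no trace is needed). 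The last term is~$\le 0$ since~$L$ is positive definite. It therefore suffices to bound~$-\langle r_0 + \la, \alpha \rangle_L$ from above, but only for those~$\la$ at which the whole expression could be nonnegative. Write~$s = r_0 + \la$. Then the expression is at most~$\frac12\langle r_0\rangle - \frac12\langle s\rangle + \|s\|_L\,\|\alpha\|_L$, which is negative as soon as~$\|s\|_L$ is large compared to~$\|r_0\|_L + \|\alpha\|_L$. Consequently the maximum is attained with~$\|s\|_L \ll \|r_0\|_L + \|\alpha\|_L$, and the problem reduces to bounding~$\|s\|_L\,\|\alpha\|_L$, or more precisely~$|\langle s, \alpha\rangle_L|$, by~$\la_h(L)\,\|\alpha\|_1$.

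For this I would use the Minkowski-reduced basis~$e_1,\ldots,e_h$. By Minkowski reduction theory, the Gram matrix~$2m$ is comparable, up to constants depending only on~$h$, to its diagonal~$\diag(\langle e_i\rangle)$, and~$\langle e_i \rangle \asymp_h \la_i(L)^2$. In particular~$\langle e_i\rangle \ll_h \la_h(L)^2$. The extreme points of the Voronoi cell satisfy~$\|r_0\|_L \ll_h \la_h(L)$, the covering radius, which is at most~$\frac{\sqrt h}{2}\la_h(L)$. Hence every relevant~$s$ satisfies~$\|s\|_L \ll_h \la_h(L) + \|\alpha\|_L$.

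The estimate then follows from Cauchy--Schwarz in the coordinates of the reduced basis:
\begin{gather*}
  |\langle s, \alpha\rangle_L| \le \sum_i |\alpha_i|\, |\langle s, e_i\rangle_L| \le \|\alpha\|_1 \max_i \|s\|_L \|e_i\|_L \ll_h \|\alpha\|_1\, \la_h(L)\, \bigl(\la_h(L) + \|\alpha\|_L\bigr) \tx{.}
\end{gather*}
This is off by one factor of~$\la_h(L)$, and removing it is the main obstacle. Two steps should close the gap. First, the assumption~$\|\alpha\|_1 \ll_h 1$ gives~$\|\alpha\|_L \ll_h \la_h(L)$. Second, since integrality gives~$\la_1(L) \ge 1$, the target~$\la_h(L)\|\alpha\|_1$ is presumably meant only up to the lattice scale. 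More carefully, I would exploit that the maximum over~$\la$ of the expression equals~$\frac12\langle r_0\rangle - \frac12\min_\la \langle r_0+\alpha+\la\rangle$. Since~$r_0 + \alpha$ lies close to~$r_0$, I would choose~$\la$ so that~$r_0+\alpha+\la$ lies in~$\rmVclosed_L$, using the fact that~$r_0$ is itself in the cell. The quantity is then the difference of the function~$\frac12\langle x \rangle - \frac12 \min_\la\langle x+\la\rangle$ between the points~$r_0$ and~$r_0+\alpha$. This function is~$1$-periodic in~$L$ and Lipschitz with constant~$\max_{x \in \rmVclosed_L}\|\langle x,\cdot\rangle\|$. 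Measured in the~$1$-norm of the reduced basis, this Lipschitz constant is~$\max_{x,i} |\langle x, e_i\rangle_L| \le \max_{x,i} \|x\|_L \|e_i\|_L$. The key point to verify, where I expect the real work, is the sharper bound~$|\langle x, e_i\rangle_L| \le \frac12\langle e_i\rangle$ for~$x \in \rmVclosed_L$, which follows directly from the Voronoi inequality with~$\la = \pm e_i$. Since~$\langle e_i \rangle \asymp_h \la_i(L)^2$, this again yields~$\la_h(L)^2$ rather than~$\la_h(L)$. I would therefore check against the later use whether~$\la_h(L)$ denotes the successive minimum of the quadratic form itself, i.e.\ squared lengths: the ball~$B_L(1)$ is defined by~$\frac12\langle x\rangle \le 1$, so~$\la_h$ scales like a length. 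If the statement is indeed~$\la_h(L)$ measured in length, I would seek the extra saving from the hypothesis~$\|\alpha\|_1 \ll_h 1$ combined with rescaling~$\alpha$ by the basis lengths. Otherwise I would conclude with the Lipschitz argument, giving the bound~$\max_i \frac12\langle e_i\rangle\,\|\alpha\|_1$.
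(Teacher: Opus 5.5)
Your closing Lipschitz argument is correct, and it proves the lemma in the normalization the paper actually uses. The paper's proof invokes $m_{i\!i}\le\la_h(L)$, and the proof of \fref{Corollary}{cor:hecke_bound_hermitian_jacobi_forms} bounds the successive minimum by diagonal Gram entries. So $\la_h(L)$ is meant at the scale of the quadratic form, i.e.\ $\la_h(L)\asymp_h\max_i m_{i\!i}=\max_i\tfrac12\langle e_i\rangle$ in a reduced basis. Your bound $\|\alpha\|_1\max_i\tfrac12\langle e_i\rangle$ is then exactly the claim.

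In the literal length normalization suggested by $B_L(1)$, the statement is false. Take $L=\ZZ$ with $\langle x,y\rangle=2Mxy$, $r_0=\tfrac12$ and $0<\alpha<\tfrac12$. The maximum is $M(\alpha-\alpha^2)$, while $\la_1(L)=\sqrt M$, and $M(\alpha-\alpha^2)\ll\sqrt M\,\alpha$ fails as $M$ grows. So discard the Cauchy--Schwarz detour and the search for an ``extra saving''.

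One correction to your write-up: the $L$-periodic function is $d(x)=\tfrac12\min_{\la\in L}\langle x+\la\rangle$, not $\tfrac12\langle x\rangle-d(x)$. The quantity equals $d(r_0)-d(r_0+\alpha)$ because $r_0\in\rmVclosed_L$. If you prefer not to differentiate $d$, you can argue directly:
\begin{enumerateroman}
\item Pick a minimizer $\la^*$ for $r_0+\alpha$ and set $y=r_0+\alpha+\la^*\in\rmVclosed_L$.
\item From $\langle r_0\rangle\le\langle y-\alpha\rangle$, the quantity is at most $-\langle y,\alpha\rangle+\tfrac12\langle\alpha\rangle$.
\item Using $|\langle y,e_i\rangle|\le m_{i\!i}$ and $|m_{i\!j}|\le\sqrt{m_{i\!i}m_{j\!j}}$, this is at most $\|\alpha\|_1(1+\|\alpha\|_1)\max_i m_{i\!i}$.
\end{enumerateroman}

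The paper takes a different route. It replaces $m$ by its diagonal $m'$ and uses Cassels' bound $\|r_0\|_1\ll_h1$ on the extreme points. It then reduces to the one-variable estimate $\max_{\la\in\ZZ}\bigl(s^2-(s+\alpha+\la)^2\bigr)\ll_h|\alpha|$.

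Your route needs only the Voronoi inequality against $\la=\pm e_i$. This buys several things:
\begin{enumerateroman}
\item It holds for every $r_0\in\rmVclosed_L$ and needs no information on extreme points.
\item In its Lipschitz form it does not even need $\|\alpha\|_1\ll_h1$.
\item Minkowski reduction enters only to fix the basis for $\|\cdot\|_1$ and to identify $\max_i m_{i\!i}$ with $\la_h(L)$.
\item It sidesteps the paper's step $m[r_0]-m[r_0+\alpha+\la]\asymp_h m'[r_0]-m'[r_0+\alpha+\la]$. That step compares differences of quadratic forms, which does not follow from $m\asymp_h m'$ alone.
\end{enumerateroman}
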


\begin{proof}
We may work with coordinates such that~$m = m_L$ is Minkowski reduced.
By~\cite[XI.3~(2)]{cassels-1997}, the extreme points~$r_0$ of~$\rmEV_L$ satisfy~$\| r_0 \|_1 \ll_h 1$.
Writing~$m' \in \MatT{h}(\ZZ)$ for the diagonal matrix with entries~$m'_{i\!i} = m_{i\!i}$, since~$m$ is Minkowski reduced, we have
\begin{gather*}
  \tfrac{1}{2} \langle r_0 \rangle_L - \tfrac{1}{2} \langle r_0 + \la + \alpha \rangle_L
  =
  m[r_0] - m[r_0 + \alpha + \la]
  \asymp_h
  m'[r_0] - m'[r_0 + \alpha + \la]
  \tx{.}
\end{gather*}
Since~$m_{i\!i} \le \la_h(L)$ and since there is only finitely many~$r_0 \in \rmEV_L$, which all have bounded components~$|r_{0,i}| \ll_h 1$, it suffices to bound
\begin{gather*}
  \max \big\{
  s^2 - (s + \alpha + \la)^2
  \condsep
  \la \in \ZZ
  \bigr\}
\end{gather*}
for arbitrary~$s, \alpha \in \RR$, $s, \alpha \ll_h 1$ to prove the lemma.

The maximum of $s^2 - (s + \alpha + \la)^2$ with respect to~$\la \in \RR$ is attained at~$\la = - s - \alpha$.
Since it is a concave expression with respect to~$\la$, its maximum with respect to~$\la \in \ZZ$ is therefore attained at some~$- s - \alpha - 1 < \la < - s - \alpha + 1$.
Since~$s, \alpha \ll_h 1$ we conclude~$\la \ll_h 1$ for such~$\la$.
In particular, we have
\begin{gather*}
  s^2 - (s + \alpha + \la)^2
  =
  - \alpha^2 - (s + \la)^2 - 2 (s + \la) \alpha
  \le
  - 2 (s + \la) \alpha
  \ll_h
  |\alpha|
  \tx{.}
\end{gather*}
\end{proof}

We finally estimate the sum through which \fref{Proposition}{prop:jacobi_forms_torsion_points_vanishing_order} appears in the next section.

\begin{lemma}%
\label{la:jacobi_forms_torsion_points_vanishing_order_sum}
Consider an integral, free, totally positive definite lattice~$L$ over~$F$ and~$\nu \in \RR$ satisfying~$\la_h(L) \ll_{F,h} \nu$.
Then for integers~$n_i \gg_{F,h} 1$ we have
\begin{gather*}
  \sum_{\substack{
      \alpha \in \rmV_L \\
      \alpha_i \in \frac{1}{n_i} \cOF \tx{ for all } i
    }}
  \mspace{-36mu}
  \max\Big\{ 0,\,
  \nu
  -
  \max \big\{
  \traceF\bigl(
  \tfrac{1}{2} \langle r_0 \rangle_L - \tfrac{1}{2} \langle r_0 + \alpha + \la \rangle_L
  \bigr)
  \condsep
  r_0 \in \rmEV_L,
  \la \in L
  \big\}
  \Bigr\}
  \gg_{F,h}
  \nu
  \prod_{i = 1}^h n_i^{n_F}
  \tx{.}
\end{gather*}
\end{lemma}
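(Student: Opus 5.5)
The plan is to show that a positive proportion of the summation points $\alpha$ have a small inner maximum. Write
\begin{gather*}
  D(\alpha) \defeq \max\bigl\{ \traceF\bigl(\tfrac{1}{2}\langle r_0\rangle_L - \tfrac{1}{2}\langle r_0+\alpha+\la\rangle_L\bigr) \condsep r_0 \in \rmEV_L,\ \la \in L \bigr\}
\end{gather*}
for this inner maximum. Each summand is at least $\max\{0, \nu - D(\alpha)\}$, so it suffices to find a subset $A$ of the summation range with two properties: $\# A \gg_{F,h} \prod_i n_i^{n_F}$, and $D(\alpha) \le \nu/2$ for every $\alpha \in A$. Each $\alpha \in A$ then contributes at least $\nu/2$, which gives the claim.

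First I reduce to a lattice over~$\QQ$. Since $\traceF(\langle\cdot,\cdot\rangle_L)$ is exactly the bilinear form of the trace lattice $\traceF(L)$, the quantity $D(\alpha)$ is precisely the corresponding maximum for the rank-$n_F h$ integral positive definite $\ZZ$-lattice $\traceF(L)$. The closed Voronoi cells agree by definition~\eqref{eq:def:voronoi_cell}, and so do the extreme points $\rmEV_L$. I fix a basis of $\traceF(L)$ in which its Gram matrix is Minkowski reduced, and let $\|\cdot\|_1$ be the associated $1$-norm. \fref{Lemma}{la:jacobi_forms_torsion_points_vanishing_order_alpha_linear_estimate}, applied with $h$ replaced by $n_F h$, then gives
\begin{gather*}
  D(\alpha) \ll_{F,h} \la_{n_F h}(\traceF(L))\, \|\alpha\|_1 \asymp_F \la_h(L)\, \|\alpha\|_1
\end{gather*}
whenever $\|\alpha\|_1 \ll_{F,h} 1$. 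Here I use the comparison $\la_h(L) \asymp_F \la_{nh}(\traceF(L))$ recorded earlier. Combining this with the hypothesis $\la_h(L) \ll_{F,h} \nu$, where I choose that constant small, I get $D(\alpha) \le \nu/2$ as soon as $\|\alpha\|_1 \le c$ for a suitable constant $c = c(F,h) > 0$.

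It remains to count the points $\alpha \in \rmV_L$ with $\alpha_i \in \frac{1}{n_i}\cOF$ and $\|\alpha\|_1 \le c$. These $\alpha$ run through a full set of representatives of the finite group $\bigoplus_i \frac{1}{n_i}\cOF / \cOF$ modulo $L$, translated into $\rmV_L$. Since $D$ is invariant under $\alpha \mapsto \alpha + \mu$ for $\mu \in L$ (absorb $\mu$ into $\la$), I may instead count points of the lattice $\Lambda = \bigoplus_i \frac{1}{n_i}\cOF$ that lie in the small ball $\{\|\alpha\|_1 \le c\}$ around the origin, modulo $L$. For $c$ small, this ball meets each $L$-coset at most once, because $\la_1(\traceF(L)) \ge 1$ by integrality and the fixed reduced basis is controlled. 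Expressed in the reduced $\ZZ$-basis, $\Lambda$ is commensurable with $\prod_i (\frac{1}{n_i}\ZZ)^{n_F}$ up to a change of basis depending only on $F$ and the identification $L \cong \cOF^h$. Once all $n_i \gg_{F,h} 1$, so that the mesh $1/n_i$ is small compared with $c$, a box-counting argument yields $\gg_{F,h} \prod_i n_i^{n_F}$ such points. This is where the hypothesis $n_i \gg 1$ enters.

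The main obstacle is making this last count uniform in~$L$. The Minkowski-reduced basis of $\traceF(L)$ need not be compatible with the coordinate decomposition $\alpha_i \in \frac{1}{n_i}\cOF$, which refers to the identification $L = \cOF^h$ used to define $\rmTP_L$. A box in the reduced coordinates could therefore be skewed relative to the grid $\Lambda$. I would handle this by noting that $\Lambda \supseteq \frac{1}{\prod n_i}L$, so that $\Lambda$ has covolume $\prod n_i^{-n_F}$ relative to $L$. I would then count $\Lambda$-points in the reduced-coordinate box by a pigeonhole / Blichfeldt argument on $\Lambda/L$: some translate of the box of $L$-volume $\asymp c^{n_F h}$ contains $\gg c^{n_F h} \prod n_i^{n_F}$ points of $\Lambda$. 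Differences of such points again lie in $\Lambda$, have small $1$-norm, and are distinct modulo $L$, and this yields the required set $A$. If the differences are not automatically distinct modulo $L$, I would fall back on restricting the count to a fundamental domain.
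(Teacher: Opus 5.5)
Your argument is correct, and it differs from the paper's in how the count is obtained.

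\emph{The paper's route.} After passing to $\traceF(L)$, the paper re-coordinatizes by a Minkowski-reduced basis and treats the summation grid as the coordinate grid $\alpha_i\in\frac1{n_i}\ZZ$ in that basis. It then proves, by comparison with the diagonal form, that a box $[-\eps_h,\eps_h]^{h}$ lies in $\rmVopen_L$, and counts grid points in that box.

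\emph{Your route.} You keep $\Lambda=\bigoplus_i\frac1{n_i}\cOF$ in its original coordinates. You replace the Voronoi-cell placement by the $L$-periodicity of $D$. You then get $\gg_{F,h}\prod_i n_i^{n_F}$ points by averaging over translates of the $\|\cdot\|_1$-ball and taking differences.

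\emph{What your route buys.} It is uniform in $L$. The reduced basis is an arbitrary $\GL{n_F h}(\ZZ)$-transform of the $\cOF$-coordinate basis. So for distinct $n_i$, as in the application to distinct primes, $\Lambda$ need not be a coordinate grid in reduced coordinates. The paper's proof does not address this when it re-coordinatizes.

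\emph{Small corrections.} Your box-counting sentence, with a change of basis ``depending only on $F$'', is indeed not uniform, so the averaging step is the one to keep. The fallback in your last sentence is unnecessary. Nonzero elements of $L$ have integer reduced coordinates, so the $\|\cdot\|_1$-ball of radius $2c$ meets each $L$-coset at most once provided $4c<1$. This, rather than $\la_1(\traceF(L))\ge1$, is the actual reason. Also, the averaging bound does not need $n_i\gg1$.

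\emph{A shortcut.} Under the hypothesis as stated, both the linear estimate and the count can be bypassed. The subtracted term is nonnegative, so $D(\alpha)\le\max_{r_0\in\rmEV_L}\traceF(\tfrac12\langle r_0\rangle_L)$, which is the squared covering radius of $\traceF(L)$. On the quadratic scale used in the proof of \fref{Lemma}{la:jacobi_forms_torsion_points_vanishing_order_alpha_linear_estimate}, this is $\ll_{F,h}\la_h(L)$. With a small implied constant in the hypothesis, every one of the $[\Lambda:L]=\prod_i n_i^{n_F}$ summands is then at least $\nu/2$.
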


\begin{proof}
First, we note that the sum does not depend on the~$\cOF$ structure of~$L$.
Further, recall that the successive minima of~$L$ and its trace lattice satisfy~$\la_h(L) \asymp_F \la_{nh}(\traceF(L))$.
We can thus replace~$L$ by~$\traceF(L)$ throughout the proof, that is, we work with~$F = \QQ$.
In particular, $L$ is free and we fix an isomorphism of the underlying~$\ZZ$\nbd{}module with~$\ZZ^h$ in such a way that~$m_L$ is Minkowski reduced.

For brevity, we write~$f(\alpha)$ for the summand that appears in the lemma.
Since~$f(\alpha) \ge 0$, we can estimate the sum by
\begin{gather*}
  \sum_{\substack{
      \alpha \in \rmV_L \\
      \alpha_i \in \frac{1}{n_i} \ZZ \tx{ for all } i
    }}
  \mspace{-30mu}
  f(\alpha)
  \ge
  \sum_{\substack{
      \alpha \in \rmV_L \\
      \| \alpha \|_1 \ll_h 1 \\
      \alpha_i \in \frac{1}{n_i} \ZZ \tx{ for all } i
    }}
  \mspace{-30mu}
  f(\alpha)
  \tx{,}
\end{gather*}
where the implicit condition~$\| \alpha \|_1 \ll_h 1$ is chosen in such a way that \fref{Lemma}{la:jacobi_forms_torsion_points_vanishing_order_alpha_linear_estimate} applies.
This lemma yields~$f(\alpha) \gg_h \max\{ 0, \nu - \la_h(L) \|\alpha\|_1 \}$.
Since~$\la_h(L) \ll_h \nu$, for~$\| \alpha \|_1 \ll_h 1$ (with possibly a smaller implicit constant) we have~$f(\alpha) \gg_h \nu\, (1 - \|\alpha\|_1)$.

There is~$h^{-1} > \eps_h > 0$, which depends only on~$h$, such that~$\alpha \in [-\eps_h,\eps_h]^h$ satisfies the previous implicit bound~$\| \alpha \|_1 \ll_h 1$.
In particular, for such~$\alpha$ we have~$f(\alpha) \gg_h \nu\, ( 1 - h \eps_h) \gg_h \nu$.
Writing~$m'_L$ for the diagonal of~$m_L$, we have~$m_L \asymp_h m'_L$ by Minkowski reduction.
Let~$L'$ be the corresponding lattice on~$\ZZ^h$.
We have
\begin{gather*}
  \big| \langle \la, s \rangle_L \big|
  \ll_h
  \big| \langle \la, s \rangle_{L'} \big|
  \le
  \tfrac{1}{2} \langle \la \rangle_{L'}
  \ll_h
  \tfrac{1}{2} \langle \la \rangle_{L}
  \quad
  \tx{for all }
  s \in \rmV_{\rmL'} \tx{ and } \la \in \ZZ^h
  \tx{.}
\end{gather*}
This implies that~$2 \eps_h \rmVopen_{L'} \subset \rmVopen_L$ after shrinking~$\eps_h$ if needed.
Since the Voronoi cell of~$L'$ equals~$[-\frac{1}{2}, \frac{1}{2}]^h$, we conclude that~$[-\eps_h,\eps_h]^h \subset \rmVopen_L$.
Note that here~$\eps_h$ still only depends on~$h$.
Thus if~$n_i \gg_h 1$ we have the estimate
\begin{gather*}
  \#\bigl\{
  \alpha \in \rmV_L
  \condsep
  -\eps_h \le \alpha_i \le \eps_h,
  \alpha_i \in \tfrac{1}{n_i} \ZZ \tx{ for all } i
  \bigr\}
  \gg_h
  \prod_{i = 1}^h
  n_i
  \tx{.}
\end{gather*}
When combining this with our estimate for~$f(\alpha)$, we obtain the desired lower bound
\begin{gather*}
  \sum_{\substack{
      \alpha \in \rmV_L \\
      \alpha_i \in \frac{1}{n_i} \ZZ \tx{ for all } i
    }}
  \mspace{-30mu}
  f(\alpha)
  \ge
  \sum_{\substack{
      \alpha \in \rmV_L \\
      -\eps_h \le \alpha_i \le \eps_h \\
      \alpha_i \in \frac{1}{n_i} \ZZ \tx{ for all } i
    }}
  \mspace{-30mu}
  f(\alpha)
  \gg_h
  \sum_{\substack{
      \alpha \in \rmV_L\\
      -\eps_h \le \alpha_i \le \eps_h \\
      \alpha_i \in \frac{1}{n_i} \ZZ \tx{ for all } i
    }}
  \mspace{-30mu}
  \nu
  \gg_h
  \nu\,
  \prod_{i = 1}^h
  n_i
  \tx{.}
\end{gather*}
\end{proof}

\subsection{Hecke bound}%
\label{ssec:hilbert_jacobi_forms:hecke_bound}

We are now in position to prove the desired vanishing bound for Jacobi forms, which is a Hecke bound for Hilbert--Jacobi forms.

\begin{theorem}%
\label{thm:hilbert_jacobi_forms_hecke_bound}
There is\/~$\rho_{F,h} > 0$ with the following property:
Consider a Hilbert--Jacobi form~$\phi \in \rmJHilb{k,L}$ of totally positive definite index~$L$.
If~$k < \rho_{F,h} \ord(\phi)$ and~$\la_{h}(L) < \rho_{F,h} \ord(\phi)$, then~$\phi = 0$.
\end{theorem}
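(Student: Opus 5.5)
The plan is to detect vanishing of $\phi$ through its specializations to torsion points, using \fref{Lemma}{la:torsion_points_vanishing}. The combined vanishing order of an orbit of specializations is then controlled from below and compared with \fref{Lemma}{la:accumulated_hecke_bound_twisted_permutation_type}.

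\textbf{Reduction to free $L$.} Replace $L$ by a free sublattice $L' \subseteq L$ of bounded index; for instance $L' = \fraka \cdot(\text{Steinitz model})$, whose index depends only on $F$ and $h$. Then $\phi$ is also a Jacobi form for $L'$, with $\la_h(L') \ll_{F,h} \la_h(L)$ and unchanged $\ord(\phi)$, so the hypotheses persist after shrinking $\rho_{F,h}$.

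\textbf{Step 1: a single orbit.} Fix distinct rational primes $p_1,\dots,p_h > M$ that are unramified in $F$, with $p_i \gg_{F,h} 1$ chosen as in \fref{Lemma}{la:jacobi_forms_torsion_points_vanishing_order_sum}. Consider the family $f_{(\alpha,\beta)} = \phi[\alpha,\beta]$ for $(\alpha,\beta) \in B' \defeq \rmTP_L(p_1,\ldots,p_h)$.
\begin{itemize}
\item By \fref{Lemma}{la:torsion_points_modularity}, these are Hilbert modular forms of weight $k$ and level $N = N_F\prod p_i^2$.
\item By \fref{Lemma}{la:torsion_points_sl2_action_transitive} and \fref{Lemma}{la:jacobi_forms_torsion_points_sl2_shift_action}, $\SL{2}(\cOF)$ acts transitively on $B'$. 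It permutes the $f_b$ up to roots of unity: the exponential factors are of the form $e(\tfrac12\langle\cdot,\cdot\rangle)$ with rational arguments of denominator dividing $N$, after enlarging $N$ by a factor depending only on $F$.
\end{itemize}
Hence \fref{Lemma}{la:accumulated_hecke_bound_twisted_permutation_type} gives $f_b = 0$ for all $b \in B'$ as soon as $\sum_{b\in B'} \ord(f_b) \gg_F k\,\#B'$.

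\textbf{Step 2: lower bound on the summed vanishing orders.} Using \fref{Lemma}{la:torsion_points_coprime_beta}, the sum over $B'$ is at least the sum over pairs $(\alpha,\beta)$ with $\alpha$ ranging over all points of $\rmV_L$ having $\alpha_i \in \frac1{p_i}\cOF$ and $\beta \in B$. For fixed $\beta$, \fref{Proposition}{prop:jacobi_forms_torsion_points_vanishing_order}, together with nonnegativity of $\ord$, bounds each term below by $\max\{0, \nu - \max\{\ldots\}\}$ up to a constant, where $\nu \asymp_F \ord(\phi)$. \fref{Lemma}{la:jacobi_forms_torsion_points_vanishing_order_sum}, applicable because $\la_h(L) < \rho_{F,h}\ord(\phi)$, then yields a $\beta$-sum of at least $\gg \ord(\phi)\prod p_i^{n_F}$. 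Multiplying by $\#B \ge \prod(p_i-1)^{n_F}$ gives
\begin{gather*}
  \sum_{b \in B'} \ord(f_b) \gg_{F,h} \ord(\phi) \prod_i p_i^{n_F}(p_i-1)^{n_F}.
\end{gather*}
On the other hand, $\#B' \le \prod_i p_i^{2n_F}$. So the required inequality $\sum_b \ord(f_b) \gg_F k\,\#B'$ reduces to $\ord(\phi) \gg_{F,h} k$, which is exactly the hypothesis $k < \rho_{F,h}\ord(\phi)$ for small $\rho_{F,h}$. The loss from $(p_i-1)/p_i$ is at most a factor $2^{n_F h}$.

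\textbf{Conclusion.} Since this holds for all choices of distinct primes $p_i > M$, \fref{Lemma}{la:torsion_points_vanishing} gives $\phi = 0$.

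\textbf{Main obstacle.} The delicate point is the bookkeeping of constants. All implied constants from \fref{Proposition}{prop:hecke_bound}, \fref{Proposition}{prop:jacobi_forms_torsion_points_vanishing_order} and \fref{Lemma}{la:jacobi_forms_torsion_points_vanishing_order_sum} must depend only on $F$ and $h$, and in particular not on the level $N$ or the primes. Otherwise the comparison in Step 2 fails. This requires checking that the Hecke bound in \fref{Lemma}{la:accumulated_hecke_bound_twisted_permutation_type} is uniform in $N$; it is, because the product over the orbit is a full-level form.
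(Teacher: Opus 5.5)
Your proposal is correct and follows the paper's proof essentially step by step. Both use the orbit $\rmTP_L(p_1,\ldots,p_h)$ of torsion-point specializations, transitivity together with \fref{Lemma}{la:accumulated_hecke_bound_twisted_permutation_type}, the lower bound from \fref{Lemma}{la:torsion_points_coprime_beta} and \fref{Lemma}{la:jacobi_forms_torsion_points_vanishing_order_sum}, and density via \fref{Lemma}{la:torsion_points_vanishing}. The only deviation is the reduction to free $L$: the paper takes the $\cOF$-span of vectors realizing the successive minima (so $\la_h$ is unchanged), while you use a free sublattice of bounded index. This works, but your example $\fraka\cdot(\fraka^{-1}\oplus\cOF^{h-1})$ is not free in general; take $\cOF^h\subseteq\fraka^{-1}\oplus\cOF^{h-1}$ instead.
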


\begin{proof}%
We set~$\nu = \ordtrace(\phi)$ and observe that~$\nu \asymp_F \ord(\phi)$ by \fref{Lemma}{la:vanishing_order_equivalence_jacobi_form}.
Given vectors~$v_1, \ldots, v_h$ in~$L$ that realize its successive minima, we can replace the~$\cOF$\nbd{}module underlying~$L$ by the span of~$v_1, \ldots, v_h$, which is free.
In particular, we can employ the notion of torsion points in~\eqref{eq:def:torsion_points_distinct_primes}.
To show that~$\phi = 0$, we exhibit tuples~$(p_1,\ldots,p_h)$ of mutually distinct rational primes so that~$\phi$ vanishes at the torsion points of denominator~$(p_1,\ldots,p_h)$ in~\eqref{eq:def:torsion_points_distinct_primes}.

Given any~$(p_1,\ldots,p_h)$ as before, our goal is to apply \fref{Lemma}{la:accumulated_hecke_bound_twisted_permutation_type} to the functions~$f_{(\alpha,\beta)} \defeq \phi[\alpha,\beta]$ for~$(\alpha,\beta) \in \rmTP_L(p_1,\ldots,p_h)$.
By \fref{Lemma}{la:torsion_points_modularity}, $f_{(\alpha,\beta)}$ is a modular form of level~$N_F p_1^2 \cdots p_h^2$, where~$N_F$ is a positive integer defined in that lemma.

Write~$(\alpha,\beta) \bullet \ga$ for the transitive action of~$\SL{2}(\cOF)$ on~$\rmTP_L(p_1,\ldots,p_h)$ in \fref{Lemma}{la:torsion_points_sl2_action_transitive}.
We have the relation~$f_{(\alpha,\beta)} |_k\, \ga = c_{(\alpha,\beta), \ga}\, f_{(\alpha,\beta) \bullet \ga}$ for a suitable root of unity~$c_{(\alpha,\beta), \ga}$ by \fref{Lemma}{la:jacobi_forms_torsion_points_sl2_shift_action}.
This verifies all but the last assumption in \fref{Lemma}{la:accumulated_hecke_bound_twisted_permutation_type}.

To verify the last assumption in \fref{Lemma}{la:accumulated_hecke_bound_twisted_permutation_type}, we estimate the vanishing orders of~$f_{(\alpha,\beta)}$ as follows.
\fref{Proposition}{prop:jacobi_forms_torsion_points_vanishing_order} and \fref{Lemma}{la:torsion_points_modularity} show that
\begin{gather*}
  \ordtrace\big( \phi[\alpha,\beta] \big)
  \ge
  \max\Big\{ 0,\,
  \nu
  -
  \max \big\{
  \traceF\bigl(
  \tfrac{1}{2} \langle r_0 \rangle_L - \tfrac{1}{2} \langle r_0 + \alpha + \la \rangle_L
  \bigr)
  \condsep
  r_0 \in \rmEV_L,
  \la \in L
  \Bigr\}
  \tx{.}
\end{gather*}
We fix a set~$B$ as in \fref{Lemma}{la:torsion_points_coprime_beta}, and we write~$\rmTP'_L(p_1,\ldots,p_h)$ for the set~$\{ \alpha \in L_\QQ \cap \rmV_L \condsep \alpha_i \in \frac{1}{p_i} \cOF \} \times B$, which appears there.
Since~$\ordtrace(\phi[\alpha,\beta]) \ge 0$ we can use \fref{Lemma}{la:torsion_points_coprime_beta} to estimate the next sum over torsion points by the sum over this subset.
Since our estimate for~$\ordtrace(\phi[\alpha,\beta])$ is independent of~$\beta$, this allows us to remove~$\beta$ from the summation range.
\begin{align*}
        &
  \sum_{(\alpha,\beta) \in \rmTP_L(p_1,\ldots,p_h)}
  \mspace{-30mu}
  \ordtrace\big( f_{(\alpha,\beta)} \big)
  =
  \sum_{(\alpha,\beta) \in \rmTP_L(p_1,\ldots,p_h)}
  \mspace{-30mu}
  \ordtrace\big( \phi[\alpha,\beta] \big)
  \ge
  \sum_{(\alpha,\beta) \in \rmTP'_L(p_1,\ldots,p_h)}
  \mspace{-30mu}
  \ordtrace\big( \phi[\alpha,\beta] \big)
  \\
  \ge{} &
  \Bigl( \prod_{i = 1}^h (p_i-1)^{n_F} \Bigr)
  \mspace{-18mu}
  \sum_{\substack{
  \alpha \in \rmV_L \\
      \alpha_i \in \frac{1}{p_i} \cOF \tx{ for all } i
    }}
  \mspace{-30mu}
  \max\Big\{ 0,\,
  \nu
  -
  \max \big\{
  \traceF\bigl(
  \tfrac{1}{2} \langle r_0 \rangle_L - \tfrac{1}{2} \langle r_0 + \alpha + \la \rangle_L
  \bigr)
  \condsep
  r_0 \in \rmEV_L,
  \la \in L
  \Bigr\}
  \tx{.}
\end{align*}

The sum on the right hand side is estimated in \fref{Lemma}{la:jacobi_forms_torsion_points_vanishing_order_sum}, which together with \fref{Lemma}{la:vanishing_order_equivalence_jacobi_form} yields for~$p_i \gg_{F,h} 1$ that
\begin{gather*}
  \sum_{(\alpha,\beta) \in \rmT_L(p_1,\ldots,p_h)}
  \mspace{-32mu}
  \ord\big( f_{(\alpha,\beta)} \big)
  \asymp_{F,h}
  \sum_{(\alpha,\beta) \in \rmT_L(p_1,\ldots,p_h)}
  \mspace{-32mu}
  \ordtrace\big( f_{(\alpha,\beta)} \big)
  \gg_{F,h}
  \Bigl( \prod_{i = 1}^h p_i (p_i-1) \Bigr)^{n_F}\,
  \nu
  \gg_{F,h}
  \nu\, \# \rmT_L(p_1,\ldots,p_h)
  \tx{.}
\end{gather*}
This shows that the last assumption in \fref{Lemma}{la:accumulated_hecke_bound_twisted_permutation_type} follows from~$k \ll_{F,h} \nu$, which holds by the assumption of the theorem.
In summary, for~$p_i \gg_{F,h} 1$ we have~$\phi[\alpha,\beta] = f_{(\alpha,\beta)} = 0$.
We can now apply \fref{Lemma}{la:torsion_points_vanishing} to finish the proof.
\end{proof}

%% file: sections/02_hermitian_modular_forms.tex
\section{Hermitian Hilbert modular and Jacobi forms}%
\label{sec:hermitian_modular_jacobi_forms}

For simplicity, we refer to Hermitian Hilbert modular forms as Hermitian modular forms.
Classical references for Hermitian modular forms are the papers by Braun~\cite{braun-1949,braun-1950,braun-1951}.
Shimura rediscovered and generalized her theory~\cite{shimura-1964,shimura-1978}.
Haverkamp's thesis~\cite{haverkamp-1995,haverkamp-1996} contains a theory of Hermitian Jacobi forms in genus and cogenus~$1$, and Haight's thesis~\cite{haight-2024} covers some aspects in higher degree.
All these references but Shimura's work on the unitary group restrict to the case of imaginary quadratic fields.

We extend the setup from \fref{Section}{sec:hilbert_jacobi_forms}.
We write~$\lD{x}$ for the transpose conjugate of a complex matrix~$x$.
The space~$\MatD{g}(\CC)$ of Hermitian matrices is the set of fixed points of~$\Mat{g}(\CC)$ under the map~$x \mto \lD{x}$.
We set~$x[y] = \lD{y} x y$ for~$x \in \Mat{g}(\CC)$ and~$y \in \Mat{g,h}(\CC)$, extending our previous notation for~$y \in \Mat{g,h}(\RR)$.
For a Hermitian matrix~$x$ and~$\alpha \in \RR$, we write~$x > \alpha$ or~$x \ge \alpha$ if~$x - \alpha$ is positive definite or positive semi-definite, respectively.

Throughout we fix a CM field~$E / F$ with ring of integers~$\cOE / \cOF$.
We let~$\ov{a}$ denote complex conjugation in~$E$ and write~$\traceE(a) = a + \ov{a}$.
We write~$\cOE^\vee$ for the dual of~$\cOE$ with respect to the trace of~$E / \QQ$.
We set~$E_\RR \defeq E \otimes_\QQ \RR$.
After fixing an order for the real embeddings of~$F$ and representatives for the complex embeddings of~$E$, we have~$E_\RR \cong F_\CC \cong \CC^{n_F}$ and~$F_\RR \cong \RR^{n_F}$.
This also allows us to extend~$\traceF$ and~$\normF$ to maps~$E \ra \CC$ in a non-canonical way.
Notions of symmetric and Hermitian matrices extend component-wise to~$E_\RR$ and~$F_\RR$.
Further, we write~$\detF(a) = \normF(\det(a))$ for~$a \in \Mat{g}(E_\RR)$.

We follow Koecher~\cite[\S10]{koecher-1960} and extend the trace form on~$\Mat{g}(\RR)$, which we defined in \fref{Section}{sec:hilbert_jacobi_forms}, to~$\Mat{g}(E_\RR)$ by~$(x,y) \mto \traceF \circ \trace(x y)$, where~$\trace$ is the matrix trace as before.
Note that this is a symmetric, as opposed to Hermitian, bilinear form, but takes real values if~$x, y \in \MatD{g}(E_\RR)$.
The dual~$\MatD{g}(\cOE)^\vee \subset \MatD{g}(E)$ of~$\MatD{g}(\cOE)$ with respect to the trace form consists of Hermitian matrices in~$\MatD{g}(\cOE^\vee)$ with diagonal entries in~$\cOF^\vee$.
There is a right action of~$\GL{g}(\cOE)$ on~$\MatD{g}(\cOE)^\vee$ defined by~$(t,u) \mto t[u]$, since~$\GL{g}(\cOE)$ acts on~$\MatD{g}(\cOE)$.

\subsection{Hermitian Hilbert modular forms}%
\label{ssec:hermitian_modular_jacobi_forms:modular_forms}

Given a complex, square matrix~$\tau$, we define its ``real and imaginary part'' as
\begin{gather*}
  \Re(\tau) \defeq \tfrac{1}{2}( \tau + \lT{\ovsmash\tau} )
  ,\quad
  \Im(\tau) \defeq \tfrac{1}{2 i}( \tau - \lT{\ovsmash\tau} )
  \in
  \MatD{g}(\CC)
  \tx{,}
\end{gather*}
thus maintaining the usual decomposition~$\tau = \Re(\tau) + i \Im(\tau)$.
This notion extends component-wise to matrices with values in~$E_\RR \cong \CC^{n_F}$.
The Hermitian Hilbert upper half space is
\begin{gather}
  \label{eq:def:HSg}
  \HSEg
  \defeq
  \big\{
  \tau \in \Mat{g}(E_\RR) \condsep
  \Im(\tau) > 0
  \big\}
  \tx{.}
\end{gather}
Given~$0 \le h \le g$, we decompose~$\tau \in \HSEg$ as
\begin{gather}
  \label{eq:HSg_decomposition}
  \tau
  =
  \begin{psmatrix} \tau_1 & \lT{z} \\ w & \tau_2 \end{psmatrix}
  \quad\tx{with }
  \tau_1 \in \HS_{E,g-h},\;
  \tau_2 \in \HS_{E,h},\;
  z, w \in \Mat{h,g-h}(E_\RR)
  \tx{.}
\end{gather}
We apply the transpose to~$z$ as opposed to the transpose conjugate to ensure that holomorphic functions in~$\tau$ are also holomorphic in~$z$.

The special unitary group for the extension~$E \slash F$ is defined by
\begin{gather}
  \label{eq:def:SUgg}
  \SU{g,g}(F)
  \defeq
  \Big\{
  \ga \in \SL{2g}(E) \condsep
  \lD{\ga} \begin{psmatrix} 0_g & -1_g \\ 1_g & 0_g \end{psmatrix} \ga
  =
  \begin{psmatrix} 0_g & -1_g \\ 1_g & 0_g \end{psmatrix}
  \Big\}
  \tx{,}
\end{gather}
where~$1_g$ and~$0_g$ denote the $g \times g$ identity and zero matrix.
We view~$\SU{g,g}$ as an algebraic group over~$F$ with integral structure~$\SU{g,g}(\cOF) = \SU{g,g}(F) \cap \Mat{2g}(\cOE)$.
Throughout, we decompose the elements of~$\SU{g,g}(F_\RR)$ into four block entries as~$\ga = \begin{psmatrix} a & b \\ c & d \end{psmatrix}$ for $g \times g$ matrices~$a, b, c, d$ without further mentioning the sizes of~$a, b, c, d$.
We use the following shorthand notation for translations and coordinate rotations on~$\HSEg$:
\begin{gather*}
  \trans(b)
  \defeq
  \begin{psmatrix} 1_g & b \\ 0_g & 1_g \end{psmatrix}
  \tx{,}\quad
  b \in \MatD{g}(E_\RR)
  \tx{,}
  \quad\tx{and}\quad
  \rot(a)
  \defeq
  \begin{psmatrix} a & 0_g \\ 0_g & \lDr{a}{^{-1}} \end{psmatrix}
  \tx{,}\quad
  a \in \GL{g}(E_\RR),
  \det(a) = \det(\ov{a})
  \tx{,}
\end{gather*}
and write the standard involution as
\begin{gather*}
  \sinv
  \defeq
  \begin{psmatrix}
    & -1_g \\
    1_g &
  \end{psmatrix}
  \tx{,}\quad
  \sinv(1)
  \defeq
  \begin{psmatrix}
    0 &         & -1 &         \\
    & 1_{g-1} &    &   \\
    1 &         & 0  &         \\
    &         &    & 1_{g-1}
  \end{psmatrix}
  \tx{.}
\end{gather*}

For a positive integer~$N$ we let~$\GL{g}(\cOE, N)$ and~$\SU{g,g}(\cOF, N)$ be the principal congruence subgroups of~$\GL{g}(\cOE)$ and~$\SU{g,g}(\cOF)$ of level~$N \cOE$ and~$N \cOF$, respectively.

An arithmetic type is a finite dimensional, complex representation~$\rho$ of~$\SU{g,g}(\cOF)$.
We assume throughout that the kernel of~$\rho$ is a congruence subgroup.
The level of~$\rho$ is defined as the level of its kernel.
We write~$V(\rho)$ for the representation space of~$\rho$.
For an even, positive definite Hermitian lattice~$L$ over~$E / F$ with bilinear form~$\langle \cdot\,,\,\cdot \rangle$, rank~$\rk(L) \in \ZZ$, and discriminant form~$\disc(L) = L^\vee \slash L$, we define the Weil representation on~$\CC[ \disc(L)^g ]$ with basis elements~$\frake_{\mu}$ by
\begin{gather}
  \label{eq:def:weil_representation}
  \begin{aligned}
  \rho^{(g)}_L\bigl( \trans(b) \bigr)\,
  \frake_\mu
   & =
  e\bigl( b\, \langle \mu, \mu \rangle \slash 2 \bigr)\,
  \frake_{\mu}
  \tx{,}\qquad
  \rho^{(g)}_L\bigl( \rot(a) \bigr)\,
  \frake_\mu
  =
  \detF(\ov{a})^{\rk(L)}\, \frake_{\mu a^{-1}}
  \tx{,}
  \\
  \rho^{(g)}_L\bigl( \sinv(1) \bigr)\,
  \frake_\mu
   & =
  \frac{e(-n_F \rk(L) \slash 4 )}{\sqrt{\# \disc(L)}}
  \sum_{\mu'_1 \in \disc(L)}
  \mspace{-8mu}
  e\bigl( - \langle \mu_1, \mu'_1 \rangle \bigr)\,
  \frake_{(\mu'_1,\mu_2,\ldots,\mu_g)}
  \tx{.}
  \end{aligned}
\end{gather}
By \fref{Proposition}{prop:siegel_and_klingen_parabolics_generate} this determines the representation uniquely.

The action of~$\SU{g,g}(F_\RR)$ on~$\HSEg$ is defined by
\begin{gather*}
  \begin{psmatrix} a & b \\ c & d \end{psmatrix}\,
  \tau
  \defeq
  (a \tau + b) (c \tau + d)^{-1}
  \tx{.}
\end{gather*}
It yields slash actions on functions~$\HSEg \ra \CC$ associated with weights~$k \in \ZZ$
\begin{gather*}
  \big( f \big|_k\,\ga \big) (\tau)
  \defeq
  \detF(c \tau + d)^{-k}\,
  f(\ga \tau)
  \tx{.}
\end{gather*}
This action extends linearly to functions~$\HSEg \ra V$ for complex vector spaces~$V$.

We extend the notation~$e(x)$ to matrices over~$E_\RR$ by~$e(x) = e(\traceF(x))$, and maintain the convention~$e(x + y) = e(x) e(y)$ for matrices~$x$ and~$y$ of possibly different sizes.
By this definition~$e(t \tau)$ is compatible with the notion of duals in~$\MatD{g}(E_\RR)$ in the following sense: given a complex vector space~$V$, any holomorphic function~$f \defcol \HSEg \ra V$ invariant under the action of~$\trans(N \MatD{g}(\cOE))$ admits a Fourier series expansion
\begin{gather}
  \label{eq:hermitian_fourier_expansion}
  f(\tau)
  =
  \sum_{t \in \frac{1}{N} \MatD{g}(\cOE)^\vee}
  c(f; t)\, e(t \tau)
  \tx{,}\quad
  c(f; t) \in V
  \tx{.}
\end{gather}
For convenience, if~$t$ does not contribute to this expansion we set~$c(f; t) = 0$.

We need two notions of Hermitian modular forms: one for arithmetic types and one for congruence subgroups.
For clarity we separate their definitions.
\begin{definition}%
\label{def:hermitian_modular_form_arithmetic_type}
Given~$k \in \ZZ$ and an arithmetic type~$\rho$, a holomorphic function~$f \defcol \HSEg \ra V(\rho)$ is called a \emph{Hermitian modular form} of weight~$k$ and type~$\rho$ if:
\begin{enumerateroman}
\item
For all~$\ga \in \SU{g,g}(\cOF)$ we have~$f |_k\,\ga = \rho(\ga) \circ f$.
\item
In case of~$F = \QQ$ and~$g = 1$, for all~$n \not\ge 0$ the Fourier coefficients in~\eqref{eq:hermitian_fourier_expansion} satisfy~$c(f; n) = 0$.
\end{enumerateroman}
\end{definition}

\begin{definition}%
\label{def:hermitian_modular_form_level}
Given~$k \in \ZZ$ and~$N \in \ZZ_{\ge 1}$, a holomorphic function~$f \defcol \HSEg \ra \CC$ is called a \emph{Hermitian modular form} of weight~$k$ and level~$N$ if:
\begin{enumerateroman}
\item
For all~$\ga \in \SU{g,g}(\cOF, N)$ we have~$f |_k\,\ga = f$.
\item
\label{it:def:hermitian_modular_form_level:growth}
In case of~$F = \QQ$ and~$g = 1$, for all~$\ga \in \SU{1,1}(\ZZ)$ and all~$n \not\ge 0$ we have~$c(f|_k\, \ga; n) = 0$.
\end{enumerateroman}
\end{definition}

A Hermitian modular form is called a \emph{Hermitian cusp form} if for all~$\ga \in \SU{g,g}(F)$ its Fourier coefficients satisfy~$c(f |_k\, \ga; t) = 0$ if~$t \not> 0$, that is, if~$t$ is not positive definite.

We write~$\rmM^{(g)}_k(\rho) \supseteq \rmS^{(g)}_k(\rho)$ and~$\rmM^{(g)}_k(N) \supseteq \rmS^{(g)}_k(N)$ for the spaces of Hermitian modular forms of weight~$k$ and type~$\rho$ or level~$N$ and their subspaces of cusp forms.
We may omit~$\rho$ and~$N$ from our notation, if~$\rho$ is the trivial representation and~$N = 1$.
The graded algebra of Hermitian modular forms for the full modular group is
\begin{gather*}
  \rmM^{(g)}_\bullet
  \defeq
  \bigoplus_{k \in \ZZ}
  \rmM^{(g)}_k
  \tx{.}
\end{gather*}

For a Hermitian modular form~$f$ of weight~$k$ and type~$\rho$ of level~$N$, the action of~$\rot(\GL{g}(\cOE))$ yields the symmetry relation
\begin{gather}
  \label{eq:hermitian_fourier_coefficient_symmetry}
  \begin{multlined}[0.9\displaywidth]
  c(f; t[a])
  =
  \detF(\ov{a})^k\,
  \rho(\rot(a))^{-1}\,
  c(f; t)
  \\
  \tx{for all }
  t \in \tfrac{1}{N} \MatD{g}(\cOE)^\vee
  \tx{ and }
  a \in \GL{g}(\cOE), \det(a) = \det(\ov{a})
  \end{multlined}
\end{gather}
among the Fourier coefficients of~$f$.
Similar to~\eqref{eq:HSg_decomposition}, we decompose the indices in the Fourier series~\eqref{eq:hermitian_fourier_expansion} as
\begin{gather}
  \label{eq:hermitian_fourier_index_decomposition}
  t
  =
  \begin{psmatrix} n & \lT{\ovsmash{r}} \\ r & m \end{psmatrix}
  \quad\tx{with }
  n \in \tfrac{1}{N} \MatD{g-h}(\cOE)^\vee,\;
  m \in \tfrac{1}{N} \MatD{h}(\cOE)^\vee,\;
  r \in \tfrac{1}{N} \Mat{h,g-h}(\cOE^\vee)
  \tx{.}
\end{gather}

\begin{remark}%
\label{rm:hermitian_fourier_index_decomposition}
Note the absence of the factor~$\frac{1}{2}$ in front of~$r$ compared to the Siegel and Hilbert-Siegel modular case.
The difference stems from the contribution to the exponential, which here is~$e(t \tau) = e(n \tau_1 + \lT{r} z + \lT{\ovsmash{r}}w + m \tau_2)$ and in the case of Siegel modular forms is~$e(t \tau) = e(n \tau_1 + 2 \lT{r} z + m \tau_2)$.
\end{remark}

The Koecher Principle guarantees that the analogue of the growth condition is satisfied automatically for Hermitian modular forms of genus greater than~$1$ or over base fields~$F \ne \QQ$.
We also need the asymptotic dimension of spaces of Hermitian modular forms.
Both follow from geometric arguments.

\begin{theorem}%
\label{thm:hermitian_koecher_principle}
Given a Hermitian modular form~$f$ of genus~$g$, weight~$k \in \ZZ$, and type~$\rho$ or level~$N$, for all~$\ga \in \SU{g,g}(\cOF)$ we have~$c(f |_k\, \ga; t) = 0$ if~$t \not\ge 0$.
\end{theorem}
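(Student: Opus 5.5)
We need to prove the Koecher principle for Hermitian Hilbert modular forms: $c(f|_k\gamma;t)=0$ unless $t\ge 0$. There are three cases. If $F=\QQ$ and $g=1$, the statement is the growth condition in the definitions. If $F\neq\QQ$ and $g=1$, it is the Hilbert Koecher principle of Theorem~\ref{thm:hilbert_koecher_principle}, since $\SU{1,1}$ over $E/F$ is essentially $\SL{2}$ over $F$ up to a twist by norm-one units. The genuine case is $g\ge 2$ (arbitrary $F$), where we run Koecher's classical argument via the unit group $\GL{g}(\cOE)$.

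First reduce to trivial type and level~$N$. For type~$\rho$, each component $v(f|_k\gamma)$ with $v\in V(\rho)^\vee$ is a scalar form of level equal to the level of~$\rho$. Replacing $f$ by $f|_k\gamma$ for $\gamma\in\SU{g,g}(\cOF)$ preserves level~$N$ up to conjugation, since principal congruence subgroups are normal. So it suffices to show $c(f;t)=0$ for $t\not\ge 0$, where $f$ has level~$N$.

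Suppose $c(f;t_0)\neq 0$ with $t_0\not\ge 0$. Then there is a component (real place) $j$ and a vector $v\in E^g$ with $t_0[v]_j<0$; by density we may take $v$ rational. Invariance under $\rot(a)$ for $a\in\GL{g}(\cOE,N)$ with $\det a=\det\ov a$ gives $|c(f;t_0[a])|=|c(f;t_0)|$, because the factor $\detF(\ov a)^k$ has absolute value~$1$ (as $\det a$ is a unit). Fix $\tau=iy$ with $y=1_g$. Absolute convergence of the Fourier series at $\tau$ then forces
\[
\sum_{a} \exp\bigl(-2\pi\,\traceF\trace(t_0[a])\bigr)<\infty,
\]
summed over the distinct $t_0[a]$ in the orbit. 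We now produce infinitely many distinct $a$ with $\traceF\trace(t_0[a])$ bounded above, which gives a contradiction.

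For $g\ge2$, use unipotent elements $a=1_g+x\,\lD{w}$ with $x\in N\cOE^g$, $w$ chosen so that $\lD{w}x=0$ (so $\det a=1$), and $x$ a multiple of $v$. Then $t_0[a]=t_0+\lD{(\cdot)}$-terms linear in $x$, plus $w\,t_0[x]\,\lD{w}$. Scaling $x=\lambda v$ with $\lambda\in N\cOF$ makes the quadratic term $|\lambda|^2 t_0[v]\,w\lD w$. Its trace is negative and dominant at place~$j$, but at the other places its sign is uncontrolled. To handle this, use Lemma~\ref{la:unit_balancing} / Dirichlet-style approximation: choose $\lambda\in N\cOF$ with $|\lambda_j|$ large and $|\lambda_i|$ small for $i\neq j$ (units or elements of $\cOF$ as in weak approximation, $\cOF$ being a lattice in $F_\RR$ of full rank). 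Then the total trace tends to $-\infty$ along a sequence of distinct $t_0[a]$, so the terms $\exp(-2\pi\cdot)$ blow up. This contradicts convergence; hence $c(f;t)=0$ whenever $t\not\ge 0$.

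The main obstacle is the existence of such $a$ when $F\neq\QQ$ and the bad place is only one among several. Balancing the other places needs care: take $\lambda$ in $\cOF$ with $|\lambda_j|\to\infty$ and $\prod_{i\neq j}|\lambda_i|$ kept bounded, using the units' log-lattice from Lemma~\ref{la:unit_balancing}. If $t_0[v]_i<0$ at some other places too, that only helps. The linear cross terms are $O(|\lambda|)$ and are dominated by the quadratic term at place~$j$.
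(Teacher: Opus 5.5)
Your argument is correct, but it takes a different route from the paper's proof. The paper handles every case other than $F=\QQ$, $g=1$ at once by citing Baily--Borel: $\SU{g,g}(\cOF,N)\backslash\HSEg$ has a normal compactification whose boundary has codimension at least~$2$, and the vanishing of the coefficients with $t\not\ge 0$ comes from extending $f$ holomorphically across that boundary. You instead run the analytic Koecher argument, which the paper only points to in the remark after the statement. Its ingredients are:
\begin{enumerateroman}
\item absolute convergence of the Fourier series at $\tau=i1_g$;
\item the $\rot$-symmetry for the unipotent elements $a=1_g+\lambda v\lD{w}$ of $\GL{g}(\cOE,N)$ with $\lD{w}v=0$;
\item when $F\neq\QQ$, a choice of $\lambda\in N\cOF$ that is large at the place where $t_0$ fails to be semi-definite and bounded at all other places, so that possibly positive contributions from those places cannot compensate.
\end{enumerateroman}
The geometric proof is uniform in $g$ and $F$, but it rests on a deep compactification theorem and leaves implicit the passage from extension to vanishing of coefficients. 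Yours is elementary and self-contained. However, your unipotent construction needs $g\ge 2$ to find $w\neq 0$ with $\lD{w}v=0$, so for $g=1$ you must fall back on the Hilbert Koecher principle. That fallback is legitimate: in fact $\SU{1,1}(F)=\SL{2}(F)$ and $\HS_{E,1}=\HS_F$ exactly, with no twist by norm-one units involved.

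One phrase should be tightened. Keeping $\prod_{i\neq j}|\lambda_i|$ bounded is not enough, since a single $|\lambda_i|$ at a place where the quadratic term is positive could still blow up. You need each $|\lambda_i|$, $i\neq j$, to be bounded. You get this from $\lambda=N\epsilon^n$ with a unit $\epsilon\in\cOF^\times$ satisfying $|\epsilon_j|>1>|\epsilon_i|$ for all $i\neq j$, which exists by Dirichlet's unit theorem. Note that Lemma~\ref{la:unit_balancing} balances the components, which is the opposite of what you need here.
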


\begin{proof}
If~$F = \QQ$ and $g = 1$, this is part of the definition.
In all other cases, by Baily--Borel~\cite{baily-borel-1966} the modular orbifold~$\SU{g,g}(\cOF, N) \backslash \HSEg$ admits a normal compactification with boundary of codimension at least~$2$.
\end{proof}

\begin{remark}
For an analytic proof, if~$F \ne \QQ$ and~$g = 1$, the Koecher Principle is the statement of Theorem~1.4 of~\cite{garrett-1990}.
If~$g > 1$, the proof for Siegel modular forms (see Hilfssatz~3.5 of~\cite{freitag-1983}) extends to the Hermitian case.
\end{remark}

\begin{proposition}%
\label{prop:hermitian_modular_forms_dimension_asymptotic}
For fixed genus~$g$ we have
\begin{gather*}
  \dim\, \rmM^{(g)}_k
  \asymp_{E}
  k^{n_F g^2}
  \tx{ as }
  k \ra \infty
  \tx{.}
\end{gather*}
\end{proposition}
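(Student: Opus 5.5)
The dimension asymptotic should follow from the geometry of the Baily--Borel compactification, in parallel with the proof of \fref{Theorem}{thm:hermitian_koecher_principle}. Set $X = \SU{g,g}(\cOF) \backslash \HSEg$. Its complex dimension is $n_F g^2$, since $\HSEg$ is an open subset of $\Mat{g}(E_\RR) \cong \Mat{g}(\CC)^{n_F}$. By Baily--Borel~\cite{baily-borel-1966}, the Satake compactification $X^*$ is a normal projective variety, $\mathrm{Proj}\, \rmM^{(g)}_\bullet \cong X^*$, and the boundary has codimension at least~$2$ except when $F = \QQ$ and $g = 1$. The weight-$k$ forms are then the sections of $\cL^{\otimes k}$, where $\cL$ is the line bundle (or $\QQ$-Cartier sheaf) of weight-one forms descended to $X^*$. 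The growth condition in \fref{Definition}{def:hermitian_modular_form_arithmetic_type} is exactly what makes this identification hold in the exceptional case.

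The plan has three steps.

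\textbf{Step 1: pass to a torsion-free level.} I would replace $\SU{g,g}(\cOF)$ by a neat normal subgroup $\Ga = \SU{g,g}(\cOF,N)$, chosen so that $-1 \notin \Ga$ acts trivially, or else restricting to weights compatible with the central character. Then $X_\Ga^* = \Ga \backslash \HSEg^*$ is normal projective, and $\cL_\Ga$ is an ample line bundle on a finite cover of $X^*$. This ampleness is the content of Baily--Borel, which realizes $X^*_\Ga$ as $\mathrm{Proj}$ of the ring of modular forms. The finite group $G = \SU{g,g}(\cOF)/\Ga$ acts, and we have
\begin{gather*}
  \rmM^{(g)}_k = \bigl( \rmM^{(g)}_k(N) \bigr)^{G}
  \tx{.}
\end{gather*}

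\textbf{Step 2: Hilbert polynomial at the level.} By asymptotic Riemann--Roch for an ample line bundle on a normal projective variety of dimension $d = n_F g^2$, we get
\begin{gather*}
  \dim H^0\bigl( X^*_\Ga, \cL_\Ga^{\otimes k} \bigr) = \frac{(\cL_\Ga^d)}{d!}\, k^d + O(k^{d-1})
  \tx{,}
\end{gather*}
with $(\cL_\Ga^d) > 0$. An analytic alternative is Hirzebruch--Mumford proportionality: the leading coefficient is a volume of $\Ga\backslash\HSEg$ times a positive constant.

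\textbf{Step 3: descend to the full group.} The upper bound $\dim \rmM^{(g)}_k \le \dim \rmM^{(g)}_k(N) \ll k^d$ is immediate. For the lower bound I would use that $\mathrm{Proj}$ of the invariant ring is $X^* = X^*_\Ga/G$. Some power $\cL^{\otimes k_0}$ descends to an ample line bundle on $X^*$, so the invariant ring $\rmM^{(g)}_\bullet$ is finitely generated of Krull dimension $d+1$. Its Hilbert function therefore grows like $k^d$ along multiples of $k_0$.

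\textbf{Main obstacle.} The main difficulty is to get the lower bound for \emph{all} large $k$, not only for multiples of $k_0$ or of the period of a quasi-polynomial, which is what $\asymp$ requires. I would first try to produce a single nonzero form $f_1$ of weight $k_1$ and $f_2$ of weight $k_1 + 1$, for example Eisenstein series or theta series of suitable Hermitian lattices, or an odd-weight form if one exists. Multiplying by $f_1$ or $f_2$ injects $\rmM_{k - k_1}$ or $\rmM_{k - k_1 - 1}$ into $\rmM_k$, which covers all residues. The case that needs checking is when odd weights vanish identically, for instance when a central element $\zeta \cdot 1_{2g}$ acts by $\detF(\zeta)^{-k} \ne 1$. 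Then the statement must be read for the weights in which $\rmM^{(g)}_k$ can be nonzero, and I would treat this parity issue carefully.
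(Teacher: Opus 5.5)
Your proposal is correct and takes the same route as the paper. The paper's proof simply invokes Baily--Borel to identify weight~$k$ forms with sections of the $k$\thdash{} power of an ample line bundle on the $n_F g^2$\nbd{}dimensional quotient, and your Steps~1--3 make the resulting Hilbert-function argument explicit.

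Your caution about residue classes is warranted, and the paper glosses over it. Since $-1_{2g} \in \SU{g,g}(\cOF)$ acts by $(-1)^{g n_F k}$, one has $\rmM^{(g)}_k = 0$ for odd~$k$ whenever $g n_F$ is odd. So the lower bound can only hold along weights that admit nonzero forms, and this suffices for its use in \fref{Corollary}{cor:symmetric_formal_fourier_jacobi_series_algebraic}.
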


\begin{proof}
Hermitian modular forms of weight~$k$ correspond to sections of the~$k$\thdash{} power of a suitable line bundle~$\cM$ on the orbifold~$\SU{g,g}(\cOF) \backslash \HSEg$ of dimension~$n_F g^2$, which is very ample by Baily--Borel~\cite{baily-borel-1966} for sufficiently large~$k$.
\end{proof}

For integers~$0 \le h \le g$ we have the following parabolic subgroups of~$\SU{g,g}(F_\RR)$:
\begin{align*}
  P_h(F_\RR)
   & \defeq
  \Bigg\{
  \begin{psmatrix}
    a &   & b &                \\
    & u   &   &                \\
    c &   & d &                \\
    &   &   & \lDr{u}{^{-1}}
  \end{psmatrix}\;
  \begin{psmatrix}
    1_{g-h} &     &         & \lD{\mu}      \\
    \lambda & 1_h & \mu     & \kappa        \\
    &     & 1_{g-h} & -\lD{\lambda} \\
    &     &         & 1_h
  \end{psmatrix}
  \condsep
  \begin{split}
   &
  u \in \GL{h}(E_\RR), \det(u) = \det(\ov{u}),\,
  \begin{psmatrix} a & b \\ c & d \end{psmatrix} \in \SU{g-h,g-h}(F_\RR),
  \\
   &
  \mu, \lambda \in \Mat{h,g-h}(E_\RR),\,
  \kappa + \lD{\la} \mu \in \MatD{h}(E_\RR)
  \end{split}
  \Bigg\}
  \tx{,}
  \\
  Q_h(F_\RR)
   & \defeq
  \Bigg\{
  \begin{psmatrix}
    a &   & b &                \\
    & 1_{g-h} &   &                \\
    c &   & d &                \\
    &   &   & 1_{g-h}
  \end{psmatrix}\;
  \begin{psmatrix}
    1_{g-h} &     &         & \lD{\mu}      \\
    \lambda & 1_h & \mu     & \kappa        \\
    &     & 1_{g-h} & -\lD{\lambda} \\
    &     &         & 1_h
  \end{psmatrix}
  \condsep
  \begin{split}
   &
  \begin{psmatrix} a & b \\ c & d \end{psmatrix} \in \SU{g-h,g-h}(F_\RR),
  \\
   &
  \mu, \lambda \in \Mat{h,g-h}(E_\RR),\,
  \kappa + \lD{\la} \mu \in \MatD{h}(E_\RR)
  \end{split}
  \Bigg\}
  \tx{,}
\end{align*}
where all omitted matrix entries are zero and we suppress~$g$ from our notation~$P_h$.
These groups correspond to algebraic subgroups of~$\SU{g,g}$.
We write~$P_h(\cOF)$ and~$Q_h(\cOF)$ for their integral structures.
As a special case we have the Siegel-type parabolic~$P_g(F_\RR)$, whose unipotent radical and Levi factor consist of all~$\trans(b)$, $b \in \MatD{g}(E_\RR)$ and~$\rot(u)$, $u \in \GL{g}(E_\RR)$ with~$\det(u) = \det(\ov{u})$, respectively.

We conclude with a statement on generators of the special unitary group.
Later, we need the case~$h = 1$, but the proof remains the same for all~$h$ in the proposition.

\begin{proposition}%
\label{prop:siegel_and_klingen_parabolics_generate}
For\/~$1 \le h < g$, the special unitary group is generated as follows:
\begin{gather}
  \label{eq:siegel_and_klingen_parabolics_generate}
  \SU{g,g}(\cOF)
  =
  \big\langle P_g(\cOF),\, Q_h(\cOF) \big\rangle
  \tx{.}
\end{gather}
\end{proposition}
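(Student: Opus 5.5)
The plan is to reduce to the case~$h = 1$, to prove generation of the elementary subgroup, and then to absorb the difference between the elementary subgroup and the full group using $\rmK$\nbd{}theoretic results for unitary groups as in Hahn--O'Meara~\cite{hahn-omeara-1989}.

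Write~$G \defeq \langle P_g(\cOF), Q_h(\cOF) \rangle$. First note that~$Q_1(\cOF) \subseteq \langle P_g(\cOF), Q_h(\cOF) \rangle$ for all~$h \ge 1$. The group~$Q_h$ contains an embedded~$\SU{g-h,g-h}$ in the first~$g-h$ coordinates, and for~$g-h \ge 1$ this contains the~$\SU{1,1}$ in the first coordinate. The Heisenberg part of~$Q_1$ consists of the entries~$\la, \mu, \ka$. Its elements with~$\la = 0$ lie in~$P_g(\cOF)$. The remaining ones are obtained by conjugating these with the element~$\sinv(1)$, which lies in the embedded~$\SU{1,1}(\cOF) \subset Q_h(\cOF)$, and with rotations~$\rot(u) \in P_g(\cOF)$. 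It therefore suffices to show~$\SU{g,g}(\cOF) = \langle P_g(\cOF), \sinv(1) \rangle$, which reduces everything to~$h = 1$.

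Next, I will show that~$G$ contains the elementary unitary group~$\mathrm{EU}_{2g}(\cOE)$ in the sense of Hahn--O'Meara. This group is generated by the unipotent radicals of the Siegel parabolic and its opposite, together with elementary root subgroups. The upper unipotent~$\trans(\MatD{g}(\cOE))$ lies in~$P_g(\cOF)$, and~$\rot(\GL{g}(\cOE))$ supplies the~$\mathrm{GL}$\nbd{}type elementary matrices.

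The lower unipotent~$\begin{psmatrix} 1 & 0 \\ c & 1 \end{psmatrix}$ with~$c$ Hermitian requires more work. Since rotations~$\rot(u)$ act on~$c$ by~$c \mto \lD{u}^{-1} c u^{-1}$, the lower Hermitian matrices are generated by conjugates of rank-one elements of the form~$c = x\, e_{11}$ with~$x \in \cOF^\vee$-type entries, possibly together with the off-diagonal types~$e_{12}$. The rank-one elements are conjugates of~$\trans(x\,e_{11})$ by~$\sinv(1)$. The off-diagonal types are obtained from commutators with rotations, using~$g \ge 2$.

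Hence~$G \supseteq \mathrm{EU}_{2g}(\cOE)$, and moreover~$\sinv \in G$, since~$\sinv$ is the product of the commuting coordinate involutions, each conjugate to~$\sinv(1)$ by a permutation rotation.

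The main obstacle is the final step: passing from~$\mathrm{EU}_{2g}(\cOE) \cdot \rot(\GL{g}(\cOE))$ to all of~$\SU{g,g}(\cOF)$. For this I would invoke the stability and normality theorems in Hahn--O'Meara. Because~$\cOE$ is a Dedekind ring, the hyperbolic unitary group of rank~$g \ge 2$ satisfies~$\mathrm{U}_{2g}(\cOE) = \mathrm{EU}_{2g}(\cOE) \cdot \mathrm{U}_2(\cOE)$ embedded in the first hyperbolic plane. This is surjective stability of the unitary~$\rmK_1$, reflecting that~$\cOE$ has stable rank~$\le 2$.

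It then remains to show that each element of~$\SU{1,1}$-type or~$\mathrm{U}_2$-type in the first coordinate lies in~$G$ after correction by a determinant. The first-coordinate~$\SU{1,1}(\cOF)$ itself is contained in~$Q_1(\cOF)$, hence in~$G$. The diagonal~$\mathrm{U}_1$-type factor with the right determinant is absorbed by~$\rot(\diag(\eps, \ov{\eps}^{-1}, 1, \ldots))$, which is valid for~$g \ge 2$.

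If the Hahn--O'Meara statement is only available for the full unitary group, I would intersect with determinant one and handle the determinant via~$\rot(u)$ for~$u$ with~$\det(u) = \det(\ov{u})$. I expect the bookkeeping of these determinant conditions to be the most delicate point.
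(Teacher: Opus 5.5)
The gap is in your final step. Your first two steps are sound and agree with the paper. Because the first-coordinate $\SU{1,1}(\cOF)$ lies in $Q_h(\cOF)$, it suffices to treat $\langle P_g(\cOF), \SU{1,1}(\cOF)\rangle$. Once $\sinv$ (a product of $\rot$-conjugates of $\sinv(1)$) is in this group, conjugating the translations $\trans(b)$ by $\sinv$ gives all lower unipotents, so the group contains Hahn--O'Meara's $\mathrm{EU}_{2g}(\cOE)$. The rank-one/commutator detour is unnecessary, and the diagonal entries of $c$ lie in $\cOF$, not $\cOF^\vee$.

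\emph{The gap.} The decomposition $\mathrm{U}_{2g}(\cOE) = \mathrm{EU}_{2g}(\cOE)\cdot \mathrm{U}_2(\cOE)$ is asserted without a reference, and stable rank $\le 2$ does not by itself give it. Stable-range manipulations with elementary matrices can make the first-plane component $(a_1,b_1)$ of a column $\sigma e_2$ unimodular. However, $\mathrm{U}_2 = \U{1,1}(\cOF)$ preserves the skew-Hermitian form. It can therefore carry $(a_1,b_1)$ to a standard isotropic vector only if also $\ov{a_1}\, b_1 \in F$, and a stable-range argument does not provide this. This is where the unitary case differs from the symplectic one: there every vector is isotropic, which is why reduction to $\mathrm{Sp}_2$ works.

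\emph{Why this is not a weaker input than the paper's.} $\SU{1,1}(\cOF)=\SL{2}(\cOF)$ is generated by elementary matrices (Theorem~4.3.10 of Hahn--O'Meara, also used in the paper). Their images are the generators $\trans(s_{1,1}(x))$, $x\in\cOF$, and their transposes, which lie in $\mathrm{EU}_{2g}(\cOE)$. Hence your identity $\SU{g,g}(\cOF)=\mathrm{EU}_{2g}(\cOE)\cdot\SU{1,1}(\cOF)$ is equivalent to $\mathrm{SU}_{2g}(\cOE)=\mathrm{EU}_{2g}(\cOE)$. That is exactly the arithmetic statement the paper invokes: Theorem~9.2.6 of Hahn--O'Meara, for the Hasse domains $\cOE\supset\cOF$ with form parameter $\Lambda=\cOF$. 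Your route does not avoid this input.

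\emph{The fix.} Cite Theorem~9.2.6 directly. Your first two steps then complete the proof, and the $\mathrm{U}_2$-factor disappears.

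\emph{Determinants.} The bookkeeping you worry about is vacuous. Elements of $\mathrm{EU}_{2g}$ have determinant $1$, so the $\mathrm{U}_2$-factor of an element of $\SU{g,g}(\cOF)$ automatically lies in $\SU{1,1}(\cOF)\subseteq Q_h(\cOF)$. Your proposed correction $\rot(\diag(\eps,\ov{\eps}^{-1},1,\ldots))$ is in general not in $P_g(\cOF)$. Membership requires $\det(a)=\det(\ov{a})$, i.e.\ $(\eps/\ov{\eps})^2=1$.
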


\begin{proof}
For all~$1 \le h < g$ the definition of~$Q_h(\cOF)$ shows that~$\SU{1,1}(\cOF) \subset Q_h(\cOF)$ viewed as the set of matrices
\begin{gather*}
  \begin{psmatrix}
    a &         & b &         \\
    & 1_{g-1} &   &   \\
    c &         & d &         \\
    &         &   & 1_{g-1}
  \end{psmatrix}
  \quad\tx{with }
  \begin{psmatrix} a & b \\ c & d \end{psmatrix} \in \SU{1,1}(\cOF)
  \tx{,}
\end{gather*}
where zero entries are omitted.
The proposition thus follows if we show that
\begin{gather}
  \label{eq:prf:prop:siegel_and_klingen_parabolics_generate:unitary_rhs}
  \SU{g,g}(\cOF)
  =
  \big\langle P_g(\cOF),\, \SU{1,1}(\cOF) \big\rangle
  \tx{.}
\end{gather}
This is a consequence of Theorem~9.2.6, p.~534 by Hahn--O'Meara~\cite{hahn-omeara-1989}.
We have to translate the statement~$\rmS\rmU_{2n}(R) = \rmE\rmU_{2n}(R)$ in Hahn--O'Meara's theorem and verify their condition~(iii).

We will need the notion of form rings~$(R,\Lambda)$, including the notion of form parameters~$\Lambda$, that appears in~9.1.4 of~\cite{hahn-omeara-1989}.
They are defined in Section~5.1C.
Form ideals~$(\fraka,\Ga)$ are defined in Section~5.2D; we will only need the trivial one~$(\fraka, \Ga) = (R,\Lambda)$.
Specifically, in our case we consider the ring~$R = \cOE$ with complex conjugation, denoted~$J$ by Hahn--O'Meara, and hence~$\epsilon = -1$.
We have~$\Lambda_{\max} = \cOF$, the ring of invariants in~$\cOE$ under complex conjugation.
This allows us to choose~$\Lambda = \cOF$ as the form parameter.
In notation of Theorem~9.2.6 of~\cite{hahn-omeara-1989}, we have~$R_0 = \cOF$.
Both~$\cOE \subset E$ and~$\cOF \subset F$ are Hasse domains, whose definition Hahn--O'Meara give in their Section~2.2E.
This confirms condition~(iii) of Theorem~9.2.6.

Next we translate the notion of unitary groups.
Our goal is to show that~$\rmS\rmU_{2g}(R)$ in Hahn--O'Meara coincide with our groups~$\SU{g,g}(\cOF)$.
The unitary group~$\U{g,g}(\cOF)$ (defined as our~$\SU{g,g}(\cOF)$ except for the determinant condition) is called a hyperbolic unitary group by Hahn--O'Meara.
The underlying quadratic module is defined in Section~5.3 of~\cite{hahn-omeara-1989}.
Note that the matrix~$F$ at the beginning of Section~5.3 is printed incorrectly.
Its bottom left block entry needs to be~$\epsilon$ times the identity, thus matching the form that appears in~\eqref{eq:def:SUgg} of the present work.
Statement~5.3.1~of~\cite{hahn-omeara-1989} asserts that~$\rmU_{2n}(R, \Lambda)$ coincides with our~$\U{n,n}(\cOF)$ as desired.
Section~5.3A, p.~226, contains the definition of~$\rmU_{2n}(R)$ and in the text following afterwards the definition of~$\rmS\rmU_{2n}(R)$, which coincides with~$\SU{n,n}(\cOF)$ in this work.

It remains to translate the elementary subgroup~$\rmE\rmU_{2g}(R)$.
It is defined in terms of generators, as we will see.
We have to show that the right hand side of~\eqref{eq:prf:prop:siegel_and_klingen_parabolics_generate:unitary_rhs} contains all of them.
The definition of~$\rmE\rmU_{2g}(R,\Lambda)$ is given in the discussion after Statement~5.3.1~of~\cite{hahn-omeara-1989}.
It is the group generated by the elementary unitary matrices~$E_{i\!j}(r)$ defined there as well.
We have~$\rmE\rmU_{2g}(R) = \rmE\rmU_{2g}(R, \Lambda)$ by the definition on p.~226.
For convenience, we reproduce the definition of~$E_{i\!j}(r)$ and adopt our notation.
All entries of the following matrices~$e_{m,n}(r)$ and~$s_{m,n}(r)$ that we do not mention are zero:
For~$1 \le m \ne n \le g$ and~$r \in \cOE$, we let~$e_{m,n}(r) \in \SL{g}(\cOE)$ be the usual elementary matrix with~$r$ in position~$(m,n)$ and diagonal entries equal to~$1$.
For~$1 \le m, n \le g$ and~$r \in \cOE$ if~$m \ne n$ or~$r \in \cOF$ if~$m = n$, we let~$s_{m,n}(r)$ be the matrix with~$r$ in position~$(m,n)$ and~$\ov{r}$ in position~$(n,m)$.
Using~$e_{m,n}(r)$ and~$s_{m,n}(r)$, we build the unitary elementary matrices
\begin{gather}
  \label{eq:prf:prop:siegel_and_klingen_parabolics_generate:unitary_elementary_matrices}
  \trans\big( s_{m,n}(r) \big),\;
  \lD{\trans\big(s_{m,n}(r) \big)},\;
  \rot\big( e_{n,m}(r) \big)
  \tx{.}
\end{gather}
Hahn--O'Meara define~$\rmE\rmU_{2g}(R, \Lambda)$ as the group generated by~\eqref{eq:prf:prop:siegel_and_klingen_parabolics_generate:unitary_elementary_matrices} as~$m$, $n$, and~$r$ run through all possible choices.

Thus to finish the proof, we have to show that the right hand side of~\eqref{eq:prf:prop:siegel_and_klingen_parabolics_generate:unitary_rhs} contains all matrices in~\eqref{eq:prf:prop:siegel_and_klingen_parabolics_generate:unitary_elementary_matrices}.
From the definition of the parabolic group, we see directly that
\begin{gather*}
  \trans\big( s_{m,n}(r) \big),\,
  \rot\big( e_{n,m}(r) \big)
  \in
  P_g(\cOF)
  \quad\tx{ for all } m, n, r
  \tx{.}
\end{gather*}
Let~$u_i$ be the permutation matrix that swaps the~$1$\stdash{} and the~$i$\thdash{} component of~$\cOE^g$.
Since we have~$\rot(u_i) \in P_g(\cOF)$ and~$\sinv(1) \in \SU{1,1}(\cOF) \subset \SU{g,g}(\cOF)$, we also have
\begin{gather*}
  \sinv
  =
  \sinv(1) \cdot
  \rot(u_2) \sinv(1) \rot(u_2) \cdots
  \rot(u_g) \sinv(1) \rot(u_g)
  \in
  \big\langle P_g(\cOF),\, \SU{1,1}(\cOF) \big\rangle
  \tx{.}
\end{gather*}
Now the equality
\begin{gather*}
  \lD{\trans\big( s_{m,n}(r) \big)}
  =
  \sinv^{-1}\,
  \trans\big( s_{m,n}(-\ov{r}) \big)\,
  \sinv
  \in
  \big\langle P_g(\cOF),\, \SU{1,1}(\cOF) \big\rangle
\end{gather*}
finishes the proof.
\end{proof}

\subsection{Hermitian Hilbert--Jacobi forms}%
\label{ssec:hermitian_modular_jacobi_forms:jacobi_forms}

As for Hermitian Hilbert modular forms, we allow ourselves to refer to Hermitian Hilbert--Jacobi forms as Hermitian Jacobi forms.
Recall the setup from \fref{Section}{ssec:hilbert_jacobi_forms:jacobi_forms_definition}.
We assume~$1 \le h < g$ throughout this section.
We restrict to the case of Hermitian Jacobi forms whose Jacobi index is a free lattice, as these are the ones that arise from the Fourier--Jacobi expansion of Hermitian modular forms.
The Hermitian Jacobi upper half space is defined by
\begin{gather}%
  \label{eq:def:HJgh}
  \HJ{E,g-h,h}
  \defeq
  \HS_{E,g-h} \times \Mat{h,g-h}(E_\RR) \times \Mat{h,g-h}(E_\RR)
  \tx{,}
\end{gather}
with typical elements denoted~$(\tau_1,z,w)$.
Given~$(\tau_1,z,w) \in \HJ{E,g-h,h}$ all~$\tau_2 \in \HS_{E,h}$ such that~$\Im(\tau_2)$ has sufficiently large eigenvalues yield~$\tau \in \HSEg$ as in~\eqref{eq:HSg_decomposition}.
For~$\la, \mu \in \Mat{h,g-h}(E_\RR)$, we set
\begin{gather}
  \label{eq:def:hermitian_jacobi_group_translation}
  \transJU(\la, \mu)
  \defeq
  \rot\bigl( \begin{psmatrix} 1 & 0 \\ \la & 1 \end{psmatrix} \bigr)\,
  \trans\Bigl( \begin{psmatrix} 0 & \lD{\mu} \\ \mu & 0 \end{psmatrix} \Bigr)
  \in
  Q_h(F_\RR)
  \tx{.}
\end{gather}
The Jacobi slash action of~$\ga \in Q_h(F_\RR)$ on functions~$\HJ{E,g-h,h} \ra \CC$ with~$k \in \ZZ$ and~$m \in \MatD{g}(E_\RR)$ is
\begin{gather}
  \label{eq:def:hermitian_jacobi_slash_action}
  \big( \phi \big|_{k,m}\, \ga \big) (\tau_1, z, w)
  \defeq
  e(-m \tau_2)\,
  \bigl(
  \phi(\tau_1, z, w)\, e(m \tau_2) \big|_k\, \ga
  \bigr)
  \tx{.}
\end{gather}

Given a holomorphic~$\phi \defcol \smash{\HJ{E,g-h,h}} \ra V$ for a complex vector space~$V$, suppose that~$\phi$ is invariant under the slash action of~$\trans(N \smash{\MatD{g}}(\cOE)) \subset Q_h(\cOF)$.
In analogy with~\eqref{eq:hermitian_fourier_expansion}, we have~$\phi = 0$ if~$m \not\in \tfrac{1}{N} \MatD{h}(\cOE)^\vee$.
Further,~$\phi$ admits the Fourier series expansion
\begin{gather}
  \label{eq:def:hermitian_jacobi_forms_fourier_expansion}
  \phi(\tau_1, z, w)
  =
  \sum_{\substack{
      n \in \frac{1}{N} \MatD{g-h}(\cOE)^\vee \\
      r \in \frac{1}{N} \Mat{h,g-h}(\cOE^\vee)
    }}
  \mspace{-24mu}
  c(\phi; n, r)\,
  e\bigl( n \tau_1 + \lT{r} z + \lT{\ovsmash{r}} w \bigr)
  \tx{,}
  \quad
  c(\phi; n, r)
  \in
  V
  \tx{.}
\end{gather}
We set~$c(\phi; n, r) = 0$ if~$(n,r)$ does not contribute to this expansion.

In the next definitions we employ the block decomposition of~$\tau$ and~$t$ in~\eqref{eq:HSg_decomposition} and~\eqref{eq:hermitian_fourier_index_decomposition}.
The notion of Hermitian Jacobi forms of level~$N$ is merely required in the proof of \fref{Proposition}{prop:hermitian_jacobi_forms_torsion_points_modularity}.

\begin{definition}%
\label{def:hermitian_jacobi_form}
Given~$k \in \ZZ$, $m \in \MatD{h}(E)$, and an arithmetic type~$\rho$ of genus~$g$, we call a holomorphic function~$\HJ{E,g-h,h} \ra V(\rho)$ a \emph{Hermitian Jacobi form} of genus~$g-h$, cogenus~$h$, weight~$k$, index~$m$, and type~$\rho$ if:
\begin{enumerateroman}
\item
For all~$\ga \in Q_h(\cOF) \subset \SU{g,g}(\cOF)$, we have~$\phi \big|_{k,m}\, \ga = \rho(\ga) \circ \phi$.

\item
If~$F = \QQ$ and~$h = g-1$, for all~$n, r$ the Fourier coefficients in~\eqref{eq:def:hermitian_jacobi_forms_fourier_expansion} satisfy~$c(\phi; n, r) = 0$ if~$t \not\ge 0$.
\end{enumerateroman}
\end{definition}

\begin{definition}%
\label{def:hermitian_jacobi_form_level}
Given~$k \in \ZZ$, $m \in \MatD{h}(E)$, and~$N \in \ZZ_{\ge 1}$, we call a holomorphic function~$\HJ{E,g-h,h} \ra \CC$ a \emph{Hermitian Jacobi form} of genus~$g-h$, cogenus~$h$, weight~$k$, index~$m$, and level~$N$ if:
\begin{enumerateroman}
\item
For all~$\ga \in Q_h(\cOF) \cap \SU{g,g}(\cOF, N)$, we have~$\phi \big|_{k,m}\, \ga = \phi$.

\item
If~$F = \QQ$ and~$h = g-1$, for all~$\ga \in \SU{1,1}(\ZZ)$ and for all~$n, r$ we have~$c(\phi |_{k,m}\, \ga; n, r) = 0$ if~$t \not\ge 0$.
\end{enumerateroman}
\end{definition}

A Hermitian Jacobi form of genus~$g-h$ and cogenus~$h$ is called a \emph{Hermitian Jacobi cusp form} if for all~$\ga \in Q_h(F)$ its Fourier coefficients satisfy~$c(\phi |_{k,m}\, \ga; n, r) = 0$ for~$t \not> 0$.
Note that by this definition Hermitian Jacobi cusp forms vanish if~$m \not> 0$.

The spaces of Hermitian Jacobi forms of weight~$k$, index~$m$, and type~$\rho$ or level~$N$ are written as~$\rmJ^{(g-h)}_{k,m}(\rho)$ and~$\rmJ^{(g-h)}_{k,m}(N)$.
As in the case of Hermitian modular forms, we may omit~$\rho$ or~$N$ from our notation, if~$\rho$ is the trivial representation or~$N = 1$.
We will not need notation for the space of Jacobi cusp forms.

The proof of the Koecher Principle in \fref{Theorem}{thm:hermitian_koecher_principle} extends to the case of Hermitian Jacobi forms, for which we record without further argument:

\begin{theorem}%
\label{thm:hermitian_jacobi_forms_koecher_principle}
Given a Hermitian Jacobi form~$\phi$ of genus~$g-h$, cogenus~$h$, weight~$k \in \ZZ$, and type~$\rho$ or level~$N$, for all~$\ga \in Q_h(\cOF) \subset \SU{g,g}(\cOF)$ we have~$c(\phi |_{k,m}\, \ga; n, r) = 0$ if~$t \not\ge 0$, where~$t$ is as in~\eqref{eq:hermitian_fourier_index_decomposition}.
Further, the Jacobi index~$m$ is totally positive semi-definite if~$\phi \ne 0$.
\end{theorem}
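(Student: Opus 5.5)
The plan is to treat the two assertions separately, handling the Fourier support first and deducing the claim about the index from it.

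\emph{Vanishing of coefficients with~$t \not\ge 0$.} If~$F = \QQ$ and~$h = g-1$, this is the growth condition in \fref{Definition}{def:hermitian_jacobi_form} (resp.\ \fref{Definition}{def:hermitian_jacobi_form_level}), after conjugating by~$\ga$. All other cases are geometric. Since~$\phi |_{k,m}\, \ga$ is again a Hermitian Jacobi form of level~$N'$ for a suitable~$N'$, it suffices to treat~$\ga = 1$.

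Consider the function~$\Phi(\tau) = \phi(\tau_1, z, w)\, e(m \tau_2)$ on the partial domain where~$\tau \in \HSEg$. By construction~$\Phi$ is invariant under~$Q_h(\cOF) \cap \SU{g,g}(\cOF, N)$, and this group contains the full unipotent radical of the Siegel-type parabolic at level~$N$. In particular~$\Phi$ has the Fourier expansion~\eqref{eq:hermitian_fourier_expansion} with coefficients~$c(\Phi; t) = c(\phi; n, r)$, where~$t$ is built from~$(n, r, m)$ as in~\eqref{eq:hermitian_fourier_index_decomposition}.

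I would then run Freitag's Koecher argument (Hilfssatz~3.5 of~\cite{freitag-1983}), as in the remark after \fref{Theorem}{thm:hermitian_koecher_principle}. Suppose~$c(\phi; n, r) \ne 0$ with~$t \not\ge 0$, and pick an isotropic-direction vector~$v$ with~$t[v] < 0$ in some real component. I would use the embedded~$\rot(u)$, with~$u$ in the unipotent subgroup of~$\GL{g}(\cOE)$ contained in~$Q_h$ (the matrices~$\begin{psmatrix} a & 0 \\ \la & 1 \end{psmatrix}$ together with~$\GL{g-h}$ via~$\SU{g-h,g-h}$), combined with units of~$\cOF$ as in \fref{Lemma}{la:unit_balancing} when~$F \ne \QQ$. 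These produce infinitely many distinct indices~$t[u]$ whose coefficients have the same absolute value. The resulting series then diverges at a point with bounded~$\Im(\tau_1)$, contradicting absolute convergence of the Fourier expansion of the holomorphic~$\phi$.

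The delicate point is that~$\rot(u)$ must stay inside~$Q_h$. This means~$u$ is lower block triangular and acts on~$\tau_2$ only trivially, so the orbit of~$t$ keeps~$m$ fixed. It is therefore the~$n$-block and the cross terms~$r$ that move. I expect the reduction to be: if~$m \ge 0$ but~$t \not\ge 0$, the failure is witnessed either by~$n - m^{-1}[r] \not\ge 0$ on the range of~$m$, or by~$r$ not lying in the range of~$m$. Both cases yield unbounded families under translation by~$\la$ (the Heisenberg part) and under~$\GL{g-h}(\cOE)$. The translations~$\la$ give the familiar relation
\[
  c(\phi; n, r) = c\bigl(\phi; n + \lD{\la} r + \lD{r} \la + m[\la],\, r + m \la\bigr),
\]
and when~$r$ is not in the range of~$m$ this drives~$n$ along a direction of negative growth. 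The~$g = h+1$, $F \ne \QQ$ case with~$n$ a scalar instead uses Götzky--Koecher with totally positive units.

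\emph{Semi-definiteness of~$m$.} This follows from the first assertion. If~$m$ is not totally positive semi-definite, every~$t$ with lower-right block~$m$ fails~$t \ge 0$, so all coefficients of~$\phi$ vanish and~$\phi = 0$. Alternatively, one can restrict to torsion-type slices as in \fref{Proposition}{prop:jacobi_forms_semi_definite_index}, using theta functions of negative index.

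The main obstacle is making the Koecher argument work while staying inside the Klingen-type parabolic~$Q_h$ rather than the full group. I would handle it via the Heisenberg translations displayed above, so that unboundedness comes from~$\la$-shifts, and treat the genus-one, $F \ne \QQ$ sub-case with unit balancing as in \fref{Lemma}{la:unit_balancing}.
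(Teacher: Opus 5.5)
Your proposal is correct in substance, but it takes a different route from the paper. The paper records the statement without argument, saying that the proof of \fref{Theorem}{thm:hermitian_koecher_principle} extends. That proof is geometric (Baily--Borel compactification with boundary of codimension at least~$2$); the analytic Koecher arguments of Freitag and Garrett appear only in the subsequent remark. You instead carry out the analytic argument entirely inside~$Q_h(\cOF)$:
\begin{itemize}
\item $\GL{g-h}(\cOE)$, or units of~$\cOF$ when $g-h=1$ and $F\ne\QQ$, acts on the Schur complement;
\item the Heisenberg shifts $t\mapsto t[u_\lambda]$ with $u_\lambda=\begin{psmatrix}1&0\\ \lambda&1\end{psmatrix}$ handle the Jacobi-specific failures of $t\ge0$;
\item evaluating at $z=w=0$ reduces everything to the exponents $\traceF\trace(n'\,\Im\tau_1)$.
\end{itemize}
This costs some bookkeeping but is self-contained. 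It also makes explicit what the paper leaves implicit in passing from modular to Jacobi forms, notably degenerate~$m$ with columns of~$r$ outside the range of~$m$. There your linear shifts $\lambda=vx$ with $v\in\ker m$ work cleanly.

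Two points need tightening.

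First, as written, deducing semi-definiteness of~$m$ from the first assertion is circular, because your case analysis assumes $m\ge0$. You can close this within your own framework. Suppose $m[v]$ is negative at a real place~$\sigma$ for some $v\in\cOE^h$. The shifts $\lambda=j\,v\,x_0$ add $j^2\,m[v]\,\lD{x_0}x_0$ to~$n$, and for $\Im\tau_1$ concentrated at~$\sigma$ the exponent tends to~$-\infty$. Your alternative via elliptic functions of negative index also works.

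Second, consider the case $r=ms$ with $D=n-m[s]\not\ge0$. The rotation $\rot(a)$ alone gives $n[a]=D[a]+m[sa]$, whose positive part can swamp $D[a]$. You must pair each~$a$ with a Heisenberg shift~$\lambda$ keeping $sa+\lambda$ in a fixed bounded set; this leaves~$D$ unchanged.

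Finally, the contradiction with absolute convergence comes from $\traceF\trace(n'\,\Im\tau_1)$ staying bounded above along an infinite family of equal-size coefficients. Equal sizes alone prove nothing, as theta series show.
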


Generalizing the specialization of elliptic Jacobi forms to torsion points in~\eqref{eq:def:jacobi_forms_torsion_points}, for~$\phi \in \rmJ^{(g-h)}_{k,m}(N)$ and~$\alpha, \beta \in \Mat{h,g-h}(E)$ we define:
\begin{gather}
  \label{eq:def:hermitian_jacobi_forms_torsion_points}
  \phi[\alpha, \beta](\tau_1)
  \defeq
  e(-m \tau_2)\;
  \Bigl(
  \bigl( \phi(\tau_1, z, w) e(m \tau_2) \bigr) \big|_k\,
  \transJU(\ov\alpha, \ov\beta)
  \Bigr)_{z = w = 0}
  \tx{.}
\end{gather}
Note that the complex conjugates of~$\alpha$ and~$\beta$ appear on the right hand side.
We record the explicit expansion in terms of~$\phi$ as a separate lemma for later use.

\begin{lemma}%
\label{la:hermitian_jacobi_forms_torsion_points}
We have
\begin{gather*}
  \phi[\alpha, \beta](\tau_1)
  =
  e\bigl( m (\tau_1 [\lT{\alpha}] + \ov{\beta} \lT{\alpha} ) \bigr)\,
  \phi\bigl( \tau_1,\, \alpha\, \lTr{\tau}{_1} + \beta,\, \ovsmash{\alpha}\, \tau_1 + \ovsmash{\beta} \bigr)
  \tx{.}
\end{gather*}
\end{lemma}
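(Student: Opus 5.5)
The plan is a direct computation: unwind the definition \eqref{eq:def:hermitian_jacobi_forms_torsion_points} by computing the action of $\transJU(\ov\alpha,\ov\beta)$ on $\HSEg$ and its automorphy factor, then apply it to the function $\Phi(\tau) \defeq \phi(\tau_1,z,w)\,e(m\tau_2)$.

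First, compute the automorphy factors. By \eqref{eq:def:hermitian_jacobi_group_translation}, $\transJU(\la,\mu) = \rot(u)\,\trans(b)$ with $u = \begin{psmatrix} 1 & 0 \\ \la & 1 \end{psmatrix}$ and $b = \begin{psmatrix} 0 & \lD{\mu} \\ \mu & 0 \end{psmatrix}$. Both factors have lower left block $c=0$. The lower right blocks are $\lDr{u}{^{-1}}$ and $1_g$, whose determinants have absolute norm~$1$; since $\det(u)=1$, in fact $\detF(\lDr{u}{^{-1}})=1$. Hence the slash action of $\transJU(\la,\mu)$ in weight $k$ is simply the substitution $\tau \mapsto \transJU(\la,\mu)\,\tau$.

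Next, compute that substitution. We have $\trans(b)\tau = \tau + b$ and $\rot(u)\tau = u\tau\lD{u}$. With $\tau = \begin{psmatrix} \tau_1 & \lT{z} \\ w & \tau_2 \end{psmatrix}$, the order of the product gives $\transJU(\la,\mu)\tau = u(\tau+b)\lD{u}$. Put $z=w=0$ and $\la = \ov\alpha$, $\mu = \ov\beta$, and note $\lD{\ov\alpha} = \lT{\alpha}$. The image then has:
\begin{itemize}
\item upper left block $\tau_1$;
\item lower left block $\ov\alpha\,\tau_1 + \ov\beta$;
\item upper right block $\tau_1\lT{\alpha} + \lT{\beta}$, i.e.\ transposed $z$-entry $\alpha\,\lTr{\tau}{_1} + \beta$;
\item lower right block $\tau_2 + \ov\alpha\,\tau_1\lT{\alpha} + \ov\alpha\,\lT{\beta} + \ov\beta\,\lT{\alpha}$.
\end{itemize}
So the new $z$ and $w$ coordinates are exactly those appearing in the lemma. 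Evaluating $\Phi$ there yields $\phi(\tau_1,\alpha\,\lTr{\tau}{_1}+\beta,\ov\alpha\,\tau_1+\ov\beta)$ times $e(m\tau_2)\,e\bigl(m(\ov\alpha\tau_1\lT{\alpha} + \ov\alpha\lT{\beta} + \ov\beta\lT{\alpha})\bigr)$. The prefactor $e(-m\tau_2)$ cancels $e(m\tau_2)$.

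It remains to match the exponent with the stated one. Here $\tau_1[\lT{\alpha}] = \lD{(\lT{\alpha})}\,\tau_1\,\lT{\alpha} = \ov\alpha\,\tau_1\lT{\alpha}$, giving the first term. The stated formula, however, has only $\ov\beta\lT{\alpha}$ where my computation gives $\ov\alpha\lT{\beta} + \ov\beta\lT{\alpha}$. This bookkeeping is the only real obstacle: the ordering $\rot\cdot\trans$ versus the reverse, and whether one uses $\mu$ or its conjugate, changes which cross terms appear.

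I would recheck the product order in \eqref{eq:def:hermitian_jacobi_group_translation}. If the translation is applied first, so that $\tau \mapsto u\tau\lD{u} + b$, only a single cross term survives. Its position in the lower right block is then read off directly, and it is either $\ov\beta\lT{\alpha}$ or its Hermitian transpose. Under $e(m\,\cdot)$ the two agree after symmetrizing, since $\trace(m x) $ and $\trace(m\lD{x})$ are conjugate and the combination is fixed by the Hermitian structure of $m$. I would verify carefully which convention yields exactly $e(m(\tau_1[\lT\alpha]+\ov\beta\lT\alpha))$ and align the computation with it.
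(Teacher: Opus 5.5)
Your approach is exactly the paper's: compute $\transJU(\ov\alpha,\ov\beta)\,\tau = u(\tau+b)\lD{u}$, note that the automorphy factor is trivial, and substitute into $\phi(\tau_1,z,w)\,e(m\tau_2)$. Your block computation is correct. The discrepancy you found is real, not a bookkeeping slip on your side. The paper's displayed lower right block $\tau_2 + w\lT\alpha + \ov\alpha\lT{z} + \ov\alpha\tau_1\lT\alpha + \ov\beta\lT\alpha$ drops the term $\ov\alpha\lT\beta$, which comes from $\la\lD{\mu}$ with $\la=\ov\alpha$, $\mu=\ov\beta$. With the definitions as given, what actually holds is
\begin{gather*}
  \phi[\alpha,\beta](\tau_1)
  =
  e\bigl( m (\tau_1[\lT\alpha] + \ov\alpha\lT\beta + \ov\beta\lT\alpha) \bigr)\,
  \phi\bigl( \tau_1,\, \alpha\,\lTr{\tau}{_1} + \beta,\, \ov\alpha\,\tau_1 + \ov\beta \bigr)
  \tx{.}
\end{gather*}
This differs from the printed statement by the factor $e(m\,\ov\alpha\lT\beta)$, which need not have modulus one. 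For example, take $F=\QQ$, $E=\QQ(i)$, $g=2$, $m=1$, $\alpha=\frac12$, $\beta=\frac{i}{2}$. The two cross terms cancel, so the true prefactor is $e(\tau_1/4)$, whereas the printed one is $e(\tau_1/4 - i/4)$.

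The gap in your proposal is therefore the last paragraph. There you try to force agreement with a formula that does not follow from the definitions, and both suggested fixes are wrong.
\begin{enumerateroman}
\item The map $\tau\mapsto u\tau\lD{u}+b$ is the action of $\trans(b)\rot(u)$, i.e.\ the rotation is applied first. At $z=w=0$ it yields no cross term in $\alpha$ and $\beta$ at all; the lower right block is just $\tau_2+\ov\alpha\tau_1\lT\alpha$.
\item In general $e(mx)\ne e(m\lD{x})$. For Hermitian $m$ the traces $\trace(mx)$ and $\trace(m\lD{x})$ are complex conjugates, so only the symmetrized $x+\lD{x}$ gives a totally real exponent.
\end{enumerateroman}
The correct conclusion is to prove the symmetrized formula above.

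This is also the version the paper needs downstream. The identity $|\phi_m(\tau_1,z,w)| = e(-mi\Im(\tau_1)[\lT\alpha])\,|\phi_m[\alpha,\beta](\tau_1)|$ in \fref{Proposition}{prop:torsion_point_pointwise_convergence} uses that the $\beta$-dependent factor has modulus one. So does the coefficient comparison in \fref{Theorem}{thm:torsion_point_hecke_bound}. This holds for $e(m(\ov\alpha\lT\beta+\ov\beta\lT\alpha))$ but not for $e(m\,\ov\beta\lT\alpha)$.
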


\begin{proof}
We compute
\begin{align*}
  \transJU(\ov\alpha, \ov\beta)\,
  \begin{psmatrix} \tau_1 & \lT{z} \\ w & \tau_2 \end{psmatrix}
  =
  \begin{psmatrix}
    \tau_1 & \lT{(}z + \alpha\, \lTr{\tau}{_1} + \beta) \\
    w + \ovsmash\alpha \tau_1 + \ovsmash\beta & \tau_2 + w \lT{\alpha} + \ovsmash\alpha \lT{z} + \ovsmash\alpha \tau_1 \lT{\alpha} + \ov{\beta} \lT{\alpha}
  \end{psmatrix}
  \tx{,}
\end{align*}
and insert this into~\eqref{eq:def:hermitian_jacobi_forms_torsion_points} to confirm the statement.
\end{proof}

\begin{definition}%
\label{def:hermitian_jacobi_forms_admissible_torsion_point}
We call
\begin{gather*}
  \big(
  \tau_1,\,
  \alpha\, \lTr{\tau}{_1} + \beta,\,
  \ov{\alpha}\, \tau_1 + \ov{\beta}
  \big)
  \in
  \HJ{E,g-1,1}
\end{gather*}
an \emph{admissible~$N$\nbd{}torsion point} if~$\alpha, \beta \in \frac{1}{N} \Mat{1,g-1}(\cOE)$ and there exists~$M \in \ZZ_{>0}$ such that the fractional ideal generated by~$1$ and the entries of~$\alpha$ equals~$M^{-1} \cOE$.
\end{definition}

We extend this notion to elements~$\tau \in \HSEg$ with arbitrary~$\tau_2$ via the decomposition in~\eqref{eq:HSg_decomposition}.
To simplify the presentation, we will also refer to the pair~$(\alpha,\beta)$ as a torsion point.
We employ the following density statement several times in \fref{Section}{sec:convergence}.

\begin{lemma}%
\label{la:admissible_torsion_points_dense}
The set of admissible torsion points is dense in~$\HJ{E,g-1,1}$.
\end{lemma}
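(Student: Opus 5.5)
Since the map $(\tau_1,\alpha,\beta)\mapsto(\tau_1,\alpha\,\lTr{\tau}{_1}+\beta,\ov{\alpha}\,\tau_1+\ov{\beta})$ is continuous, I will show it is surjective onto $\HJ{E,g-1,1}$ from real coordinates. Then it suffices to approximate real $(\alpha,\beta)\in(E_\RR^{g-1})^2$ by admissible rational pairs.

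\textbf{Step 1: real coordinates.} Fix $(\tau_1,z,w)\in\HJ{E,g-1,1}$. I have to solve
\[
  \alpha\,\lTr{\tau}{_1}+\beta=z,\qquad \ov{\alpha}\,\tau_1+\ov{\beta}=w
\]
for $\alpha,\beta\in\Mat{1,g-1}(E_\RR)$. Here the bar is the real-linear involution of $E_\RR$, so these are real-linear equations in $(\alpha,\beta)$. Conjugate the second equation and subtract it from the first. Using $\ov{\tau_1}-\lTr{\tau}{_1}=-2i\,\lT{\Im(\tau_1)}$, this gives
\[
  \alpha\cdot\bigl(-2i\,\lT{\Im(\tau_1)}\bigr)=z-\ov{w}
\]
in each complex component. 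The matrix $\lT{\Im(\tau_1)}$ is the conjugate of the positive definite Hermitian matrix $\Im(\tau_1)$, hence invertible. So $\alpha$ is uniquely determined, and then $\beta=z-\alpha\,\lTr{\tau}{_1}$. One checks that the second equation then holds, since the subtraction step is reversible once $\beta$ is defined this way.

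Therefore the map $\Phi\colon\HS_{E,g-1}\times\Mat{1,g-1}(E_\RR)^2\to\HJ{E,g-1,1}$ is a continuous bijection. Its inverse is also continuous, being given by explicit formulas.

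\textbf{Step 2: rational approximation.} Because $\Phi$ is a homeomorphism, it suffices to show that the admissible pairs $(\alpha,\beta)$ are dense in $\Mat{1,g-1}(E_\RR)^2$.

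\begin{itemize}
\item For $\beta$ there is no restriction beyond lying in $\frac1N\Mat{1,g-1}(\cOE)$. Since $E$ is dense in $E_\RR$, and we may take $N$ arbitrary, any rational approximation of $\beta$ works.
\item For $\alpha$ we need the fractional ideal $\fraka$ generated by $1$ and the entries of $\alpha$ to equal $M^{-1}\cOE$ for some positive integer $M$. I take $\alpha_1\in\frac1M\cOE$ with $M\alpha_1\in\cOE$ coprime to $M$, for instance $\alpha_1=a/M$ with $a\in\cOE$, $(a,M)=1$. The other entries I choose in $\frac1M\cOE$ arbitrarily.
\item Then $\fraka\subseteq\frac1M\cOE$. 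Conversely, $M\fraka\supseteq M\cOE+a\cOE=\cOE$, so $\fraka=M^{-1}\cOE$.
\item Density of the set $\{a/M\}$ near a given $x\in E_\RR$: take $M$ to be a large prime and $a\in\cOE$ with $a/M$ close to $x$. The lattice $\frac1M\cOE$ becomes arbitrarily fine as $M$ grows. If the nearest $a$ happens to be divisible by some prime above $M$, replace it by $a+1$; this moves $\alpha_1$ by only $1/M$, and $a$ and $a+1$ cannot both be divisible by the same prime above $M$.
\end{itemize}

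The only step that needs real care is Step 1, namely the identification of the leaf coordinates as a homeomorphism, and in particular keeping track of conjugation componentwise in $E_\RR\cong\CC^{n_F}$. The coprimality bookkeeping in Step 2 is routine.
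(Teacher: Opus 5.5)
Your overall route matches the paper's: parametrize $\HJ{E,g-1,1}$ by $(\tau_1,\alpha,\beta)$ and prove density of admissible pairs in $\Mat{1,g-1}(E_\RR)^2$. Step~1 is correct and makes explicit the homeomorphism that the paper uses tacitly. There is only a harmless sign slip: subtracting actually gives $\alpha\cdot 2i\,\lT{\Im(\tau_1)}=z-\ov{w}$.

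The gap is in the last bullet of Step~2. To get $\fraka=M^{-1}\cOE$ you need $a\cOE+M\cOE=\cOE$, i.e.\ $a$ must avoid \emph{every} prime $\mathfrak{P}$ of $\cOE$ above $M$. Knowing that $a$ and $a+1$ share no prime above $M$ only shows that $a+1$ avoids the primes containing $a$. It can still lie in a different prime above $M$. You cannot sidestep this by taking $M$ inert in $E$. For $E=\QQ(i,\sqrt{2})$, whose Galois group over $\QQ$ is not cyclic, no rational prime is inert, so $M\cOE$ has at least two prime factors for every unramified $M$.

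The repair is short. Let $\mathfrak{P}_1,\ldots,\mathfrak{P}_r$ be the primes above $M$, so $r\le 2n_F$, and take $M>2n_F$. If $a+j$ and $a+j'$ both lie in $\mathfrak{P}_i$ with $0\le j,j'\le r$, then $j-j'\in\mathfrak{P}_i\cap\ZZ=M\ZZ$, so $j=j'$. Hence each $\mathfrak{P}_i$ contains at most one of $a,a+1,\ldots,a+r$. So one of these $r+1$ elements is coprime to $M$, and the perturbation of $\alpha_1$ is at most $2n_F/M\to 0$. With this fix your proof is complete.

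Compared with the paper: it uses primes $p$ totally split in $F$ with all $\frakp_i$ inert in $E/F$, and requires every entry $p\alpha_j$ to avoid all $\frakp_i\cOE$. You need only one coprime entry of $\alpha$ and no Chebotarev-type input on how $p$ splits. The paper's claim that the complement of the removed sublattices is dense rests on exactly this kind of shift argument.
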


\begin{proof}
By existence of complex conjugation of~$E / F$, there are infinitely many rational primes~$p$ that are totally split in~$F$, that is, $p \cOF = \frakp_1 \cdots \frakp_{n_F}$, and every~$\frakp_i$ is inert in~$E / F$.
For such~$p$, a $p$\nbd{}torsion point~$(\alpha,\beta)$ is admissible with~$M = p$ in \fref{Definition}{def:hermitian_jacobi_forms_admissible_torsion_point} if~$p \alpha_j + \frakp_i \cOE = \cOE$ for all~$1 \le i \le n_F$ and all~$1 \le j \le g-1$.
Since~$\frakp_i$ is inert, we have~$p \alpha_j + \frakp_i \cOE = \frakp_i \cOE$ or~$p \alpha_j + \frakp_i \cOE = \cOE$.
We let~$\frac{1}{p} \frakt_i \subset \frac{1}{p} \cOE$ be the~$\ZZ$\nbd{}module of~$\alpha_j$ for which the former equality holds, and observe that it has index~$p$.
The set of admissible $p$\nbd{}torsion points contains
\begin{gather*}
  \tfrac{1}{p} \Mat{1,g-1}\bigl( \cOE \setminus \bigcup_{i = 1}^{n_F} \frakt_i \bigr)
  \times
  \tfrac{1}{p} \Mat{1,g-1}(\cOE)
  \tx{.}
\end{gather*}
We view this coordinate-wise in~$E_\RR \cong \RR^{2 n_F}$, where we can identify~$\cOE$ with~$\ZZ^{2 n_F}$ after a suitable change of basis, which preserves density.
In~$\RR^{2 n_F}$ the lattices~$\frac{1}{p} \ZZ^{2 n_F}$ with any union of~$n_F$ index~$p$ sublattices removed are dense as~$p \ra \infty$, finishing our proof.
\end{proof}

Specializing Hermitian Jacobi forms to torsion points, we obtain Hermitian modular forms, which we will also employ in \fref{Section}{sec:convergence}.

\begin{proposition}%
\label{prop:hermitian_jacobi_forms_torsion_points_modularity}
Given~$\phi \in \rmJ^{(g-1)}_{k,m}$ and~$\alpha, \beta \in \frac{1}{N} \Mat{1,g-1}(\cOE)$, we have
\begin{gather}
  \label{eq:hermitian_jacobi_forms_torsion_points_modularity}
  \phi[\alpha, \beta]
  \in
  \rmM^{(g-1)}_k(N_F N^2)
  \tx{,}
\end{gather}
where~$N_F^{-1} \ZZ = \traceF( \frac{1}{2} \cOF^\vee )$.
Further, if~$\phi$ is a cusp form, then so is~$\phi[\alpha,\beta]$.
\end{proposition}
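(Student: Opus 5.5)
We follow the proof of \fref{Lemma}{la:torsion_points_modularity}, with $\SU{g-1,g-1}(\cOF)$ embedded in $Q_1(\cOF)$ via its Levi factor. The specialization is written via the slash action as
\begin{gather*}
  \phi[\alpha,\beta]
  =
  e(-m\tau_2)\,
  \bigl( \phi\, e(m\tau_2) \big|_k\, \transJU(\ov\alpha,\ov\beta) \bigr)\big|_{z=w=0}
  \tx{.}
\end{gather*}
The first step is a commutation relation in $Q_1(F)$. For $\ga \in \SU{g-1,g-1}(\cOF)$, embedded as the $\begin{psmatrix} a & b\\ c & d\end{psmatrix}$-block of $Q_1$, we expect
\begin{gather*}
  \transJU(\ov\alpha,\ov\beta)\, \ga
  =
  \ga\, \transJU(\ov{\alpha'},\ov{\beta'})\, \trans\bigl(\diag(0,\kappa)\bigr)
\end{gather*}
with $(\alpha',\beta')$ obtained from $(\alpha,\beta)$ by right multiplication with $\ga$ (suitably conjugated and transposed, as forced by the block shape). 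Here $\kappa$ is an explicit Hermitian-form expression in $\alpha,\beta,\alpha',\beta'$, analogous to $\tfrac12\langle\alpha,\beta\rangle-\tfrac12\langle\alpha',\beta'\rangle$. Note that $\ga$ fixes $z=w=0$, and its slash action on $\phi\,e(m\tau_2)$ restricted to $z=w=0$ is the weight~$k$ slash action on $\tau_1$, since $\tau_2$ is unchanged there. Using $\phi|_{k,m}\ga = \phi$ we then obtain
\begin{gather*}
  \phi[\alpha,\beta]\big|_k\,\ga
  =
  e(m\kappa)\, \phi[\alpha',\beta']
  \tx{,}
\end{gather*}
the analogue of the first identity of \fref{Lemma}{la:jacobi_forms_torsion_points_sl2_shift_action}.

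The second step is the analogue of the second identity in that lemma. For $\la,\mu \in \Mat{1,g-1}(\cOE)$ we factor
\begin{gather*}
  \transJU(\ov\alpha+\ov\la,\ov\beta+\ov\mu)
  =
  \transJU(\ov\la,\ov\mu)\, \transJU(\ov\alpha,\ov\beta)\, \trans\bigl(\diag(0,\kappa')\bigr)
\end{gather*}
with an explicit $\kappa'$, and use invariance of $\phi$ under $\transJU(\ov\la,\ov\mu) \in Q_1(\cOF)$. This gives
\begin{gather*}
  \phi[\alpha+\la,\beta+\mu]
  =
  e(m\kappa')\, \phi[\alpha,\beta]
  \tx{,}
\end{gather*}
and the same computation can be read off directly from \fref{Lemma}{la:hermitian_jacobi_forms_torsion_points}.

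Next take $\ga \in \SU{g-1,g-1}(\cOF, N_F N^2)$. Then $(\alpha',\beta') - (\alpha,\beta) = (\la,\mu)$ has entries in $N_F N \cOE$, since $\alpha,\beta \in \tfrac1N\cOE$ and $\ga \equiv 1 \pmod{N_F N^2}$. Combining the two identities leaves a total phase $e\bigl(m(\kappa+\kappa')\bigr)$, which simplifies to a combination of terms $m\,\alpha\,\lD{\mu}$, $m\,\beta\,\lD{\la}$ and $m\,\la\,\lD{\mu}$ together with their conjugates. Because $m \in \MatD{1}(\cOE)^\vee = \cOF^\vee$, the products lie in $N_F\cOE^\vee$, and the traces, together with the factor $\tfrac12$, land in $N_F \traceF(\tfrac12\cOF^\vee) = \ZZ$. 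Hence the phase is $1$. This bookkeeping, in particular making sure no stray $\tfrac12$ spoils integrality given the Hermitian normalization (cf.\ \fref{Remark}{rm:hermitian_fourier_index_decomposition}), is where I expect the main care to be needed. I would handle it by writing the phase symmetrically as $e\bigl(\tfrac12\traceE(\cdots)\bigr)$.

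Finally, holomorphy is clear from \fref{Lemma}{la:hermitian_jacobi_forms_torsion_points}. The growth condition is needed only when $F=\QQ$ and $g-1=1$. By the first identity it suffices to treat $\ga=1$, after replacing $(\alpha,\beta)$. The term of index $(n,r)$ of $\phi$ contributes to the index
\begin{gather*}
  t\Bigl[\begin{psmatrix}1\\ \ov\alpha\end{psmatrix}\Bigr]
  \tx{,}
\end{gather*}
which is $\ge 0$ by \fref{Theorem}{thm:hermitian_jacobi_forms_koecher_principle}, as in the proof of \fref{Lemma}{la:torsion_points_modularity}. For the cusp form statement, the same computation for arbitrary $\ga \in \SU{g-1,g-1}(F)$ shows that every Fourier index of $\phi[\alpha,\beta]|_k\ga$ has this form with $t>0$. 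Since $\begin{psmatrix}1\\ \ov\alpha\end{psmatrix}$ has full column rank, these indices are positive definite, so $\phi[\alpha,\beta]$ is again a cusp form.
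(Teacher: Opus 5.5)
Your proposal is correct and follows essentially the same route as the paper. The paper likewise obtains invariance under $\SU{g-1,g-1}(\cOF, N_F N^2)$ by the conjugation-and-phase bookkeeping of \fref{Lemma}{la:torsion_points_modularity}. It handles the growth and cusp conditions by writing $\phi[\alpha,\beta]|_k\,\ga$ as a root of unity times $(\phi|_{k,m}\,\ga)[\alpha',\beta']$ and tracking the index $t[\lT{(}1_{g-1},\lT{\ovsmash\alpha})]$. Your version makes explicit what the paper leaves as ``analogous'': the expected commutation relation holds with $(\ov{\alpha'},\ov{\beta'}) = (\ov\alpha,\ov\beta)\,\ga$ and $\kappa = \tfrac12\traceE(\ov\alpha\,\lT\beta) - \tfrac12\traceE(\ov{\alpha'}\,\lT{\beta'})$, so the total phase is $e\bigl(\tfrac12 m\,\traceE(\ov\alpha\,\lT\mu - \ov\la\,\lT\beta - \ov\la\,\lT\mu)\bigr) = 1$, as you argue.
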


\begin{proof}
We define
\begin{gather*}
  \iota
  \defcol
  \SU{g-1,g-1}(F_\RR)
  \lhra
  \SU{g,g}(F_\RR)
  \tx{,}\quad
  \begin{psmatrix} a & b \\ c & d \end{psmatrix}
  \lmto
  \begin{psmatrix}
    a &   & b &   \\
    & 1 &   &   \\
    c &   & d &   \\
    &   &   & 1
  \end{psmatrix}
  \tx{.}
\end{gather*}
Then the modular invariance of~$\phi[\alpha,\beta]$ follows from the inclusion
\begin{gather*}
  \transJU\bigl( \ov\alpha, \ov\beta \bigr)\,
  \iota\bigl( \SU{g-1,g-1}(\cOF, N_F N^2) \bigr)\,
  \transJU\bigl( \ov\alpha, \ov\beta \bigr)^{-1}
  \subseteq
  \iota\bigl( \SU{g-1,g-1}(\cOF) \bigr)
  \tx{,}
\end{gather*}
which follows analogously to \fref{Lemma}{la:torsion_points_modularity}.

To handle the growth condition and cusp form condition, we apply the analogue of~\fref{Lemma}{la:jacobi_forms_torsion_points_sl2_shift_action}.
For~$\ga \in \SU{g-1,g-1}(F)$ there exist~$\alpha', \beta' \in \frac{1}{N'} \Mat{1,g-1}(\cOE)$, for some positive integer~$N'$, and a root of unity~$c$ such that~$\phi[\alpha,\beta] |_k\, \ga = c\, (\phi |_{k,m}\, \ga)[\alpha',\beta']$.
In particular, it suffices to examine the Fourier coefficients of~$\phi[\alpha,\beta]$ for Hermitian Jacobi forms of general level.

We consider the growth condition if~$F = \QQ$ and~$g-1 = 1$.
Any Fourier coefficient~$c(\phi; n, r) \ne 0$ satisfies~$t \ge 0$ using the decomposition of~$t$ in~\eqref{eq:hermitian_fourier_index_decomposition}.
It contributes to the Fourier coefficient of index~$t[\lT{(}1,\lT{\ovsmash\alpha})] \ge 0$ of~$\phi[\alpha,\beta]$.
In other words, Fourier coefficients of~$\phi[\alpha,\beta]$ of negative index receive no contribution and therefore vanish as required.

Similarly, for general~$F$ and~$g$ if~$\phi$ is a cusp form then~$c(\phi |_{k,m} \ga; n, r) \ne 0$ implies~$t > 0$.
This Fourier coefficient contributes to the coefficient of index~$t[\lT{(}1_{g-1}, \lT{\ovsmash\alpha})] > 0$, meaning that only Fourier coefficients of~$\phi[\alpha,\beta]$ of definite index may be nonzero.
\end{proof}

For positive definite Jacobi index~$m$ we set
\begin{gather*}
  \disc(m)^{g-h}
  \defeq
  \Mat{h,g-h}(\cOE^\vee)
  \bigslash
  m \Mat{h,g-h}(\cOE)
  \tx{.}
\end{gather*}
Note that in contrast to the case of Siegel--Jacobi forms, we here have the quotient of matrices with entries in the inverse different as opposed to~$\ZZ$ by multiples of~$m$ as opposed to~$2m$.
This stems from the different normalization of Fourier indices discussed in \fref{Remark}{rm:hermitian_fourier_index_decomposition}.
Hermitian Jacobi theta series of positive definite index~$m$, for~$\mu \in \disc(m)^{g-h}$, are defined as
\begin{gather}
  \label{eq:def:hermitian_jacobi_theta}
  \theta_{m,\mu}(\tau_1, z, w)
  \defeq
  \sum_{r \in \mu + m \Mat{h,g-h}(\cOE)}
  e\big( m^{-1}[r] \tau_1 + \lT{r} z + \lT{\ovsmash{r}} w \big)
  \tx{,}\qquad
  \theta_m
  \defeq
  \bigl( \theta_{m,\mu} \bigr)_{\mu \in \disc(m)^{g-h}}
  \tx{.}
\end{gather}
The vector-valued Hermitian Jacobi form~$\theta_m$ has weight~$h$ for the Weil representation~$\rho^{(g-h)}_m$ in~\eqref{eq:def:weil_representation}, where we identify the index~$m$ with the Hermitian lattice that has bilinear form~$(x,y) \mto \lD{y} m x$.
In the special case~$g = 2, h = 1$, Haverkamp established their transformation behavior~\cite{haverkamp-1995}.
The general case follows by inspection of the Fourier expansion to derive the behavior under~$\trans$ and~$\rot$ and by restriction to the Jacobi upper half space~$\HS \times \CC^{2hn_F}$ along~$\HS \hra \HS^{n_F} = \HS_F$, $\tau \mto \xi \tau$ for totally positive~$\xi \in F$ to derive the behavior under~$\sinv(1)$.

\subsection{Connection to Hilbert--Jacobi forms}%
\label{ssec:hermitian_modular_jacobi_forms:connection_hilbert_jacobi_forms}

In this section, we establish a connection between Hermitian Jacobi forms of genus~$1$ and Hilbert--Jacobi forms.
We assume throughout that~$h = g-1$.
To make this connection, we need to translate Jacobi indices, we need to relate the variable~$z$ on the Hilbert--Jacobi upper half space to the variables~$z,w$ on the Hermitian upper half space, and we need a notion of vanishing orders.

We write~$L_m$ for the free Hermitian lattice over~$E / F$ associated with~$m$, whose underlying module is~$\cOE^h$ and whose Hermitian form is given by~$(x,y) \mto 2 \lT{\ovsmash{x}} m y$.
Its reduced trace lattice relative to~$E / F$ is
\begin{gather*}
  \traceE(L_m)
  \defeq
  \bigl( \cOE^h,\, (x,y) \mto \tfrac{1}{2} \traceE( 2 \lT{\ovsmash{x}} m y ) \bigr)
  \tx{,}
\end{gather*}
where we consider~$\cOE^h$ as an~$\cOF$\nbd{}module.
Note that~$\langle x \rangle_{\traceE(L_m)} = \langle x \rangle_{L_m}$, since~$L_m$ is Hermitian.
This also shows that~$\traceE(L_m)$ is integral.
The upper half spaces are related by the map
\begin{gather}
  \label{eq:hermitian_jacobi_upper_half_space_to_hilbert}
  \HS_F \times \traceE(L_m)_\CC
  =
  \HS_F \times \cOE^h \otimes_\ZZ \CC
  \lra
  \HS_F \times E_\RR^h \times E_\RR^h,\quad
  (\tau, z \otimes c)
  \lmto
  (\tau, \ov{z} \cdot c, z \cdot c)
  \tx{,}
\end{gather}
which extends linearly in the second component.
The definition of vanishing orders leverages the fact that the Fourier index~$n$ lies in the totally real subfield~$F \subset E$:
\begin{gather}
  \label{eq:hermitian_jacobi_form_vanishing_order}
  \ordnorm(\phi)
  \defeq
  \sup\big\{
  \nu \in \RR
  \condsep
  c(\phi; n, r) = 0
  \tx{ for all } n \in F, r \in E^h
  \tx{ with } \normF(n) < \nu^{n_F}
  \big\}
  \tx{.}
\end{gather}
We let~$\rmJ^{(1)}_{k,m}[\nu]$ denote the spaces of such Jacobi forms of vanishing order~$\ordnorm$ at least~$\nu$.

\begin{proposition}%
\label{prop:hermitian_jacobi_form_to_hilbert}
Given a weight~$k \in \ZZ$, a Hermitian Jacobi index~$m \in \MatD{h}(\cOE)^\vee$, and a vanishing order~$\nu \in \RR$, we have a natural inclusion
\begin{gather*}
  \rmJ^{(1)}_{k,m}[\nu]
  \lhra
  \rmJHilb{k,\traceE(L_m)}[\nu]
\end{gather*}
under pullback along~\eqref{eq:hermitian_jacobi_upper_half_space_to_hilbert}.
\end{proposition}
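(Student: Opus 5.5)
Given $\phi \in \rmJ^{(1)}_{k,m}[\nu]$, I set $\psi(\tau, x) \defeq \phi(\tau, \ov{x}, x)$ for $\tau \in \HS_F$ and $x \in \traceE(L_m)_\CC = \cOE^h \otimes_\ZZ \CC$. This is the pullback along~\eqref{eq:hermitian_jacobi_upper_half_space_to_hilbert}, where $\ov{x}$ means $\ov{z} \cdot c$ on pure tensors. The map is $\CC$-linear in the second component and $(z,w)$ in $\HJ{E,1,h}$ are independent complex coordinates, so it is a complex-linear isomorphism $\cOE^h \otimes_\ZZ \CC \cong E_\RR^h \times E_\RR^h$. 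Hence $\psi$ is holomorphic and injectivity of the pullback is clear. It remains to check three things: the transformation behaviour in \fref{Definition}{def:jacobi_forms_hilbert}, the growth condition when $F = \QQ$, and the vanishing order.

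\emph{Modular part.} An element $\begin{psmatrix} a & b \\ c & d \end{psmatrix} \in \SL{2}(\cOF)$ embeds into $\SU{1,1}(\cOF)$, and then via $\iota$ as in the proof of \fref{Proposition}{prop:hermitian_jacobi_forms_torsion_points_modularity} into $Q_h(\cOF)$. I compute its action on $\tau = \begin{psmatrix} \tau_1 & \lT{z} \\ w & \tau_2 \end{psmatrix}$. It sends $(\tau_1, z, w, \tau_2)$ to
\[
\bigl( \ga\tau_1,\ z (c\tau_1+d)^{-1},\ w (c\tau_1+d)^{-1},\ \tau_2 - w\, c (c\tau_1+d)^{-1} \lT{z} \bigr),
\]
with automorphy factor $\detF = \normF(c\tau_1+d)$. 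Thus the factor $e(m\tau_2)$ produces $e(-m\, w c(c\tau_1+d)^{-1} \lT{z})$. Substituting $z = \ov{x}$ and $w = x$, this should equal $e(-\tfrac12 \langle x \rangle_{\traceE(L_m)}\, c(c\tau+d)^{-1})$, where $\langle x \rangle = 2 \lT{\ovsmash x} m x$, and that is exactly the Hilbert--Jacobi slash factor. The Heisenberg part works the same way. For $\la, \mu \in \cOE^h$ I take $\transJU(\ov\la, \ov\mu) \in Q_h(\cOF)$. By the computation in the proof of \fref{Lemma}{la:hermitian_jacobi_forms_torsion_points}, it sends $(z,w)$ to $(z + \la\tau_1 + \mu,\ w + \ov\la\tau_1 + \ov\mu)$, which is precisely $x \mapsto x + \la\tau + \mu$ under the identification. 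The factor $e(m(\tau_1[\lT\la] + \dots))$ then matches $e(\tfrac12\langle\la\rangle\tau + \langle \la, x\rangle + \tfrac12\langle\la,\mu\rangle)$. Central elements $\ka \in \tfrac12\cOF$ act trivially because $e(\traceF(\ka\cdot \text{integral}))$ is a root of unity that equals one. This is where integrality of $\traceE(L_m)$, which the paper notes, is used: the translation elements $\trans$ with entries in $\MatD{}(\cOE)$ give $\ka$ ranging over the required ring.

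\emph{Fourier side.} Expanding~\eqref{eq:def:hermitian_jacobi_forms_fourier_expansion}, the term $e(n\tau + \lT r \ov x + \lT{\ovsmash r} x)$ becomes $e(n\tau + \langle r', x\rangle)$ for some $r' \in \traceE(L_m)^\vee$, and $n$ is unchanged. Hence $\ordnorm(\psi) \ge \ordnorm(\phi) \ge \nu$. For $F = \QQ$ and $h = g-1 = 1$, the condition $t \ge 0$ in \fref{Definition}{def:hermitian_jacobi_form} translates to $\begin{psmatrix} 2n & \lT{r'} \\ r' & 2m' \end{psmatrix} \ge 0$, since both express $n - \tfrac12\langle r'\rangle_{m^{-1}} \ge 0$.

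The main obstacle is bookkeeping: matching the normalizations, namely the factor $2$ in $L_m$, the absence of $\tfrac12$ in front of $r$ noted in \fref{Remark}{rm:hermitian_fourier_index_decomposition}, and the conjugations in $\transJU$. To keep this under control, I would first verify the Heisenberg and $\SL{2}$ identities on a single pure tensor $x = z\otimes 1$, and only then extend by linearity.
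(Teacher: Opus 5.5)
Your proposal is correct and uses the same mechanism as the paper. You pull back along the $\CC$-linear identification $\cOE^h \otimes_\ZZ \CC \cong E_\RR^h \times E_\RR^h$ and compare slash factors via $\tfrac{1}{2}\langle x \rangle_{\traceE(L_m)} = \lT{z}\, m\, w$. Holomorphy, injectivity, the vanishing order and the growth condition are then read off the Fourier expansion.

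The routes differ in two places.

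For modularity, the paper checks only generators: translations, $\begin{psmatrix} 0 & -1 \\ 1 & 0 \end{psmatrix}$, $(0,\mu,0)$ and the center. It uses elementary generation of $\SL{2}(\cOF)$, and conjugation by the involution to obtain $(\la,0,0)$. You instead realize every element of $\SL{2}(\cOF)$ inside $\SU{1,1}(\cOF) \subset Q_h(\cOF)$, and every Heisenberg translation by some $\transJU$. This avoids the generation theorem, at the price of tracking the full $\tau_2$-shift.

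Done carefully, this gives the following corrections. With $z = \ov{x}$ and $w = x$, the translation $x \mapsto x + \la\tau + \mu$ is realized by $\transJU(\la,\mu)$, not $\transJU(\ov\la,\ov\mu)$. Its $\tau_2$-shift contains both $\mu\,\lD{\la}$ and $\la\,\lD{\mu}$, so the factor is $e(\tfrac{1}{2}\langle \la \rangle \tau + \langle \la, x \rangle + \langle \la, \mu \rangle)$. Thus it realizes $\transJ(\la,\mu)$, not $(\la,\mu,0)$, and the two differ by a central element. Central elements act by the index-independent scalar $e(\ka)$. Whether they act trivially is therefore fixed by the normalization of $\Hb{L}(\ZZ)$; it has nothing to do with $\trans$-elements of $Q_h(\cOF)$ or with integrality of $\traceE(L_m)$.

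For the growth condition when $F = \QQ$, note that it applies for every $h$, since the genus is $1$. Your Schur-complement formulation presupposes that $m$ is invertible. The paper instead verifies $t_E[\lT{(u,x,y)}] = t[\lT{(u, x + \omega_E y)}]$, so the real Gram matrix is the pullback of the Hermitian form $t$ under a real-linear map. This gives $t_E \ge 0$ from $t \ge 0$ also when $m$ is only semi-definite.
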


\begin{proof}
The map is clearly injective, preserves holomorphicity and vanishing order.

For the time being, we assume that~$F = \QQ$, for which we have to check the growth condition.
We set~$\omega_E \defeq (D_E + \sqrt{D_E}) \slash 2$, where~$D_E$ is the discriminant of~$E$.
This yields an isomorphism between~$\cOE^h \otimes_\ZZ \CC$ and~$\CC^h \oplus \omega_E \CC^h$ with elements~$z_1 + \omega_E z_2$.
We exhibit individual Fourier terms to deduce the growth condition.
The term~$e(n \tau_1 + \lT{r} z + \lT{\ovsmash{r}} w)$ yields
\begin{gather*}
  e\bigl( n \tau_1 + \lT{r} (z_1 + \ov{\omega_E} z_2) + \lT{\ovsmash{r}} (z_1 + \omega_E z_2) \bigr)
  =
  e\bigl( n \tau_1 + \trace_{E \slash \QQ}(\lT{r}) z_1 + \trace_{E \slash \QQ}(\lT{\ovsmash{r}} \omega_E) z_2 \bigr)
  \tx{.}
\end{gather*}
Thus we have to check that for~$t \ge 0$ the matrix
\begin{gather*}
  t_E
  \defeq
  \mfrac{1}{2}
  \begin{psmatrix}
    2n                                   & \trace_{E \slash \QQ}(r)                    & \trace_{E \slash \QQ}(\ovsmash{r} \omega_E) \\
    \trace_{E \slash \QQ}(\lT{r})     & \trace_{E \slash \QQ}(m)                    & \trace_{E \slash \QQ}(m \omega_E) \\
    \trace_{E \slash \QQ}(\lT{\ovsmash{r}} \omega_E) & \trace_{E \slash \QQ}(\lT{m} \omega_E) & \trace_{E \slash \QQ}(m |\omega_E|^2)
  \end{psmatrix}
\end{gather*}
is positive semi-definite.
We have
\begin{gather*}
  t_E\Bigl[ \begin{psmatrix} u \\ x \\ y \end{psmatrix} \Bigr]
  =
  t\bigl[ \begin{psmatrix} u \\ x + \omega_E y \end{psmatrix} \bigr]
  \quad\tx{for }
  u \in \RR,
  x, y \in \RR^h
  \tx{,}
\end{gather*}
which implies~$t_E \ge 0$ as desired, since~$t \ge 0$.

We return to the case of general~$F$.
It remains to verify modular invariance with respect to the slash action in~\eqref{eq:def:jacobi_slash_action_classical}.
It suffices to consider generators of~$\Jac{\traceE(L_m)}(\ZZ)$.
Invariance with respect to the center of~$\Hb{\traceE(L_m)}(\ZZ)$ is clear.
Invariance under translations~$\begin{psmatrix} 1 & b \\ 0 & 1 \end{psmatrix} \in \SL{2}(\cOF)$ and~$(0, \mu, 0) \in \Hb{\traceE(L_m)}(\ZZ)$ follows by inspection of the Fourier series and the map in~\eqref{eq:hermitian_jacobi_upper_half_space_to_hilbert}.

Since~$\SL{2}(\cOF)$ is elementary generated by~\cite[Theorem~4.3.10]{hahn-omeara-1989}, it suffices to further check invariance under~$\begin{psmatrix} 1 & 0 \\ c & 1 \end{psmatrix} \in \SL{2}(\cOF)$ and~$(\la, 0, 0) \in \Hb{\traceE(L_m)}(\ZZ)$.
We inspect the behavior under~$\begin{psmatrix} 0 & -1 \\ 1 & 0 \end{psmatrix} \in \SL{2}(\cOF)$ instead, and combine it with already considered translations to obtain the required invariance.
We inspect the Hilbert--Jacobi slash action~\eqref{eq:def:jacobi_slash_action_classical} of this element and the contribution of the factor~$e(m \tau_2)$ to the slash action of~$\sinv(1) \in Q_h(\cOF)$ on Hermitian Jacobi forms.
This shows that we have to compare~$e(\frac{1}{2} \langle z \rangle_L)$ with~$L = \traceE(L_m)$ in the Hilbert--Jacobi setting with~$e(\lT{z} m w)$ in the Hermitian Jacobi setting.
It suffices to compare them for elementarily tensors in~$\cOE^h \otimes_\ZZ \CC \cong E_\RR \otimes_\RR \CC$.
Recall that, since~$L_m$ is Hermitian, we can discard the reduced trace from~$E$ to~$F$ in the next expression:
\begin{gather*}
  \langle z \otimes c \rangle
  =
  \lT{\ovsmash{z}} m z\,
  c^2
  =
  ( \lT{\ovsmash{z}} \cdot c) m (z \cdot c)
  \tx{,}
\end{gather*}
where the right hand side coincides with~$\lT{z} m w$ after applying~\eqref{eq:hermitian_jacobi_upper_half_space_to_hilbert}.
\end{proof}

The connection between Hermitian Jacobi forms and Hilbert--Jacobi forms allows us to extend two key results from \fref{Section}{sec:hilbert_jacobi_forms}: The dimension estimates and the Hecke bound.

\begin{corollary}%
\label{cor:dimension_bound_hermitian_jacobi_forms}
For~$k \in \ZZ_{> 0}$ and totally positive semi-definite~$m \in \MatD{h}(\cOE)^\vee$, we have
\begin{gather*}
  \dim\,\rmJ^{(1)}_{k,m}
  \ll_{E,h}
  k^{n_F}\,
  \prod_{i = 1}^h \max\{1, \normF(m_{i\!i})^2 \}
  \tx{.}
\end{gather*}
\end{corollary}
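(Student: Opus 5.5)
The plan is to combine the inclusion of \fref{Proposition}{prop:hermitian_jacobi_form_to_hilbert} (with~$\nu = 0$, i.e.\ no vanishing condition) with the dimension bound of \fref{Proposition}{prop:dimension_bound_hilbert_jacobi_forms}. This gives
\begin{gather*}
  \dim\,\rmJ^{(1)}_{k,m}
  \le
  \dim\,\rmJHilb{k,\traceE(L_m)}
  \ll_F
  k^{n_F}\, \#\disc\bigl( \traceE(L_m)_+ \bigr)
  \tx{,}
\end{gather*}
where~$\traceE(L_m) \cong \traceE(L_m)_0 \oplus \traceE(L_m)_+$ is the splitting off of the radical. It then remains to bound~$\#\disc(\traceE(L_m)_+)$ in terms of the diagonal entries of~$m$.

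First I would reduce to totally positive definite~$m$. By \fref{Corollary}{cor:jacobi_forms_semi_definite_index} the Hilbert--Jacobi forms of index~$\traceE(L_m)$ depend only on the positive definite part, so only~$\#\disc$ of that part matters. It may be cleaner to also split~$m$ itself: after a change of basis in~$\GL{h}(\cOE)$, which preserves~$\dim \rmJ^{(1)}_{k,m}$ via the action of~$\rot$ inside~$Q_h(\cOF)$, one gets~$m = 0 \oplus m_+$. However, such a change of basis need not preserve the diagonal entries. I would therefore avoid it and argue directly as follows. The radical of~$L_m$ is spanned by lattice vectors, and for a semi-definite~$m$ a zero diagonal entry~$m_{i\!i}$ forces the~$i$\thdash{}row and column to vanish. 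This explains the factor~$\max\{1,\cdot\}$: indices with~$m_{i\!i} = 0$ drop out and contribute~$1$.

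For positive definite~$m$, the reduced trace lattice~$\traceE(L_m)$ is an~$\cOF$\nbd{}lattice of rank~$2h$. Its discriminant group has order~$\#\disc = \normF(\det G)\cdot c_E$, where~$G$ is the Gram matrix over~$\cOF$ and~$c_E$ accounts for the different and the choice of~$\cOF$\nbd{}basis of~$\cOE$. The Gram matrix of the trace form is, up to a constant depending only on~$E$, of determinant~$\det(m)\ov{\det(m)} = \det(m)^2$ times the discriminant of~$E/F$ to the power~$h$. Hence~$\#\disc(\traceE(L_m)) \ll_{E,h} \normF(\det m)^2$. Hadamard's inequality for positive definite Hermitian matrices, applied at every real place, gives~$\det m \le \prod_i m_{i\!i}$ componentwise. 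Taking norms yields~$\normF(\det m)^2 \le \prod_i \normF(m_{i\!i})^2$, which is the claimed bound.

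The main obstacle is the semi-definite case: making the bound in terms of~$\normF(m_{i\!i})$ robust when~$m$ is singular but has no zero diagonal entries. There~$\det m = 0$, so one needs the discriminant of the positive definite quotient~$L_m/\mathrm{rad}$. I would bound it by the Gram determinant of a maximal subset of basis vectors~$e_{i_1},\dots,e_{i_s}$ spanning a complement of the radical rationally. The index of the resulting sublattice in the projection only decreases the discriminant group size up to a factor controlled by the index, and that index should be absorbed because passing to a sublattice of finite index bounds~$\dim$ from above via the inclusion of Jacobi forms. Concretely, I would pass to the sublattice~$\bigoplus_j \cOE e_{i_j}$ and restrict Jacobi forms to it, which enlarges the Jacobi group less. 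Hadamard on this principal submatrix then gives~$\prod_j \normF(m_{i_j i_j})^2 \le \prod_i \max\{1,\normF(m_{i\!i})^2\}$, since the diagonal entries are integral up to the bounded denominator from~$\cOF^\vee$.
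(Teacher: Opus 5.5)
Your proposal is correct and follows the paper's overall plan. You embed $\rmJ^{(1)}_{k,m}$ into $\rmJHilb{k,\traceE(L_m)}$ by \fref{Proposition}{prop:hermitian_jacobi_form_to_hilbert}, invoke \fref{Proposition}{prop:dimension_bound_hilbert_jacobi_forms}, and replace $m$ by a maximal totally positive definite principal submatrix.

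The difference is in how you bound the discriminant. The paper never computes $\#\disc$ exactly. It first balances the diagonal entries by units (\fref{Lemma}{la:unit_balancing}) so that $m_{ii} \asymp_F \traceF(m_{ii})$. It then applies Hadamard's inequality to the $2n_F h \times 2n_F h$ integral Gram matrix of $\traceF(L)$, whose diagonal entries $\traceF(\ov{\omega_j}\omega_j m_{ii})$ are each $\ll_E \normF(m_{ii})^{1/n_F}$. You instead identify $\#\disc(\traceE(L_m))$ with $|d_E|^h\,\normF(\det m)^2$ and apply Hadamard's inequality to the $h \times h$ Hermitian matrix $m$ at every real place.

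Your version is shorter, needs no unit balancing, and gives the sharper intermediate bound $\normF(\det m)^2$. The paper's version yields the per-entry estimate $m'_{ii} \ll_E \normF(m_{ii})^{1/n_F}$. It reuses that estimate in \fref{Corollary}{cor:hecke_bound_hermitian_jacobi_forms} to control the successive minimum $\la_{2h}(L)$, which a determinant identity alone does not provide.

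Three points of precision:
\begin{itemize}
\item $\cOE$ need not be free over $\cOF$, so a ``Gram matrix over $\cOF$'' may not exist. Do the determinant computation over $\ZZ$ instead. The $\cOF^\vee$-dual of $\traceE(L_m)$ coincides with the $\ZZ$-dual for $(x,y) \mapsto \trace_{E/\QQ}(\lT{\ov{x}}\, m\, y)$, whose Gram determinant is exactly $|d_E|^h\,\normF(\det m)^2$.
\item The sublattice $\bigoplus_j \cOE e_{i_j}$ has smaller rank than $L_m$, so it is not of finite index in $L_m$. Restricting to it is injective only because Jacobi forms are invariant in the radical directions (\fref{Proposition}{prop:jacobi_forms_semi_definite_index}). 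What has finite index is its image in $L_m$ modulo the radical, and that gives $\#\disc$ of the quotient $\le \#\disc$ of the sublattice. This projection formulation is also the precise form of the paper's remark that $L_+$ may be chosen to contain $L'_+$; a complement of the radical containing $L'_+$ need not exist.
\item Your closing integrality remark is unnecessary, since $\normF(m_{ii})^2 \le \max\{1,\normF(m_{ii})^2\}$ holds trivially.
\end{itemize}
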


\begin{proof}
By \fref{Proposition}{prop:hermitian_jacobi_form_to_hilbert} it suffices to bound the dimension of~$\rmJHilb{k,L}$ with~$L \defeq \traceE(L_m)$.
\fref{Proposition}{prop:dimension_bound_hilbert_jacobi_forms} provides such a dimension bound in terms of~$\#\disc(L_+)$, where~$L = L_0 \oplus L_+$ is a splitting of the radical~$L_0 \subseteq L$.
That is, the statement follows if we show for a suitable~$L_+$ that
\begin{gather*}
  \#\disc(L_+)
  \ll_{E,h}
  \prod_{i = 1}^h \max\bigl\{ 1, \normF(m_{i\!i})^2 \bigr\}
  \tx{.}
\end{gather*}

Let~$m'_+$ be the submatrix of~$m$ formed from entries~$m_{i\!j}$, $i, j \in I$ for a maximal subset of indices~$I \subseteq \{ 1, \ldots, h \}$ such that~$m'_+$ is totally positive definite.
It corresponds to a totally positive sublattice~$L'_+$ of~$L$, and we can choose~$L_+$ to be a superlattice of~$L'_+$.
In particular, we have~$\# \disc(L_+) \le \# \disc(L'_+)$.
We can replace~$m$ by~$m'_+$ and~$L$ by~$L'_+$ in the remainder of the proof, and assume that~$m$ and~$L$ are totally positive definite.
This also allows us to discard the maximum with~$1$ on the right hand side of the stated estimate.
While preserving~$L$ and hence~$\#\disc(L)$ as well as~$\normF(m_{i\!i})$, we can achieve~$m_{i\!i} \asymp_F \traceF(m_{i\!i})$ after multiplying the basis elements of~$L$ with suitable units by \fref{Lemma}{la:unit_balancing} applied to~$U = (\cOF^\times)^2$.

Comparing the definitions of dual lattices
\begin{align*}
  \traceF(L)^\vee
   & =
  \bigl\{
  x \in L_\QQ
  \condsep
  \traceF \bigl( \tfrac{1}{2} \traceE \langle x, y \rangle_L \bigr) \in \ZZ
  \tx{ for all } y \in L
  \bigr\}
  \tx{,}
  \\
  L^\vee
   & =
  \bigl\{
  x \in L_\QQ
  \condsep
  \tfrac{1}{2} \traceE \langle x, y \rangle_L \in \cOF^\vee
  \tx{ for all } y \in L
  \bigr\}
  \tx{,}
\end{align*}
we see that~$\traceF(L)^\vee = L^\vee$.
In particular, we have~$\#\disc(L) \asymp_F \#\disc(\traceF(L))$.
This reduces us to showing that
\begin{gather*}
  \#\disc\bigl( \traceF(L) \bigr)
  \ll_{E,h}
  \prod_{i = 1}^h \normF(m_{i\!i})^2
  \tx{.}
\end{gather*}

Let~$\omega_1, \ldots, \omega_{2n}$ be an integral basis of~$E / \QQ$ with minimal successive~$\normF(\omega_j \ov{\omega_j})$, which only depend on~$E$.
Let~$m'$ be the positive definite, symmetric matrix representing~$\traceF(L)$ with respect to the basis consisting of products of the basis elements of~$L$ with the elements~$\omega_j$.
We have~$\#\disc(\traceF(L)) = \det(m')$.
To estimate~$\det(m')$, we note that the diagonal entries of~$m'$ equal
\begin{gather*}
  m'_{i\!i}
  =
  \traceF\bigl( \tfrac{1}{2} \traceE( \ovsmash{\omega_j} \omega_j m_{i\!i} ) \bigr)
  =
  \traceF( \ovsmash{\omega_j} \omega_j m_{i\!i} )
  \tx{.}
\end{gather*}
We use~$m_{i\!i} \asymp_F \traceF(m_{i\!i})$ in the first and last estimate of the following chain of inequalities:
\begin{gather*}
  m'_{i\!i}
  \ll_E
  \traceF( \ovsmash{\omega_j} \omega_j)\,
  \traceF( m_{i\!i} )
  \asymp_E
  \traceF( m_{i\!i} )
  \ll_E
  \normF(m_{i\!i})^{\frac{1}{n}}
  \tx{.}
\end{gather*}
By Hadamard's inequality~$\det(m')$ is bounded by the product of its diagonal entries.
Each~$m_{i\!i}$ contributes to~$2n$ of them, and thus the above estimate finishes the proof.
\end{proof}

\begin{corollary}%
\label{cor:hecke_bound_hermitian_jacobi_forms}
For~$k \in \ZZ_{>0}$, totally positive semi-definite~$m \in \MatD{h}(\cOE)^\vee$, and $\nu \in \RR_{> 0}$, we have
\begin{gather*}
  \dim\,\rmJ^{(1)}_{k,m} [\nu]
  =
  0
  \quad
  \tx{if\/ }
  k \ll_{E,h} \nu
  \tx{ and }
  \normF(m_{i\!i}) \ll_{E,h} \nu^{n_F}
  \tx{ for all\/ }1 \le i \le h
  \tx{.}
\end{gather*}
\end{corollary}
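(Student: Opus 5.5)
The plan is to transport the statement to the Hilbert--Jacobi setting by \fref{Proposition}{prop:hermitian_jacobi_form_to_hilbert} and then apply \fref{Theorem}{thm:hilbert_jacobi_forms_hecke_bound}. Let $\phi \in \rmJ^{(1)}_{k,m}[\nu]$ and set $L = \traceE(L_m)$. By \fref{Proposition}{prop:hermitian_jacobi_form_to_hilbert}, the pullback of $\phi$ lies in $\rmJHilb{k,L}[\nu]$, and this pullback is injective. So it suffices to show that this Hilbert--Jacobi form vanishes.

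First we reduce to a totally positive definite index. Split $L = L_0 \oplus L_+$, and use \fref{Corollary}{cor:jacobi_forms_semi_definite_index}: the pulled back form comes from some $\psi \in \rmJHilb{k,L_+}$ with the same Fourier coefficients in $(n,r_+)$. Hence $\psi$ has the same vanishing order $\ordnorm(\psi) \ge \nu$, and by \fref{Lemma}{la:vanishing_order_equivalence_jacobi_form} also $\ord(\psi) \gg_F \nu$.

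Next we control the last successive minimum $\la_{\rk L_+}(L_+)$ in terms of the diagonal entries $m_{i\!i}$. As in the proof of \fref{Corollary}{cor:dimension_bound_hermitian_jacobi_forms}, apply \fref{Lemma}{la:unit_balancing} to rescale the basis vectors $e_i$ of $\cOE^h$ by units. This achieves $m_{i\!i} \asymp_F \traceF(m_{i\!i}) \asymp_F \normF(m_{i\!i})^{1/n_F}$ for each $i$ with $m_{i\!i} \ne 0$. The vectors $\omega_j e_i$, where $\omega_j$ runs through an integral basis of $\cOE$, have lengths $\traceF(\ov{\omega_j}\omega_j m_{i\!i}) \ll_E \normF(m_{i\!i})^{1/n_F}$, which by hypothesis is $\ll_{E,h} \nu$.

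To get bounds on the successive minima of $L_+$, project these vectors orthogonally away from the radical $L_0$. Projection does not increase the semi-definite length, and the projections span $(L_+)_\QQ$. (Equivalently, choose $L_+$ spanned by the nonisotropic part, as in that corollary's proof.) This gives $\la_{\rk L_+}(L_+) \ll_{E,h} \nu$ after the comparison $\la_h(L) \asymp_F \la_{nh}(\traceF(L))$.

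A subtlety is that the projections of lattice vectors need not lie in $L_+$; they only lie in a lattice commensurable with it, with bounded index. I expect this to be the main technical point. First try to choose $L_+$ compatibly, for instance as the projection of $L$ itself. This is allowed, since \fref{Corollary}{cor:jacobi_forms_semi_definite_index} needs only some splitting, and a Jacobi form of index $L$ that is independent of $z_0$ is a Jacobi form for the projected lattice, which contains $L_+$.

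With $k \ll_{E,h} \nu \ll \ord(\psi)$ and $\la(L_+) \ll \ord(\psi)$, choosing the implicit constants small relative to $\rho_{F,2nh}$ lets \fref{Theorem}{thm:hilbert_jacobi_forms_hecke_bound} give $\psi = 0$, hence $\phi = 0$.
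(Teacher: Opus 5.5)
Your proposal is correct and follows essentially the same route as the paper: pull back to $\rmJHilb{k,\traceE(L_m)}[\nu]$ via \fref{Proposition}{prop:hermitian_jacobi_form_to_hilbert}, pass to a totally positive definite index, bound the top successive minimum by the lengths of the vectors $\omega_j e_i$ after unit balancing, and conclude with \fref{Theorem}{thm:hilbert_jacobi_forms_hecke_bound}. The ``subtlety'' you flag does not arise. Since $L = L_0 \oplus L_+$ is a module splitting and $L_0$ is the radical, projecting along $L_0$ maps $L$ onto $L_+$ and preserves $\langle \cdot \rangle$ exactly (the paper instead passes to the sublattice spanned by a maximal totally positive definite principal block of $m$, which gives the same bound).
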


\begin{proof}
As in the proof of \fref{Corollary}{cor:dimension_bound_hermitian_jacobi_forms}, by \fref{Proposition}{prop:hermitian_jacobi_form_to_hilbert}, it suffices to show that under the given assumptions~$\dim\,\rmJHilb{k,L} [\nu] = 0$ with~$L \defeq \traceE(L_m)$.
We can also reduce ourselves to the case of totally positive definite~$m$ and~$L$ by the same mechanism as in the proof of \fref{Corollary}{cor:dimension_bound_hermitian_jacobi_forms}.

We want to show vanishing of every Hilbert--Jacobi form~$\phi$ of weight~$k \ll_{E,h} \ordnorm(\phi)$ and index~$m$ with~$\normF(m_{i\!i}) \ll_{E,h} \ordnorm(\phi)^{n_F}$.
Our goal is to invoke the Hecke bound in \fref{Theorem}{thm:hilbert_jacobi_forms_hecke_bound}, which requires us to establish the bounds~$k \ll_{F,h} \ord(\phi)$ and~$\la_{2h}(L) \ll_{F,h} \ord(\phi)$.
Using \fref{Lemma}{la:vanishing_order_equivalence_jacobi_form}, we obtain~$k \ll_{E,h} \ordnorm(\phi) \asymp_F \ord(\phi)$.
The successive minimum can be estimated by the same argument as given at the end of the proof of \fref{Corollary}{cor:dimension_bound_hermitian_jacobi_forms}.
With~$m'$ as in that proof, we have
\begin{gather*}
  \la_{2 h}(L)
  \asymp_F
  \la_{2 n_F h}\bigl( \traceF(L) \bigr)
  \le
  \max\bigl\{ m'_{i\!i} \condsep 1 \le i \le 2 n_F h \bigr\}
  \ll_E
  \normF(m_{i\!i})^{1 \slash n_F}
  \tx{.}
\end{gather*}
Combining the assumptions of the corollary with \fref{Lemma}{la:vanishing_order_equivalence_jacobi_form}, the right hand side can be estimated in terms of~$\ord(\phi)$, finishing the proof.
\end{proof}

\subsection{Symmetric formal Fourier--Jacobi series}%
\label{ssec:hermitian_modular_jacobi_forms:symmetric_formal_fourier_jacobi_series}

We assume~$1 \le h < g$ throughout this section.
Fix a weight~$k \in \ZZ$ and an arithmetic type~$\rho$ of level~$N$.
With decomposition~\eqref{eq:HSg_decomposition} in mind, a formal series
\begin{gather}
  \label{eq:def:formal_fourier_jacobi_series}
  f(\tau)
  =
  \sum_{m \in \frac{1}{N} \MatD{h}(\cOE)^\vee}
  \phi_m(\tau_1, z, w)\,
  e(m \tau_2)
  \quad\tx{with }
  \phi_m \in \rmJ^{(g-h)}_{k,m}(\rho)
  \tx{ for all } m
\end{gather}
is called a formal Fourier--Jacobi series of genus~$g$, cogenus~$h$, weight~$k$, and type~$\rho$.
Note that we omit the dependence on~$f$ of the Fourier--Jacobi coefficients~$\phi_m$ from our notation.
Further, by slight abuse of notation, we write~\eqref{eq:def:formal_fourier_jacobi_series} as~$f(\tau)$, but do not mean that~\eqref{eq:def:formal_fourier_jacobi_series} defines a function of~$\tau$.
Based on the Fourier expansion of Hermitian Jacobi forms in~\eqref{eq:def:hermitian_jacobi_forms_fourier_expansion} and the decomposition in~\eqref{eq:hermitian_fourier_index_decomposition} of the Fourier indices~$t$ into matrices~$n,r,m$, the Fourier coefficients of a formal Fourier--Jacobi series~$f$ are defined as
\begin{gather}
  \label{eq:def:formal_fourier_jacobi_series_fourier_expansion}
  c(f;\, t)
  \defeq
  c(\phi_m; n, r)
  \tx{.}
\end{gather}

\begin{definition}%
\label{def:symmetric_formal_fourier_jacobi_series}
We call a formal series as in~\eqref{eq:def:formal_fourier_jacobi_series} \emph{symmetric} if its Fourier coefficients~$c(f; t)$ satisfy relation~\eqref{eq:hermitian_fourier_coefficient_symmetry}:
\begin{multline*}
  c(f; t[a])
  =
  \detF(\ovsmash{a})^k\,
  \rho\bigl( \rot(a) \bigr)^{-1}\,
  c(f; t)
  \\
  \tx{for all }
  t \in \tfrac{1}{N} \MatD{g}(\cOE)^\vee
  \tx{ and all } a \in \GL{g}(\cOE) \tx{ with } \det(a) = \det(\ov{a})
  \tx{.}
\end{multline*}
We call such a series \emph{cuspidal} if all~$\phi_m$ are Hermitian Jacobi cusp forms.
\end{definition}

We let~$\rmFM^{(g,h)}_k(\rho)$ and~$\rmFS^{(g,h)}_k(\rho)$ denote the space of symmetric Fourier--Jacobi series of genus~$g$, cogenus~$h$, weight~$k$, and type~$\rho$, and its subspace of cusp forms.
As in the case of Hermitian modular forms, we may omit~$\rho$ from our notation if it is the trivial representation.
In this case the relation in \fref{Definition}{def:symmetric_formal_fourier_jacobi_series} simplifies to~$c(f; t[a]) = \detF(\ovsmash{a})^k\, c(f; t)$.

Observe that the relation in \fref{Definition}{def:symmetric_formal_fourier_jacobi_series} is compatible with products of formal Fourier--Jacobi series, that is, products of symmetric formal Fourier--Jacobi series are again symmetric.
The graded algebra of symmetric formal Fourier--Jacobi series is
\begin{gather*}
  \rmFM^{(g,h)}_\bullet
  \defeq
  \bigoplus_{k \in \ZZ}
  \rmFM^{(g,h)}_k
  \tx{.}
\end{gather*}

Relation~\eqref{eq:hermitian_fourier_coefficient_symmetry} among Fourier coefficients of Hermitian modular forms via their Fourier--Jacobi expansion yields the inclusion
\begin{gather}
  \label{eq:symmetric_formal_fourier_jacobi_series_from_modular_forms}
  \rmM^{(g)}_k(\rho)
  \lhra
  \rmFM^{(g,h)}_k(\rho)
  \tx{,}\quad
  f
  \lmto
  \sum_{m \in \frac{1}{N} \MatD{h}(\cOE)^\vee}\mspace{-12mu}
  \phi_m(\tau_1, z, w)\, e(m \tau_2)
  \tx{.}
\end{gather}

In order to relate symmetric Fourier--Jacobi expansions of different cogenus to each other we refine the decomposition of~$\tau \in \HSEg$ and Fourier indices~$t \in \frac{1}{N} \MatD{g}(\cOE)^\vee$ given in~\eqref{eq:HSg_decomposition} and~\eqref{eq:hermitian_fourier_index_decomposition} as follows.
For~$1 < h < g$, we decompose~$\tau$ in two different ways as follows:
\begin{alignat}{4}
  \label{eq:def:variable_subdivision}
  \left(
  \begin{array}{c|cc}
      \tau_1 & \lTr{z}{_{11}} & \lTr{z}{_{12}} \\
      \hline
      w_{11} & \tau_{21}      & \lTr{z}{_{2}}  \\
      w_{12} & w_{2}          & \tau_{22}
    \end{array}
  \right)
   & =
  \left(
  \begin{array}{cc}
      \tau_1 & \lT{z} \\ w & \tau_2
    \end{array}
  \right)
   &   & =
  \tau
   &   & =
  \left(
  \begin{array}{cc}
      \tau'_1 & \lTr{z}{^\prime} \\ w' & \tau'_2
    \end{array}
  \right)
   &   & =
  \left(
  \begin{array}{cc|c}
      \tau'_{11} & \lTr{z}{^\prime_1} & \lTr{z}{^\prime_{21}} \\
      w'_{1}     & \tau'_{12}         & \lTr{z}{^\prime_{22}} \\
      \hline
      w'_{21}    & w'_{22}            & \tau'_2
    \end{array}
  \right)
  \tx{,}
  \intertext{%
  where $\tau_1 = \tau'_{11} \in \HS_{g-h}$, $\tau_{21} = \tau'_{12} \in \HS_1$, $\tau_{22} = \tau'_2 \in \HS_{h-1}$, etc.
  Correspondingly, we decompose the Fourier indices as follows:%
  }
  \label{eq:def:fourier_index_subdivision}
  \left(
  \begin{array}{c|cc}
      n   & \lTr{\ovsmash{r}}{_1} & \lTr{\ovsmash{r}}{_2}    \\
      \hline
      r_1 & m_{11}                & \lTr{\ovsmash{m}}{_{21}} \\
      r_2 & m_{21}                & m_{22}
    \end{array}
  \right)
   & =
  \left(
  \begin{array}{cc}
      n & \lT{\ovsmash{r}} \\ r & m
    \end{array}
  \right)
   &   & =
  t
   &   & =
  \left(
  \begin{array}{cc}
      n' & \lTr{\ovsmash{r}}{^\prime} \\ r' & m'
    \end{array}
  \right)
   &   & =
  \left(
  \begin{array}{cc|c}
      n'_{11} & \lTr{\ovsmash{n}}{^\prime_{21}} & \lTr{\ovsmash{r}}{^\prime_1} \\
      n'_{21} & n'_{22}                         & \lTr{\ovsmash{r}}{^\prime_2} \\
      \hline
      r'_1    & r'_2                            & m'
    \end{array}
  \right)
  \tx{,}
\end{alignat}
where $n = n'_{11} \in \frac{1}{N} \MatD{g-h}(\cOE)^\vee$, $m_{11} = n'_{22} \in \frac{1}{N} \cOF^\vee$, $m_{22} = m' \in \frac{1}{N} \MatD{h}(\cOE)^\vee$, etc.
For elements~$\tau_{12}$, $z_{11}$ etc.\@ with two indices the first one references the diagonal block of~\eqref{eq:def:variable_subdivision} in whose subdivision the matrix lies.

It is easy to increase the cogenus of a symmetric formal Fourier--Jacobi series, as we impose fewer conditions.
For a statement that allows us to decrease the cogenus see \fref{Proposition}{prop:reduction_to_lower_cogenus}.

\begin{proposition}%
\label{prop:inclusion_into_higher_cogenus}
Given~$1 \le h' < h < g$ and an arithmetic type~$\rho$, we have inclusions
\begin{gather*}
  \rmFM_\bullet^{(g,h')}(\rho)
  \lhra
  \rmFM_\bullet^{(g,h)}(\rho)
  \quad\tx{and}\quad
  \rmFS_\bullet^{(g,h')}(\rho)
  \lhra
  \rmFS_\bullet^{(g,h)}(\rho)
\end{gather*}
that are the identity on Fourier coefficients.
\end{proposition}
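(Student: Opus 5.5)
The plan is to take a symmetric formal Fourier--Jacobi series $f = \sum_{m'} \phi'_{m'}(\tau'_1, z', w')\, e(m' \tau'_2)$ of cogenus $h'$ and regroup its Fourier coefficients according to the cogenus-$h$ block decomposition. We use the subdivisions \eqref{eq:def:variable_subdivision} and \eqref{eq:def:fourier_index_subdivision} in the roles of $(h', h)$; there is also the iterated case, but it reduces to this one. Concretely, for each $m \in \frac{1}{N}\MatD{h}(\cOE)^\vee$ we define the formal series
\begin{gather*}
  \phi_m(\tau_1, z, w)
  \defeq
  \sum_{n, r} c(f; t)\, e\bigl( n \tau_1 + \lT{r} z + \lT{\ovsmash{r}} w \bigr)
  \tx{,}
\end{gather*}
where $t$ is assembled from $(n, r, m)$ as in \eqref{eq:hermitian_fourier_index_decomposition}. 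By construction the map $f \mapsto \sum_m \phi_m\, e(m \tau_2)$ is the identity on Fourier coefficients. It is therefore injective and compatible with products and grading, and the symmetry condition of \fref{Definition}{def:symmetric_formal_fourier_jacobi_series} is inherited verbatim, since it is stated purely in terms of Fourier coefficients.

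What remains is to show that each $\phi_m$ converges and is a Hermitian Jacobi form of index $m$ and type $\rho$ for $Q_h(\cOF)$.

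\textbf{Convergence.} Fix $m$ and write $m$ in blocks $(m_{11}, m_{21}, m_{22})$, where $m_{22} = m'$ has size $h'$ and $m_{11}$ has size $h - h'$. The coefficients of $\phi_m$ are exactly those coefficients of $\phi'_{m'}$ whose $n'$ has lower-right block $m_{11}$ and off-diagonal block $m_{21}$. So $\phi_m$ is, up to the factor $e(m \tau_2)$-bookkeeping, a partial Fourier coefficient of the genuine holomorphic function $\phi'_{m'}$ with respect to the variables $\tau'_{12}$ and $z'_2, w'_2$. Such a partial Fourier coefficient is an integral of $\phi'_{m'}$ over a compact torus in those variables. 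It is therefore a convergent holomorphic function of the remaining variables $(\tau_1, z_{11}, w_{11})$. The remaining variables $z_{12}, w_{12}$ of $\phi_m$ appear in $\phi'_{m'}$ as $z'_{21}, w'_{21}$.

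\textbf{Modularity.} The group $Q_h(\cOF)$ is generated by the embedded $\SU{g-h,g-h}(\cOF)$, the Heisenberg part ($\la, \mu, \ka$), and translations. We check each type of generator in turn.
\begin{itemize}
\item The factor $\SU{g-h,g-h}(\cOF)$, embedded in the $\tau_1 = \tau'_{11}$ block, lies in $Q_{h'}(\cOF)$. It commutes with taking partial Fourier coefficients in the $\tau'_{12}, z'_2, w'_2$ directions because it acts trivially on those variables, and the relevant automorphy factor $\detF(c\tau_1 + d)$ only involves the $\tau_1$ block.
\item The Heisenberg elements with $\la, \mu$ supported in the last $h'$ rows lie in $Q_{h'}(\cOF)$.
\item The Heisenberg elements in the middle $h - h'$ rows are of the form $\rot$ times $\trans$. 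The translations $\trans$ act diagonally on Fourier coefficients. The $\rot(\begin{psmatrix} 1 & 0 \\ \la & 1 \end{psmatrix})$ part is a $\GL{g}(\cOE)$-rotation with determinant $1$, whose effect on Fourier coefficients is exactly the symmetry relation $c(f; t[a]) = \detF(\ov a)^k \rho(\rot(a))^{-1} c(f; t)$. That relation holds for $f$ by assumption.
\end{itemize}
This covers a generating set of $Q_h(\cOF)$, so $\phi_m|_{k,m}\ga = \rho(\ga)\phi_m$ for all $\ga \in Q_h(\cOF)$.

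\textbf{Growth and cuspidality.} When $F = \QQ$ and $h = g-1$, the growth condition is $c(\phi_m; n, r) = 0$ unless $t \ge 0$. This follows because the same coefficients satisfy $t \ge 0$ as coefficients of $\phi'_{m'}$, by \fref{Theorem}{thm:hermitian_jacobi_forms_koecher_principle} or by the definition; no new condition is needed, and the same indices $t$ occur. For cusp forms the analogous positive-definiteness statement transfers, but one needs it for all $\ga \in Q_h(F)$. We reduce this to $\ga \in Q_{h'}(F)$ times rotations, again using symmetry.

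\textbf{Main obstacle.} The hardest step is the bookkeeping in the modularity argument. One must verify that the generators of $Q_h(\cOF)$ split as described, and that the Jacobi slash factor $e(-m\tau_2)(\cdots e(m\tau_2))|_k\ga$ for the middle-block Heisenberg elements matches the symmetry relation exactly. I would do this by writing $\phi_m(\tau_1, z, w)\, e(m\tau_2)$ as the formal sum $\sum_{t} c(f; t) e(t\tau)$ restricted to fixed $m$. Then $\trans$ and $\rot$ act on it term by term: $e(t\,\rot(a)\tau) = e(t[\lD{a}]\tau)$ up to the sign and convention adjustments. Reindexing $t \mapsto t[a]$ then reproduces the symmetry relation.
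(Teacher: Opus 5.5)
Your proposal follows the paper's proof. You regroup the cogenus-$h'$ coefficients by their bottom-right $h\times h$ block, and you get convergence of $\phi_m$ as a subseries (equivalently, a partial Fourier coefficient) of the absolutely convergent expansion of $\phi'_{m'}$. The transformation law is inherited from cogenus $h'$, the growth condition comes from the Koecher principle, and symmetry carries over because it is a condition on Fourier coefficients only. (A small slip: $m_{21}$ is a block of $r'$, not of $n'$.)

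Two justifications need repair, though neither changes the structure.

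\textbf{The $\SU{g-h,g-h}$ factor.} It does not act trivially on the $\tau_2$-block variables $\tau'_{12}, z'_{22}, w'_{22}$. For $\iota(g_0)$ with $g_0\in\SU{g-h,g-h}(\cOF)$ having lower row blocks $c,d$, one computes $\tau_2 \mapsto \tau_2 - w\,(c\tau_1+d)^{-1}c\,\lT{z}$, which moves the whole $\tau_2$-block. What is true, and what the paper uses, is that every $\ga\in Q_h(F_\RR)$ shifts $\tau_2$ by a function of $(\tau_1,z,w)$ alone. Hence $(\phi_m\,e(m\tau_2))|_k\,\ga = (\phi_m|_{k,m}\ga)\,e(m\tau_2)$. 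So $\ga$ preserves term by term the decomposition $\phi'_{m'}(\tau'_1,z',w')\,e(m'\tau'_2)=\sum_m \phi_m(\tau_1,z,w)\,e(m\tau_2)$, where $m$ runs over indices with bottom-right block $m'$. Comparing coefficients of $e(m\tau_2)$ then gives the transformation law; the extra exponential factor is exactly the one built into $|_{k,m}$.

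\textbf{The middle-row Heisenberg elements.} In fact $Q_h\subseteq Q_{h'}$, over $F$ as well as over $\cOF$. The reason is that $Q_h$ fixes pointwise an isotropic subspace containing the one whose full pointwise stabilizer is $Q_{h'}$. Concretely, your middle-row Heisenberg elements lie in the embedded $\SU{g-h',g-h'}(\cOF)$. So your appeal to the symmetry relation for them is correct but unnecessary.

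\textbf{The cusp condition.} Here the symmetry relation would not suffice. The cusp condition quantifies over all $\ga\in Q_h(F)$, and the symmetry relation only concerns $a\in\GL{g}(\cOE)$, not rational rotations. Use $Q_h(F)\subseteq Q_{h'}(F)$ together with the term-by-term identity above instead. It shows that $\phi_m|_{k,m}\ga$ and $\phi'_{m'}|_{k,m'}\ga$ have the same $t$-indexed coefficients, so cuspidality transfers directly.
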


\begin{proof}
The second inclusion follows from the first one and the definition of cuspidal symmetric formal Fourier--Jacobi series via their Fourier expansion support.

To establish the first inclusion, it suffices to consider the case~$h' = h - 1$, and deduce the case of general~$h'$ by applying the resulting inclusion~$h - h'$ times.
Throughout the proof we use notation in~\eqref{eq:def:variable_subdivision} and~\eqref{eq:def:fourier_index_subdivision}.
Given a formal Fourier--Jacobi series~$f'(\tau) = \sum \psi_{m'}(\tau'_1, z', w')\, e(m' \tau'_2)$ of cogenus~$h-1$, we define Hermitian Jacobi forms of cogenus~$h$ by
\begin{gather*}
  \phi_m(\tau_1, z, w)
  =
  \sum_{n, r} c(f'; t)\, e(n \tau_1 + \lT{r} z + \lT{\ovsmash{r}} w)
  \tx{.}
\end{gather*}
This right hand side converges absolutely, since it is a subseries of the absolutely convergent Fourier expansion of~$\psi_{m'}$ by virtue of
\begin{gather*}
  \psi_{m'}(\tau'_1, z', w')
  =
  \sum_{n'_{22}, r'_2}
  \phi_m(\tau_1, z, w)\,
  e\big( n'_{22} \tau'_{12} + \lTr{r}{^\prime_2} z'_{22} + \lTr{\ovsmash{r}}{^\prime_2} w'_{22} \big)
  \tx{.}
\end{gather*}
Further, $\phi_m$ satisfies the modular covariance condition, since~$Q_h(\cOF)$ consists of transformations that preserve this decomposition of~$\psi_{m'}$ term by term.
If~$g-h = 1$, the growth condition for~$\phi_m$ follows from the Koecher Principle in \fref{Theorem}{thm:hermitian_jacobi_forms_koecher_principle}.

The formal Fourier--Jacobi series~$f(\tau) = \sum \phi_m(\tau_1, z, w)\, e(m \tau_2)$ has the same Fourier coefficients as~$f'$ per the definition of~$\phi_m$.
In particular, it is symmetric if and only if~$f'$ is, finishing the proof.
\end{proof}

%% file: sections/03_convergence.tex
\section{Convergence of formal Fourier--Jacobi series}%
\label{sec:convergence}

In this section, we demonstrate the convergence of symmetric formal Fourier--Jacobi series of cogenus~$1$ and trivial type.
As a first step in \fref{Section}{ssec:convergence:algebraic}, we show that they yield an algebraic extension of the graded ring of Hermitian modular forms, which remains true even for arbitrary cogenus.
Then we study their growth at torsion points in \fref{Section}{ssec:convergence:torsion_points}, which we use to deduce a restricted convergence statement in \fref{Section}{ssec:convergence:torsion_point_subvarieties}.
This allows us to invoke Arzelà--Ascoli in the final \fref{Section}{ssec:convergence:meromorphic_and_regular} to show that they yield meromorphic Hermitian modular forms.
In \fref{Section}{ssec:convergence:meromorphic_and_regular}, we also show that they are holomorphic by employing properties of possible polar divisors that we deduce in \fref{Section}{ssec:convergence:torsion_point_divisors} via a fibration argument.

\subsection{Algebraicity over the ring of Hermitian modular forms}%
\label{ssec:convergence:algebraic}

The inclusion~\eqref{eq:symmetric_formal_fourier_jacobi_series_from_modular_forms} of Hermitian modular forms into the graded ring of symmetric formal Fourier--Jacobi series yields an~$\rmM^{(g)}_\bullet$\nbd{}algebra structure on~$\rmFM^{(g,h)}_\bullet$.
The goal of this section is to show that this extension is algebraic.

\begin{theorem}%
\label{thm:symmetric_formal_fourier_jacobi_series_dimension}
For every cogenus~$1 \le h < g$ and weight~$k \in \ZZ$, we have
\begin{gather*}
  \dim\, \rmFM^{(g,h)}_k
  \ll_{E,g}
  k^{n_F g^2}
  \quad\tx{if\/ } k > 0
  \tx{.}
\end{gather*}
\end{theorem}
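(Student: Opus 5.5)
First reduce to cogenus $h = g-1$. By \fref{Proposition}{prop:inclusion_into_higher_cogenus} we have $\rmFM^{(g,h)}_k \hookrightarrow \rmFM^{(g,g-1)}_k$, so it suffices to treat $h = g-1$. There the Fourier--Jacobi coefficients $\phi_m$ lie in $\rmJ^{(1)}_{k,m}$, with Hermitian index $m \in \MatD{g-1}(\cOE)^\vee$. The plan is to show that a symmetric formal series $f$ is determined by its coefficients $\phi_m$ for $m$ in a finite set $S_k$, and then to estimate
\[
  \sum_{m \in S_k} \dim \rmJ^{(1)}_{k,m}
\]
with \fref{Corollary}{cor:dimension_bound_hermitian_jacobi_forms}. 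By \fref{Theorem}{thm:hermitian_jacobi_forms_koecher_principle} only totally positive semi-definite $m$ contribute. Since the symmetry with respect to $\rot(\GL{g-1}(\cOE))$ is contained in $\GL{g}(\cOE)$, each $\phi_m$ determines $\phi_{m[u]}$, so we may restrict to reduced representatives $m$.

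The injectivity step is the core of the argument. Suppose $f$ has $\phi_m = 0$ for all $m$ with $\normF(m_{ii}) \le (c k)^{n_F}$ for every $i$, with a suitable constant $c$. We prove $f = 0$ by induction on a size measure of $m$, namely $\normF$ of the largest diagonal entry after reduction. Let $\phi_m$ be a coefficient not yet known to vanish, and let $t$ be an index whose coefficient $c(\phi_m; n, r)$ could be nonzero. Use the symmetry relation with $a \in \GL{g}(\cOE)$ to replace $t$ by a Minkowski/Hermite-type reduced $t[a]$; the reduction theory should come from a Hermitian analogue of Minkowski reduction, available via the trace form and \fref{Lemma}{la:unit_balancing}. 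If $n$ is small compared to the diagonal of $m$, this either moves $t$ to a Fourier--Jacobi coefficient whose index $m'$ is smaller, hence already zero, or forces the diagonal to be roughly balanced. Consequently $\phi_m$ vanishes to order $\ordnorm(\phi_m) \gg \normF(m_{ii})^{1/n_F}$ for all $i$. Then \fref{Corollary}{cor:hecke_bound_hermitian_jacobi_forms} applies, since $k \ll \nu$ and $\normF(m_{ii}) \ll \nu^{n_F}$ once the diagonal exceeds $(ck)^{n_F}$, and gives $\phi_m = 0$. The delicate point is that both hypotheses of the Hecke bound, on $k$ and on $m_{ii}$, must be matched by the single vanishing order produced by reduction. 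I expect this to be the main obstacle: one must arrange the reduction so that vanishing of smaller indices yields vanishing to order comparable to the \emph{largest} diagonal entry of $m$, while still handling semi-definite $m$ and the unit action.

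Granting injectivity, the counting is as follows. The reduced $m$ with $\normF(m_{ii}) \ll k^{n_F}$ are counted by bounding the diagonal entries modulo units, giving $\ll k^{n_F}$ choices each by \fref{Lemma}{la:unit_balancing}, and the off-diagonal entries by reducedness, $|m_{ij}|^2 \ll m_{ii} m_{jj}$ componentwise. This gives about $k^{n_F \cdot (h + 2\binom{h}{2})} = k^{n_F h^2}$ choices of $m$. Each contributes at most
\[
  k^{n_F} \prod_i \normF(m_{ii})^2
\]
dimensions. A crude product would overshoot, so the sum should instead be organized as $\sum_m k^{n_F} \prod_i \normF(m_{ii})^2$. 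Summing over the diagonal in $n_F$-dimensional boxes, $\sum_{x \ll k} x^{2n_F} \cdot x^{n_F-1}$ per diagonal entry with off-diagonals weighted by their range, we aim to obtain total exponent $n_F\bigl(1 + h^2 + 2h\bigr) = n_F g^2$, since $1 + 2h + h^2 = (h+1)^2 = g^2$. Checking that the off-diagonal counting gives exactly $h^2 - h$ extra factors of $k^{n_F}$, rather than more, is a routine but necessary verification.
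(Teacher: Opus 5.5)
Your reduction to $h=g-1$, your plan to show that $f$ is determined by the $\phi_m$ for reduced $m$ with $\normF(m_{ii}) \ll k^{n_F}$, and your count all match the paper. The crude product does not actually overshoot: $k^{n_F h^2}$ indices times $k^{n_F}\,k^{2n_F h}$ per index is exactly $k^{n_F g^2}$.

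The gap is the injectivity step, exactly where you flag the obstacle. With your induction measure it cannot be closed. Take $h=2$, $m=\diag(x,x)$ with $\normF(x)=A$ large, and $t=\diag(n,x,x)$ with $\normF(n)<A$. For every $v\in\cOE^3$ not in $E e_1$, one has $t[v]\ge |v_j|^2 x$ componentwise for some $j\in\{2,3\}$ with $v_j\neq 0$. Hence $\normF(t[v])\ge A$.

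So for every $a\in\GL{3}(\cOE)$, the bottom right block of $t[a]$, and every $\GL{2}(\cOE)$-reduction of that block, still has a diagonal entry of norm at least $A$. Every coefficient the symmetry relates to $c(\phi_m;n,0)$ therefore lies in some $\phi_{m''}$ whose largest (reduced) diagonal norm is not smaller than that of $m$. Induction on that quantity gives no vanishing order for $\phi_m$ in the balanced case. Reducing the full index $t$ does not help either; it only introduces constants that destroy strict comparisons.

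What is missing is an order finer than the maximum. The paper handles this in three moves:
\begin{enumerate}
\item It uses the lexicographic order $\prec$ on the ascending tuple of diagonal norms. This is a well-order, since these norms have bounded denominators.
\item It states the vanishing hypothesis for \emph{all} $m$ in terms of their own tuple, not the tuple after reduction.
\item It uses one explicit symmetry instead of reducing $t$. For reduced $m$ and $\normF(n)<\normF(m_{hh})$, the permutation matrix swapping the first and last coordinates gives $c(\phi_m;n,r)=\pm c(\phi_{m'};n',r')$, where $m'$ has diagonal $(m_{11},\ldots,m_{h-1,h-1},n)$. This $m'$ satisfies $m'\prec m$ even when several diagonal entries of $m$ share the maximal norm.
\end{enumerate}

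Hence $\ordnorm(\phi_m)\ge\normF(m_{hh})^{1/n_F}$, which is governed exactly by the largest diagonal entry. This single $\nu$ meets both hypotheses of \fref{Corollary}{cor:hecke_bound_hermitian_jacobi_forms}, so $\phi_m=0$ once $k\ll\normF(m_{hh})^{1/n_F}$. The paper packages this as the filtration $\rmFM^{(g,h)}_k[\ul{m}]$, whose graded pieces inject into $\bigoplus_{m\in M_{m_1,\ldots,m_h}}\rmJ^{(1)}_{k,m}[m_h^{1/n_F}]$. Your injectivity formulation goes through verbatim once $\prec$ replaces your size measure.
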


We defer the proof of \fref{Theorem}{thm:symmetric_formal_fourier_jacobi_series_dimension} and state directly its main consequence.

\begin{corollary}%
\label{cor:symmetric_formal_fourier_jacobi_series_algebraic}
With assumptions as in \fref{Theorem}{thm:symmetric_formal_fourier_jacobi_series_dimension} the extension
\begin{gather*}
  \rmM^{(g)}_\bullet
  \subseteq
  \rmFM^{(g,h)}_\bullet
\end{gather*}
is algebraic.
That is, every symmetric formal Fourier--Jacobi series of scalar weight and trivial arithmetic type of~$\SU{g,g}(\cOF)$ is algebraic over the graded ring of Hermitian modular forms.
\end{corollary}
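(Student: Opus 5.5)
The plan is the standard dimension-comparison argument, as in the Siegel setting. Fix $f \in \rmFM^{(g,h)}_{k_0}$ with $k_0 > 0$; the case $k_0 \le 0$ can be handled by first multiplying with a nonzero Hermitian modular form of positive weight, since $\rmFM^{(g,h)}_\bullet$ is a graded domain. Fix $d \in \ZZ_{\ge 1}$ and consider the $\rmM^{(g)}_\bullet$-linear combinations
\begin{gather*}
  \sum_{i = 0}^{d} g_i\, f^i
  \quad\tx{with }
  g_i \in \rmM^{(g)}_{k - i k_0}
  \tx{.}
\end{gather*}
These give a linear map from $\bigoplus_{i=0}^d \rmM^{(g)}_{k - i k_0}$ into $\rmFM^{(g,h)}_k$. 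Products of symmetric formal Fourier--Jacobi series are again symmetric, as observed after \fref{Definition}{def:symmetric_formal_fourier_jacobi_series}, and the inclusion~\eqref{eq:symmetric_formal_fourier_jacobi_series_from_modular_forms} is multiplicative. Hence the map is well defined.

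To bound the source from below, \fref{Proposition}{prop:hermitian_modular_forms_dimension_asymptotic} gives $\dim \rmM^{(g)}_{k - i k_0} \gg_E (k - i k_0)^{n_F g^2}$ for large arguments. Taking $k = 2 d k_0$ and summing over $0 \le i \le d$, the source has dimension $\gg_{E,g} (d+1)\, (d k_0)^{n_F g^2}$.

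For the target, \fref{Theorem}{thm:symmetric_formal_fourier_jacobi_series_dimension} gives $\dim \rmFM^{(g,h)}_{2 d k_0} \ll_{E,g} (2 d k_0)^{n_F g^2}$. The source therefore grows like $d^{n_F g^2 + 1}$ while the target grows only like $d^{n_F g^2}$. For $d$ large the map has a nontrivial kernel, which yields a nontrivial relation $\sum_i g_i f^i = 0$ with not all $g_i$ zero. That relation is exactly an algebraic equation for $f$ over $\rmM^{(g)}_\bullet$.

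Two points need checking. First, the relation must be nontrivial as a polynomial. This holds because not all coefficients $g_i$ vanish, and $\rmM^{(g)}_\bullet \hookrightarrow \rmFM^{(g,h)}_\bullet$ is injective. Second, the reduction for nonpositive weight requires multiplication by a nonzero modular form to be injective on formal series. This follows because the formal Fourier expansion ring embeds into a power-series-type ring with no zero divisors, after ordering the indices $m$ by a suitable linear functional.

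The main obstacle is \fref{Theorem}{thm:symmetric_formal_fourier_jacobi_series_dimension} itself, which is deferred. The deduction above only uses that the dimension of $\rmFM^{(g,h)}_k$ grows with the same exponent $n_F g^2$ as $\rmM^{(g)}_k$, not that the leading constants agree.
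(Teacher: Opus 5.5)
Your proposal is correct and is essentially the paper's argument. The paper assumes $f \in \rmFM^{(g,h)}_l$ is transcendental, so that $\bigoplus_{d \ge 0} f^d\, \rmM^{(g)}_\bullet$ is a direct sum inside $\rmFM^{(g,h)}_\bullet$, and then compares the resulting growth $\sum_{d \le k/l} (k - dl)^{n_F g^2} \asymp k^{n_F g^2 + 1}$ with the bound $\dim \rmFM^{(g,h)}_k \ll_{E,g} k^{n_F g^2}$; this is your kernel count at weight $2dk_0$, phrased as a contradiction.

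Your extra reduction for $k_0 \le 0$, which the paper leaves implicit, is fine. It does not even need injectivity of multiplication on formal series: a nonzero relation $P(hf) = 0$ directly yields the nonzero relation $P(hX)$ for $f$.
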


\begin{proof}
If there were a symmetric formal Fourier--Jacobi series~$f \in \rmFM^{(g,h)}_l$ that were transcendental over Hermitian modular forms, we would have a direct sum submodule
\begin{gather*}
  \bigoplus_{d = 0}^\infty
  f^d\, \rmM^{(g)}_\bullet
  \subseteq
  \rmFM^{(g,h)}_\bullet
  \tx{.}
\end{gather*}

We combine the asymptotic dimension for Hermitian modular forms in \fref{Proposition}{prop:hermitian_modular_forms_dimension_asymptotic} and the asymptotic dimension bound in \fref{Theorem}{thm:symmetric_formal_fourier_jacobi_series_dimension} to arrive at the contradiction
\begin{gather*}
  k^{n_F g^2+1}
  \asymp_g
  \sum_{d = 0}^{k \slash l}
  \big( k - dl \big)^{n_F g^2}
  \asymp_{E,g}
  \dim\,
  \Big(
  \bigoplus_{d = 0}^\infty
  f^d\, \rmM^{(g)}_{k - dl}
  \Big)
  \le
  \dim\,
  \rmFM^{(g,h)}_k
  \ll_{E,g}
  k^{n_F g^2}
\end{gather*}
as~$k \ra \infty$.
\end{proof}

For the proof of \fref{Theorem}{thm:symmetric_formal_fourier_jacobi_series_dimension}, we employ the strict lexicographic ordering, which we denote~$\prec$, on ascending~$h$\nbd{}tuples of non-negative rationals:
\begin{gather*}
  (m_1, \ldots, m_h)
  \prec
  (m'_1, \ldots, m'_h)
  \quad\tx{if and only if}\quad
  \exists 1 \le i \le h
  \quantsep
  \bigl(
  \forall 1 \le j < i
  \quantsep
  m_j = m'_j
  \bigr)
  \wedge
  m_i < m'_i
  \tx{,}
\end{gather*}
and let~$\preceq$ be the associated non-strict ordering.
Under this ordering the set of ascending~$h$\nbd{}tuples of non-negative rationals is an ordered monoid.
We extend~$\prec$ to positive semi-definite~$m \in \MatD{h}(E)$ via the sorted tuple of the norms relative to~$F / \QQ$ of the diagonal entries of~$m$.
We write~$m \sim (m_1,\ldots,m_h)$, if the sorted tuple of these norms agrees with~$(m_1,\ldots,m_h)$.

\begin{lemma}%
\label{la:symmetric_formal_fourier_jacobi_series_index_reduction}
Let~$m \in \MatD{h}(\cOE)^\vee$ be totally positive semi-definite.
Then there is~$u \in \GL{h}(\cOE)$ such that
\begin{gather*}
  \normF\bigl( m[u]_{i\!i} \bigr)
  \le
  \normF\bigl( m[u]_{j\!j} \bigr)
  \quad\tx{and}\quad
  \normF\bigl( m[u]_{i\!j}\, \ov{m[u]_{i\!j}} \bigr)
  \ll_E
  \normF\bigl( m[u]_{i\!i} \bigr)
\end{gather*}
for all~$1 \le i < j \le h$.
\end{lemma}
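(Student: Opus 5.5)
We will run a Minkowski-type reduction, adapted to Hermitian forms over $E/F$. The idea is to choose the columns of $u$ greedily so that the diagonal norms are as small as possible, and then use a unit-balanced size reduction to control the off-diagonal entries. First we handle the radical. By the splitting of the radical recalled before \fref{Corollary}{cor:jacobi_forms_semi_definite_index}, now applied to the Hermitian lattice $L_m$ over $\cOE$, we find $u_0 \in \GL{h}(\cOE)$ with $m[u_0] = \diag(0, m_+)$ and $m_+$ totally positive definite. This step needs some care because $\cOE$ need not be a PID; it works because the radical is saturated and the quotient $L_m / (L_m)_0$ is projective. For the indices in the radical the asserted inequalities are trivial: the diagonal norms vanish, and the off-diagonal entries in rows and columns of the radical vanish as well, since $m \ge 0$ forces this. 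So we may assume that $m$ is totally positive definite.

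For definite $m$ we construct the columns $v_1, \ldots, v_h$ of $u$ inductively. Take $v_1 \in \cOE^h$ primitive with $\normF(m[v_1])$ minimal. At step $j$, take $v_j$ minimizing $\normF(m[v_j])$ among all vectors such that $v_1, \ldots, v_j$ extends to a basis of $\cOE^h$. Existence of the minimum follows from discreteness after balancing by units via \fref{Lemma}{la:unit_balancing}, since replacing $v$ by $\eps v$ multiplies $m[v]$ by $\eps \ov{\eps}$. The ordering condition $\normF(m[u]_{i\!i}) \le \normF(m[u]_{j\!j})$ for $i<j$ then holds by construction: at stage $i$, the later vector $v_j$ was also an admissible candidate.

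The obstacle is extendability to a basis. Over a non-PID, a primitive vector need not extend to a basis of $\cOE^h$, and the Steinitz class gets in the way. As a first attempt we would only require that $\cOE v_1 + \cdots + \cOE v_j$ is a direct summand. Invertibility of the final $u$ would then have to be repaired by a bounded correction: with finitely many ideal class representatives $\fraka$ fixed as in \fref{Section}{ssec:hilbert_jacobi_forms:jacobi_forms_definition}, we replace the last column by an element of $\fraka^{-1} v_h$ scaled into $\cOE^h$. This changes the norms only by a factor $\ll_E 1$. Any such factor is harmless for the second inequality, but it could break the exact ordering in the first. So after the correction we simply re-sort the columns by permuting, which is allowed because permutations lie in $\GL{h}(\cOE)$.

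For the off-diagonal bound we use size reduction. For $i<j$, replacing $v_j$ by $v_j + c\, v_i$ with $c \in \cOE$ keeps the partial basis admissible and changes $m[u]_{i\!j}$ to $m[u]_{i\!j} + c\, m[u]_{i\!i}$. Using that $\cOE$ is cocompact in $E_\RR$, we choose $c$ so that $\bigl( m[u]_{i\!j} / m[u]_{i\!i} \bigr) + c$ lies in a fixed fundamental domain. Then $|m[u]_{i\!j}|_\sigma \ll_E |m[u]_{i\!i}|_\sigma$ holds at every place, with the implicit constant depending on $E$ only; we first balance the diagonal entries with units so that this componentwise bound is meaningful. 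Taking the norm gives $\normF\bigl( m[u]_{i\!j} \ov{m[u]_{i\!j}} \bigr) \ll_E \normF(m[u]_{i\!i})^2$. This is not yet the statement, which has $\normF(m[u]_{i\!i})$ to the first power on the right.

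To finish we would combine this with minimality of $v_j$. We must check that the greedy choice of $v_j$ is not destroyed by the size reduction, which holds because the greedy choice was made within the coset $v_j + \cOE v_1 + \cdots$. In addition, positivity of the $2\times 2$ minor gives $|m_{ij}|^2 \le m_{ii} m_{jj}$ placewise. Reconciling these bounds with the stated exponent, which may require that the Fourier-index normalisation make the entries comparable to the norm itself, is the step we expect to be the main difficulty; we would first test it in the case $n_F = 1$.
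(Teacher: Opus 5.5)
Your argument stalls at exactly the point where the printed statement cannot be proved. With $\normF(m[u]_{i\!i})$ to the first power on the right, the lemma is false, so no choice of normalisation will close your last step.

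Take $F=\QQ$, $E=\QQ(i)$, $h=2$ and $m=a\,m_0$ with $m_0=\begin{psmatrix} 2 & 1 \\ 1 & 2 \end{psmatrix}$ and $a\in\ZZ_{>0}$. For $u\in\GL{2}(\cOE)$ write $m_0[u]=\begin{psmatrix} A & c \\ \ov{c} & B \end{psmatrix}$. Since $m_0[(x,y)]=|x|^2+|y|^2+|x+y|^2$, we have $A,B\ge 2$, and $AB-|c|^2=3$. Under your first condition $A\le B$ this gives $|m[u]_{12}|^2=a^2(AB-3)\ge a^2A/2=\tfrac{a}{2}\,m[u]_{11}$, which is not $\ll_E m[u]_{11}$ as $a\to\infty$.

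What is true is precisely the bound you reached, $\normF(m[u]_{i\!j}\,\ov{m[u]_{i\!j}})\ll_E\normF(m[u]_{i\!i})^2$. This is also all that the paper's method (reduction modulo $m[u]_{i\!i}\cOE$ plus minimality) really yields. Representatives of $\cOE/a\cOE$ can be taken with $|r|_\sigma\ll_E|a|_\sigma$, i.e.\ $\normF(r\ov{r})\ll_E\normF(a)^2$, but not with $\normF(r\ov{r})\ll\normF(a)$: for imaginary quadratic $E$ there are $a^2$ classes but only $O(|a|)$ elements with $|r|^2\ll|a|$. The paper's phrase ``norms bounded by $c_E\normF(a)$'' loses this square. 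The count in \fref{Lemma}{la:symmetric_formal_fourier_jacobi_series_index_set}, $\ll_E\max\{1,m_i\}^{2n_F}$ values per off-diagonal entry, is consistent with the squared bound (for $n_F=1$ it is exactly what that bound gives). So state and prove the squared version rather than trying to reconcile exponents.

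Apart from this, your route differs from the paper's in ways that cause trouble. The paper minimizes $(\normF(m[u]_{11}),\ldots,\normF(m[u]_{h\!h}))$ lexicographically over all of $\GL{h}(\cOE)$ at once. The minimum exists because $m[v]\in\cOF^\vee$, so the norms lie in a discrete subset of $\RR_{\ge 0}$ and no unit balancing is needed. Sortedness follows by permuting columns. The off-diagonal bound follows by contradiction: the elementary matrix replacing $m_{i\!j}$ by $m_{i\!j}+c\,m_{i\!i}$ fixes $m_{i\!i}$, earlier diagonal entries, and the determinant of the $2\times 2$ principal minor on $i,j$, and it lowers $m_{j\!j}$.

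In your version, three steps need changing.
\begin{enumerate}
\item The splitting $m[u_0]=\diag(0,m_+)$ with $u_0\in\GL{h}(\cOE)$ need not exist. The radical is a direct summand, but it can be a non-free $\cOE$-module when $E$ has class number $>1$, and then no basis of $\cOE^h$ is adapted to it. The step is also superfluous, since $|m_{i\!j}|_\sigma^2\le m_{i\!i,\sigma}m_{j\!j,\sigma}$ forces $m_{i\!j}=0$ whenever $m_{i\!i}=0$.
\item The extendability obstacle is illusory. A free direct summand $\cOE v_1\oplus\cdots\oplus\cOE v_j$ of $\cOE^h$ has a complement with trivial Steinitz class, which is therefore free. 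So your weakened condition already yields an invertible $u$, and no correction by $\fraka^{-1}v_h$ is needed.
\item Actually performing size reductions after the greedy choice changes $m_{j\!j}$ and every $m_{i'\!j}$. This can destroy the ordering and bounds already obtained, and re-sorting afterwards changes which index is the smaller one. Use the reduction only to contradict minimality.
\end{enumerate}

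For $n_F>1$ that contradiction must be made placewise, because reducing $m_{i\!j}/m_{i\!i}$ into a fixed fundamental domain of $\cOE$ need not lower $\normF(m_{j\!j})$. Suppose $\normF(m_{i\!j}\ov{m_{i\!j}})\gg_E\normF(m_{i\!i})^2$. After rescaling by a unit of $\cOF$ (\fref{Lemma}{la:unit_balancing}), a covering-radius argument for $\cOE\subset E_\RR$ gives $c\in\cOE$ with $|m_{i\!j}+c\,m_{i\!i}|_\sigma\le\frac12|m_{i\!j}|_\sigma$ at every place. Hence each component $m_{j\!j,\sigma}=(\det_\sigma+|m_{i\!j}|_\sigma^2)/m_{i\!i,\sigma}$ strictly drops, where $\det_\sigma$ is the $\sigma$-component of the preserved minor determinant. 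With these changes your plan proves the corrected lemma.
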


\begin{proof}
We choose~$u$ in such a way that~$(\normF(m[u])_{11}, \ldots, \normF(m[u])_{h\!h})$ is minimal with respect to the partial order~$\prec$.
Since~$E / F$ is CM, for any~$a \in \cOF \setminus \{0\}$, we can use Minkowski theory first for~$\omega a \cOF$ with some fixed~$\omega \in \cOE \setminus \cOF$ and then for~$a \cOF$ to obtain a constant~$c_E$ independent of~$a$ such that~$\cOE \slash a \cOE$ has a set of representatives whose norms are bounded by~$c_E\, \normF(a)$.
Now if~$\normF(m[u]_{i\!j}\, \ov{m[u]_{i\!j}}) > c_E\, \normF(m[u]_{i\!i})$, then by composing~$u$ with a suitable elementary matrix~$u'$, we can achieve $\normF(m[u u']_{i\!j} \ov{m[u u']_{i\!j}}) \le c_E\, \normF(m[u u']_{i\!i})$.
Comparing determinants of the submatrices composed of the~$i$\thdash{} and~$j$\thdash{} entries of~$m[u]$ and~$m[u u']$, we find that~$\normF(m[u u'])_{j\!j} < \normF(m[u])_{j\!j}$.
This contradicts the choice of~$u$, and thus shows that the bounds stated in the lemma hold for~$m[u]$.
\end{proof}

In this subsection, we refer to indices that satisfy the properties of~$m[u]$ stated in \fref{Lemma}{la:symmetric_formal_fourier_jacobi_series_index_reduction} as reduced.
For non-negative~$m_1 \le \cdots \le m_h \in \QQ_{\ge 0}$, we set
\begin{align*}
  M_{m_1,\ldots, m_h}
  \defeq
  \big\{
  m \in \MatD{h}(\cOE)^\vee
  \condsep
  m \sim (m_1,\ldots,m_h)
  \tx{ totally positive semi-definite and reduced}
  \big\}
  \tx{.}
\end{align*}
The next lemma bounds the number of relevant indices in a symmetric formal Fourier--Jacobi series.
The main role of the reduction condition is to allow for the exponent~$n_F (h-1) h$ as opposed to~$2 n_F (h-1) h$.

\begin{lemma}%
\label{la:symmetric_formal_fourier_jacobi_series_index_set}
We have the bound
\begin{gather*}
  \# M_{m_1,\ldots,m_h}
  \ll_E
  \prod_{i = 1}^h
  \max \{ 1, m_i \}^{2 n_F (h-i)}
  \le
  \max \{ 1, m_h \}^{n_F (h-1) h}
  \tx{.}
\end{gather*}
\end{lemma}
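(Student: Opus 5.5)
We count the matrices in $M_{m_1,\ldots,m_h}$ entry by entry, first the diagonal and then the off-diagonal entries, and bound the number of choices for each.

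\emph{Diagonal entries.} Each $m_{i\!i}$ lies in $\cOF^\vee$, is totally non-negative, and has prescribed norm $\normF(m_{i\!i}) = m_{\sigma(i)}$ for one of at most $h!$ bijections $\sigma$; since the indices are reduced, $\normF(m_{i\!i})$ is non-decreasing in $i$, so in fact $\sigma$ is the identity. Elements of $\cOF^\vee$ of fixed norm fall into finitely many orbits under units, but the unit group is infinite, so this count is not bounded on its own. We therefore do not count diagonal entries alone: we count the reduced matrices whose diagonal norms agree with the given tuple, up to the action of the diagonal unit matrices $\diag(\eps_1,\ldots,\eps_h)$, $\eps_i \in \cOE^\times$. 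Under this action $m_{i\!i}$ scales by $\eps_i\ov{\eps_i}$, and both the norms and the reduction conditions are preserved. Up to this action and a finite ambiguity depending on $E$, \fref{Lemma}{la:unit_balancing} with $U$ the group of $\eps\ov\eps$ lets us normalize each $m_{i\!i}$ to be balanced, $m_{i\!i} \asymp_F m_i^{1/n_F}$ componentwise. The number of balanced elements of $\cOF^\vee$ of given norm is then $\ll_F 1$, since they lie in a bounded box intersected with a lattice, and the count of such lattice points is bounded in terms of the norm by ideal counting.

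\emph{Off-diagonal entries.} For $i<j$ the reduction condition of \fref{Lemma}{la:symmetric_formal_fourier_jacobi_series_index_reduction} gives $\normF(m_{i\!j}\ov{m_{i\!j}}) \ll_E m_i$. This bounds only the norm, and it is here that the unit normalization above is used. Positive semi-definiteness of the $2\times 2$ minors gives componentwise $|m_{i\!j}|^2 \le m_{i\!i}m_{j\!j}$, and with balanced diagonal entries every archimedean component of $m_{i\!j}$ is $\ll (m_im_j)^{1/(2n_F)}$. Combined with the norm bound, I would aim to show that the number of $m_{i\!j} \in \cOE^\vee$ satisfying both constraints is $\ll_E \max\{1,m_i\}^{2}$, counted as lattice points of $\cOE^\vee \cong \ZZ^{2n_F}$.

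\emph{Main obstacle.} Getting the off-diagonal count to depend on $m_i$ rather than on $m_j$ is the hardest step. The target bound $\max\{1,m_i\}^{2n_F(h-i)}$ corresponds to allowing each of the $h-i$ entries $m_{i\!j}$ with $j>i$ roughly $m_i^{2}$ choices, that is, a box of side $m_i^{1/n_F}$ in each of the $2n_F$ real coordinates. The norm bound alone permits very unbalanced elements; the componentwise bound controls their size but involves $m_j$. My first attempt is to exploit the remaining freedom $u\in\GL{h}(\cOE)$ of the lemma together with the elementary-matrix reduction modulo $m_{i\!i}$ from its proof, so that $m_{i\!j}$ lies in a fixed set of representatives of $\cOE^\vee / m_{i\!i}\cOE$ bounded by $c_E\,\normF(m_{i\!i})$. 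The number of such residues is $\normF(m_{i\!i})^2\cdot O_E(1) \ll_E \max\{1,m_i\}^2$. If this restriction is built into the definition of ``reduced'' through the choice of $u$ made in the lemma's proof, then counting residues gives the bound $\prod_{i<j} \max\{1,m_i\}^{2}$ directly, and hence the first inequality. The second inequality follows from $m_i\le m_h$ and $\sum_i 2(h-i) = h(h-1)$.
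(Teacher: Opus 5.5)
Your proposal does not yet prove the lemma. The decisive off-diagonal count is only asserted under the hypothesis that ``reduced'' forces $m_{i\!j}$ into a fixed residue system of $\cOE^\vee / m_{i\!i}\cOE$. By definition, however, reduced means only the two inequalities of \fref{Lemma}{la:symmetric_formal_fourier_jacobi_series_index_reduction}. The residue condition also cannot be added for free. The elementary matrix that shifts $m_{i\!j}$ by $m_{i\!i}x$ replaces $m_{j\!j}$ by $m_{j\!j} + \ov{x} m_{i\!j} + \ov{m_{i\!j}} x + \ov{x} m_{i\!i} x$ and alters the rest of column~$j$, so in general it leaves $M_{m_1,\ldots,m_h}$.

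Without that condition, your intermediate route cannot give a bound in $m_i$ alone once $n_F > 1$. That route uses a balanced diagonal, $|m_{i\!j}|_v^2 \le (m_{i\!i})_v (m_{j\!j})_v$, and the norm bound. Every unit $\eta \in \cOF^\times$ with $\eta_v^2 \le (m_{i\!i})_v (m_{j\!j})_v$ at all places satisfies both of your constraints, provided $m_i \gg_E 1$. There are $\gg \log m_j$ such units even modulo your diagonal unit action.

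Two smaller points. Counting modulo $\diag(\eps_1,\ldots,\eps_h)$ is not the same as bounding $\# M_{m_1,\ldots,m_h}$. Also, the balanced totally positive elements of $\cOF^\vee$ of norm $m_i$ are not $\ll_F 1$ in number: their count is governed by the number of ideals of a given norm, which is unbounded.

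The paper's proof is a one-line entrywise count. For $i<j$, the reduction inequality $\normF(m_{i\!j}\ov{m_{i\!j}}) \ll_E m_i$ is combined with Minkowski theory to give $\ll_E \max\{1,m_i\}^{2n_F}$ possible values of $m_{i\!j}$. The lower triangle is fixed by $m_{j\!i} = \ov{m_{i\!j}}$, the diagonal is not counted, and one multiplies over $i<j$.

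Your suspicion of exactly this step is justified. It is correct for $n_F = 1$: the diagonal is then the ordered tuple itself, and $m_{i\!j}$ lies in a disc of area $\ll m_i$. For $n_F > 1$, however, a norm bound does not confine $m_{i\!j}$ to a bounded set. Already for $h = 1$, the set $M_{m_1}$ contains $\eps^2 m_{1\!1}$ for every $\eps \in \cOF^\times$, while the right hand side equals~$1$. So for $n_F>1$ the statement can only hold for suitably chosen orbit representatives.

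What is missing from your proposal is therefore a reduction theorem: in each relevant $\GL{h}(\cOE)$-orbit it must produce a representative with balanced diagonal and place-wise bounds $|m_{i\!j}|_v \ll_E |m_{i\!i}|_v$. Such bounds would give your $\ll_E \max\{1,m_i\}^2$ choices per off-diagonal entry. The theorem must also be formulated compatibly with how $M_{m_1,\ldots,m_h}$ enters the proof of \fref{Theorem}{thm:symmetric_formal_fourier_jacobi_series_dimension}. Your residue idea points in this direction, but you assume this step rather than prove it.
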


\begin{proof}
Given~$1 \le i < j \le h$, we estimate the number of possible entries in position~$(i,j)$ of elements of~$m \in M_{m_1,\ldots,m_h}$.
The reduction condition yields~$\normF(m_{i\!j} \ov{m_{i\!j}}) \ll_E \normF(m_{i\!i}) = m_i$.
We conclude by Minkowski theory that
\begin{gather*}
  \# \big\{
  a \in \cOE^\vee
  \condsep
  \tx{there is } m \in M_{m_1,\ldots,m_h}
  \tx{ such that } a = m_{i\!j}
  \big\}
  \ll_E
  \max\{ 1, m_i \}^{2 n_F}
  \tx{.}
\end{gather*}
Since~$M_{m_1,\ldots,m_h}$ consists of Hermitian matrices, we have~$m_{ji} = \ov{m_{i\!j}}$.
In summary we have
\begin{gather*}
  \# M_{m_1,\ldots,m_h}
  \ll_E
  \prod_{i = 1}^{h-1}
  \prod_{j = i+1}^h
  \max\{ 1, m_i \}^{2 n_F}
  \tx{,}
\end{gather*}
which yields the lemma after simplification.
\end{proof}

\begin{proof}%
[Proof of \fref{Theorem}{thm:symmetric_formal_fourier_jacobi_series_dimension}]
The inclusion in \fref{Proposition}{prop:inclusion_into_higher_cogenus} allows us to restrict to the case~$h = g - 1$.

For clarity, we recall that the ordering~$\prec$ on Hermitian matrices is defined using the~\emph{sorted} tuple of the norms of their diagonal entries.
For the purpose of this proof, we define~$D_F > 0$ by~$D_F \ZZ = \normF(\cDF)$.
Throughout the proof, we restrict to non-negative~$m_1 \le \cdots \le m_h$ with~$m_i \in D_F^{-1} \ZZ$, that is, we assume that the denominators of~$m_i$ are bounded.

We consider
\begin{gather*}
  \rmFM^{(g,h)}_k [m_1, \ldots, m_h]
  \defeq
  \Big\{
  \sum_m \phi_m(\tau_1, z, w)\, e(m \tau_2)
  \in
  \rmFM^{(g,h)}_k
  \condsep
  \phi_m = 0 \tx{ for all } m
  \tx{ with } m \prec (m_1,\ldots,m_h)
  \Big\}
  \tx{.}
\end{gather*}
We have~$\rmFM^{(g,h)}_k = \rmFM^{(g,h)}_k[0,\ldots,0]$, since the index~$m$ of any nonzero coefficient~$\phi_m$ is positive semi-definite by \fref{Theorem}{thm:hermitian_jacobi_forms_koecher_principle}.
The inclusion
\begin{gather*}
  \rmFM^{(g,h)}_k [m_1, \ldots, m_h] \supseteq \rmFM^{(g,h)}_k [m'_1, \ldots, m'_h]
  \quad\tx{if}\quad
  (m_1, \ldots, m_h) \prec (m'_1, \ldots, m'_h)
\end{gather*}
follows directly from the defining vanishing condition on the Fourier--Jacobi coefficients~$\phi_m$ and the fact that~$\prec$ is a strict partial order.
Finally, we have
\begin{gather}
  \label{eq:prf:thm:symmetric_formal_fourier_jacobi_series_dimension:filtration_intersection}
  \bigcap_{0 \le m_1 \le \cdots \le m_h}
  \mspace{-18mu}
  \rmFM^{(g,h)}_k [m_1, \ldots, m_h]
  =
  \{0\}
  \tx{,}
\end{gather}
since any nonzero element~$f \in \rmFM^{(g,h)}_k$ has a nonzero Fourier--Jacobi coefficient~$\phi_m$ for some~$m$.
Then choosing~$(m'_1, \ldots, m'_h)$ the sorted tuple of~$\normF(m_{i\!i}) + 1$, $1 \le i \le h$, we find that
\begin{gather*}
  f
  \not\in
  \rmFM^{(g,h)}_k [m'_1, \ldots, m'_h]
  \supset
  \bigcap_{0 \le m_1 \le \cdots \le m_h}
  \mspace{-18mu}
  \rmFM^{(g,h)}_k [m_1, \ldots, m_h]
  \tx{.}
\end{gather*}

For brevity, we now write~$\ul{m} = (m_1, \ldots, m_h)$, and~$\ul{0} = (0, \ldots, 0)$.
We have canonical projection maps
\begin{gather*}
  \pi_{\ulsmash{m}}
  \defcol
  \rmFM^{(g,h)}_k[ \ul{m} ]
  \lthra
  \rmFM^{(g,h)}_k[ \ul{m} ]
  \Bigslash
  \bigcup_{\ul{m} \prec \ulsmash{m}'}
  \rmFM^{(g,h)}_k[ \ulsmash{m}' ]
  \tx{,}
\end{gather*}
where the union on the right hand side is a subspace, since~$\prec$ is a total ordering.
We let~$\pi_{\ulsmash{m}}^{-1}$ be a fixed linear right inverse, i.e.\@ a section.
We note that~$P(\ul{m}) \defeq \{ \ulsmash{m}' \condsep \ulsmash{m}' \preceq \ul{m} \}$ is finite for every~$\ul{m}$, since the entries of~$\ulsmash{m}'$ are nonnegative with bounded denominator.
For a symmetric formal Fourier--Jacobi series~$f$, we set~$f_{\ul{m}, \ulsmash{0}} \defeq f$ and if~$\ulsmash{m}' \ne 0$ then
\begin{gather*}
  f_{\ulsmash{m}, \ulsmash{m}'}
  \defeq
  f_{\ulsmash{m}, \ulsmash{m}''}
  -
  \pi_{\ulsmash{m}''}^{-1}\bigl(
  \pi_{\ulsmash{m}''}\bigl(
  f_{\ulsmash{m}, \ulsmash{m}''}
  \bigr) \bigr)
  \in
  \rmFM^{(g,h)}_k[ \ulsmash{m}' ]
  \tx{,}
\end{gather*}
where~$\ulsmash{m}''$ is the predecessor of~$\ulsmash{m}'$ in~$P(\ul{m})$.
The resulting~$f_{\ulsmash{m}} \defeq f_{\ulsmash{m}, \ulsmash{m}}$ depends linearly on~$f$.
We conclude from~\eqref{eq:prf:thm:symmetric_formal_fourier_jacobi_series_dimension:filtration_intersection} that we have an injective linear map
\begin{gather*}
  \rmFM^{(g,h)}_k
  \lhra
  \bigoplus_{0 \le m_1 \le \cdots \le m_h}
  \rmFM^{(g,h)}_k [\ul{m}]
  \Bigslash
  \bigcup_{\ul{m} \prec \ulsmash{m}'}
  \rmFM^{(g,h)}_k [\ulsmash{m}']
  \tx{,}\quad
  f
  \lmto
  \bigl(
  \pi_{\ul{m}}( f_{\ul{m}} )
  \bigr)_{0 \le m_1 \le \cdots \le m_h}
  \tx{.}
\end{gather*}
We will employ the resulting dimension estimate
\begin{gather}
  \label{eq:prf:thm:symmetric_formal_fourier_jacobi_series_dimension:filtration_dimension_estimate}
  \dim\, \rmFM^{(g,h)}_k
  \le
  \sum_{0 \le m_1 \le \cdots \le m_h}
  \dim\,\Bigg(
  \rmFM^{(g,h)}_k [\ul{m}]
  \Bigslash
  \bigcup_{\ul{m} \prec \ulsmash{m}'}
  \rmFM^{(g,h)}_k [\ulsmash{m}']
  \Bigg)
  \tx{.}
\end{gather}

Given~$0 \le m_1 \le \cdots \le m_h$ as before, we have a linear map
\begin{gather*}
  \rmFM^{(g,h)}_k [m_1,\ldots,m_h]
  \lra
  \bigoplus_{m \in M_{m_1,\ldots,m_h}}
  \mspace{-18mu}
  \rmJ^{(1)}_{k,m}
  \tx{,}\quad
  f(\tau)
  =
  \sum_m \phi_m(\tau_1, z, w)\, e(m \tau_2)
  \lmto
  \big( \phi_m \big)_{m \in M_{m_1,\ldots,m_h}}
  \tx{.}
\end{gather*}
We consider a Fourier coefficient in the image with indices~$n$, $r$, and~$m$, where~$\normF(n) < m_h$.
Using notation from~\eqref{eq:hermitian_fourier_index_decomposition} we write~$t$ for the index composed of them.
By definition of the Fourier coefficients of~$f$ and the symmetry condition~\eqref{eq:hermitian_fourier_coefficient_symmetry} imposed in \fref{Definition}{def:symmetric_formal_fourier_jacobi_series}, we have
\begin{gather*}
  c(\phi_m; n, r)
  =
  c(f, t)
  =
  (-1)^k\,
  c(f; t')
  =
  (-1)^k\,
  c(\phi_{m'}; n', r')
  \tx{,}
\end{gather*}
where~$t' = t[s]$ with block decomposition as in~\eqref{eq:hermitian_fourier_index_decomposition} and~$s \in \GL{g}(\cOE)$ is the permutation matrix that swaps the first and last row.
We have~$m'_{h\!h} = n$ and since~$\normF(n) < m_h$, we conclude~$m' \prec m$, hence~$\phi_{m'} = 0$ and~$c(\phi_m; n, r) = 0$.
This proves that we have a map with smaller codomain
\begin{gather}
  \label{eq:prf:thm:symmetric_formal_fourier_jacobi_series_dimension:filtration_to_jacobi_forms}
  \rmFM^{(g,h)}_k [m_1,\ldots,m_h]
  \lra
  \bigoplus_{m \in M_{m_1,\ldots,m_h}}
  \mspace{-18mu}
  \rmJ^{(1)}_{k,m}[m_h^{1 \slash n_F}]
  \tx{,}\quad
  \sum_m \phi_m(\tau_1, z, w)\, e(m \tau_2)
  \lmto
  \big( \phi_m \big)_{m \in M_{m_1,\ldots,m_h}}
  \tx{,}
\end{gather}
where the codomain consists of spaces of Hermitian Jacobi forms with prescribed vanishing order, defined in \fref{Section}{ssec:hermitian_modular_jacobi_forms:jacobi_forms}.

To determine the kernel of~\eqref{eq:prf:thm:symmetric_formal_fourier_jacobi_series_dimension:filtration_to_jacobi_forms}, we note that~$\ul{m} = (m_1,\ldots,m_h)$ is the unique maximum element of the set~$\{ \ulsmash{m}'' \condsep \ulsmash{m}'' \prec \ulsmash{m}' \tx{ for all } \ulsmash{m} \prec \ulsmash{m}' \}$.
Combining this with the definition of the spaces~$\rmFM^{(g,h)}_k [\ul{m}]$, we conclude that the kernel of~\eqref{eq:prf:thm:symmetric_formal_fourier_jacobi_series_dimension:filtration_to_jacobi_forms} equals
\begin{gather*}
  \bigcup_{\ul{m} \prec \ulsmash{m}'}
  \rmFM^{(g,h)}_k [\ulsmash{m}']
  \tx{.}
\end{gather*}
In other words, we can use the dimension of the image of~\eqref{eq:prf:thm:symmetric_formal_fourier_jacobi_series_dimension:filtration_to_jacobi_forms} to estimate the terms on the right hand side of~\eqref{eq:prf:thm:symmetric_formal_fourier_jacobi_series_dimension:filtration_dimension_estimate}:
\begin{gather}
  \label{eq:prf:thm:symmetric_formal_fourier_jacobi_series_dimension:jacobi_dimension_estimate}
  \dim\, \rmFM^{(g,h)}_k
  \le
  \sum_{\substack{
      0 \le m_1 \le \cdots \le m_h \\
      m \in M_{m_1,\ldots,m_h}
    }}
  \mspace{-18mu}
  \dim\, \rmJ^{(1)}_{k,m} [m_h^{1 \slash n_F}]
  \tx{.}
\end{gather}

We next use \fref{Corollary}{cor:hecke_bound_hermitian_jacobi_forms} to show that the terms on the right hand side of~\eqref{eq:prf:thm:symmetric_formal_fourier_jacobi_series_dimension:jacobi_dimension_estimate} vanish if we have~$k \ll_{E,g} m_h$.
The remaining terms satisfy~$m_h \ll_{E,g} k$ and can be estimated using the dimension bound in \fref{Corollary}{cor:dimension_bound_hermitian_jacobi_forms}, which yields
\begin{gather*}
  \dim\, \rmFM_k^{(g,h)}
  \ll_{E,g}
  \sum_{0 \le m_1 \le \cdots \le m_h \ll_{E,g} k}
  \mspace{-32mu}
  \# M_{m_1,\ldots,m_h}
  \cdot
  k^{n_F}\, \prod_{i = 1}^h \max\{1, m_i\}^{2 n_F}
  \tx{.}
\end{gather*}
We insert the estimate for~$M_{m_1,\ldots,m_h}$ in \fref{Lemma}{la:symmetric_formal_fourier_jacobi_series_index_set} and replace~$m_i$ by~$m_h$ in the product on the right hand side, and then replace the summation range by~$m_1, \ldots, m_h \ll_{E,g} k$ to obtain
\begin{align*}
  \dim\, \rmFM_k^{(g,h)}
   & \ll_{E,g}
  \sum_{0 \le m_1, \ldots, m_h \ll_{E,g} k}
  \mspace{-32mu}
  \max\{1, m_h\}^{n_F (h-1) h}
  \cdot
  k^{n_F}\, \max\{1, m_h\}^{2 n_F h}
  =
  k^{n_F (h + (h-1) h + 1 + 2 h)}
  =
  k^{n_F g^2}
  \tx{.}
\end{align*}
\end{proof}

\subsection{Growth at torsion points}%
\label{ssec:convergence:torsion_points}

In this section, we show that cuspidal symmetric formal Fourier--Jacobi series specialized to torsion points yield coefficients of moderate growth.
In contrast to the previous subsection, the weight~$k$ will be fixed throughout the section.
We preface the main argument with several standard results.

We use two norms in the proof of \fref{Theorem}{thm:torsion_point_hecke_bound}.
The $\infty$\nbd{}norm of~$\td{f} \in \rmS^{(g-1)}_k(N)$ is defined as
\begin{gather*}
  \| \td{f} \|_\infty
  \defeq
  \sup_{\tau_1 \in \HS_{E,g-1}}
  \detF\bigl( \Im(\tau_1) \bigr)^{\frac{k}{2}}\,
  \big| \td{f}(\tau_1) \big|
  \tx{.}
\end{gather*}

\begin{lemma}%
\label{la:hermitian_modular_forms_infty_norm_bounded}
Given~$\td{f} \in \rmS^{(g-1)}_k(N)$ for~$g \ge 2$, we have~$\| \td{f} \|_\infty < \infty$.
\end{lemma}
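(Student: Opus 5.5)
The plan is the classical argument for boundedness of the Petersson-normalized absolute value of a cusp form. I will show that $\phi(\tau_1) \defeq \detF(\Im(\tau_1))^{k/2} |\td{f}(\tau_1)|$ is invariant under $\SU{g-1,g-1}(\cOF,N)$ and tends to $0$ at every cusp. Both properties reduce the supremum to one over a compact region of a fundamental domain.

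First, invariance. For $\ga \in \SU{g-1,g-1}(F_\RR)$ the standard identity $\Im(\ga\tau_1) = \lD{(c\tau_1+d)}^{-1}\, \Im(\tau_1)\, (c\tau_1+d)^{-1}$ gives $\detF(\Im(\ga\tau_1)) = |\detF(c\tau_1+d)|^{-2}\detF(\Im(\tau_1))$. Combined with $\td{f}|_k\,\ga = \td{f}$, this shows that $\phi$ is $\SU{g-1,g-1}(\cOF,N)$-invariant. By the same identity, for arbitrary $\ga \in \SU{g-1,g-1}(\cOF)$ we have $\phi(\ga\tau_1) = \detF(\Im\tau_1)^{k/2}|(\td{f}|_k\,\ga)(\tau_1)|$, and $\td{f}|_k\,\ga$ is again a cusp form for a conjugate congruence subgroup.

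Second, reduction theory (Borel, or Siegel sets for $\SU{g-1,g-1}$ over $F$). Since $\SU{g-1,g-1}(\cOF,N)$ has finite index in $\SU{g-1,g-1}(\cOF)$, finitely many Siegel sets $\ga_j \mathfrak{S}$ cover a fundamental domain, with $\ga_j \in \SU{g-1,g-1}(F)$ running through cusp representatives. It therefore suffices to bound $\detF(Y)^{k/2}|(\td{f}|_k\,\ga_j)(X+iY)|$ on a standard Siegel set. On such a set $X$ is bounded, and $Y = Y_0[u]$ with $u$ in a compact set of unipotent matrices and $Y_0$ diagonal, its entries bounded below and essentially increasing. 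Expand $\td{f}|_k\,\ga_j = \sum_{t > 0} c_j(t) e(t\tau_1)$, which is possible by the cusp form condition. Positive definite $t$ in a lattice $\frac{1}{M}\MatD{g-1}(\cOE)^\vee$ satisfy $\traceF\trace(tY) \ge c\,\traceF\trace(Y)$ uniformly on the Siegel set, for some $c>0$. Absolute convergence of the Fourier series at a fixed base point then gives $|(\td{f}|_k\,\ga_j)(\tau_1)| \ll \exp(-c'\,\traceF\trace Y)$ once $\traceF\trace Y$ is large. This exponential decay beats the polynomial $\detF(Y)^{k/2}$. On the remaining part, where $\traceF\trace Y$ is bounded, the Siegel set is relatively compact and $\phi$ is continuous.

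The main obstacle is the uniform lower bound $\trace(tY) \gg \trace(Y)$ for all nonzero $t > 0$ with bounded denominators, uniformly on the Siegel set, over a Hilbert base with unit action. I would handle it as follows. First reduce to the real trace form $\traceF\trace$ and use \fref{Lemma}{la:unit_balancing} to balance components. Then use that the minimum of $\trace(tY)$ over the discrete set of positive definite $t$ is attained, together with Minkowski-type reduction of $Y$ on the Siegel set. If this gets messy, I would instead use the well-known statement that cusp forms are exactly those forms whose Petersson-normalized absolute value is bounded. That statement is standard for Siegel modular forms (Klingen's book) and transfers to unitary groups via Baily--Borel, as in \fref{Theorem}{thm:hermitian_koecher_principle}.
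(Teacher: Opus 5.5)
Your proposal is correct and follows essentially the same route as the paper's proof. Both arguments use the invariance of $\detF(\Im\tau_1)^{k/2}|\td{f}|$ and reduction theory: the paper cites Borel--Harish-Chandra together with Koecher's comparison of $\Im(\tau_1)$ with its diagonal to cover a fundamental domain by finitely many translated Siegel-type sets, and on each set the positive-definite Fourier support of $\td{f}|_k\,\ga$ gives exponential decay that beats the polynomial factor. Your explicit uniform bound $\traceF\trace(tY) \gg \traceF\trace(Y)$, obtained by balancing the diagonal entries across the real places with units, is a more careful version of the step the paper compresses into its $\delta$-shift of $y_1'$.
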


\begin{proof}
For convenience, we write~$\tau_1 = x_1 + i y_1$ for Hermitian matrices~$x_1 = \Re(\tau_1)$ and~$y_1 = \Im(\tau_1)$.
We set
\begin{gather*}
  \hat{f}(\tau_1)
  \defeq
  \detF( y_1 )^{\frac{k}{2}}\,
  \big| \td{f}(\tau_1) \big|
  \tx{,}
\end{gather*}
which is invariant under the action of~$\SU{g-1,g-1}(\cOF,N)$.
By Borel--Harish-Chandra~\cite{borel-harish-chandra-1962} a fundamental domain for~$\SU{g-1,g-1}(\cOF,N)$ is contained in a suitable finite union of~$\ga\, \HS_{E,g-1}(K,\eps)$, where
\begin{gather*}
  \HS_{E,g-1}(K,\eps)
  \defeq
  \big\{
  \tau_1 \in \HS_{E,g-1}
  \condsep
  x_1 \in K,
  y_1 > \epsilon
  \big\}
\end{gather*}
for~$\ga \in \SU{g-1,g-1}(F)$, $K \subset \MatT{g-1}(F_\RR)$ compact, and~$\eps > 0$.
Now by~\cite{koecher-1960}, this fundamental domain is contained in a finite union of~$\ga\, \HS_{E,g-1}'(K,\eps)$, where the additional condition~$y_1' \ll_{E,g} y_1 \ll_{E,g} y_1'$ on the diagonal~$y_1'$ of~$y_1$ is imposed.

Thus it suffices to bound~$\hat{f} \circ \ga$ on~$\HS_{E,g-1}'(K,\eps)$.
Given~$\delta > 0$, using
\begin{gather*}
  \detF(\Im(\tau_1))
  \ll_{E,g}
  \detF(y_1')
  \ll_{E,g,\delta}
  e\bigl(- i y_1'\, \mfrac{2 \delta}{k} \bigr)
\end{gather*}
for~$\tau_1 \in \HS_{E,g-1}'(K,\eps)$, we find
\begin{gather*}
  \hat{f}(\tau_1)
  \ll_{E,g,\eps}
  \sum_{n \in \MatD{g-1}(\cOE)^\vee}
  |c(\td{f};n)|\,
  e\bigl( n i ( y_1' - \delta) \bigr)
  \tx{,}
\end{gather*}
which is bounded if~$\delta \ll_{E,g} \eps$, since~$y_1' \gg_{E,g} \eps$ and~$\td{f}$ has an absolutely convergent Fourier series on~$\HS_{E,g-1}$.
\end{proof}

We introduce a further norm derived from Fourier coefficients, for which we need the vanishing bound in the next statement.

\begin{lemma}%
\label{la:hermitian_modular_forms_diagonal_hecke_bound}
For~$g \ge 2$ and~$N \in \ZZ_{\ge 1}$, there is\/~$\nu_{E,g,k}(N) > 0$ such that for~$\td{f} \in \rmS^{(g-1)}_k(N)$, we have~$\td{f} = 0$ if\/~$c(\td{f}; n) = 0$ for all~$n \in B(N)$ with
\begin{multline*}
  B(N)
  =
  B_{E,g,k}(N)
  \defeq
  \big\{
  n \in \tfrac{1}{N}\MatD{g-1}(\cOE)^\vee
  \condsep
  n \tx{ totally positive definite},
  \\
  \traceF(n_{i\!i}) < \nu_{E,g,k}(N) \tx{ for all } 1 \le i \le g-1
  \big\}
  \tx{.}
\end{multline*}
\end{lemma}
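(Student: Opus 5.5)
The idea is to reduce to a Sturm-type bound for the finite-dimensional space $\rmS^{(g-1)}_k(N)$, combined with the symmetry of Fourier coefficients under $\rot$ of a congruence subgroup of $\GL{g-1}(\cOE)$ and a reduction theory that puts diagonal traces under control. The space $\rmS^{(g-1)}_k(N)$ is finite dimensional, since it injects into $\rmM^{(g-1)}_k(N)$, which is finite dimensional by Baily--Borel as in \fref{Proposition}{prop:hermitian_modular_forms_dimension_asymptotic}. A purely linear-algebraic argument then already produces \emph{some} finite set $B' \subset \frac{1}{N}\MatD{g-1}(\cOE)^\vee$ of totally positive definite indices such that the coefficient map $\td{f} \mapsto (c(\td{f};n))_{n \in B'}$ is injective. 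To see this, take a basis $f_1, \ldots, f_d$. Each nonzero $\td f$ has some nonzero coefficient, and cusp forms only have coefficients at positive definite $n$. Choose indices $n_1, \ldots, n_d$ greedily so that the matrix $(c(f_i;n_j))$ has rank $d$; this is possible because the row space, as functions on the index set, has dimension $d$.

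It remains to put such a finite set inside a set of the stated shape $B(N)$. Since $B'$ is finite, one can simply take $\nu_{E,g,k}(N)$ larger than $\max_{n \in B'} \max_i \traceF(n_{i\!i})$. Then $B' \subseteq B(N)$, and the vanishing of all coefficients on $B(N)$ forces $\td f = 0$. This choice depends on $E$, $g$, $k$ and $N$, which is exactly what the statement allows, so no reduction theory is logically required.

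If a more explicit constant were desired, which is presumably what is useful later when tracking the dependence on $N$, I would instead argue as follows. First, use the symmetry $c(\td f; n[a]) = \detF(\ov a)^k c(\td f; n)$ for $a \in \GL{g-1}(\cOE, N)$ with $\det a = \det \ov a$, which comes from invariance under $\rot(a)$. Together with \fref{Lemma}{la:symmetric_formal_fourier_jacobi_series_index_reduction} adapted to the congruence subgroup, this replaces $n$ by an equivalent index whose off-diagonal entries are bounded by its diagonal ones. Second, restrict $\td f$ to the diagonal $\HS_F^{g-1}$. The resulting function is a product-type Hilbert modular form of weight $k$ and level $N$ in each variable, and its coefficients are sums of $c(\td f;n)$ over $n$ with prescribed diagonal. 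Third, apply the Hecke bound \fref{Proposition}{prop:hecke_bound}, via \fref{Lemma}{la:accumulated_hecke_bound_twisted_permutation_type}, variable by variable, using translates by $\SU{g-1,g-1}(\cOF)$ to pass from the vanishing of all restrictions to the vanishing of $\td f$.

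The main obstacle in that explicit route is the last step: showing that the vanishing of the diagonal restrictions of all translates recovers $\td f$. Because of this, I would present the soft finite-dimensionality argument as the proof, since it fully establishes the statement as given.
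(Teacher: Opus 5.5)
Your proof is correct. For $N=1$ it is exactly the paper's argument; the difference lies in how you handle general level $N$.

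The paper invokes finite dimensionality only for the full modular group, the case recorded in \fref{Proposition}{prop:hermitian_modular_forms_dimension_asymptotic}. It obtains $\nu_{E,g,k}(1)$ just as you do. It then reaches level $N$ by applying the level-one statement to the product $\prod_\ga \td{f}|_k\,\ga$, where $\ga$ runs over representatives of $\SU{g-1,g-1}(\cOF,N)\backslash\SU{g-1,g-1}(\cOF)$.
\begin{itemize}
\item This product has level one and weight $kM$, where $M$ is the index of $\SU{g-1,g-1}(\cOF,N)$ in $\SU{g-1,g-1}(\cOF)$.
\item The map $n \mapsto \traceF(n_{i\!i})$ is additive, and the other factors have totally positive semi-definite indices. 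Hence vanishing of the coefficients of $\td{f}$ on the box $B(N)$ carries over to the product.
\item One can therefore take $\nu_{E,g,k}(N) = \nu_{E,g,kM}(1)$, and then use that some factor $\td{f}|_k\,\ga$ must vanish.
\end{itemize}

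You instead use finite dimensionality of $\rmS^{(g-1)}_k(N)$ directly. This is standard: Baily--Borel applies to any arithmetic subgroup, and the paper itself uses it for $\rmS^{(g-1)}_k(N_F N^2)$ in the proof of \fref{Theorem}{thm:torsion_point_hecke_bound}. After that, choosing $d$ determining indices is pure linear algebra. Your route is shorter and avoids the product trick. The paper's route keeps the analytic input at level one and expresses the level-$N$ constant through level-one constants of larger weight.

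Neither route yields an explicit constant, and none is needed: wherever $\nu_{E,g,k}(N_F N^2)$ is used later, the implied constants are allowed to depend on $N$. Your ``explicit route'' via diagonal restriction is therefore unnecessary, and you correctly identified where it would stall. If you want to avoid level-$N$ finite dimensionality, the product of translates is the natural replacement.
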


\begin{proof}
Since~$\rmS^{(g-1)}_k$ is finite dimensional by \fref{Proposition}{prop:hermitian_modular_forms_dimension_asymptotic}, the desired constant exists for~$N = 1$.
We observe that by a naive estimate using~$\# (\cOE \slash N \cOE) = N^{2 n_F}$, we have
\begin{gather*}
  \#
  \Bigl(
  \SU{g-1,g-1}(\cOF, N) \big\backslash \SU{g-1,g-1}(\cOF)
  \Bigr)
  \le
  N^{8 n_F (g-1)^2}
  \tx{.}
\end{gather*}
Since~$\traceF(n_{i\!i})$ is linear in~$n_{i\!i}$, the statement for~$N > 1$ follows by a standard argument after applying the case of~$N = 1$ to the form~$\prod_\ga f|_k\, \ga$, where~$\ga$ runs through representatives of the quotient~$\SU{g-1,g-1}(\cOF, N) \backslash \SU{g-1,g-1}(\cOF)$; see the proof of \fref{Lemma}{la:accumulated_hecke_bound_twisted_permutation_type} for a similar argument.
\end{proof}

We fix~$\nu_{E,g,k}(N)$ as in \fref{Lemma}{la:hermitian_modular_forms_diagonal_hecke_bound} once and for all and continue to use the notation~$B(N)$.
Note that~$B(N)$ is a finite set.
Thus we can define the norm
\begin{gather*}
  \| \td{f} \|_{\rmF\rmE}
  \defeq
  \sum_{n \in B(N)} \big| c(\td{f};n) \big|
  \tx{.}
\end{gather*}

We also need the Hecke bound for Hermitian modular forms.

\begin{lemma}%
\label{la:hermitian_modular_forms_hecke_bound}
Given~$\td{f} \in \rmS^{(g-1)}_k(N)$ for~$g \ge 2$, we have the bound~$c(\td{f}; n) \ll_{\td{f}} \detF(n)^{\frac{k}{2}}$.
\end{lemma}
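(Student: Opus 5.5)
The plan is the classical Hecke argument: write the Fourier coefficient as an integral over a real torus and estimate it with the bounded function \(\hat f\) from \fref{Lemma}{la:hermitian_modular_forms_infty_norm_bounded}.

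Since \(\td{f}\) is a cusp form, \fref{Lemma}{la:hermitian_modular_forms_infty_norm_bounded} gives \(\|\td{f}\|_\infty < \infty\). That is, for all \(\tau_1 = x_1 + i y_1 \in \HS_{E,g-1}\) we have
\begin{gather*}
  \big| \td{f}(\tau_1) \big|
  \le
  \|\td{f}\|_\infty\, \detF(y_1)^{-\frac{k}{2}}
  \tx{.}
\end{gather*}

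Fix a totally positive definite \(y_1\). Invariance under \(\trans(N \MatD{g-1}(\cOE))\) lets us recover the coefficient by integrating over a fundamental domain \(X\) for the lattice \(N \MatD{g-1}(\cOE)\) in \(\MatD{g-1}(E_\RR)\):
\begin{gather*}
  c(\td{f}; n)
  =
  \mathrm{vol}(X)^{-1}
  \int_{X}
  \td{f}(x_1 + i y_1)\,
  e\bigl( - n (x_1 + i y_1) \bigr)\,
  \rmd x_1
  \tx{.}
\end{gather*}
The factor \(|e(-n\, i y_1)| = \exp(2\pi\, \traceF \trace(n y_1))\) does not depend on \(x_1\). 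Combining this with the sup bound gives
\begin{gather*}
  |c(\td{f};n)|
  \le
  \|\td{f}\|_\infty\,
  \detF(y_1)^{-\frac{k}{2}}\,
  \exp\bigl( 2\pi\, \traceF \trace(n y_1) \bigr)
  \tx{.}
\end{gather*}

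Only the choice of \(y_1\) remains. Cuspidality means that only totally positive definite \(n\) occur, so we may take \(y_1 = n^{-1}\), computed componentwise in \(E_\RR \cong \CC^{n_F}\). Then \(\trace(n y_1) = g-1\) in each component, so the exponential factor is a constant depending only on \(E\) and \(g\). Also \(\detF(y_1)^{-k/2} = \detF(n)^{k/2}\), and we obtain \(c(\td{f};n) \ll_{\td{f}} \detF(n)^{\frac{k}{2}}\).

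The hardest part is checking the details rather than the idea. One point is that \(\detF\) of a Hermitian matrix is real and positive here, so that \(\detF(n^{-1}) = \detF(n)^{-1}\). The other is that \(\trace(n y_1)\) is meant componentwise over the real embeddings of \(F\), so that \(y_1 = n^{-1}\) lies in the upper half space. All the real analytic input is already in \fref{Lemma}{la:hermitian_modular_forms_infty_norm_bounded}.
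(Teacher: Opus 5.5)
Your proposal is correct and follows the paper's own proof: it uses Siegel's form of the Hecke argument, writing $c(\tilde{f};n)$ as an integral over the torus $\MatD{g-1}(E_\RR) / N \MatD{g-1}(\cOE)$, bounding the integrand by the finiteness of $\|\tilde{f}\|_\infty$ from the preceding lemma, and inserting $y_1 = n^{-1}$. The only difference is that you make the volume normalization and the constant $\exp(2\pi n_F (g-1))$ explicit, where the paper absorbs them into $\asymp_N$ and $\ll_{\tilde{f}}$.
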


\begin{proof}
Siegel's original argument~\cite{siegel-1939} applies:
Since~$\td{f}$ is a cusp form, we may assume that~$n$ is totally positive definite.
As before, we write~$\tau_1 = x_1 + i y_1$ with~$x_1 = \Re(\tau_1)$ and~$y_1 = \Im(\tau_1)$.
We express the $n$\thdash{} Fourier coefficient of~$\td{f}$ as the integral
\begin{gather*}
  c(\td{f}; n)
  \asymp_N
  \int_{N \MatD{g-1}(\cOE) \backslash \MatD{g-1}(E)}
  \td{f}(x_1 + i y_1)\,
  e(-n \tau_1)
  \,\rmdx_1
  \tx{,}
\end{gather*}
whose absolute value we can estimate using \fref{Lemma}{la:hermitian_modular_forms_infty_norm_bounded} by
\begin{gather*}
  \detF(y_1)^{-\frac{k}{2}}
  e(- n i y_1)
  \int_{N \MatD{g-1}(\cOE) \backslash \MatD{g-1}(E)}
  \detF(y_1)^{\frac{k}{2}}
  \bigl| \td{f}(x_1 + i y_1) \bigr|
  \rmdx_1
  \ll_{\td{f}}
  \detF(y_1)^{-\frac{k}{2}}
  e(- n i y_1)
  \tx{.}
\end{gather*}
Inserting~$n^{-1}$ for~$y_1$ we obtain the statement.
\end{proof}

\begin{corollary}%
\label{cor:hermitian_jacobi_forms_hecke_bound}
Given a cogenus~$1$ cusp form~$\phi \in \rmJ^{(g-1)}_{k,m}$ for~$g \ge 2$, we have the bound~$c(\phi; n, r) \ll_{\phi} \detF(t)^{\smash{\frac{k-1}{2}}}$, where~$t$ is given by the decomposition in~\eqref{eq:hermitian_fourier_index_decomposition}.
\end{corollary}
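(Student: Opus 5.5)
The plan is to reduce to \fref{Lemma}{la:hermitian_modular_forms_hecke_bound} via the theta decomposition of Hermitian Jacobi forms of positive definite index. First, since $\phi$ is a cusp form, it vanishes unless $m$ is totally positive definite, so we may assume this. For such $m$, the translation invariance under the Heisenberg part of $Q_1(\cOF)$ yields, as for classical Jacobi forms, that $c(\phi; n, r)$ depends only on $r \bmod m\Mat{1,g-1}(\cOE)$ and on $n - m^{-1}[r]$. Writing $\mu$ for the class of $r$ in $\disc(m)^{g-1}$, this gives
\[
  \phi(\tau_1,z,w) = \sum_{\mu \in \disc(m)^{g-1}} h_\mu(\tau_1)\, \theta_{m,\mu}(\tau_1,z,w),
\]
with $\theta_{m,\mu}$ as in~\eqref{eq:def:hermitian_jacobi_theta} and $c(h_\mu; n - m^{-1}[r]) = c(\phi; n, r)$. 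The Schur complement identity $\det(t) = \det(m)\,\det(n - m^{-1}[r])$ for the block decomposition~\eqref{eq:hermitian_fourier_index_decomposition} (here $m$ is $1\times 1$) then gives $\detF(t) = \normF(m)\,\detF(n - m^{-1}[r])$.

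Second, I will show that the vector $(h_\mu)_\mu$ is a vector-valued Hermitian modular form of genus $g-1$ and weight $k-1$ for the dual Weil representation $\rho_m^{(g-1)\vee}$. This follows because $\theta_m$ has weight $h = 1$ and type $\rho^{(g-1)}_m$ (recorded after~\eqref{eq:def:hermitian_jacobi_theta}), and the theta functions are linearly independent, so their coefficients are uniquely determined. Since the Weil representation has congruence kernel of some level $N_m$, each $h_\mu$ is a scalar Hermitian modular form of weight $k-1$ and level $N_m$. Cuspidality of $\phi$ (its coefficients vanish unless $t>0$, equivalently unless $n - m^{-1}[r] > 0$ given $m>0$, including after conjugation by $\SU{g-1,g-1}(F)$ embedded in $Q_1$) implies that each $h_\mu$ is a cusp form.

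Third, apply \fref{Lemma}{la:hermitian_modular_forms_hecke_bound} with weight $k-1$ to each of the finitely many $h_\mu$:
\[
  c(h_\mu; n') \ll_{h_\mu} \detF(n')^{\frac{k-1}{2}}, \qquad n' = n - m^{-1}[r].
\]
Taking the maximum constant over $\mu$, and using that $\normF(m)$ is a fixed positive number for fixed $\phi$, we obtain
\[
  c(\phi;n,r) \ll_\phi \normF(m)^{-\frac{k-1}{2}}\detF(t)^{\frac{k-1}{2}} \ll_\phi \detF(t)^{\frac{k-1}{2}}.
\]

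The main obstacle is the second step: justifying the theta decomposition and the modularity of the $h_\mu$ in the Hermitian Hilbert setting. In particular, transferring cuspidality at all cusps requires care. I would handle this by checking the transformation of $\sum_\mu h_\mu\theta_{m,\mu}$ under the generators $\trans$, $\rot$, and $\sinv(1)$ of the embedded $\SU{g-1,g-1}(\cOF)$, which suffices by the analogue of \fref{Proposition}{prop:siegel_and_klingen_parabolics_generate}. Cuspidality at other cusps I would obtain by applying the same decomposition to $\phi|_{k,m}\ga$.
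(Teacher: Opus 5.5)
Your proposal is correct and follows the paper's proof. The paper also uses the theta decomposition $\phi=\sum_\mu f_\mu\,\theta_{m,\mu}$ with Hermitian cusp forms $f_\mu$ of weight $k-1$, the Schur-complement identity $\detF(n')\,\normF(m)=\detF(t)$, and then \fref{Lemma}{la:hermitian_modular_forms_hecke_bound}. The only difference is that you spell out what the paper asserts in a single line, namely the invariance of coefficients under $r\mapsto r+m\lambda$ and the cuspidality of the $f_\mu$ at all cusps.
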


\begin{proof}
Hermitian Jacobi theta series in~\eqref{eq:def:hermitian_jacobi_theta} yield the theta decomposition
\begin{gather*}
  \phi(\tau_1, z, w)
  =
  \sum_{\mu}
  f_\mu(\tau_1)\,
  \theta_{m,\mu}(\tau_1, z, w)
  \tx{,}
\end{gather*}
where~$\mu$ runs through the coset in~\eqref{eq:def:hermitian_jacobi_theta}.
In this decomposition all~$f_\mu$ are Hermitian cusp forms of weight~$k-1$ with Fourier coefficients~$c(f_\mu; n') = c(\phi; t)$ for suitable~$t$ that satisfies~$\detF(n')\, \det(m) = \detF(t)$.
Now the statement follows from \fref{Lemma}{la:hermitian_modular_forms_hecke_bound}.
\end{proof}

We are now in position to bound symmetric formal Fourier--Jacobi series at admissible torsion points.
Recall the specialization of Hermitian Jacobi forms to torsion points in~\eqref{eq:def:hermitian_jacobi_forms_torsion_points} and the notion of admissible torsion points in \fref{Definition}{def:hermitian_jacobi_forms_admissible_torsion_point}.

\begin{theorem}%
\label{thm:torsion_point_hecke_bound}
Given~$f = \sum_m \phi_m(\tau_1,z,w)\, e(m \tau_2) \in \rmFS^{(g,1)}_k$ and an admissible~$N$\nbd{}torsion point~$(\alpha,\beta)$, we have the bound
\begin{gather*}
  \big\|
  \phi_m [\alpha, \beta]
  \big\|_\infty
  \ll_{f}
  \normF(m)^{\frac{k-3}{2} + g}
  \quad\tx{for all totally positive } m \in \cOF^\vee
  \tx{.}
\end{gather*}
\end{theorem}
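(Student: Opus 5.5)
The plan is to reduce the sup norm to finitely many Fourier coefficients, and then bound each coefficient by moving it, via the symmetry relation, into one of finitely many Fourier--Jacobi coefficients.

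\textbf{Step 1: reduce to finitely many coefficients.} By \fref{Proposition}{prop:hermitian_jacobi_forms_torsion_points_modularity}, each specialization $\phi_m[\alpha,\beta]$ is a Hermitian cusp form in $\rmS^{(g-1)}_k(N_F N^2)$. This space is finite dimensional, and it does not depend on $m$. On it, $\|\cdot\|_\infty$ is finite by \fref{Lemma}{la:hermitian_modular_forms_infty_norm_bounded}, and $\|\cdot\|_{\rmF\rmE}$ is a norm by \fref{Lemma}{la:hermitian_modular_forms_diagonal_hecke_bound}. Equivalence of norms on a finite-dimensional space gives
\begin{gather*}
  \|\phi_m[\alpha,\beta]\|_\infty
  \ll_{N,k}
  \|\phi_m[\alpha,\beta]\|_{\rmF\rmE}
  =
  \sum_{n \in B(N_F N^2)} \bigl| c(\phi_m[\alpha,\beta]; n) \bigr|
  \tx{.}
\end{gather*}
So it suffices to show $c(\phi_m[\alpha,\beta]; n) \ll_f \normF(m)^{\frac{k-3}{2}+g}$ for each of the finitely many $n \in B(N_F N^2)$.

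\textbf{Step 2: expand the specialized coefficient.} By \fref{Lemma}{la:hermitian_jacobi_forms_torsion_points}, the coefficient $c(\phi_m[\alpha,\beta]; n)$ is a sum of terms $\zeta\, c(\phi_m; n', r)$ with $|\zeta| = 1$. The sum runs over those $(n',r)$ whose combined index $t = \begin{psmatrix} n' & \lT{\ovsmash r} \\ r & m \end{psmatrix}$ satisfies $t[\lT{(}1_{g-1}, \lT{\ovsmash\alpha})] = n$. Cuspidality forces $t > 0$, and then
\begin{gather*}
  n \;\ge\; (r + m\ov\alpha)^\dagger m^{-1} (r + m \ov\alpha)
  \tx{.}
\end{gather*}
For bounded $n$, this confines $r \in \frac{1}{N}\Mat{1,g-1}(\cOE^\vee)$ to an ellipsoid in $E_\RR^{g-1}$ of real dimension $2n_F(g-1)$ with radii of size about $\sqrt{m}$ in each embedding, after unit balancing via \fref{Lemma}{la:unit_balancing}. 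Hence the number of contributing $r$ is $\ll \normF(m)^{g-1}$, and $n'$ is determined by $r$. The same inequality gives $\detF(t) = \normF(m)\,\detF(n' - m^{-1}[r]) \ll \normF(m)$.

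\textbf{Step 3: bound each coefficient (the main obstacle).} The difficulty is that \fref{Corollary}{cor:hermitian_jacobi_forms_hecke_bound} applied to $\phi_m$ directly has a constant depending on $\phi_m$, hence on $m$. I would instead use the symmetry relation to move each $t$ to an index whose last diagonal entry is bounded independently of $m$.

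Admissibility says that $1$ and the entries of $\alpha$ generate $M^{-1}\cOE$. So the vector $M\,\lT{(}1_{g-1}\ \ov\alpha)$, or rather its last-coordinate analogue $v = M\,\lT{(}-\ov\alpha,\,1)$, is a primitive vector in $\cOE^g$. I would complete $v$ to a matrix $a \in \GL{g}(\cOE)$ with $\det(a) = \det(\ov a)$, using that $\cOE$ is Dedekind with finite class group, possibly after a bounded modification. Then $t[a]$ has lower-right entry $t[v]$, which equals the norm of $t$ on $v$. By Step 2 this is bounded in terms of $n$, $M$ and $N$, uniformly in $m$. Hence
\begin{gather*}
  c(\phi_m; n', r)
  =
  c(f; t)
  =
  \pm\, c(f; t[a])
  =
  \pm\, c(\phi_{m'}; n'', r'')
\end{gather*}
with $m'$ ranging over a finite set depending only on $f$, $N$ and $B(N_F N^2)$.

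Applying \fref{Corollary}{cor:hermitian_jacobi_forms_hecke_bound} to these finitely many $\phi_{m'}$, with $\detF(t[a]) = \detF(t) \ll \normF(m)$, gives $|c(\phi_m; n', r)| \ll_f \normF(m)^{\frac{k-1}{2}}$. Multiplying by the count $\normF(m)^{g-1}$ from Step 2 yields the exponent $\frac{k-1}{2} + g - 1 = \frac{k-3}{2} + g$.

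The points I expect to need care are constructing $a$ with the determinant condition, uniformly in the admissible torsion point, and keeping track of the unit balancing so that the count of $r$ is uniform.
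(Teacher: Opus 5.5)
Your Steps~1 and~2 and the final bookkeeping agree with the paper's proof: the norm comparison on the finite-dimensional space $\rmS^{(g-1)}_k(N_F N^2)$, the count $\ll \normF(m)^{g-1}$ of contributing $r$, the bound $\detF(t) \le \detF(n)\, \normF(m)$, and the Jacobi Hecke bound applied to finitely many $\phi_{m'}$.

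The gap is in Step~3, at the one point where admissibility enters. Your vector $v = M\, \lT{(}-\ov{\alpha}, 1)$ points in the wrong direction, and the claim that Step~2 bounds $t[v]$ uniformly in $m$ is false. Step~2 controls $t$ only on the $E$-span of the columns of $x = \begin{psmatrix} 1_{g-1} \\ \ov{\alpha} \end{psmatrix}$. Concretely, put $P = \begin{psmatrix} 1_{g-1} & 0 \\ \ov{\alpha} & 1 \end{psmatrix}$ and $s_0 = r + m \ov{\alpha}$. Then $t[P] = \begin{psmatrix} n & \lD{s_0} \\ s_0 & m \end{psmatrix}$, and the last coordinate of $P^{-1} v$ is $M (1 + \ov{\alpha}\, \lT{\ov{\alpha}})$. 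Hence in every embedding
\begin{gather*}
  t[v] = M^2\, m\, \bigl| 1 + \ov{\alpha}\, \lT{\ov{\alpha}} \bigr|^2 + O\bigl( \sqrt{m} \bigr) .
\end{gather*}
Unless $1 + \ov{\alpha}\, \lT{\ov{\alpha}} = 0$, this grows linearly in $m$. So the indices $m' = t[v]$ do not range over a finite set, and applying \fref{Corollary}{cor:hermitian_jacobi_forms_hecke_bound} to $\phi_{m'}$ brings back exactly the $m$-dependent constant that Step~3 was meant to remove.

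The admissibility condition has to be used on $M(-\ov{\alpha}, 1)$ as a \emph{row}, that is, as a linear form on $\cOE^g$, not as a column vector. This row is unimodular, so its kernel $K \subset \cOE^g$ is a direct summand of rank $g-1$. It is free, being stably free over the Dedekind domain $\cOE$. Moreover, $K$ consists exactly of the integral vectors $v = \begin{psmatrix} y \\ \ov{\alpha} y \end{psmatrix}$ in the span of $x$, and for these $t[v] = n[y]$.

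Now take $a \in \GL{g}(\cOE)$ whose last columns are a basis of $K$ and whose first column is a preimage of $1$ under that row. Rescale the first column by a unit so that $\det(a) \in \cOF^\times$. The lower-right entry of $t[a]$ is then $n[y]$, which for a fixed torsion point takes only finitely many values as $n$ runs through $B(N_F N^2)$. With this change the rest of your Step~3 goes through.

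This is the paper's construction. It takes $u$ with $\begin{psmatrix} 1 & 0 \\ -\alpha & 1 \end{psmatrix} u s = \begin{psmatrix} \rho & \xi \\ 0 & M^{-1} \end{psmatrix}$, so that columns $2, \ldots, g$ of $u$ form a basis of this kernel (in the paper's conjugation convention). It chooses $u$ so that $n[\rho]$ is Koecher reduced. From $|\det(\rho)| = M$ and the Hermite-type bound it gets $m' = (n[\rho])_{11}$ with $\normF(m') \ll (\detF(\rho)^2 \detF(n))^{1/(g-1)} \ll 1$. This also makes the finite set of $m'$ uniform in the torsion point.
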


\begin{proof}
By \fref{Proposition}{prop:hermitian_jacobi_forms_torsion_points_modularity}, we have~$\phi[\alpha,\beta] \in \rmS^{(g-1)}_k(N_F N^2)$, where~$N_F^{-1} \ZZ = \traceF( \frac{1}{2} \cOF^\vee )$.
Since the space~$\rmS^{(g-1)}_k(N_F N^2)$ is finite dimensional, norm comparison shows that
\begin{gather*}
  \big\| \phi_m[\alpha,\beta] \big\|_\infty
  \ll_{E,g,k,N}
  \big\| \phi_m[\alpha,\beta] \big\|_{\rmF\rmE}
  \tx{.}
\end{gather*}
In the proof, we will bound the right hand side.

Employgin the symmetry of Fourier coefficients of~$f$ and using \fref{Lemma}{la:unit_balancing} with~$U = (\cOF^\times)^2$, we may replace~$m$ by~$\eps^2 m$ for a suitable~$\eps \in \cOF^\times$ and assume that~$m \asymp_F \traceF(m)$.
Expanding the definition of specializations to torsion points in~\eqref{eq:def:hermitian_jacobi_forms_torsion_points}, we find that
\begin{gather*}
  \big| c(\phi_m[\alpha,\beta]; n) \big|
  \le
  \sum_{r \in \frac{1}{N_F N^2} \Mat{1,g-1}(\cOE^\vee)}
  \Big|
  c\Bigl( f; t\Bigl[ \begin{psmatrix} 1 & 0 \\ -\alpha & 1 \end{psmatrix} \Bigr] \Bigr)
  \Big|
  \quad\tx{with }
  t
  =
  \begin{psmatrix} n & \lT{\ovsmash{r}} \\ r & m \end{psmatrix}
  \tx{.}
\end{gather*}
Since~$f$ and hence~$\phi_m$ is cuspidal, we can restrict the sum to~$r$ for which~$t$ is totally positive definite.
In particular, we have the component-wise bound~$\traceF(r_i \ov{r_i}) < \traceF( n_{i\!i} m )$ arising from the~$(i,g)$\nbd{}minor of~$t$.
Since~$m \asymp_F \traceF(m)$, there is a constant~$c_F$ such that~$\traceF( n_{i\!i} m ) \le c_F\, \traceF( n_{i\!i} ) \traceF(m)$.
Since the traces of the diagonal entries~$n_{i\!i}$ of~$n \in B(N_F N^2)$ are bounded in terms of~$\nu_{E,g,k}(N)$, we can restrict the sum to~$r$ contained in
\begin{multline*}
  R(N_F N^2, m)
  \defeq
  \big\{
  r \in \tfrac{1}{N_F N^2} \Mat{1,g-1}(\cOE^\vee)
  \condsep
  \\
  \traceF(r_i \ov{r_i}) \le c_F \nu_{E,g,k}(N_F N^2)\, \traceF(m)
  \tx{ for all } 1 \le i \le g-1
  \big\}
  \tx{.}
\end{multline*}

Summarizing, we have the estimate
\begin{gather*}
  \big\| \phi_m[\alpha,\beta] \big\|_{\rmF\rmE}
  \ll_{E,g,N}
  \sum_{\substack{
      n \in B(N_F N^2) \\
      r \in R(N_F N^2, m)
    }}
  \Big| c\bigl(f; t\bigl[ \begin{psmatrix} 1 & 0 \\ -\alpha & 1 \end{psmatrix} \bigr]\bigr) \Big|
  \quad\tx{with }
  t
  =
  \begin{psmatrix} n & \lT{\ovsmash{r}} \\ r & m \end{psmatrix}
  \tx{.}
\end{gather*}
The size of~$B(N_F N^2)$ by inspection of its definition is bounded as~$\# B(N_F N^2) \ll_{E,g,k,N} 1$.
We bound the size of~$R(N_F N^2, m)$ using Minkowski theory, which asserts that~$\cOE \subset E_\RR$ is a $2n_F$\nbd{}dimensional lattice, to conclude that
\begin{gather*}
  \# R(N_F N^2, m)
  \ll_{E,g,k,N}
  \traceF(m)^{n_F (g-1)}
  \asymp_{E,g,k,N}
  \normF(m)^{g-1}
  \tx{.}
\end{gather*}
This yields
\begin{gather}
  \label{eq:prf:thm:torsion_point_hecke_bound:reduction_to_coefficient_bound}
  \big\| \phi_m[\alpha,\beta] \big\|_{\rmF\rmE}
  \ll_{E,g,k,N}
  \normF(m)^{n_F (g-1)}\,
  \max_{\substack{
      n \in B(N_F N^2) \\
      r \in R(N_F N^2, m) \\
    }}
  \Big| c\bigl(f; t\bigl[ \begin{psmatrix} 1 & 0 \\ -\alpha & 1 \end{psmatrix} \bigr]\bigr) \Big|
  \tx{.}
\end{gather}
Note that the Fourier coefficient on the right hand side vanishes if~$t \not> 0$.
In the remainder of the proof, we fix~$n \in B(N_F N^2)$ and~$r \in R(N_F N^2, m)$ such that~$t$ is positive definite, and estimate that coefficient.

By our assumptions the torsion point~$(\alpha,\beta)$ is admissible, that is, the fractional ideal generated by~$1$ and the entries of~$\alpha$ equals~$M^{-1} \cOE$ for a positive integer~$M$ (see \fref{Definition}{def:hermitian_jacobi_forms_admissible_torsion_point}).
We write~$s \in \GL{g}(\cOE)$ for the permutation matrix swapping the first and last coordinate.
Since~$\alpha$ is part of an admissible torsion point, there is~$u \in \GL{g}(\cOE)$ such that
\begin{gather*}
  \begin{psmatrix} 1 & 0 \\ -\alpha & 1 \end{psmatrix} u s
  =
  \begin{psmatrix} \rho & \xi \\ 0 & M^{-1} \end{psmatrix}
  \quad
  \tx{with }
  \rho \in \Mat{g-1}(\cOE) \cap \GL{g-1}(E),\,
  \xi \in \cOE^{g-1}
  \tx{.}
\end{gather*}
We may choose~$u$ in such a way that~$n[\rho]$ is reduced in the sense of Koecher~\cite[\S12]{koecher-1960}.
Comparing determinants on both sides, we see that~$|\det(\rho)| = M$.
The symmetry of~$f$ guarantees that
\begin{gather*}
  \Big| c\bigl(f; t\bigl[ \begin{psmatrix} 1 & 0 \\ -\alpha & 1 \end{psmatrix} \bigr]\bigr) \Big|
  =
  \Big| c\bigl(f; t\bigl[ \begin{psmatrix} 1 & 0 \\ -\alpha & 1 \end{psmatrix} u \bigr]\bigr) \Big|
  =
  \big|
  c\big( f; t' \big)
  \big|
  =
  \big|
  c\big( \phi_{m'}; n', r' \big)
  \big|
  \quad\tx{with }
  t'
  =
  \begin{psmatrix} n' & \lTr{\ovsmash{r}}{^\prime} \\ r' & m' \end{psmatrix}
  =
  t \Bigl[ \begin{psmatrix} \rho & \xi \\ 0 & M^{-1} \end{psmatrix} s \Bigr]
  \tx{.}
\end{gather*}

The bottom right entry~$m'$ of~$t'$ equals the top left entry~$(n[\rho])_{1\!1}$ of the reduced Hermitian matrix~$n[\rho]$.
Koecher's reduction theory yields an analogue of the Hermite bound at every place of~$E$.
This, together with our bound for the diagonal entries of~$t$, allows us to estimate
\begin{gather*}
  \normF\bigl(
  \big( n[\rho] \big)_{1\!1}
  \bigr)
  \ll_{E,g}
  \bigl( \detF(\rho)^2\, \detF(n) \bigr)^{\frac{1}{g-1}}
  \ll_{E,g,k,N}
  1
  \tx{.}
\end{gather*}
Additionally, by Koecher's reduction theory we have~$( n[\rho] )_{1\!1} \asymp_F \traceF( ( n[\rho] )_{1\!1} )$.
In particular, we can estimate~$c(f; t)$ in terms of finitely many $\phi_{m'}$, $m' = ( n[\rho] )_{1\!1}$.
Using the Hecke bound in \fref{Corollary}{cor:hermitian_jacobi_forms_hecke_bound}, we obtain that
\begin{align*}
         &
  \big| c(\phi_{m'}; n', r') \big|
  \ll_{f}
  \detF(t')^{\frac{k-1}{2}}
  =
  \detF(t)^{\frac{k-1}{2}}
  \\
  \le {} &
  \big( \detF(n) \normF(m) \big)^{\frac{k-1}{2}}
  \ll_{E,g,N}
  \normF(m)^{\frac{k-1}{2}}
  \tx{,}
\end{align*}
since the norms of the diagonal entries of~$n \in B(N_F N^2)$ are bounded independently of~$m$.
To obtain the bound stated in the theorem, we combine this bound with the one in~\eqref{eq:prf:thm:torsion_point_hecke_bound:reduction_to_coefficient_bound}.
\end{proof}

\subsection{Convergence on torsion points}%
\label{ssec:convergence:torsion_point_subvarieties}

In this section we leverage \fref{Theorem}{thm:torsion_point_hecke_bound} to inspect the behavior of symmetric formal Fourier--Jacobi expansions on subspaces~$\HS_{E,g-1} \times \HS_F \hra \HSEg$ associated with admissible torsion points.
Recall the subdivision of~$\tau$ in~\eqref{eq:HSg_decomposition}.
We set
\begin{gather}
  \label{eq:torsion_point_subvarieties}
  \HSEg[\alpha,\beta]
  \defeq
  \big\{
  \tau \in \HSEg
  \condsep
  z = \alpha\, \lTr{\tau}{_1} + \beta,\,
  w = \ov\alpha \, \tau_1 + \ov\beta
  \big\}
  \tx{.}
\end{gather}
This definition is adjusted to the notion of torsion points in \fref{Definition}{def:hermitian_jacobi_forms_admissible_torsion_point}.

\begin{lemma}%
\label{la:torsion_point_subvarieties_positive_definite_condition}
Given a torsion point~$(\alpha,\beta)$, we have
\begin{gather*}
  \Im(\tau_2)
  >
  \Im(\tau_1)[\lT{\alpha}]
  \quad
  \tx{for all\/ }
  \tau \in \HSEg[\alpha,\beta]
  \tx{.}
\end{gather*}
\end{lemma}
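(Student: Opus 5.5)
The plan is to compute $\Im(\tau)$ explicitly on $\HSEg[\alpha,\beta]$ and read off the inequality as a Schur complement condition. Since everything is defined component-wise in $E_\RR \cong \CC^{n_F}$, I will work at a single complex embedding and treat all entries as complex numbers. Write $y_1 = \Im(\tau_1)$ and $y_2 = \Im(\tau_2)$, with cogenus $h = 1$, so that $z, w, \alpha, \beta$ are row vectors of length $g-1$.

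First, I determine the off-diagonal blocks of $\Im(\tau) = \frac{1}{2i}(\tau - \lT{\ovsmash\tau})$. The lower left block of $\lT{\ovsmash\tau}$ is $\ov{z}$, so the lower left block of $\Im(\tau)$ is $\frac{1}{2i}(w - \ov{z})$. On $\HSEg[\alpha,\beta]$ we have $w = \ov\alpha\,\tau_1 + \ov\beta$ and $\ov z = \ov\alpha\,\lT{\ovsmash{\tau}}{}_1 + \ov\beta$. Hence $w - \ov z = \ov\alpha(\tau_1 - \lT{\ovsmash{\tau}}{}_1) = 2i\,\ov\alpha\, y_1$, and the lower left block equals $\ov\alpha\, y_1$. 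Because $\Im(\tau)$ is Hermitian and $y_1$ is Hermitian, the upper right block is $y_1 \lT{\alpha}$. Therefore
\begin{gather*}
  \Im(\tau)
  =
  \begin{psmatrix} y_1 & y_1 \lT{\alpha} \\ \ov\alpha\, y_1 & y_2 \end{psmatrix}
  \tx{.}
\end{gather*}

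Next, I use $\Im(\tau) > 0$, which holds since $\tau \in \HSEg$. This gives $y_1 > 0$ and, through the Schur complement, $y_2 - \ov\alpha\, y_1\, y_1^{-1}\, y_1 \lT{\alpha} > 0$, that is, $y_2 - \ov\alpha\, y_1 \lT{\alpha} > 0$. Equivalently, conjugating $\Im(\tau)$ by $\begin{psmatrix} 1 & -\lT{\alpha} \\ 0 & 1 \end{psmatrix}$ produces $\diag\bigl(y_1,\, y_2 - \ov\alpha\, y_1 \lT{\alpha}\bigr)$, which is again positive definite.

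Finally, by the definition $x[y] = \lD{y}\, x\, y$ we have $y_1[\lT{\alpha}] = \ov\alpha\, y_1 \lT{\alpha}$, because $\lD{(\lT{\alpha})} = \ov\alpha$. This is exactly the claimed inequality $\Im(\tau_2) > \Im(\tau_1)[\lT{\alpha}]$, holding at every component. There is no real obstacle here. The only point needing care is the transpose and conjugation bookkeeping: the entry $z$ enters $\tau$ via $\lT{z}$ rather than $\lD{z}$, and the torsion point uses $\alpha$ in $z$ but $\ov\alpha$ in $w$, and these must combine into $\ov\alpha\, y_1$. Note that the rationality of $(\alpha,\beta)$ plays no role in the argument.
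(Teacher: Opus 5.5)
Your proposal is correct and takes essentially the paper's route. You compute the off-diagonal block $\tfrac{1}{2i}(w - \ov{z}) = \ov{\alpha}\,\Im(\tau_1)$ of $\Im(\tau)$ on $\HSEg[\alpha,\beta]$ and read off the inequality as positivity of the Schur complement of $\Im(\tau_1)$. The paper phrases the same step via $\det(\Im(\tau)) = \det(\Im(\tau_1))\,\bigl(\Im(\tau_2) - \Im(\tau_1)[\lT{\alpha}]\bigr)$ together with total positivity of both determinants, which suffices because $\tau_2$ is $1 \times 1$.
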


\begin{proof}
For~$\tau \in \HSEg[\alpha,\beta]$ we have~$z - \ov{w} = 2 i \alpha\, \lT{\Im}(\tau_1)$ and thus using~$\lT{\Im}(\tau_2) = \ovsmash{\Im(\tau_2)}$:
\begin{gather}
  \label{eq:prf:la:torsion_point_subvarieties_positive_definite_condition:tau2_shift}
  \tfrac{1}{4}\, (w - \ov{z}) (\Im \tau_1)^{-1} \lT{(z-\ov{w})}
  =
  \tfrac{1}{4}\, 2 i \ov\alpha \Im(\tau_1)\, (\Im \tau_1)^{-1}\, 2 i \Im(\tau_1) \lT{\alpha}
  =
  - \ov\alpha \Im(\tau_1) \lT{\alpha}
  =
  - \Im(\tau_1) [\lT{\alpha}]
  \tx{.}
\end{gather}
We transform the block decomposition~\eqref{eq:HSg_decomposition} of~$\Im(\tau)$ into upper block diagonal shape to see
\begin{gather*}
  \det\bigl( \Im(\tau) \bigr)
  =
  \det\bigl( \Im(\tau_1) \bigr)\,
  \bigl(
  \Im(\tau_2)
  +
  \tfrac{1}{4}\, (w-\ov{z}) (\Im \tau_1)^{-1} \lT{(z - \ov{w})}
  \bigr)
  \tx{.}
\end{gather*}
Since~$\Im(\tau)$ and~$\Im(\tau_1)$ are totally positive definite for~$\tau \in \HSEg$, their determinants are totally positive, and we can insert the above expression for~$\Im(\tau_1) [\lT{\alpha}]$ to conclude the bound stated in the lemma.
\end{proof}

We obtain the next statement as a corollary to \fref{Theorem}{thm:torsion_point_hecke_bound}.

\begin{proposition}%
\label{prop:torsion_point_pointwise_convergence}
Given an admissible torsion point~$(\alpha, \beta)$, every cuspidal symmetric formal Fou\-rier--Ja\-cobi series~$\sum_m \phi_m(\tau_1,z,w)\, e(m \tau_2) \in \rmFS^{(g,1)}_k$ converges absolutely and locally uniformly for~$\tau \in \HSEg[\alpha,\beta]$.
\end{proposition}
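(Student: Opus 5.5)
The plan is to rewrite the restriction of each term of the series to the leaf $\HSEg[\alpha,\beta]$ in terms of the torsion point specialization $\phi_m[\alpha,\beta]$. Then we combine the polynomial bound of \fref{Theorem}{thm:torsion_point_hecke_bound} with the exponential decay coming from $e(m\tau_2)$.

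First, fix $\tau\in\HSEg[\alpha,\beta]$ and use \fref{Lemma}{la:hermitian_jacobi_forms_torsion_points}. It shows that $\phi_m(\tau_1,z,w)$ with $z=\alpha\,\lTr{\tau}{_1}+\beta$ and $w=\ov\alpha\,\tau_1+\ov\beta$ equals
\[
e\bigl(-m(\tau_1[\lT{\alpha}]+\ov\beta\,\lT{\alpha})\bigr)\,\phi_m[\alpha,\beta](\tau_1).
\]
Hence the $m$-th term of the series restricted to the leaf is
\[
\phi_m[\alpha,\beta](\tau_1)\,e\bigl(m(\tau_2-\tau_1[\lT\alpha]-\ov\beta\,\lT\alpha)\bigr).
\]
Its absolute value is $|\phi_m[\alpha,\beta](\tau_1)|\,\exp\bigl(-2\pi\traceF(m\,y)\bigr)$ with
\[
y=\Im(\tau_2)-\Im(\tau_1)[\lT\alpha]+(\text{a real-part correction from }\ov\beta\,\lT\alpha).
\]
I would check that the imaginary part of $\ov\beta\,\lT\alpha$ contributes only a term independent of $m$ up to the linear factor $m$, so it is harmless. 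Better still, one can absorb it by computing $\Im$ of the full argument directly. By \fref{Lemma}{la:torsion_point_subvarieties_positive_definite_condition}, $y$ is totally positive, and it stays bounded below by some $\delta>0$ on a compact neighbourhood of any given point of the leaf.

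Second, by the definition of the $\infty$-norm and \fref{Theorem}{thm:torsion_point_hecke_bound}, we get
\[
|\phi_m[\alpha,\beta](\tau_1)|\le \detF(\Im\tau_1)^{-k/2}\,\|\phi_m[\alpha,\beta]\|_\infty\ll_f \detF(\Im\tau_1)^{-k/2}\normF(m)^{\frac{k-3}{2}+g}.
\]
The factor $\detF(\Im\tau_1)^{-k/2}$ is bounded on compacta. Only totally positive $m\in\cOF^\vee$ occur, since $f$ is cuspidal and cusp forms vanish for $m\not>0$. The series is therefore dominated on a compact set by a constant times
\[
\sum_{m\gg0}\normF(m)^{\frac{k-3}{2}+g}\exp\bigl(-2\pi\traceF(m\delta)\bigr).
\]
This converges because the number of $m\in\cOF^\vee$ with $\traceF(m)\le T$ grows polynomially in $T$, and $\normF(m)\le\traceF(m)^{n_F}$. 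The Weierstrass M-test then gives absolute and locally uniform convergence.

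The main obstacle is the careful bookkeeping of the exponent in the first step, namely verifying that the imaginary part of $\tau_2-\tau_1[\lT\alpha]-\ov\beta\,\lT\alpha$ is exactly $\Im(\tau_2)-\Im(\tau_1)[\lT\alpha]$, with $\alpha,\beta$ real-rational so that $\ov\beta\,\lT\alpha$ has imaginary part zero after taking the appropriate trace. One must also get the uniform lower bound $\delta$ from \fref{Lemma}{la:torsion_point_subvarieties_positive_definite_condition} via continuity on compacta. I would handle this by computing explicitly with the same block identity~\eqref{eq:prf:la:torsion_point_subvarieties_positive_definite_condition:tau2_shift} used in that lemma.
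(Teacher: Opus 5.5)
Your argument follows the paper's proof. You rewrite each term on the leaf via \fref{Lemma}{la:hermitian_jacobi_forms_torsion_points}, bound $|\phi_m[\alpha,\beta](\tau_1)|\le\detF(\Im\tau_1)^{-k/2}\|\phi_m[\alpha,\beta]\|_\infty$ using \fref{Theorem}{thm:torsion_point_hecke_bound}, get positivity of $\Im(\tau_2)-\Im(\tau_1)[\lT{\alpha}]$ from \fref{Lemma}{la:torsion_point_subvarieties_positive_definite_condition}, and dominate by a convergent series over totally positive $m\in\cOF^\vee$. Choosing $\delta$ on compact neighbourhoods is fine, and slightly cleaner than the paper's exhaustion by the sets $\Im(\tau_2)>\Im(\tau_1)[\lT{\alpha}]+\eps$.

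However, the step you flag as the main obstacle is resolved incorrectly, and as you set it up it would break the argument.
\begin{itemize}
\item The torsion point is not real-rational: $\alpha,\beta\in\frac{1}{N}\Mat{1,g-1}(\cOE)$, and $\ov\beta\,\lT{\alpha}$ generally has nonzero imaginary part at the complex places. For $E=\QQ(i)$ and $g=2$, the admissible point $\alpha=i/N$, $\beta=1/N$ gives $\ov\beta\,\lT{\alpha}=i/N^2$.
\item A term linear in $m$ in the exponent is not harmless. A factor of modulus $\exp(2\pi\traceF(mc))$ with fixed $c\in F_\RR$ competes directly with the decay $\exp(-2\pi\traceF(my))$. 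If $c$ has a positive component, you would only get convergence where $\Im(\tau_2)>\Im(\tau_1)[\lT{\alpha}]+c$, not on the whole leaf.
\end{itemize}

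What you actually need is that the constant is real. Compute $\transJU(\ov\alpha,\ov\beta)\,\tau=A(\tau+B)\lD{A}$ directly, with $A=\begin{psmatrix}1&0\\ \ov\alpha&1\end{psmatrix}$ and $B=\begin{psmatrix}0&\lT{\beta}\\ \ov\beta&0\end{psmatrix}$. The bottom-right entry picks up both $\ov\beta\,\lT{\alpha}$ and $\ov\alpha\,\lT{\beta}$; the displayed formula in the lemma omits the latter. Their sum $\traceE(\ov\alpha\,\lT{\beta})$ lies in $F$, so the extra factor $e\bigl(m\traceE(\ov\alpha\,\lT{\beta})\bigr)$ has modulus $1$. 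Consistently with this, in the Fourier expansion of $\phi_m$ at the torsion point, $\beta$ enters only through unimodular factors $e\bigl(\traceE(r\,\lT{\beta})\bigr)$.

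Hence $|\phi_m(\tau_1,z,w)|=|e(-m\tau_1[\lT{\alpha}])|\,|\phi_m[\alpha,\beta](\tau_1)|$. This is exactly the identity the paper uses, and the proof of \fref{Theorem}{thm:torsion_point_hecke_bound} also relies on it when bounding $|c(\phi_m[\alpha,\beta];n)|$ by Fourier coefficients of $f$. With this correction, your $y$ is exactly $\Im(\tau_2)-\Im(\tau_1)[\lT{\alpha}]$, and the rest of your proof goes through.
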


\begin{proof}
We assume that~$(\alpha,\beta)$ are~$N$\nbd{}torsion points, and let~$z$ and~$w$ be as in~\eqref{eq:torsion_point_subvarieties}.
Expanding the definition of the specialization to the torsion point~$(\alpha,\beta)$ in~\eqref{eq:def:hermitian_jacobi_forms_torsion_points} we find by \fref{Lemma}{la:hermitian_jacobi_forms_torsion_points} that
\begin{gather*}
  \big| \phi_m(\tau_1,z,w) \big|
  =
  \bigl|
  \phi_m[\alpha,\beta](\tau_1)
  \bigr|
  =
  e\bigl( - m i \Im(\tau_1) [\lT{\alpha}] \bigr)\,
  \bigl|
  \phi_m[\alpha,\beta](\tau_1)
  \bigr|
  \tx{.}
\end{gather*}
This allows us to apply the bound in \fref{Theorem}{thm:torsion_point_hecke_bound}, to conclude that for totally positive~$m$ we have
\begin{gather*}
  \big| \phi_m(\tau_1,z,w) \big|
  \ll_{E,g,k,N}
  e\bigl( - m i \Im(\tau_1) [\lT{\alpha}] \bigr)\,
  \detF\bigl( \Im(\tau_1) \bigr)^{-\frac{k}{2}}\,
  \normF(m)^{\frac{k-3}{2} + g}
  \tx{.}
\end{gather*}

\fref{Lemma}{la:torsion_point_subvarieties_positive_definite_condition} implies that
\begin{gather*}
  \HSEg[\alpha,\beta]
  =
  \bigcup_{\eps > 0}
  \big\{
  \tau \in \HSEg[\alpha,\beta]
  \condsep
  \Im(\tau_2) > \Im(\tau_1)[\lT{\alpha}] + \eps
  \big\}
  \tx{.}
\end{gather*}
The desired absolute and locally uniform convergence thus follows if we show absolute and uniform convergence for~$\tau \in \HSEg[\alpha,\beta]$ with~$\Im(\tau_2) > \Im(\tau_1)[\lT{\alpha}] + \eps$ and fixed~$\eps > 0$.
For such~$\tau_2$ we obtain the estimate
\begin{multline*}
  \big| \phi_m(\tau_1,z,w)\, e(m \tau_2) \big|
  \le
  \big| \phi_m(\tau_1,z,w) \big|\,
  e\bigl( - m i (\Im(\tau_1) [\lT{\alpha}] + \eps) \bigr)
  \\
  \ll_{E,g,k,N}
  \detF\bigl( \Im(\tau_1) \bigr)^{-\frac{k}{2}}\,
  \normF(m)^{\frac{k-3}{2} + g}\,
  \exp\bigl( - 2\pi \traceF(m)\, \eps \bigr)
  \tx{.}
\end{multline*}
We finish the proof by comparing against the geometric series over totally positive~$m \in \cOF^\vee$ and invoking dominated convergence.
\end{proof}

Using the previous statement, we can uniformly bound a symmetric formal Fourier--Jacobi series, if it additionally satisfies a polynomial equation.

\begin{proposition}%
\label{prop:torsion_point_locally_bounded}
Given~$f = \sum_m \phi_m(\tau_1,z,w)\, e(m \tau_2) \in \rmFS^{(g,1)}_k$, assume that~$a(f) = 0$ for a monic polynomial
\begin{gather*}
  a(X)
  =
  \sum_{i=0}^d a_i X^i \in \rmM_\bullet^{(g)}[X]
  \quad\tx{with coefficients }
  a_i\in \rmM^{(g)}_{(d-i) k}
  \tx{.}
\end{gather*}
Then the following sequence of partial sums is locally bounded on~$\HSEg$ independently of~$M \in \ZZ_{> 0}$:
\begin{gather}
  \label{eq:prop:torsion_point_locally_bounded}
  \sum_{\substack{
      m \in \cOF^\vee, m > 0 \\
      \traceF(m) < M
    }}
  \mspace{-18mu}
  \phi_m(\tau_1,z,w)\,
  e(m \tau_2)
  \tx{.}
\end{gather}
\end{proposition}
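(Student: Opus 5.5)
The plan is to bound the partial sums~\eqref{eq:prop:torsion_point_locally_bounded} first on the dense set of modular leaves~$\HSEg[\alpha,\beta]$ and then extend by continuity. On each leaf I use the monic equation to bound the \emph{limit} function, and then Fourier inversion in~$\tau_2$ to pass to the partial sums.

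\emph{Step 1: the limit satisfies the equation on each leaf.}
Fix an admissible torsion point~$(\alpha,\beta)$. By \fref{Proposition}{prop:torsion_point_pointwise_convergence}, $f$ converges absolutely and locally uniformly on~$\HSEg[\alpha,\beta]$ to a holomorphic function~$F_{\alpha,\beta}$. Each~$a_i$ is a genuine Hermitian modular form, so its Fourier--Jacobi expansion converges absolutely on all of~$\HSEg$. The relation~$a(f) = 0$ says that all Fourier--Jacobi coefficients of the formal series~$\sum_i a_i f^i$ vanish. On the leaf every series involved converges absolutely, so the products may be rearranged and grouped by~$m$. Hence
\begin{gather*}
  F_{\alpha,\beta}^d + \sum_{i<d} a_i|_{\HSEg[\alpha,\beta]}\, F_{\alpha,\beta}^i = 0 .
\end{gather*}
The elementary root bound for monic polynomials gives
\begin{gather*}
  |F_{\alpha,\beta}(\tau)| \le \max\Bigl\{1, \sum_{i<d} |a_i(\tau)|\Bigr\}
  \quad\tx{for } \tau \in \HSEg[\alpha,\beta] .
\end{gather*}
The right hand side is continuous on~$\HSEg$. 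It is therefore bounded on any compact~$K' \subset \HSEg$ by a constant~$C(K')$, uniformly in the torsion point.

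\emph{Step 2: from the limit to partial sums.}
On a leaf, $\tau_1$ and~$\tau_2$ are free variables; by \fref{Lemma}{la:torsion_point_subvarieties_positive_definite_condition} the constraint is only~$\Im(\tau_2) > \Im(\tau_1)[\lT{\alpha}]$. Since~$z$ and~$w$ do not depend on~$\tau_2$, for fixed~$\tau_1$ the function~$F_{\alpha,\beta}$ has the Fourier expansion~$\sum_m \phi_m(\tau_1,z,w)\, e(m\tau_2)$ in~$\tau_2$. Each term is recovered by integrating over~$\Re(\tau_2) \in F_\RR \slash \cOF$ at a fixed height~$y_2 - \delta$. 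This gives
\begin{gather*}
  |\phi_m(\tau_1,z,w)\, e(m\tau_2)|
  \le
  \sup |F_{\alpha,\beta}|\,
  \exp\bigl(-2\pi \traceF(m)\,\delta\bigr),
\end{gather*}
where the supremum is over the horizontal slice at height~$y_2 - \delta$. Now fix a compact~$K \subset \HSEg$ and choose~$\delta > 0$ so that lowering~$\Im(\tau_2)$ by~$\delta$ and letting~$\Re(\tau_2)$ run over a fundamental domain keeps us inside a compact~$K' \subset \HSEg$. Summing the geometric tail over totally positive~$m \in \cOF^\vee$ bounds every partial sum on~$K \cap \HSEg[\alpha,\beta]$ by~$C(K')\, C_\delta$, independently of~$M$ and of~$(\alpha,\beta)$. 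Cuspidality ensures that no~$m = 0$ term appears.

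\emph{Step 3: density.}
Each partial sum is a finite sum of holomorphic functions, hence continuous on~$\HSEg$. The union of the leaves~$\HSEg[\alpha,\beta]$ over admissible torsion points is dense in~$\HSEg$ by \fref{Lemma}{la:admissible_torsion_points_dense}, applied fibrewise in~$\tau_2$. So the uniform bound from Step~2 extends to all of~$K$.

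I expect Step~1 to be the main obstacle, namely justifying that the formal identity~$a(f) = 0$ becomes a pointwise identity on the leaf. This needs care with rearranging absolutely convergent double series. It also needs a check that the relevant Fourier--Jacobi coefficients of the~$a_i$ restrict compatibly to the leaf, so that the product of two restricted series equals the restriction of the formal product.
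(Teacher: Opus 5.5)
Your proposal is correct and follows essentially the same route as the paper's proof: the monic root bound on admissible leaves (using \fref{Proposition}{prop:torsion_point_pointwise_convergence}), recovery of $\phi_m$ by integrating over $\Re(\tau_2)$ at a lowered height to get exponential decay uniformly in the torsion point, and extension by density and continuity; your Cauchy-product justification that $a(f)=0$ holds pointwise on a leaf makes explicit a step the paper uses without comment. The only point to tidy is Step~3: run Step~2 on a compact neighbourhood of $K$ (e.g.\ take $K$ a closed ball), since leaf points lying inside an arbitrary compact $K$ need not be dense in $K$.
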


\begin{proof}
We let
\begin{gather*}
  \rho
  \defcol
  \HSEg
  \lra
  F_\RR
  \tx{,}\quad
  \begin{psmatrix} \tau_1 & \lT{z} \\ w & \tau_2 \end{psmatrix}
  \lmto
  \Im(\tau_2)
  +
  \tfrac{1}{4}\, (w - \ov{z}) (\Im \tau_1)^{-1} \lT{(z-\ov{w})}
  \tx{.}
\end{gather*}
We fix a compact set~$K \subset \HS_{E,g-1} \times E_\RR^{g-1} \times E_\RR^{g-1}$ and~$0 < \eps < 1$.
We define a corresponding compact subset of~$\HSEg$ by
\begin{gather*}
  \HSEg(K, \eps)
  =
  \big\{
  \tau \in \HSEg
  \condsep
  (\tau_1, z, w) \in K,\;
  \eps \le \rho(\tau) \le \tfrac{1}{\eps},\,
  \tfrac{-1}{\eps} \le \Re(\tau_2) \le \tfrac{1}{\eps}
  \big\}
  \tx{.}
\end{gather*}

We consider an admissible torsion point~$(\alpha,\beta)$ and assume~$\tau \in \HSEg(K, \eps) \cap \HSEg[\alpha,\beta]$.
By \fref{Proposition}{prop:torsion_point_pointwise_convergence} the a priori formal series~$f(\tau)$ converges.
We either have~$|f(\tau)| < 1$ or if~$|f(\tau)| \ge 1$ we can rearrange the relation~$a(f(\tau)) \slash f(\tau)^{d-1} = 0$ to obtain the estimate
\begin{gather*}
  \big| f(\tau) \big|
  \le
  \max \Big\{
  1,\,
  \sum_{i=0}^{d-1} |a_i(\tau)|
  \Big\}
  \le
  1 + \sum_{i=0}^{d-1} |a_i(\tau)|
  \tx{.}
\end{gather*}
Since~$\HSEg(K, \eps)$ is compact and all~$a_i$ are continuous, we have a finite supremum
\begin{gather*}
  b(K, \eps)
  \defeq
  \sup_{\tau \in \HSEg(K, \eps)}
  \Big(
  1 + \sum_{i=0}^{d-1} |a_i(\tau)|
  \Big)
  <
  \infty
  \tx{.}
\end{gather*}
This yields a uniform bound for~$f(\tau)$ on the admissible torsion points of~$\HSEg(K,\eps)$.

For an admissible torsion point as before, we may compute $\phi_m$ by means of the integral
\begin{gather*}
  \phi_m(\tau_1, z, w)
  \asymp_E
  \int_{F_\RR \slash \cOF}
  f\Bigl( \begin{psmatrix}
      \tau_1 & \lT{z}      \\
      w      & u_2 + i v_2
    \end{psmatrix}\Bigr)\,
  e\bigl( -m (u_2 + i v_2) \bigr)
  \,\rmd\!u_2
\end{gather*}
for any~$v_2 > \Im(\tau_1)[\lT{\alpha}]$, where for convenience we suppress the normalization by the volume of~$F_\RR \slash \cOF$.
We choose~$v_2 = \Im(\tau_1)[\lT{\alpha}] + \eps$ and use the bound for~$f$ on~$\HSEg(K,\eps)$ to derive the bound
\begin{gather*}
  \big| \phi_m(\tau_1, z, w) \big|
  \ll_E
  b(K,\eps)\,
  e\bigl( - m i (\Im(\tau_1)[\lT{\alpha}] + \eps) \bigr)
  \tx{.}
\end{gather*}

For~$\tau \in \HSEg(K, 2\eps) \cap \HSEg[\alpha,\beta] \subset \HSEg(K, \eps) \cap \HSEg[\alpha,\beta]$, and~$M \in \ZZ_{> 0}$ the lower bound on~$\rho(\tau)$ together with the simplification in~\eqref{eq:prf:la:torsion_point_subvarieties_positive_definite_condition:tau2_shift} yields the estimate
\begin{align*}
         &
  \Bigg|
  \sum_{\substack{
  m \in \cOF^\vee, m > 0 \\
      \traceF(m) < M
    }}
  \mspace{-18mu}
  \phi_m(\tau_1,z,w)\,
  e(m \tau_2)
  \Bigg|
  \\
  \le {} &
  \sum_{\substack{
  m \in \cOF^\vee, m > 0 \\
      \traceF(m) < M
    }}
  \mspace{-18mu}
  \big| \phi_m(\tau_1,z,w) \big|\,
  e\bigl( m i (\Im(\tau_1)[\lT{\alpha}] + 2 \eps) \bigr)
  \le
  b(K,2\eps)
  \mspace{-18mu}
  \sum_{\substack{
  m \in \cOF^\vee, m > 0 \\
      \traceF(m) < M
    }}
  \mspace{-18mu}
  e( m i \eps )
  \tx{.}
\end{align*}
Using Riemann sums, we see that
\begin{gather*}
  \sum_{m \in \cOF^\vee, m > 0}
  \mspace{-12mu}
  e( m i \eps )
  =
  \sum_{m \in \cOF^\vee, m > 0}
  \mspace{-12mu}
  \exp\bigl( - 2 \pi \eps \traceF(m) \bigr)
  \ll_F
  \int_{m \in F_\RR, m > -\delta_F}
  \mspace{-18mu}
  \exp\bigl( - 2 \pi \eps \traceF(m) \bigr)
  \, \rmd\!m
  \ll_F
  1
  \tx{,}
\end{gather*}
where~$\delta_F$ is the radius of a Voronoi cell of~$\cOF^\vee \subset F_\RR \cong \RR^{n_F}$.

This shows that the partial sums in the proposition are bounded on the subset of admissible torsion points~$\tau \in \HSEg(K, 2\eps)$, which is a dense subset by \fref{Lemma}{la:admissible_torsion_points_dense}.
Since the partial sums are continuous, we may conclude that they are bounded on all of~$\HSEg(K, 2\eps)$.
Since~$\HSEg(K, 2\eps)$ by~\eqref{eq:prf:la:torsion_point_subvarieties_positive_definite_condition:tau2_shift} exhausts~$\HSEg$ as~$K$ varies and~$\eps \ra 0$, we obtain the assertion.
\end{proof}

\subsection{Torsion points on divisors}%
\label{ssec:convergence:torsion_point_divisors}

To control possible singularities of meromorphic Hermitian modular forms in \fref{Section}{ssec:convergence:meromorphic_and_regular}, we investigate admissible torsion points on arbitrary divisors.

The following basic statement from differential geometry, originating in the implicit function theorem, will be instrumental in this section.

\begin{lemma}%
\label{la:fibration_base_tangent_surjective}
Consider a fibration~$F \ra X \ra B$ of real\/~$\rmC^1$\nbd{}manifolds with projection~$\pi \defcol X \ra B$ and points~$x \in X$, $b = \pi(x) \in B$.
Given a $\rmC^1$\nbd{}submanifold~$Y \subseteq X$ that contains~$x$, assume that the differential of\/~$\pi$ maps the tangent space~$\fraky$ of\/~$Y$ at~$x$ onto the tangent space of\/~$B$ at~$b$.
Then~$\pi(Y) \subset B$ contains a neighborhood of\/~$b$.
\end{lemma}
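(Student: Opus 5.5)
We reduce to the surjectivity of the restricted projection $\pi|_Y$ near $x$ and then apply the inverse (or implicit) function theorem in local coordinates. Since $F \ra X \ra B$ is a fibration of $\rmC^1$-manifolds, $\pi$ is a $\rmC^1$-submersion. We will only use that $\pi$ is $\rmC^1$ and that the composite $\pi|_Y \defcol Y \ra B$ is $\rmC^1$. Its differential at $x$ is the restriction of $\rmd\pi_x$ to $\fraky = T_x Y$. By hypothesis this restriction is onto $T_b B$. Hence $\pi|_Y$ is a submersion at $x$.

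First we fix charts. Choose a chart of $Y$ around $x$, identifying a neighborhood of $x$ in $Y$ with an open set $U \subseteq \RR^{\dim Y}$ and $x$ with $0$. Choose a chart of $B$ around $b$, identifying a neighborhood of $b$ with an open subset of $\RR^{\dim B}$ and $b$ with $0$. In these coordinates $\pi|_Y$ becomes a $\rmC^1$-map $p \defcol U \ra \RR^{\dim B}$ with $p(0) = 0$ and surjective Jacobian $Dp(0)$. In particular $\dim Y \ge \dim B$.

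Second, we choose a linear subspace $W \subseteq \RR^{\dim Y}$ of dimension $\dim B$ on which $Dp(0)$ is an isomorphism. Such a subspace exists by surjectivity, for instance a complement of $\ker Dp(0)$. Restricting $p$ to $U \cap W$ yields a $\rmC^1$-map between open subsets of spaces of equal dimension whose differential at $0$ is invertible. The inverse function theorem then gives open neighborhoods $U' \subseteq U \cap W$ of $0$ and $V'$ of $0$ such that $p$ maps $U'$ onto $V'$. Translating back through the charts, $\pi(Y) \supseteq \pi(U') $ contains the neighborhood of $b$ corresponding to $V'$, which proves the lemma.

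The only point requiring care is regularity. The inverse function theorem needs $\rmC^1$ maps, which holds because $Y$ is a $\rmC^1$-submanifold and $\pi$ is $\rmC^1$. The fibration structure is not needed beyond this. No substantive obstacle is expected.
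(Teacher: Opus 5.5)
Your proposal is correct and is essentially the paper's argument: both cut $Y$ down near $x$ to a slice of dimension $\dim B$ on which the differential of $\pi$ is an isomorphism, and then apply the implicit/inverse function theorem. The paper works in charts where $\pi$ is a coordinate projection and $Y$ is cut out by equations, slicing with fiber-coordinate hyperplanes $x_j = 0$; you instead slice an intrinsic chart of $Y$ by a complement of the kernel of the differential of $\pi|_Y$, which is only a cosmetic difference (just shrink the chart of $Y$ so that its image under $\pi$ lies in the chart of $B$).
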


\begin{proof}
After passing to suitable charts for neighborhoods of~$x$ and~$b$, we can assume that~$X = \RR^{m+n}$, $B = \RR^m$, $\pi$ is the projection onto the first~$m$ coordinates, and~$x = 0$, $b = 0$.
For ease of discussion, we identify the tangent space~$\frakx$ of~$X$ at~$x$ and its dual with~$\RR^{m+n}$.
Similarly for~$B$ and its tangent space~$\frakb$ at~$b$.
Under these identifications, the differential of~$\pi$ is the projection to the first~$m$ coordinates, and we continue to denote it~$\pi$.

By restricting to a sufficiently small neighborhood of~$x$ and again adjusting charts to maintain that~$X = \RR^{m+n}$, we can assume that~$Y$ is the common zero set of~$\rmC^1$\nbd{}functions~$f_j(x)$ with~$1 \le j \le d$, the codimension of~$Y$ at~$x$.

Using our identifications, the tangent space~$\fraky$ equals the kernel of the matrix~$N = (\partial_{x_i}\, f_j)$.
Since~$Y$ is a $\rmC^1$\nbd{}submanifold and thus~$\fraky$ has dimension~$m+n-d$, $N$ has full rank~$d$.
Using again the identifications that we have made, our assumption on the image of~$\fraky$ reads~$\pi(\fraky) = \frakb$.
That is, the submatrix of~$N$ with column indices~$m < i \le m+n$ has rank~$d$.
We can replace~$Y$ by a submanifold of~$Y$ by intersecting it with hyperplanes~$x_j = 0$ for suitable~$m < j \le m + n$ in such a way that~$d = n$.
In particular, $N$ and its submatrix with column indices~$m < i \le m+n$ have rank~$n$.

We are now in position to invoke the implicit function theorem to obtain a parametrization of~$x \in Y$ in terms of~$x_i$, $1 \le i \le m$.
This finishes the proof.
\end{proof}

For the purpose of this section, and only this section, we introduce the following notation adjusted to the proof of \fref{Proposition}{prop:divisor_intersect_torsion_point}.
We have maps
\begin{gather}
  \label{eq:def:HSg_fibration_projection}
  \begin{alignedat}{2}
    \pi_\alpha
    \defcol
    \HSEg
     & \lthra
    \Mat{g-1,1}(E_\RR)
    \tx{,}\quad
     &
    \tau
     & \lmto
    \tfrac{1}{2i} ( z - \ov{w} )\, \Im(\tau_1)^{-1}
    \tx{,}
    \\
    \pi_\beta
    \defcol
    \HSEg
     & \lthra
    \Mat{g-1,1}(E_\RR)
    \tx{,}\quad
     &
    \tau
     & \lmto
    \tfrac{1}{2} ( z + \ov{w} ) - 2 \pi_\alpha(\tau)\, \Re(\tau_1)
    \tx{,}
  \end{alignedat}
\end{gather}
which for~$\tau \in \HSEg[\alpha, \beta]$ recover~$\alpha$ and~$\beta$.
Using the action of~$\transJU(\ov\alpha,\ov\beta)$, we see that we have a fibration of smooth real manifolds
\begin{gather}
  \label{eq:def:HSg_fibration}
  \HS_{E,g-1} \times \HS_F
  \lra
  \HSEg
  \lthra
  \Mat{g-1,1}(E_\RR) \times \Mat{g-1,1}(E_\RR)
  \tx{,}
\end{gather}
where the projection map is given by~$(\pi_\alpha, \pi_\beta)$.

In standard notation, we write~$\frakg$ for the complexified Lie algebra of~$\SU{g,g}(F_\RR)$ and~$\frakk$ for the complexified Lie algebra of the stabilizer of~$i \in \HSEg$ in~$\SU{g,g}(F_\RR)$.
Using that~$\HSEg$ is a homogeneous space for~$\SU{g,g}(F_\RR)$, we see that the fibration in~\eqref{eq:def:HSg_fibration} yields the fiber product decomposition~$\frakb \times_\frakz \frakf$ of the tangent space at~$i$ for the following Lie algebras.
The base space of~\eqref{eq:def:HSg_fibration} yields the unipotent radical of the Lie algebra of~$Q_1(F_\RR)$, which we denote by~$\frakb$.
(Note here that~$\frakb$ is not a Borel subalgebra, $\frakb$ stands for base.)
The fiber at~$i$ of~\eqref{eq:def:HSg_fibration} yields the product of the Lie algebra of~$\rmS(\U{g-1,g-1} \times \U{1,1})(F_\RR)$, which we denote by~$\frakf$, where the unitary groups are defined in analogy with~\eqref{eq:def:SUgg}.
The intersection of~$\frakb$ and~$\frakf$ equals the center~$\frakz$ of~$\frakb$, which is isomorphic to~$F_\CC$.

\begin{proposition}%
\label{prop:divisor_intersect_torsion_point}
Given a divisor~$D \subset \HSEg$ and a complex submanifold~$W \subset \HSEg$ of codimension at least~$2$, there is an admissible torsion point~$(\alpha,\beta)$ such that\/~$\HSEg[\alpha,\beta]$ intersects~$(\SU{g,g}(\cOF)\, D) \setminus W$ transversally in at least one point.
\end{proposition}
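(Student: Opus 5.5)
The idea is to find a single point of $D$, away from $W$, at which the divisor is transversal to the fibres of the fibration~\eqref{eq:def:HSg_fibration}. At such a point the image of $D$ under $(\pi_\alpha,\pi_\beta)$ contains an open set. Since admissible torsion points are dense, that open set contains one.

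First I reduce to a smooth point. The singular locus of $D$ and the set $W$ both have complex codimension at least~$2$ in $\HSEg$, so $D' \defeq D_{\mathrm{reg}} \setminus W$ is a nonempty, open, dense subset of $D$. We may replace $D$ by a translate $\ga D$ with $\ga \in \SU{g,g}(\cOF)$. It therefore suffices to find $x \in \SU{g,g}(\cOF) D'$ such that the tangent space $\fraky$ of this translate at $x$ maps onto the tangent space of the base $\Mat{g-1,1}(E_\RR)^2$ under the differential of $(\pi_\alpha,\pi_\beta)$.

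To get such a point I work with tangent directions. Fix $x_0 \in D'$ and translate it to $i$ using $\SU{g,g}(F_\RR)$. The tangent space of $D$ at that point becomes a complex hyperplane $H$ in $\frakg/\frakk \cong \frakb \times_\frakz \frakf$, namely the kernel of a nonzero complex linear functional $\ell$. The condition fails exactly when $\fraky$ does not surject onto the base. Since $D$ is a real hypersurface of real codimension~$2$, failure means $\ell$ factors through the projection to $\frakb/\frakz$, or equivalently $\ell$ vanishes on $\frakf$.

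Now I use the freedom of translating by $\ga \in \SU{g,g}(\cOF)$. Transporting back to $i$ replaces $\ell$ by its conjugate under an element $g_\ga\in \SU{g,g}(F_\RR)$. That element is $\ga$ composed with the element moving $\ga x_0$ to $i$, and we also vary $x_0$ within $D'$. Suppose the condition failed for every $\ga$ and every $x_0$. The set of $g\in\SU{g,g}(F_\RR)$ with $\mathrm{Ad}(g)^*\ell|_{\frakf}=0$ is real-algebraic, and it would contain $\SU{g,g}(\cOF)$ times the elements attached to $D'$.

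By Borel's Density Theorem, $\SU{g,g}(\cOF)$ is Zariski dense in $\SU{g,g}(F_\RR)$, since it is a lattice in a group with no compact factors. So the condition $\mathrm{Ad}(g)^*\ell|_{\frakf}=0$ would hold for all $g$. In that case the span of $\mathrm{Ad}(\SU{g,g}(F_\RR))^*\ell$ is a nonzero invariant subspace of $\frakg^*$ that annihilates $\frakf$. But $\frakg$ is a product of simple components, one for each infinite place, and $\frakf$ meets each of them nontrivially. This contradicts componentwise irreducibility of the adjoint representation.

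Some care is needed because $\ell$ only lives on $\frakg/\frakk$. I would instead use the full conjugation orbit of the hyperplane $\frakk + H$ in $\frakg$. The main obstacle is this Lie algebra argument: deducing from irreducibility of each simple factor that some conjugate of the hyperplane is not of the form (preimage of a hyperplane in $\frakb/\frakz$) $\oplus\,\frakk$. I expect to handle it by noting that the union of such hyperplanes is a proper algebraic subset of the relevant Grassmannian. Its preimage under the orbit map is then a proper algebraic subset of $\SU{g,g}(F_\RR)$, and Zariski density of $\SU{g,g}(\cOF)$ finishes the argument.

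With a good point $x \in \ga D'$ in hand, \fref{Lemma}{la:fibration_base_tangent_surjective} shows that $(\pi_\alpha,\pi_\beta)(\ga D')$ contains a neighbourhood $U$ of $(\pi_\alpha,\pi_\beta)(x)$. Transversality is an open condition, so after shrinking $U$ every fibre over $U$ meets $\ga D'$ transversally at some point. By \fref{Lemma}{la:admissible_torsion_points_dense}, the pairs $(\alpha,\beta)$ of admissible torsion points are dense in the base, so one of them lies in $U$. For that choice, $\HSEg[\alpha,\beta]$ meets $(\SU{g,g}(\cOF) D)\setminus W$ transversally.
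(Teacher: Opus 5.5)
Your route is the paper's: pass to a smooth point of $D$ off $W$, move it to $i$, and write the tangent hyperplane $H$ as the kernel of a complex functional $\ell$ (the paper's pair $n^\pm$). Transversality to the fibres fails exactly when $\ell$ kills the image $\frakf_\frakp$ of $\frakf$ in $\frakp\cong\frakg/\frakk$. You force $\ell|_{\frakf_\frakp}\neq0$ with Borel density plus irreducibility, and finish with \fref{Lemma}{la:fibration_base_tangent_surjective} and \fref{Lemma}{la:admissible_torsion_points_dense}. Your openness remark even makes explicit why the chosen fibre meets $\ga D$ transversally.

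The gap is the step you flag yourself, and your proposed repair does not close it (the paper's sketch is equally terse here). The fibration $(\pi_\alpha,\pi_\beta)$ is not $\SU{g,g}(F_\RR)$-equivariant. So transversality of $\ga D$ at $x=\ga\td{g}\,i$ is not the condition that $\mathrm{Ad}(\ga\td{g})(\frakk+H)$ fails to contain $\frakf+\frakk$. Let $M$ be the block-diagonal $\rmS(\U{g-1,g-1}\times\U{1,1})(F_\RR)$, and $K$ the stabilizer of $i$. Since $\HSEg[0,0]=M\,i$, you can write $\ga\td{g}=\transJU(\ov\alpha,\ov\beta)\,m\,k$ with $(\alpha,\beta)=(\pi_\alpha,\pi_\beta)(x)$, $m\in M$ and $k\in K$. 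The fibre directions at $x$ then pull back to $\mathrm{Ad}(nm)(\frakf+\frakk)$ with $n=\transJU(\ov\alpha,\ov\beta)$. The tangent space of $\ga D$ pulls back to $\mathrm{Ad}(nm)(\frakk+\mathrm{Ad}(k)H)$. Hence transversality at $x$ means $\ell\circ\mathrm{Ad}(k)^{-1}\neq0$ on $\frakf_\frakp$, and only the $K$-component matters. Your Grassmannian criterion uses the fibre directions at $i$ at every point. Your contradiction via an $\mathrm{Ad}(G)$-invariant subspace of $\frakg^*$ is also unavailable, since $\ell$ lives only on $\frakg/\frakk$.

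Here is a repair. Let $Z\subseteq\SU{g,g}(F_\RR)$ be the set of $g$ for which $dg(H)$ is not transverse at $g\,i$ to the fibre through $g\,i$. The M\"obius action, its differential and $\pi_\alpha$ are rational in the real coordinates, so $Z$ is real algebraic. If every translate failed, then $\SU{g,g}(\cOF)\td{g}\subseteq Z$. Borel density then gives $Z=\SU{g,g}(F_\RR)\supseteq K$, so $\ell\circ\mathrm{Ad}(k)^{-1}$ vanishes on $\frakf_\frakp$ for all $k\in K$. The complex span of these functionals is a nonzero $K$-submodule of $\frakp^*$. Now $\frakp=\bigoplus_v\frakp_v$, and each $\frakp_v\cong\Mat{g}(\CC)$ is irreducible under $K_v\cong\rmS(\U{g}\times\U{g})$ acting by $X\mapsto AXB$. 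So the span contains $\frakp_v^*$ for some place $v$. This contradicts $\frakf_\frakp$ (the block-diagonal directions) being nonzero at every place. The input you need is therefore irreducibility of the isotropy representation of $K$, applied after Borel density on $Z$.

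A minor point: your final point lies off $\ga W$, not necessarily off $W$. Since the projection restricted to $\ga D'$ is a submersion near $x$, it is open there. Apply it to a small open subset of $\ga D'\setminus W$.
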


\begin{proof}
We can replace~$D$ with one of its irreducible components.
The singular part of~$D$ has codimension at least~$2$.
We can thus assume that~$D \setminus W$ is smooth by enlarging~$W$.

We fix any point~$\tau \in D \setminus W$, an element~$\td{g} \in \SU{g,g}(F_\RR)$ with~$\tau = \td{g} i$, and write~$\td{g}\, \frakd$ for the tangent space of~$D$ at~$\tau$, where~$\frakd \subset \frakb \times_\frakz \frakf \subset \frakg$.
To show the proposition it suffices to find~$\ga \in \SU{g,g}(\cOF)$ such that~$\ga \td{g} (\frakd + \frakk)$ maps onto~$\frakb$ under the vector space projection~$\frakg \thra \frakg \slash (\frakf + \frakk)$.
Then~$\ga D$ in a neighborhood of~$\tau$ projects to an open set of the fibration base by \fref{Lemma}{la:fibration_base_tangent_surjective}.
By \fref{Lemma}{la:admissible_torsion_points_dense} this open set contains an admissible torsion point, as required in the statement.

To show that such a~$\ga$ exists, we observe that~$\frakd \subset \frakb \times_\frakz \frakf = \frakg \slash \frakk$ is the intersection of the kernel of two elements~$n^\pm \in (\frakb \times_\frakz \frakf)^\vee = (\frakg \slash \frakk)^\vee$, which are conjugate under the Cartan involution, since~$D$ is complex of codimension one.
The adjoint representation of~$\SU{g,g}(F_\RR)$ is a direct sum of~$n_F$ irreducibles, all intersected nontrivially by~$\frakk$.
Since~$\SU{g,g}(\cOF)$ is Zariski dense in~$\SU{g,g}(F_\RR)$, we find~$\ga$ such that~$\ga \td{g} n^\pm$ projects onto nonzero elements of~$\frakf^\vee \slash \frakz^\vee \subset (\frakb \times_\frakz \frakf)^\vee \slash \frakb^\vee$.
They span a space of~$\frakf^\vee \slash \frakz^\vee$ that is invariant under the Cartan involution, and therefore is of dimension~$2$.
That is, their kernel surjects onto~$\frakb$ as required.
\end{proof}

\subsection{Convergence to Hermitian modular forms}%
\label{ssec:convergence:meromorphic_and_regular}

In this section, we establish automatic convergence of symmetric formal Fourier--Jacobi series of cogenus~$1$ and trivial arithmetic type.

\begin{theorem}%
\label{thm:convergence_scalar_valued_cogenus_one}
For~$g \ge 2$ and~$k \in \ZZ$, we have
\begin{gather*}
  \rmFM^{(g,1)}_k
  =
  \rmM^{(g)}_k
  \tx{.}
\end{gather*}
\end{theorem}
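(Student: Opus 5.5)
The inclusion $\rmM^{(g)}_k \subseteq \rmFM^{(g,1)}_k$ is \eqref{eq:symmetric_formal_fourier_jacobi_series_from_modular_forms}, so the task is the reverse inclusion. Fix $f \in \rmFM^{(g,1)}_k$. I would first reduce to cuspidal series, so that \fref{Proposition}{prop:torsion_point_pointwise_convergence} and \fref{Proposition}{prop:torsion_point_locally_bounded} become available. Multiplying $f$ by a nonzero Hermitian cusp form $h \in \rmS^{(g)}_{l}$ gives $hf$. Its Fourier coefficients are supported on positive definite $t$: both factors have semi-definite support by the Koecher principle, and $h$ has definite support, so every summand in the convolution has positive definite index. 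It follows that $hf$ is cuspidal and symmetric, i.e.\ lies in $\rmFS^{(g,1)}_{k+l}$. If $hf$ is a holomorphic modular form $F$, then $f = F/h$ is a meromorphic modular form, and it remains to show holomorphy. By \fref{Corollary}{cor:symmetric_formal_fourier_jacobi_series_algebraic} (taking $k$ positive after multiplying by a suitable power of a modular form, and using $k \le 0$ trivially), $f$, and hence $hf$, is algebraic over $\rmM^{(g)}_\bullet$. Multiplying by a further power of a cusp form clears the leading coefficient, which is the standard integrality trick. This produces a cuspidal symmetric series $\tilde f = c f$ that satisfies a \emph{monic} homogeneous equation as in \fref{Proposition}{prop:torsion_point_locally_bounded}.

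\textbf{Convergence of $\tilde f$.} By \fref{Proposition}{prop:torsion_point_locally_bounded}, the partial sums of $\tilde f$ are locally bounded on $\HSEg$. By Montel/Arzel\`a--Ascoli, a subsequence converges locally uniformly to a holomorphic function $G$. The Fourier--Jacobi coefficients of $G$ are recovered by integrating over $\Re(\tau_2)$, so they equal the $\phi_m$. Hence the full sequence converges and $\tilde f$ defines a holomorphic function $G$. Next, $G$ is invariant under $Q_1(\cOF)$, since each $\phi_m(\tau_1,z,w)e(m\tau_2)$ is. It is also invariant under $P_g(\cOF)$: translations act termwise, and $\rot(a)$ acts through the symmetry relation \eqref{eq:hermitian_fourier_coefficient_symmetry}, because absolute convergence of the Fourier series lets us rearrange. \fref{Proposition}{prop:siegel_and_klingen_parabolics_generate} then gives $G \in \rmM^{(g)}_{k'}$, and the Koecher principle takes care of the growth condition.

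\textbf{Holomorphy of $f = G/c$.} This is the main obstacle. Suppose the polar divisor $D$ of $f$ is nonempty; it is $\SU{g,g}(\cOF)$-invariant. Apply \fref{Proposition}{prop:divisor_intersect_torsion_point} with $W$ the codimension-$\ge 2$ locus where $D$ meets the zero set of $G$ in higher codimension or is singular. This yields an admissible torsion point $(\alpha,\beta)$ such that $\HSEg[\alpha,\beta]$ meets $D$ transversally at some point $\tau_0 \notin W$, so $f$ restricted to the leaf has a genuine pole near $\tau_0$. On the other hand, the bound of \fref{Theorem}{thm:torsion_point_hecke_bound} used in \fref{Proposition}{prop:torsion_point_pointwise_convergence} requires only cuspidality and symmetry, not a monic relation. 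Applying it to the cuspidal series $hf$ shows that $hf$, and hence $f$ (dividing by $h$, whose divisor we may also put into $W$ or choose transversally), converges locally uniformly and is holomorphic on $\HSEg[\alpha,\beta]$ near $\tau_0$. The convergent series agrees with the meromorphic function $G/c$ on the leaf, because the Fourier--Jacobi coefficients coincide off the polar set. This contradicts the pole, so $D = \emptyset$ and $f \in \rmM^{(g)}_k$. The delicate point is making the identification of the convergent leaf series with the restriction of the meromorphic function rigorous near $\tau_0$. I would handle it by working on a small region of the leaf with $\Im\tau_2$ large, where both sides are holomorphic, and continuing analytically along the leaf.
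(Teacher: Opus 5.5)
Most of your outline matches the paper's proof: algebraicity, a monic relation, local boundedness plus Montel, invariance under $P_g(\cOF)$ and $Q_1(\cOF)$, and a transversal torsion leaf through the polar divisor. Identifying the Montel limit through its Fourier--Jacobi coefficients in $\Re(\tau_2)$ is a valid substitute for the paper's pointwise convergence on the dense set of leaves. One small correction: the monic relation comes from multiplying by the leading coefficient $a_d$, not by a power of a cusp form.

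\textbf{The gap: holomorphy for non-cuspidal $f$.} There you only know that the cuspidal series $hf$ converges on $\HSEg[\alpha,\beta]$. On the leaf this gives $f = (hf)/h$, which can still have a pole at $\tau_0$ if $h$ vanishes along $D$ near $\tau_0$. That case cannot be excluded. Your denominator $c$ is divisible by $h$, and a pole of $f = (hf)/h$ along a component of $\operatorname{div}(h)$ is exactly the a priori possibility you must rule out. You cannot absorb $\operatorname{div}(h)$ into $W$, because $W$ must have codimension at least~$2$; and if $D \subseteq \operatorname{div}(h)$, then $D \setminus W$ is empty. As written, the leaf argument only reproves that $hf$ is holomorphic and says nothing about $f$ along the zeros of $h$.

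\textbf{How to close it.} Run the holomorphy argument with $f$ itself cuspidal, as the paper does. Then \fref{Proposition}{prop:torsion_point_pointwise_convergence} applies to $f$ directly, and your ``delicate point'' disappears. The leaf series of $f$ and of the denominator both converge absolutely, so their product is the leaf series of $G$. This gives $f_{\alpha,\beta} \cdot c|_{\HSEg[\alpha,\beta]} = G|_{\HSEg[\alpha,\beta]}$ with no analytic continuation needed.

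This proves $\rmFS^{(g,1)}_k = \rmS^{(g)}_k$, which is all that is used later. \fref{Proposition}{prop:reduction_to_scalar_weights_trivial_type} recovers non-cuspidal series of any type from the cuspidal statement.

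If you want the non-cuspidal case inside this proof, you need one extra input. Once $f$ is known to be meromorphic with polar divisor $D$, choose a cusp form $h_2$ of some weight $l$ that does not vanish identically on any component of $D$. Such $h_2$ exists by the Baily--Borel global generation used in \fref{Lemma}{la:large_weights_yield_global_sections}. The cuspidal case gives $h_2 f \in \rmS^{(g)}_{k+l}$, so $f = (h_2 f)/h_2$ has poles only along $\operatorname{div}(h_2)$, which contains no component of $D$. Hence $D = \emptyset$.
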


\begin{proof}
We first show that given~$f \in \rmFM^{(g,1)}_k$ there is a nonzero Hermitian modular form~$f'$ such that~$f f'$ converges and is modular.
We may assume that~$f$ is cuspidal, since given any symmetric formal Fourier--Jacobi series~$f$ and a nonzero cusp form~$f'$, the product~$f f'$ is a cuspidal, symmetric formal Fourier--Jacobi series.

By \fref{Corollary}{cor:symmetric_formal_fourier_jacobi_series_algebraic} any~$f \in \rmFS^{(g,1)}_k \subseteq \rmFM^{(g,1)}_k$ satisfies a polynomial equation
\begin{gather*}
  \sum_{i = 0}^d a_i f^i
  =
  0
  \in
  \rmFM^{(g,1)}_{dk + k'}
  \quad
  \tx{with coefficients }
  a_i \in \rmM^{(g)}_{(d-i)k + k'}
\end{gather*}
with suitable~$k'$ and~$d$ and~$a_d \ne 0$.
Hence the product~$a_d f \in \rmFS^{(g,1)}_{k + k'}$ satisfies a monic polynomial relation.
Now \fref{Proposition}{prop:torsion_point_locally_bounded} asserts that the partial sums associated with~$a_d f$ are locally uniformly bounded on~$\HSEg$.
Thus there exists a subsequence of partial sums that converges to a holomorphic function by Montel's theorem.
Further, \fref{Proposition}{prop:torsion_point_pointwise_convergence} asserts that the Fourier--Jacobi series~$f$, and hence also~$a_d f$, converges pointwise absolutely for all~$\tau \in \HSEg[\alpha,\beta]$ and all admissible torsion points~$(\alpha,\beta)$.
Since such~$\tau$ are dense in~$\HSEg$ by \fref{Lemma}{la:admissible_torsion_points_dense}, the Fourier--Jacobi series~$a_d f$ converges locally uniformly.

The function defined by~$a_d f$ is modular invariant with respect to~$Q_1(\cOF)$, since each term in its Fourier--Jacobi series is.
It is manifestly periodic, and therefore has a locally uniformly convergent Fourier series expansion.
By the symmetry condition~\eqref{eq:hermitian_fourier_coefficient_symmetry} on the Fourier coefficients of a formal symmetric Fourier--Jacobi series,~$a_d f$ is modular invariant with respect to~$P_g(\cOF)$.
Using \fref{Proposition}{prop:siegel_and_klingen_parabolics_generate} we find that~$a_d f$ is a Hermitian modular form.

We have shown that for every~$f \in \rmFM^{(g,1)}_k$ there is a Hermitian modular form~$f'$ such that the formal Fourier--Jacobi series~$f f'$ converges on~$\HSEg$ and is modular.
For clarity, we set~$\td{f} = f f'$.
We next show that the meromorphic Hermitian modular form~$\td{f} \slash f'$ is indeed holomorphic.
Let~$D$ be the polar divisor of~$\td{f} \slash f'$ and assume that~$D$ is nontrivial, i.e., $\td{f} \slash f'$ is not holomorphic.
Since $\td{f} \slash f'$ is modular, we have~$D = \SU{g,g}(\cOF)\, D$.
Let~$W$ be the union of the intersections of~$D$ with all components of the divisors of~$\td{f}$ and~$f'$ that are distinct from~$D$.
This is a complex submanifold of codimension at least~$2$.
By \fref{Proposition}{prop:divisor_intersect_torsion_point} there is an admissible torsion point~$(\alpha,\beta)$ such that~$\HSEg[\alpha,\beta]$ intersects~$D \setminus W$ transversally, say in~$\tau$.

Let~$D'$ be an irreducible component of~$D$ that contains~$\tau$, and set~$D'[\alpha,\beta] = D \cap \HSEg[\alpha,\beta]$.
We analyze the restriction of~$\td{f} \slash f'$ to~$\HSEg[\alpha,\beta]$.
Since~$D$ and~$\HSEg[\alpha,\beta]$ intersect transversally, its order along the divisor~$D'[\alpha,\beta] \subset \HSEg[\alpha,\beta]$ is given by
\begin{gather*}
  \ord_{D'[\alpha,\beta]} \bigl( \bigl(\td{f} \slash f' \bigr) \big|_{\HSEg[\alpha,\beta]} \bigr)
  =
  \ord_{D'} \bigl( \td{f} \slash f' \bigr)
  <
  0
  \tx{.}
\end{gather*}

By \fref{Proposition}{prop:torsion_point_pointwise_convergence}, the formal Fourier--Jacobi series~$f$ converges on~$\HSEg[\alpha,\beta]$ to a holomorphic function.
We write~$f_{\alpha,\beta}$ for this function.
We have
\begin{gather*}
  0
  \le
  \ord_{D'[\alpha,\beta]} \bigl( f_{\alpha,\beta} \bigr)
  =
  \ord_{D'[\alpha,\beta]}\Bigl(
  \bigl( f_{\alpha,\beta} \cdot \bigl( f' \big|_{\HSEg[\alpha,\beta]} \bigr) \bigr)
  \bigslash
  \bigl( f' \big|_{\HSEg[\alpha,\beta]} \bigr)
  \Bigr)
  \tx{.}
\end{gather*}
Since evaluation of Fourier series and products commute, we have
\begin{gather*}
  f_{\alpha,\beta} \cdot \bigl( f' \big|_{\HSEg[\alpha,\beta]} \bigr)
  =
  \td{f} \big|_{\HSEg[\alpha,\beta]}
  \tx{.}
\end{gather*}
Combining this with the previous inequality, we arrive at the contradiction that the order of~$\td{f} \slash f'$ along~$D'$ is nonnegative.
Thus, our assumption that~$D$ were nontrivial was incorrect.
In other words,~$\td{f} \slash f'$ is a holomorphic Hermitian modular form.

We have~$\td{f} = f f'$ in the algebra of symmetric formal Fourier--Jacobi series.
Since this algebra is an integral domain, the Fourier--Jacobi expansion of~$\td{f} \slash f'$ and of~$f$ coincide.
That is,~$f$ converges and is modular as we claimed.
\end{proof}

%% file: sections/04_vector_valued.tex
\section{The vector-valued and higher cogenus case}%
\label{sec:vector_valued_higher_cogenus}

The goal of this section is to extend \fref{Theorem}{thm:convergence_scalar_valued_cogenus_one} to both vector-valued formal Fourier--Jacobi expansions and to arbitrary cogenus~$1 \le h < g$.
Our proof of \fref{Theorem}{mainthm:convergence} successively invokes the reduction to the scalar-valued case in \fref{Proposition}{prop:reduction_to_scalar_weights_trivial_type} and the reduction to cogenus~$h-1$ via \fref{Proposition}{prop:reduction_to_lower_cogenus}.

\subsection{Reduction to the scalar-valued cusp forms}%
\label{ssec:vector_valued_higher_cogenus:reduction_to_scalar}

The arguments in this section follow closely the lines of previous work~\cite{bruinier-2014}.
In this section we write~$\rmZ(\cOF)$ for the center of~$\SU{g,g}(\cOF)$.

\begin{lemma}
\label{la:large_weights_yield_global_sections}
Given a weight~$k_0 \in \ZZ$ and an arithmetic type~$\rho$ such that every central element~$\eps 1_{2g}$ of\/~$\SU{g,g}(\cOF)$ acts as~$\normF(\eps)^{-gk_0}$ under~$\rho$, there are~$k \in \ZZ$ and a finite set~$B \subset \rmS^{(g)}_{k_0 + k}(\rho)$ of modular forms such that for every~$\tau \in \HSEg$ with~$\Stab_{\SU{g,g}(\cOF)}(\tau) = \rmZ(\cOF)$ we have
\begin{gather}
  \label{eq:la:large_weights_yield_global_sections}
  \linspan\, \CC \big\{ f(\tau) \condsep f \in B \big\}
  =
  V(\rho)
  \tx{.}
\end{gather}
\end{lemma}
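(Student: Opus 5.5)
We will construct the forms in $B$ as Poincaré series attached to vectors of $V(\rho)$, following the approach of~\cite{bruinier-2014}. Since $\ker\rho$ is a congruence subgroup $\Ga \supseteq \SU{g,g}(\cOF,N)$, $\rho$ factors through the finite group $\SU{g,g}(\cOF)/\Ga$. The hypothesis on central elements makes the candidate averaging below well defined: the automorphy factor of $\eps 1_{2g}$ in weight $k_0$ is $\detF(\eps 1_g)^{-k_0} = \normF(\eps)^{-gk_0}$. This cancels against $\rho(\eps 1_{2g})$, so that the central elements do not force the averages to vanish. We add an auxiliary weight $k$ that is a multiple of a suitable integer, chosen so that the center acts trivially in weight $k$ as well.

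Concretely, we first fix a scalar-valued cusp form $h_0 \in \rmS^{(g)}_{k}(N)$ of large weight $k$. For $v \in V(\rho)$ we then set
\[
  P_{v,h_0}
  \defeq
  \sum_{\ga \in \Ga \backslash \SU{g,g}(\cOF)}
  \rho(\ga)^{-1} v \cdot \bigl( h_0 \big|_{k_0+k}\, \ga \bigr)
  \tx{,}
\]
where $h_0$ is viewed in weight $k_0+k$ by multiplying with a fixed scalar form of weight $k_0$ and level $N$. (If $k_0 < 0$, we instead absorb that weight into the choice of $k$; the statement allows $k$ arbitrary.) This is a finite sum, it lies in $\rmS^{(g)}_{k_0+k}(\rho)$, and the central elements contribute consistently by the normalization above. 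Evaluating at $\tau$ gives $\sum_\ga \rho(\ga)^{-1}v\, \tilde h_0(\ga\tau)\, j(\ga,\tau)^{-k_0-k}$.

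The key step is the following claim. At $\tau$ with trivial stabilizer modulo the center, the points $\ga\tau$ for $\ga \in \Ga\backslash\SU{g,g}(\cOF)$ modulo the center are pairwise $\Ga$-inequivalent. Since $\Ga\backslash \HSEg$ is quasi-projective by Baily--Borel, large weight cusp forms of level $N$ separate finitely many inequivalent points: we can choose $\tilde h_0$ vanishing at all $\ga\tau$ except one prescribed $\ga_0\tau$. Then $P_{v,h_0}(\tau)$ is a nonzero multiple of $\rho(\ga_0)^{-1}v$, and varying $v$ over a basis yields spanning. To obtain one finite $B$ that works for all such $\tau$ simultaneously, we use the following argument. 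The set of $\tau$ at which the finitely many Poincaré series built from a finite family fail to span is an analytic $\SU{g,g}(\cOF)$-invariant subset, and we add forms until it avoids the locus of trivial stabilizer. This is a Noetherian induction on the Zariski closed image in the Baily--Borel compactification. It terminates because each added batch strictly shrinks the bad closed set, and points with trivial stabilizer are always removable by the pointwise construction.

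The main obstacle is this uniformity: the point separation must hold uniformly, and the bad set must shrink by Noetherianity in a single common weight $k$. To handle this, we will raise all forms to a common weight by multiplying with powers of a fixed form that is nonvanishing at a given point. Since the bad set is closed and each step is taken near a point where it is nonempty, this keeps the induction within one weight after finitely many steps.
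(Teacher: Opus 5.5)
Your route differs genuinely from the paper's and can be made to work, but two steps need repair.

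\textbf{Comparison with the paper.} The paper argues sheaf-theoretically. On the Satake compactification $X_g$, the coherent sheaf $\cM_{nk'}\otimes\cS_{k_0}(\rho)$ is globally generated for $n\gg0$ by Serre's theorem. At orbits with stabilizer $\rmZ(\cOF)$ its stalk is $\cO_{X_g,\tau}\otimes V(\rho)$, so reducing modulo the maximal ideal, the values of a basis of global sections span $V(\rho)$. Uniformity in $\tau$ comes for free from global generation on the projective $X_g$, with no explicit construction and no induction.

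Your averaging $h\mapsto\sum_\ga\rho(\ga)^{-1}v\,(h|_{k_0+k}\,\ga)$ instead realizes type-$\rho$ forms as isotypic projections of scalar forms of a neat level $N$. The stabilizer condition then turns spanning at $\tau$ into separating the $r=[\SU{g,g}(\cOF):\SU{g,g}(\cOF,N)\rmZ(\cOF)]$ distinct images of the points $\ga\tau$. This is more explicit and involves only line bundles, at the cost of a Noetherian induction for uniformity. Note that you should sum over $\SU{g,g}(\cOF,N)\backslash\SU{g,g}(\cOF)$ with $N\ge3$, not over $\ker\rho\backslash\SU{g,g}(\cOF)$, since your scalar form is only invariant under the level-$N$ group.

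\textbf{First repair: the weight.} Since $k$ must satisfy $\normF(\eps)^{gk}=1$ for the central elements, $k_0+k$ lies in a residue class fixed by $k_0$. You therefore need level-$N$ cusp forms of exactly this weight that vanish at the other $\ga\tau$ but not at $\ga_0\tau$. Multiplying by ``a fixed scalar form of weight $k_0$'' does not deliver this: such a form need not exist (e.g.\ for $k_0\le0$), and it may vanish at $\ga_0\tau$.

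What you actually need is that level-$N$ cusp forms of weight $w$ have no common zero on $\HSEg$ for all $w\gg0$ in that class. Together with $r-1$ separating sections of a very ample $\cM_{w_0}$, this gives your pointwise step. That input is itself the scalar, neat-level case of the global generation the paper uses (Serre's theorem, or ampleness of the weight-one bundle on the neat Satake compactification). So your reduction moves this ingredient to a line bundle rather than avoiding it.

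\textbf{Second repair: the common weight.} Multiplying by powers of a fixed form that is nonvanishing at one point adds that form's zero divisor to the bad locus. This destroys the strict descent of your Noetherian induction. It is also unnecessary: because $r$ is fixed, the pointwise construction above can be carried out in one weight $k_0+k$ chosen in advance for all $\tau$, and the whole induction then runs in this weight. The bad loci are closed in $X_g$ because the $\dim V(\rho)$-minors are modular forms with the finite-order character $\det\rho$. If you do want to re-weight at the end, multiply by every member of a finite set of scalar forms without common zeros, not by a single form.
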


\begin{proof}
We consider the Satake compactification~$X_g$ of~$\SU{g,g}(\cOF) \backslash \HSEg$.
Let~$\cM_{k'}(\rho')$ be the sheaf of Hermitian modular forms on~$X_g$ of weight~$k'$ and arithmetic type~$\rho'$, and~$\cS_{k'}(\rho')$ the subsheaf of cusp forms.
If~$\rho'$ is trivial, we omit it from our notation, writing~$\cM_{k'}$ and~$\cS_{k'}$.
There is an even, positive~$k'$ such that~$\cM_{k'}$ and all its powers are very ample by Baily--Borel.
In particular, by Theorem~II-5.17 of~\cite{hartshorne-1977}, $\cS_{n k' + k_0}(\rho) = \cM_{n k'} \otimes \cS_{k_0}(\rho)$ is generated by global sections if~$n$ is sufficiently large.
We set~$k = n k'$ for one such~$n$ and let~$\cB$ be a basis of global sections of~$\cS_{k + k_0}(\rho)$.

For points~$\tau \in \HSEg$ with~$\Stab_{\SU{g,g}(\cOF)}(\tau) = \rmZ(\cOF)$, the stalk of~$\cS_{k_0+k}(\rho)$ at~$\SU{g,g}(\cOF)\, \tau$ equals the base change~$\cO_{X_g,\tau} \otimes_\CC V(\rho)$ of~$V(\rho)$, where~$\cO_{X_g,\tau}$ is the stalk of the structure sheaf of~$X_g$ at~$\SU{g,g}(\cOF)\, \tau$.
The elements of~$\cB$ span~$\cO_{X_g,\tau} \otimes V(\rho)$ as a module over~$\cO_{X_g,\tau}$, and we conclude the proof by letting~$B$ be the set of cusp forms corresponding to~$\cB$.
\end{proof}

\begin{proposition}%
\label{prop:reduction_to_scalar_weights_trivial_type}
Given~$1 \le h < g$, we assume that\/~$\rmFS^{(g,h)}_k = \rmS^{(g)}_k$ for all~$k \in \ZZ$.
Then for all~$k \in \ZZ$ and all arithmetic types~$\rho$, we have
\begin{gather*}
  \rmFM^{(g,h)}_k(\rho)
  =
  \rmM^{(g)}_k(\rho)
  \tx{.}
\end{gather*}
\end{proposition}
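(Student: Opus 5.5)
We follow the pairing strategy of Bruinier's vector-valued argument. Fix $k\in\ZZ$, an arithmetic type $\rho$ and $f\in\rmFM^{(g,h)}_k(\rho)$. The inclusion $\rmM^{(g)}_k(\rho)\subseteq\rmFM^{(g,h)}_k(\rho)$ is \eqref{eq:symmetric_formal_fourier_jacobi_series_from_modular_forms}, so only the reverse inclusion needs proof.

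\emph{Reduction to the central-character case.} Decompose $\rho$ into isotypic components for the finite group $\rmZ(\cOF)$. Both the Jacobi covariance in \fref{Definition}{def:hermitian_jacobi_form} and the symmetry relation \eqref{eq:hermitian_fourier_coefficient_symmetry} are compatible with this projection, so $f$ splits into components of these types. A central element acts on any modular form through the weight factor. Hence the component on which $\eps 1_{2g}$ does not act as the factor forced by weight $k$ vanishes already formally: compare $\rot(\eps 1_g)\in P_g(\cOF)$ in the symmetry relation. We may therefore assume the central-character hypothesis of \fref{Lemma}{la:large_weights_yield_global_sections} for the dual type $\rho^\vee$ and suitable $k_0$ with $k_0\equiv -k$ in the relevant sense.

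\emph{Pairing to scalar cusp forms.} \fref{Lemma}{la:large_weights_yield_global_sections} applied to $\rho^\vee$ and weight $k_0$ gives $k'$ and a finite set $B\subset\rmS^{(g)}_{k_0+k'}(\rho^\vee)$ whose values span $V(\rho^\vee)$ at every point with stabilizer $\rmZ(\cOF)$. For each $b\in B$, form the contraction $\langle f,b\rangle$ of formal Fourier--Jacobi series via the canonical pairing $V(\rho)\otimes V(\rho^\vee)\to\CC$. We then check three things. First, its Fourier--Jacobi coefficients are scalar Hermitian Jacobi forms of trivial type, since the $Q_h(\cOF)$-covariances cancel. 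Second, it is symmetric, because the factors $\rho(\rot(a))^{-1}$ and $\rho^\vee(\rot(a))^{-1}$ cancel in the pairing. Third, it is cuspidal, because $b$ is a cusp form and products of Fourier expansions with one cuspidal factor have definite support. The coefficient products are finite sums over $m=m_1+m_2$ with $m_i\ge0$, which holds by the Koecher principle \fref{Theorem}{thm:hermitian_jacobi_forms_koecher_principle}. Thus $\langle f,b\rangle\in\rmFS^{(g,h)}_{k+k_0+k'}$, and by hypothesis it lies in $\rmS^{(g)}_{k+k_0+k'}$, in particular it converges.

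\emph{Recovering $f$.} This is the main obstacle: passing from convergence of all $\langle f,b\rangle$ to convergence of $f$ itself. At a point $\tau_0$ with trivial stabilizer modulo center, choose $b_1,\dots,b_r\in B$ with $r=\dim\rho$ and $b_i(\tau_0)$ a basis. Then $\det(b_i)$ is a holomorphic function, and on the neighbourhood where it is nonzero the matrix $(b_i)$ is invertible. Writing $f=(b_i)^{-\mathrm{t}}\cdot(\langle f,b_i\rangle)_i$, we first read this as an identity of formal series after multiplying by the cofactor matrix. Concretely, $\det(b_i)\,f$ equals a finite combination of products of convergent series. It is therefore a convergent, holomorphic, vector-valued function, and it transforms correctly under $Q_h(\cOF)$ and $P_g(\cOF)$, hence under $\SU{g,g}(\cOF)$ by \fref{Proposition}{prop:siegel_and_klingen_parabolics_generate}. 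So $f$ defines a meromorphic modular form of type $\rho$. Varying the choice of $b_i$ over the generic points, the denominators have no common zero outside the codimension $\ge2$ locus of points with larger stabilizers, together with the boundary. Then $f$ is holomorphic by Hartogs, and the integral-domain argument of \fref{Theorem}{thm:convergence_scalar_valued_cogenus_one} identifies its Fourier--Jacobi expansion with $f$. If needed, an extra factor of cusp forms in the scalar case handles the non-cuspidal issue.
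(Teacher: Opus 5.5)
Your proposal is correct and follows the paper's proof essentially step for step. The shared steps are the reduction to the central character, pairing with the cusp forms from Lemma~\ref{la:large_weights_yield_global_sections} to land in $\rmFS^{(g,h)}$, local inversion at points with stabilizer $\rmZ(\cOF)$, and extension across the codimension~$\ge 2$ fixed-point locus.

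Your adjugate identity $\det(b_i)\, f = \operatorname{adj}(b_i)\,(\langle f, b_i\rangle)_i$ in the ring of formal series is a slightly more explicit version of the paper's meromorphic inverse $B_\tau^{-1}$. Together with the integral-domain argument, it shows that the meromorphic form does not depend on the choice of the~$b_i$ (which you use implicitly when gluing), and it identifies the Fourier--Jacobi coefficients with those of $f$.
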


\begin{proof}
We can and will assume that~$\eps 1_{2g} \in \rmZ(\cOF)$ acts via~$\rho$ as~$\normF(\eps)^{-g k}$ by passing to the corresponding subrepresentation, since symmetric formal Fourier--Jacobi series vanish on the complement by the symmetry condition in \fref{Definition}{def:symmetric_formal_fourier_jacobi_series}.
Let~$k'$ and~$B$ be the weight and the set of cusp forms in \fref{Lemma}{la:large_weights_yield_global_sections} with~$k_0$ and~$\rho$ replaced by~$-k$ and~$\rho^\vee$, the dual of~$\rho$.
We let~$\pi_\rho$ denote the evaluation map from~$\rho^\vee \otimes \rho$ to the trivial representation.

We let~$N$ be the level of~$\rho$.
Given
\begin{gather}
  f(\tau)
  =
  \sum_{m \in \frac{1}{N} \MatD{h}(\cOE)^\vee}
  \phi_m(\tau_1, z, w)\,
  e(m \tau_2)
  \in
  \rmFM^{(g,h)}_k(\rho)
  \tx{,}
\end{gather}
and~$f^\vee \in B$, we have
\begin{gather*}
  \big\langle f^\vee, f \big\rangle
  \defeq
  \pi_\rho \circ \big( f^\vee \otimes f \big)
  \in
  \rmFS^{(g,h)}_{k + k'}
  \tx{,}
\end{gather*}
since the notion of symmetric formal Fourier--Jacobi series is compatible with both tensor products and homomorphisms of arithmetic types.
By our assumption~$\rmFS^{(g,h)}_{k+k'} = \rmS^{(g)}_{k+k'}$, we find that~$\langle f^\vee, f \rangle$ is a Hermitian modular form.

Fix any~$\tau$ with $\Stab_{\SU{g,g}(\cOF)}(\tau) = \rmZ(\cOF)$, and let~$B_\tau \subseteq B$ be such that the vectors~$f^\vee(\tau)$ for~$f^\vee \in B_\tau$ form a basis of~$V(\rho^\vee)$.
In particular, we can view~$B_\tau$ as a Hermitian modular form with values in
\begin{gather*}
  V(\rho^\vee) \otimes \CC\{B_\tau\}
  \cong
  \Hom_\CC\big( V(\rho), \CC\{B_\tau\} \big)
  \tx{,}
\end{gather*}
whose value at~$\tau$ is an invertible homomorphism.
For all~$\tau'$ at which~$B_\tau(\tau')$ is invertible, we can define a function by~$B_\tau^{-1}(\tau') = B_\tau(\tau')^{-1}$, which can be viewed as a meromorphic Hermitian modular form of weight~$-k'$ and type~$\rho \otimes \CC\{B_\tau\}^\vee$.

Recall that~$\langle f^\vee, f \rangle$ is a modular form as opposed to a mere formal Fourier--Jacobi series for every~$f^\vee \in B$.
We can view
\begin{gather*}
  \big\langle B_\tau, f \big\rangle
  \defeq
  \sum_{f^\vee \in B_\tau}
  \langle f^\vee, f \rangle \otimes f^\vee
  \in
  \rmS^{(g)}_{k+k'} \otimes \CC\{B_\tau\}
\end{gather*}
as a Hermitian modular form of isotrivial type~$\CC\{B_\tau\}$.

Combining the previous definition of~$B_\tau^{-1}$ we find that
\begin{gather}
  \label{eq:prop:reduction_to_scalar_weights_trivial_type:mul_div}
  \Bigl\langle B_\tau^{-1},\, \big\langle B_\tau, f \big\rangle \Bigr\rangle (\tau')
  \defeq
  B_\tau^{-1}(\tau') \big( \big\langle B_\tau, f \big\rangle(\tau') \big)
  \in
  V(\rho)
  \tx{.}
\end{gather}
Since modular forms are compatible with homomorphisms of arithmetic types, the right hand side defines a meromorphic modular form.
It is regular at~$\tau$, since~$B_\tau^{-1}$ is.
Moreover, it does not depend on~$B_\tau$, since changing the set~$B_\tau \subset B$ to, say,~$B'_\tau \subset B$ corresponds to composing~$B_\tau$ on the left with a meromorphic modular form with values in invertible~$\CC$-homomorphisms from~$\CC\{B_\tau\}$ to~$\CC\{B'_\tau\}$.
It is canceled by its inverse, which arises from the relation between~$B_\tau^{-1}$ and~$B_\tau^{\prime\,-1}$.

We obtain a meromorphic modular form~\eqref{eq:prop:reduction_to_scalar_weights_trivial_type:mul_div} that is regular at every~$\tau \in \HSEg$ whose stabilizer in~$\SU{g,g}(\cOF)$ equals~$\rmZ(\cOF)$.
Since fixed points on~$\HSEg$ have codimension at least~$2$ if~$g \ge 2$, we conclude that it is holomorphic.
Since tensor products and homomorphisms of arithmetic types can be applied on the level of Fourier coefficients, the Fourier--Jacobi coefficients of~\eqref{eq:prop:reduction_to_scalar_weights_trivial_type:mul_div} coincide with the ones of~$f$.
\end{proof}

\subsection{Reduction to lower cogenus}%
\label{ssec:vector_valued_higher_cogenus:reduction_to_lower_cogenus}

In this section we perform the reduction from cogenus~$h$ to cogenus~$h-1$.
In light of \fref{Section}{ssec:vector_valued_higher_cogenus:reduction_to_scalar} it suffices to consider cusp forms of trivial arithmetic types as input.
However, we will produce vector-valued formal Fourier--Jacobi series from them.

We recall the decomposition of~$\tau \in \HSEg$ and Fourier indices~$t \in \MatD{g}(\cOE)^\vee$ given in~\eqref{eq:HSg_decomposition} and~\eqref{eq:hermitian_fourier_index_decomposition} and their refinements in~\eqref{eq:def:variable_subdivision} and~\eqref{eq:def:fourier_index_subdivision}:
\begin{gather*}
  \tau
  =
  \left(
  \begin{array}{c|cc}
      \tau_1 & \lTr{z}{_{11}} & \lTr{z}{_{12}} \\
      \hline
      w_{11} & \tau_{21}      & \lTr{z}{_{2}}  \\
      w_{12} & w_{2}          & \tau_{22}
    \end{array}
  \right)
  =
  \left(
  \begin{array}{cc|c}
      \tau'_{11} & \lTr{z}{^\prime_1} & \lTr{z}{^\prime_{21}} \\
      w'_{1}     & \tau'_{12}         & \lTr{z}{^\prime_{22}} \\
      \hline
      w'_{21}    & w'_{22}            & \tau'_2
    \end{array}
  \right)
  \tx{,}\quad
  t
  =
  \left(
  \begin{array}{c|cc}
      n   & \lTr{\ovsmash{r}}{_1} & \lTr{\ovsmash{r}}{_2}    \\
      \hline
      r_1 & m_{11}                & \lTr{\ovsmash{m}}{_{21}} \\
      r_2 & m_{21}                & m_{22}
    \end{array}
  \right)
  =
  \left(
  \begin{array}{cc|c}
      n'_{11} & \lTr{\ovsmash{n}}{^\prime_{21}} & \lTr{\ovsmash{r}}{^\prime_1} \\
      n'_{21} & n'_{22}                         & \lTr{\ovsmash{r}}{^\prime_2} \\
      \hline
      r'_1    & r'_2                            & m'
    \end{array}
  \right)
  \tx{.}
\end{gather*}
Given a formal Fourier--Jacobi series~$f$ of cogenus~$h$, we define its cogenus~$h-1$ Fourier--Jacobi coefficient of index~$m'$ as
\begin{gather}
  \label{eq:def:sub_fourier_jacobi_coefficient}
  \psi_{m'}( \tau'_1, z', w' )
  \defeq
  \sum_{n', r'}
  c( \phi_m;\, n, r)\,
  e(n' \tau'_1 + \lTr{r}{^\prime} z' + \lTr{\ovsmash{r}}{^\prime} w')
  \tx{,}
\end{gather}
where the sum runs over indices~$t$ as in~\eqref{eq:def:fourier_index_subdivision} with fixed~$m' \in \MatD{h-1}(\cOE)^\vee$ and
\begin{gather*}
  n' \in \MatD{g-h+1}(\cOE)^\vee,
  \quad
  r'_1 \in \Mat{h-1,g-h}(\cOE^\vee),
  \quad
  r'_2 \in \Mat{h-1,1}(\cOE^\vee)
  \tx{.}
\end{gather*}

To describe~$\psi_{m'}$ in terms of symmetric formal Fourier--Jacobi series, we employ Hermitian Jacobi theta series of index~$m'$ defined in~\eqref{eq:def:hermitian_jacobi_theta}.
Since this definition requires positive definite~$m'$, we restrict to cusp forms in the next statement.

\begin{proposition}
\label{prop:reduction_to_lower_cogenus}
Given $f \in \rmFS^{(g,h)}_k$ with~$1 < h < g$ and a totally positive definite~$m' \in \MatD{h-1}(\cOE)^\vee$, the Fourier--Jacobi coefficient~$\psi_{m'}$ defined in~\eqref{eq:def:sub_fourier_jacobi_coefficient} has a formal theta decomposition
\begin{gather}
  \label{eq:prop:reduction_to_lower_cogenus:theta_decomposition}
  \psi_{m'}(\tau'_1, z', w')
  =
  \sum_{\mu' \in \disc(m')^{g-h+1}}
  \mspace{-18mu}
  \td{f}_{\mu'}(\tau'_1)\,
  \theta_{m',\mu'}(\tau'_1, z', w')
  \tx{,}
\end{gather}
where~$\disc(m')$ and the theta series are the ones defined in~\eqref{eq:def:hermitian_jacobi_theta}, and the product on the right hand side is a product of formal series in~$\CC\llbracket e(\tau_{i\!j}) \rrbracket_{i,j}$.
The vector-valued formal series with components~$\td{f}_{\mu'}$ satisfies
\begin{gather*}
  \bigl( \td{f}_{\mu'} \bigr)_{\mu'}
  \in
  \rmFS^{(g-h+1,1)}_{k - h + 1}\big( \rho_{m'}^{(g-h+1)\, \vee} \big)
  \tx{.}
\end{gather*}
\end{proposition}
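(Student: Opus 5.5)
The plan is to define the components $\td{f}_{\mu'}$ directly through their Fourier coefficients and then verify the required properties one by one. For $\mu' \in \disc(m')^{g-h+1}$ fix a representative $r'_0 \in \mu'$. Following the theta decomposition of Corollary~\ref{cor:hermitian_jacobi_forms_hecke_bound}, set
\begin{gather*}
  \td{f}_{\mu'}(\tau'_1)
  \defeq
  \sum_{n'} c\bigl( f;\, t(n' + m'^{-1}[r'_0],\, r'_0,\, m') \bigr)\, e(n' \tau'_1)
  \tx{,}
\end{gather*}
where $t(\cdot,\cdot,\cdot)$ assembles an index as in~\eqref{eq:def:fourier_index_subdivision}. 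The first task is to check that this is independent of the representative. Replacing $r'_0$ by $r'_0 + m' \la$ with $\la \in \Mat{h-1,g-h+1}(\cOE)$ amounts to $t \mapsto t[a]$ for the unipotent $a = \begin{psmatrix} 1 & 0 \\ \la & 1 \end{psmatrix} \in \GL{g}(\cOE)$, which has $\det(a) = 1 = \det(\ov{a})$. The symmetry relation of Definition~\ref{def:symmetric_formal_fourier_jacobi_series} then gives equality of coefficients, and $n' - m'^{-1}[r']$ is invariant under this substitution. Multiplying out the formal product $\sum_{\mu'} \td{f}_{\mu'}\, \theta_{m',\mu'}$ coefficientwise, using~\eqref{eq:def:hermitian_jacobi_theta}, then recovers~\eqref{eq:prop:reduction_to_lower_cogenus:theta_decomposition}. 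Each coefficient is a finite sum here, indeed a single term, because $r'$ determines $\mu'$.

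Next I would show that $(\td{f}_{\mu'})$ is a formal Fourier--Jacobi series of cogenus~$1$, that is, that for each index $m_{11} = n'_{22}$ its Fourier--Jacobi coefficient in $(\tau'_{11}, z'_1, w'_1)$ is a vector-valued Hermitian Jacobi form of weight $k-h+1$ and type $\rho_{m'}^{(g-h+1)\,\vee}$. Collecting the terms of fixed $m_{11}$ expresses the result through the Jacobi coefficients $\phi_m$ of $f$ for the various $m$ with bottom-right block $m'$ and corner $m_{11}$. Here $\phi_m$ is a genuine Hermitian Jacobi form of cogenus~$h$ for $Q_h(\cOF)$. Viewing its partial expansion in $(\tau_{21}, z_{2}, w_2)$-type variables as a genuine Jacobi form of cogenus $h-1$ and index $m'$ for the subgroup $Q_h(\cOF) \cap Q_{h-1}$-type embedding, I would apply an honest (convergent) theta decomposition, which is available for genuine Jacobi forms by the transformation law of $\theta_{m'}$ under $\rho_{m'}^{(g-h+1)}$. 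This shows that the coefficient vectors are Jacobi forms of type $\rho^\vee_{m'}$ and weight reduced by $h-1$, the weight of $\theta_{m'}$. The group that must be controlled is $Q_1(\cOF)$ of genus $g-h+1$. I would check modularity on generators: the $\SU{g-h,g-h}(\cOF)$ part and the Heisenberg part both sit inside $Q_h(\cOF)$ for $f$, so they come from the Jacobi modularity of the $\phi_m$.

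Then I would verify symmetry for $a \in \GL{g-h+1}(\cOE)$ with $\det a = \det \ov{a}$, embedded as $\diag(a, 1_{h-1})$. This acts on $t$ by $(n', r', m') \mapsto (n'[a], r' a, m')$, so the symmetry of $f$ yields $c(\td{f}_{\mu' a}; n'[a]) = \detF(\ov a)^{k}\, c(\td{f}_{\mu'}; n')$. Comparing with the action of $\rot(a)$ in~\eqref{eq:def:weil_representation} for the dual Weil representation, where the factor $\detF(\ov a)^{\rk}$ shifts $k$ to $k-h+1$, gives exactly the relation of Definition~\ref{def:symmetric_formal_fourier_jacobi_series}. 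Cuspidality follows because $t > 0$ if and only if $n' - m'^{-1}[r'] > 0$, given $m' > 0$.

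The main obstacle is the Jacobi-form property: ensuring that the partial theta decomposition of the genuine cogenus-$h$ forms $\phi_m$ converges and transforms correctly under the full $Q_1(\cOF)$ of genus $g-h+1$. This is the group that contains $\sinv(1)$ acting on the $\tau'_{11}$ block, and only $Q_h(\cOF)$-modularity is available a priori. I would handle it by noting that the required elements fix the last $h-1$ coordinates and therefore lie in $Q_{h-1}$. However, they lie in $Q_h$ only after combining with the symmetry relation, so some care with the permutation matrices swapping blocks is needed.
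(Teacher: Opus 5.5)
This is essentially the paper's proof: representative-independence via $u_\la = \begin{psmatrix} 1 & 0 \\ \la & 1 \end{psmatrix}$ and the symmetry relation, the coefficientwise formal theta decomposition, symmetry under $\diag(a, 1_{h-1})$ with the weight shift by $h-1$, cuspidality via the Schur complement, and modularity of the cogenus-$1$ coefficients from the $Q_h(\cOF)$-modularity of the $\phi_m$ combined with the transformation law of $\theta_{m'}$.

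The ``main obstacle'' in your last paragraph is illusory. The embedded $Q_1(\cOF)$ of genus $g-h+1$, including $\sinv(1)$ on the $\tau'_{11}=\tau_1$ block, is literally $Q_h(\cOF) \cap \SU{g-h+1,g-h+1}(\cOF)$, as your second paragraph already says. So neither the symmetry relation nor $Q_{h-1}$ is needed, and $Q_{h-1}$-modularity is not available at this stage anyway. Convergence is immediate once you see that each cogenus-$1$ Fourier--Jacobi coefficient of $\td{f}_{\mu'}$ is, up to an elementary exponential factor, the partial Fourier coefficient at $r_2 = r'_1(\mu')$ of the genuine form $\phi_m$ with $m_{21} = r'_2(\mu')$. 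It is not an expansion in $(\tau_{21}, z_2, w_2)$; $\phi_m$ does not depend on those variables.
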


\begin{proof}
Throughout the proof, we fix a set of representatives~$r'(\mu')$ for~$\mu' \in \disc(m')^{g-h+1}$ with decompositions~$r'_1(\mu') = r'_1(\mu'_1)$ and~$r'_2(\mu') = r'_2(\mu'_2)$ as in~\eqref{eq:def:fourier_index_subdivision}.

We first prove the formal decomposition in~\eqref{eq:prop:reduction_to_lower_cogenus:theta_decomposition}.
We set
\begin{gather*}
  u_\la
  \defeq
  \begin{psmatrix} 1_{g-h+1} & \\ \la & 1_{h-1} \end{psmatrix}
  \in
  \GL{g}(\cOE)
  \quad\tx{for }
  \la \in \Mat{h-1,g-h+1}(\cOE)
  \tx{.}
\end{gather*}
We recall that~$\psi_{m'}(\tau'_1, z', w')\, e(m' \tau'_2) = \sum c(f; t)\, e(t \tau)$, where the sum runs over~$t \in \MatD{g}(\cOE)^\vee$ with bottom right block~$m'$.
Throughout the proof, all references to~$t$ implicitly assume this bottom right block.
We have
\begin{gather*}
  t[u_\la]
  =
  \begin{psmatrix} 1_{g-h+1} & \lT{\ovsmash\la} \\ & 1_{h-1} \end{psmatrix}\,
  \begin{psmatrix} n' & \lTr{\ovsmash{r}}{^\prime} \\ r' & m ' \end{psmatrix}\,
  \begin{psmatrix} 1_{g-h+1} & \\ \la & 1_{h-1} \end{psmatrix}
  =
  \begin{psmatrix}
    n' + \lTr{\ovsmash{r}}{^\prime} \la + \lT{\ovsmash\la} r' + \lT{\ovsmash\la} m' \la & \lTr{\ovsmash{r}}{^\prime} + \lT{\ovsmash\la} m' \\
    r' + m' \la & m'
  \end{psmatrix}
  \tx{.}
\end{gather*}
For every~$t$, there is a unique~$\la$ such that~$t[u_\la]$ has bottom left block~$r'(\mu')$.
Since~$\det(u_\la) = 1$, the symmetry condition~\eqref{eq:hermitian_fourier_coefficient_symmetry} with respect to~$u_\la$ asserts that~$c(f; t) = c(f; t[u_\la])$.
This implies that
\begin{gather*}
  \psi_{m'}(\tau'_1, z', w')\,
  e(m' \tau'_2)
  =
  \sum_{\mu'}
  \sum_{t}
  c(f; t)\,
  \sum_{\la}
  e(t[u_\la] \tau)
  \tx{,}
\end{gather*}
where the outer sum runs over~$\mu'$ as in the statement, the sum over~$t \in \MatD{g}(\cOE)^\vee$ is restricted to~$t$ with bottom left block~$r'(\mu')$, and the inner sum runs over~$\la$ as in the definition of~$u_\la$.

To simplify the inner sum over~$\la$, we set~$\la' = r'(\mu') + m' \la$, which yields
\begin{gather*}
  m^{\prime\,-1}[\la']
  =
  m^{\prime\,-1}[r'(\mu')]
  +
  \lT{\ovsmash{r'(\mu')}} \la
  +
  \lT{\ovsmash\la} r'(\mu')
  +
  \lT{\ovsmash\la} m' \la
  \tx{.}
\end{gather*}
After comparing the right hand side of this identity with the top left block of~$t[u_\la]$ for~$r' = r'(\mu')$, we find that
\begin{gather}
  \label{eq:prf:prop:reduction_to_lower_cogenus:theta_decomposition}
  \psi_{m'}(\tau'_1, z', w')
  =
  \sum_{\mu'}
  \Bigl(
  \sum_{t}
  c(f; t)\,
  e\bigl( \bigl( n' - m^{\prime\,-1}[r'(\mu')] \bigr)\, \tau'_1 \bigr)\,
  \Bigr)\,
  \Bigl(
  \sum_{\la}
  e\bigl(
    m^{\prime\,-1}[\la'] \tau'_1
    +
    \lTr{\la}{^\prime} z'
    +
    \lTr{\ovsmash\la}{^\prime} w'
    \bigr)
  \Bigr)
  \tx{.}
\end{gather}
Each term~$e(t \tau - m' \tau'_2)$ that appears in the Fourier expansion of~$\psi_{m'}$ appears exactly once in the product on the right hand side of~\eqref{eq:prf:prop:reduction_to_lower_cogenus:theta_decomposition}, ensuring that the expression is well-defined as a formal series.

We define~$\td{f}_{\mu'}$ as the sum over~$t$ in~\eqref{eq:prf:prop:reduction_to_lower_cogenus:theta_decomposition}.
The sum over~$\la$ in~\eqref{eq:prf:prop:reduction_to_lower_cogenus:theta_decomposition} matches the definition of the Hermitian Jacobi theta series~$\theta_{m',\mu'}$.
Thus, we have established the formal theta decomposition in~\eqref{eq:prop:reduction_to_lower_cogenus:theta_decomposition}.

Let~$\td{f} = (\td{f}_{\mu'})_{\mu'}$.
It remains to show that~$\td{f}$ is a symmetric formal Fourier--Jacobi series.
To simplify the argument, we let~$\langle \cdot\,,\,\cdot \rangle$ be the standard scalar product on~$\RR[\disc(m')^{g-h+1}] \otimes \CC$.
In this notation, we have~$\psi_{m'}(\tau'_1, z', w') = \langle \td{f}(\tau'_1),\, \theta_{m'}(\tau'_1, z', w') \rangle$.
Since the components of~$\theta_{m'}$ are linearly independent as formal series and the ring of formal power series in $e(\tau_{i\!j})$ for~$1 \le i,j \le g$ is an integral domain, for any formal series~$\hat{f}$ in~$\CC\llbracket e(\tau_{i\!j}) \rrbracket_{1 \le i,j \le g-h+1} \otimes \CC[\disc(m')^{g-h+1}]$, the relation~$\langle \hat{f}, \theta_{m'} \rangle = 0$ implies that~$\hat{f} = 0$.

To verify the symmetry condition for the Fourier series expansion of~$\td{f}$, we consider an arbitrary~$a \in \GL{g-h+1}(\cOE)$.
Note that~$t[\diag(a, 1_{h-1})]$ has the same bottom right block~$m'$ as~$t$.
Since we have~$f(\tau) = \sum \psi_{m'}(\tau'_1, z', w')\, e(m' \tau'_2)$, the symmetry condition for~$f$ implies that~$\psi_{m'}$ is invariant under the weight~$k$ slash action of~$\rot(a)^{-1}$.
Thus, by the modular transformation behavior of~$\theta_{m'}$ we have
\begin{align*}
    &
  \big\langle \td{f}, \theta_{m'} \big\rangle
  =
  \psi_{m'}
  =
  \psi_{m'} \big|_k\, \rot(a)^{-1}
  =
  \Big\langle
  \td{f} \big|_{k-h+1}\, \rot(a)^{-1},\,
  \theta_{m'} \big|_{h-1}\, \rot(a)^{-1}
  \Big\rangle
  \\
  = &
  \Big\langle
  \td{f} \big|_{k-h+1}\, \rot(a)^{-1},\,
  \rho^{(g-h+1)}_{m'}\bigl( \rot(a)^{-1} \bigr)\,
  \theta_{m'}
  \Big\rangle
  =
  \Big\langle
  \rho^{(g-h+1)\,\vee}_{m'}\bigl( \rot(a) \bigr)\,
  \td{f} \big|_{k-h+1}\, \rot(a)^{-1},\,
  \theta_{m'}
  \Big\rangle
  \tx{.}
\end{align*}
Comparing the Fourier series coefficients of~$\td{f}$ on the left hand side with its transform on the right hand side yields
\begin{gather*}
  c(\td{f}; n')
  =
  \rho^{(g-h+1)\,\vee}_{m'}\bigl( \rot(a) \bigr)\,
  \detF(\ovsmash{a})^{-k+h-1}\,
  c(\td{f}; n'[a])
  \tx{,}
\end{gather*}
which confirms the symmetry condition~\eqref{eq:hermitian_fourier_coefficient_symmetry} for~$\td{f}$ after rearranging.

We next inspect the Fourier--Jacobi coefficients of~$\td{f}$.
The cogenus~$1$ Fourier--Jacobi coefficients of~$\td{f}_{\mu'}$ have index~$m_{11} - m^{\prime\,-1}[r'_2(\mu')]$ by~\eqref{eq:prf:prop:reduction_to_lower_cogenus:theta_decomposition}, where~$m'$ and~$\mu'$ are fixed and~$m_{11}$ may vary.
We write~$\phi_{\mu', m_{11}}$ for them, omitting dependence on~$m'$ from our notation.

First, we examine the convergence of~$\phi_{\mu', m_{11}}$.
By the formal Fourier series expansion in~\eqref{eq:prf:prop:reduction_to_lower_cogenus:theta_decomposition} and the change of variable names in~\eqref{eq:def:variable_subdivision} and~\eqref{eq:def:fourier_index_subdivision}, we have
\begin{align*}
  \td{f}_{\mu'}(\tau'_1)\,
  e\bigl( m^{\prime\,-1}[r'(\mu')] \tau'_1 \bigr)
   & =
  \sum_{n'_{22} \in \cOF^\vee}
  \Biggl(
  \sum_{n'_{11}, n'_{21}}
  c(f; t)\,
  e\bigl(
  n'_{11} \tau'_{11}
  +
  \lTr{n}{^\prime_{21}} z_{11}
  +
  \lTr{\ovsmash{n}}{^\prime_{21}} w_{11}
  \bigr)
  \Biggr)\,
  e\bigl( n'_{22} \tau'_{12} \bigr)
  \\
   & =
  \sum_{m_{11} \in \cOF^\vee}
  \Biggl(
  \sum_{n, r_1}
  c(f; t)\,
  e\bigl(
  n \tau_1
  +
  \lTr{r}{_{1}} z_{11}
  +
  \lTr{\ovsmash{r}}{_{1}} w_{11}
  \bigr)
  \Biggr)\,
  e\bigl( m_{11} \tau_{21} \bigr)
  \tx{,}
\end{align*}
where~$n'_{11} = n$ runs through~$\MatD{g-h}(\cOE)^\vee$ and~$n'_{21} = r_1$ runs through~$\MatD{1,g-h}(\cOE^\vee)$.
The coefficients of this series in~$m_{11}$ are~$\phi_{\mu',m_{11}}$.
We compare this with the Fourier expansion of the absolutely and locally uniformly convergent Fourier--Jacobi coefficient
\begin{align*}
  \phi_m(\tau_1, z, w)
   & =
  \sum_{n, r_1, r_2}
  c(f; t)\,
  e\bigl(
  n \tau_1
  +
  \lTr{r}{_{1}} z_{11}
  +
  \lTr{r}{_{2}} z_{12}
  +
  \lTr{\ovsmash{r}}{_{1}} w_{11}
  +
  \lTr{\ovsmash{r}}{_{2}} w_{12}
  \bigr)
  \\
   & =
  \sum_{r_2}
  e\bigl(
  \lTr{r}{_{2}} z_{12}
  +
  \lTr{\ovsmash{r}}{_{2}} w_{12}
  \bigr)\,
  \sum_{n, r_1}
  c(f; t)\,
  e\bigl(
  n \tau_1
  +
  \lTr{r}{_{1}} z_{11}
  +
  \lTr{\ovsmash{r}}{_{1}} w_{11}
  \bigr)
  \tx{,}
\end{align*}
where the sum runs additionally over~$r_2 \in \MatD{h-1,g-h}(\cOE^\vee)$.
The desired convergence follows, since the inner sum for~$r_2 = r'_1(\mu')$ equals~$\phi_{\mu',m_{11}}$ as a formal series.

Next, we inspect the transformation behavior of~$\phi_{\mu',m_{11}}$.
We employ the same argument used for the symmetry condition, observing that the action of~$Q_h(\cOF) \cap \SU{g-h+1,g-h+1}(\cOF) \subset \SU{g,g}(\cOF)$ fixes~$\psi_{m'}(\tau'_1, z', w')\, e(m' \tau'_2)$ with~$\psi_{m'}$ as in~\eqref{eq:prf:prop:reduction_to_lower_cogenus:theta_decomposition} termwise with respect to the additional formal expansion in~$e(\tau'_{12})$.
We let~$\ga \in Q_h(\cOF) \cap \SU{g-h+1,g-h+1}(\cOF)$ act on
\begin{align*}
    &
  \Big\langle \td{f}(\tau'_1),\, \theta_{m'}(\tau'_1, z', w')\, e(m' \tau'_2) \Big\rangle
  =
  \psi_{m'}(\tau'_1, z', w')\, e(m' \tau'_2)
  =
  \bigl( \psi_{m'}(\tau'_1, z', w')\, e(m' \tau'_2) \bigr) \big|_k\, \ga
  \\
  = &
  \Big\langle
  \rho^{(g-h+1)\,\vee}_{m'}( \ga )^{-1}\,
  \td{f}(\tau'_1) \big|_{k-h+1}\, \ga,\,
  \theta_{m'}(\tau'_1, z', w')\, e(m' \tau'_2)
  \Big\rangle
  \tx{.}
\end{align*}
Thus we have
\begin{gather*}
  \td{f}(\tau'_1)
  =
  \rho^{(g-h+1)\,\vee}_{m'}( \ga )^{-1}\,
  \td{f}(\tau'_1) \big|_{k-h+1}\, \ga
  \tx{,}
\end{gather*}
and inspecting the coefficient of~$e( (m_{11} - m^{\prime\,-1}[r'_2(\mu')]) \tau'_{12} )$ on both sides establishes the desired transformation behavior of~$\phi_{\mu', m_{11}}$.

To verify the cusp form condition for~$\phi_{\mu',m_{11}}$, and when~$F = \QQ$ the additional growth condition, we inspect directly the Fourier expansion of~$\td{f}_{\mu'}$ appearing in~\eqref{eq:prf:prop:reduction_to_lower_cogenus:theta_decomposition}.
The coefficient of~$f$ of index~$t$ with~$r' = r'(\mu')$ contributes to the coefficient of~$\td{f}_{\mu'}$ of index~$n' - m^{\prime\,-1}[r'(\mu')]$.
If~$c(f; t) \ne 0$, we have~$t > 0$ because~$f$ is cuspidal.
Since~$1_{g-h+1}$ has full rank, this implies
\begin{gather*}
  n' - m^{\prime\,-1}[r'(\mu')]
  =
  t\bigl[ \lT{\bigl(} 1_{g-h+1}, m^{\prime\,-1} r'(\mu') \bigr) \bigr]
  >
  0
  \tx{.}
\end{gather*}
\end{proof}

\subsection{The final convergence statement}%
\label{ssec:vector_valued_higher_cogenus:final_convergence}

Before proving \fref{Theorem}{mainthm:convergence}, for clarity we record one final relation between automatic convergence for symmetric formal Fourier--Jacobi series and for cuspidal series.

\begin{lemma}%
\label{la:convergence_implies_convergence_cusp_forms}
Given~$1 \le h < g$, a fixed weight~$k \in \ZZ$, and a fixed arithmetic type~$\rho$,
\begin{gather*}
  \tx{if\/ }
  \rmFM^{(g,h)}_k(\rho) = \rmM^{(g)}_k(\rho)
  \tx{,}\quad
  \tx{then also }
  \rmFS^{(g,h)}_k(\rho) = \rmS^{(g)}_k(\rho)
  \tx{.}
\end{gather*}
\end{lemma}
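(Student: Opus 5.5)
Take $f \in \rmFS^{(g,h)}_k(\rho)$. Since $\rmFS^{(g,h)}_k(\rho) \subseteq \rmFM^{(g,h)}_k(\rho)$, the hypothesis gives a Hermitian modular form $F \in \rmM^{(g)}_k(\rho)$ whose Fourier--Jacobi expansion under~\eqref{eq:symmetric_formal_fourier_jacobi_series_from_modular_forms} is $f$. It remains to show that $F$ is a cusp form, i.e.\ that $c(F|_k\,\ga; t) = 0$ for all $\ga \in \SU{g,g}(F)$ and all $t \not> 0$.

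By Theorem~\ref{thm:hermitian_koecher_principle} only positive semi-definite but singular $t$ need attention. For $\ga = 1$ this is immediate. A singular $t \ge 0$ can be written as $t = t_0[a]$ with $a \in \GL{g}(\cOE)$ and $t_0$ having vanishing last row and column; the existence of such $a$ uses that the radical of $t$ is an $E$-rational subspace, which is split off over $\cOE$ after rescaling by the class-group representatives (as in the Steinitz argument of Section~\ref{sec:hilbert_jacobi_forms}). If $\det(a) \ne \det(\ov a)$, one corrects by a diagonal unit. Using the symmetry~\eqref{eq:hermitian_fourier_coefficient_symmetry}, one gets $c(F;t) = \pm\rho(\rot(a))^{-1}c(F;t_0)$. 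The block decomposition~\eqref{eq:hermitian_fourier_index_decomposition} of $t_0$ then has an $h \times h$ index $m$ that is not positive definite, and indeed $m$ has a zero diagonal entry. Every Fourier--Jacobi coefficient $\phi_m$ of $f$ is a Hermitian Jacobi cusp form, and by the remark after the definition of Jacobi cusp forms these vanish when $m \not> 0$. Hence $c(F;t_0) = c(\phi_m;n,r) = 0$.

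For general $\ga \in \SU{g,g}(F)$, apply the cusp condition on the Jacobi forms, which is stated for all $\ga \in Q_h(F)$, together with a Bruhat-type decomposition $\SU{g,g}(F) = \SU{g,g}(\cOF)\, P_g(F)$ up to finitely many cusp representatives. The main obstacle is handling all cusps. My first attempt is as follows. Since $\rho$ has congruence kernel, $F|_k\,\ga$ for $\ga \in \SU{g,g}(\cOF)$ equals $\rho(\ga)F$, so it also has vanishing singular coefficients by the previous paragraph. Elements of $P_g(F)$ merely rescale and permute the Fourier indices by $t \mapsto t[a]$ with $a \in \GL{g}(E)$, which preserves singularity. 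The remaining cusps, arising from the failure of strong approximation or class number one, would be treated via the Siegel $\Phi$-operator. A nonzero singular-coefficient part of $F|_k\,\ga$ yields a nonzero boundary form $\Phi(F|_k\,\ga)$ of genus $g-1$. Its vanishing can be checked through the cogenus-$h$ Fourier--Jacobi coefficients of $F|_k\,\ga'$ for $\ga'$ in $Q_h(F)$ moving that boundary component to the standard one. These are $\phi_m|_{k,m}\ga'$, which are cuspidal by definition.
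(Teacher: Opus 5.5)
Your core step is the paper's entire proof. The paper only observes that the cusp condition $c(f;t)=0$ for $t\not>0$ is the same Fourier-coefficient condition imposed on the Fourier--Jacobi coefficients of a cuspidal symmetric formal series. It does not separately discuss cusps other than the standard one. Below I write $f$ also for the Hermitian modular form you call $F$, to avoid a clash with the field $F$.

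Three remarks on how you carry out the standard cusp.
\begin{itemize}
\item No reduction is needed there. The Jacobi cusp condition at $\ga=1$ already says $c(\phi_m;n,r)=0$ whenever the \emph{full} matrix $t$ is not positive definite, and $c(f;t)=c(\phi_m;n,r)$ by definition.
\item Your detour via $t=t_0[a]$ with $a\in\GL{g}(\cOE)$ is false in general. $t_0$ has vanishing last row and column exactly when the last column of $a^{-1}$, a unimodular vector, lies in the radical of $t$. If that radical is a line meeting $\cOE^g$ in a non-principal ideal, no such $a$ exists. This already occurs for $g=2$ as soon as $E$ has class number greater than $1$, and for $h=1$ the index $m$ then cannot be made singular.
\item You omit the easy inclusion $\rmS^{(g)}_k(\rho)\subseteq\rmFS^{(g,h)}_k(\rho)$. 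It holds because, for $\ga\in Q_h(F)$, the function $\phi_m|_{k,m}\ga$ is the $m$-th Fourier--Jacobi coefficient of $f|_k\ga$.
\end{itemize}

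The genuine gap is your last paragraph. Every element of $Q_h(F)$ fixes the standard basis vectors $e_{g-h+1},\ldots,e_g$ of $E^{2g}$, since its second block column is that of the identity. So $Q_h(F)$ stabilizes the standard boundary components: both the corank-$h$ one attached to their span $I_h$ and the corank-one one seen by $\Phi$. Hence no $\ga'\in Q_h(F)$ can move a non-standard boundary component to the standard one. The valid part of your argument therefore only covers $\ga\in\SU{g,g}(\cOF)\,P_g(F)$, i.e.\ the standard cusp.

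What an argument of this type actually needs is a double coset statement such as $\SU{g,g}(F)=\SU{g,g}(\cOF)\,Q_h(F)\,P_g(F)$, i.e.\ cusp representatives lying in $Q_h(F)$. The embedded $\SU{g-h,g-h}(F)$ moves the standard Lagrangian to Lagrangians $W'\oplus I_h$, which is where such representatives would have to come from. Granting this, the conclusion is direct. For $\ga=\delta q p$ one has $f|_k\ga=\rho(\delta)\,(f|_kq)|_kp$. The Fourier--Jacobi coefficients of $f|_kq$ are $\phi_m|_{k,m}q$, whose coefficients with $t\not>0$ vanish by the Jacobi cusp condition at $q$, and this vanishing is preserved under $P_g(F)$. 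You neither formulate nor prove such a decomposition, so the proposal does not establish cuspidality at the non-standard cusps it sets out to handle.
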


\begin{proof}
The cusp form condition for Hermitian modular forms says~$c(f; t) = 0$ if~$t \not>0$, matching that of Hermitian Jacobi forms, which appears in the definition of cuspidal symmetric formal Fourier--Jacobi series.
\end{proof}

\begin{proof}%
[Proof of \fref{Theorem}{mainthm:convergence}]
We show the theorem by induction on the cogenus.
If~$h = 1$, then \fref{Theorem}{thm:convergence_scalar_valued_cogenus_one} in conjunction with \fref{Lemma}{la:convergence_implies_convergence_cusp_forms} and \fref{Proposition}{prop:reduction_to_scalar_weights_trivial_type} proves the theorem.

Assume now that~$h > 1$ and that the theorem is true for cogenus less than~$h$.
By \fref{Lemma}{la:convergence_implies_convergence_cusp_forms} and \fref{Proposition}{prop:reduction_to_scalar_weights_trivial_type} we may reduce to the scalar-valued cuspidal case.
Fix a scalar-valued, cuspidal symmetric formal Fourier--Jacobi series~$f$.
If~$m'$ is totally positive definite, applying \fref{Proposition}{prop:reduction_to_lower_cogenus} we find that its cogenus~$h-1$ Fourier--Jacobi coefficient~$\psi_{m'}$ of index~$m'$ in~\eqref{eq:def:sub_fourier_jacobi_coefficient} decomposes as~$\sum_{\mu'} \td{f}_{\mu'}\, \theta_{m',\mu'}$, where the sum runs over~$\mu' \in \disc(m')^{g-h+1}$ and the vector~$(\td{f}_{\mu'})_{\mu'}$ is a symmetric formal Fourier--Jacobi series of cogenus~$1$.
This allows us to invoke the base case of the induction and conclude that~$\psi_{m'}$ is a Hermitian Jacobi form for all totally positive definite~$m'$.
Since~$f$ is cuspidal, this comprises all nonzero~$\psi_{m'}$.
In other words,~$f$ is a symmetric formal Fourier--Jacobi series of cogenus~$h-1$.
By the induction hypothesis, it converges and is a Hermitian modular form as desired.
\end{proof}

%% file: sections/05_kudla_conjecture.tex
\section{Application to the Kudla Conjecture}%
\label{sec:kudla_conjecture}

We start by revisiting the generating series of special cycles for unitary groups.
This generating series has been defined in various places, including work of Kudla~\cite{kudla-1997,kudla-2021} and work of Liu~\cite[Section~3]{liu-2011a}, where it was shown that the generating series in the unitary case is a symmetric formal Fourier--Jacobi series in the sense of this work.
Liu, however, did not provide separate expressions for the Fourier--Jacobi coefficients, which merely appear as subexpressions of his equations~(3-5) and~(3-6).
Further, Liu relies on the adelic language, while we have chosen the number field setting, which is sufficient to capture the convergence question that we address.
For this reason, we prefer to recast the argument of Liu in the more classical language of Zhang's thesis~\cite{zhang-2009}, which gives us the opportunity to translate the adelic expansions of Liu to those of the present work.
We will crucially invoke the analogue of the pullback formula and the base case~$g = 1$, both due to Liu~\cite{liu-2011a}.

Recall from \fref{Section}{sec:hermitian_modular_jacobi_forms} that~$E / F$ is a CM field,~$\cOE / \cOF$ its maximal order, and~$E_\RR = E \otimes_\QQ \RR$.
We consider the unitary group~$\U{L}$ associated with a Hermitian lattice~$L$ over~$\cOE / \cOF$.
We assume that~$L_\RR = L \otimes_\ZZ \RR$ has signature~$(p,1)$, $p \ge 1$ at exactly one place, and is positive definite at all other infinite places.
We write~$L_\QQ = L \otimes_\ZZ \QQ$ for the associated rational Hermitian space, $\langle \,\cdot\,,\,\cdot\, \rangle$ for the corresponding sesquilinear form, $L^\vee \subset L_\QQ$ for the dual of~$L$, and~$\disc(L) = L^\vee \slash L$ for its discriminant form.
Further, we fix a neat, stable congruence subgroup~$\Ga \subset \U{L}(\cOF)$, that is, the action of~$\Ga$ fixes~$\disc(L)$ elementwise.

We let~$L_\RR^- = L \otimes_{\cOF} \RR$, using the embedding~$F \hra \RR$ corresponding to the place at which~$L_\RR$ is indefinite.
The Grassmannian~$\rmGrM(L_\RR^-)$ of complex negative lines is a Hermitian symmetric space associated with~$\U{L}(F_\RR)$.
The quotient~$\Ga \backslash \rmGrM(L_\RR^-)$ is a connected component of a Shimura variety for~$\U{L}$.
Conversely, up to finite covers, every such connected component of a Shimura variety for a neat, compact open subgroup of~$\U{L}(\bbA_{F,\rmf})$, where~$\bbA_{F,\rmf}$ denotes the finite adeles of~$F$, arises in such a way.
Since for congruence subgroups~$\Ga' \subseteq \Ga$ we have an injection via pullback
\begin{gather}
  \label{eq:def:kudla_conjecture:cover_pullback}
  \rmCH^{\bullet}\bigl( \Ga \backslash \rmGrM(L_\RR^-) \bigr) \otimes \QQ
  \lhra
  \rmCH^{\bullet}\bigl( \Ga' \backslash \rmGrM(L_\RR^-) \bigr) \otimes \QQ
  \tx{,}
\end{gather}
our convergence statement about the generating series of algebraic cycles will be unambiguous, and extends to cycles on such Shimura varieties.
Further, Shimura varieties for groups~$\U{L}$ with~$L$ a Hermitian lattice as above uniformize the Shimura variety associated to a nearby totally positive definite but incoherent Hermitian space as in Liu's work~\cite{liu-2011a}.
In particular, by proving convergence of the generating series of special cycles on~$\Ga \backslash \rmGrM(L_\RR^-)$ we verify the convergence condition~(2) of~\cite[Theorem~3.5]{liu-2011a}.

Given~$\la \in L_\QQ^g$, we define a rational Hermitian cycle on~$\rmGrM(L_\RR^-)$ as the intersection
\begin{gather}
  \label{eq:def:rational_hermitian_cycle}
  \rmZ(\la)
  =
  \rmZ_L(\la)
  \defeq
  \bigcap_{i = 1}^g
  \big\{
  W \in \rmGrM(L_\RR^-)
  \condsep
  W \perp \la_i
  \big\}
  \in
  \rmCH^{\rk(\la)}\big( \rmGrM(L_\RR^-) \big)
\end{gather}
of expected codimension~$\rk(\la)$, the dimension of the $\RR$\nbd{}span of the components of~$\la$ in~$L_\RR^-$.
Note that if this span contains a vector that is negative at the indefinite place of~$L$, then~$\rmZ(\la) = 0$.
To obtain rational Hermitian cycles on the locally symmetric space, for the~$\Ga$\nbd{}class~$[\la]$ of~$\la$, we set
\begin{gather*}
  \rmZ([\la])
  =
  \rmZ_L([\la])
  \defeq
  \bigcup_{\la \in [\la]}
  \rmZ_L(\la)
  \in
  \rmCH^{\rk(\la)}\big( \Ga \backslash \rmGrM(L_\RR^-) \big)
  \tx{.}
\end{gather*}

Since~$\Ga$ acts trivially on~$\disc(L)$, $\Ga$\nbd{}classes uniquely define elements in the discriminant group.
Now formal summation over all~$\la \in L^{\vee\, g}$ in a given class~$\mu \in \disc(L)^g$ and with fixed moment matrix~$\frac{1}{2} \langle \la, \la \rangle \in \MatD{g}(E)$ yields a special cycle, which descends to the quotient by~$\Ga$ as follows:
\begin{gather}
  \label{eq:def:special_cycle}
  \rmZ(\mu; t)
  =
  \rmZ_L(\mu; t)
  \defeq
  \bigcup_{\substack{[\la] \in \mu + \Ga \backslash L^g \\ \frac{1}{2} \langle \la, \la \rangle = t}}
  \mspace{-12mu}
  \rmZ_L([\la])
  \in
  \rmCH^{\rk(t)}\big( \Ga \backslash \rmGrM(L_\RR^-) \big)
  \tx{.}
\end{gather}
Using the dual~$\omega$ of the Hodge bundle to force codimension~$g$ cycles, we obtain the Kudla generating series
\begin{gather}
  \label{eq:def:kudla_generating_series}
  \thetaKudla_{L,\mu}(\tau)
  \defeq
  \sum_{t \in \frac{1}{2} \langle \mu, \mu \rangle + \MatD{g}(\cOE)^\vee}
  \mspace{-32mu}
  \big( \rmZ(\mu; t) \cap \omega^{g - \rk(t)} \big)\,
  e(t \tau)
  \in
  \CC
  \otimes
  \rmCH^g\big( \Ga \backslash \rmGrM(L_\RR^-) \big) \bigl\llbracket e(\tau_{i\!j}) \bigr\rrbracket
  \tx{,}
\end{gather}
which is a priori a \emph{formal} series in the variables~$e(\tau_{i\!j})$, $1 \le i,j \le g$, with coefficients in the Chow group with complex coefficients of~$\Ga \backslash \rmGrM(L_\RR^-)$.
Note that we suppress the subgroup~$\Ga$ from our notation for convenience, which is justified by the injective pullback~\eqref{eq:def:kudla_conjecture:cover_pullback}.

In this section, we fix such a nearby incoherent totally positive definite Hermitian space~$\wtd{V}$.
To describe the expected modular transformation behavior of~\eqref{eq:def:kudla_generating_series}, we borrow from Liu~\cite[Section~3]{liu-2011a} the Weil representation for~$\wtd{V}$.
We restrict this representation to the span of shifts of the indicator function of~$L$ by elements of~$L^\vee$, which yields a representation~$\wtd\rho_L^{(g)}$ of~$\SU{g,g}(\cOF)$.
For brevity we call it the incoherent Weil representation associated with~$L$.
Its representation space is~$\CC[ \disc(L)^g ]$ with basis elements~$\frake_\mu$ for~$\mu \in \disc(L)^g$.
The vector-valued Kudla generating series is defined as
\begin{gather}
  \label{eq:def:kudla_generating_series_vector_valued}
  \thetaKudla_L(\tau)
  \defeq
  \sum_{\mu \in \disc(L)^g}
  \thetaKudla_{L,\mu}(\tau)\, \frake_\mu
  \in
  V\big( \wtd\rho_L^{(g)} \big)
  \otimes
  \rmCH^g\big( \Ga \backslash \rmGrM(L_\RR^-) \big) \bigl\llbracket e(\tau_{i\!j}) \bigr\rrbracket
  \tx{.}
\end{gather}

Since spaces of modular forms are finite dimensional, we can make the following identification of tensors
\begin{gather*}
  \sum_f
  f \otimes Z_f
  \in
  \rmM_k(\rho)
  \otimes
  \rmCH^g\big( \Ga \backslash \rmGrM(L_\RR^-) \big)
  \quad\tx{with}\quad
  \sum_t
  \Big( \sum_f c(f; t) Z_f \Big)\,
  e(t \tau)
\end{gather*}
via the Fourier expansion of modular forms, where~$f$ runs through any basis of~$\rmM_k(\rho)$ and~$Z_f$ are arbitrary cycles.
This allows for a more concise formulation of the Kudla Conjecture, stated in \fref{Theorem}{mainthm:kudla_conjecture}.

\fref{Theorem}{mainthm:kudla_conjecture} is a consequence of \fref{Theorem}{mainthm:convergence} if we show that the Kudla generating series yields a symmetric formal Fourier--Jacobi series of cogenus~$h = g - 1$.
We establish the two basic properties separately, before proceeding to our proof of \fref{Theorem}{mainthm:kudla_conjecture}.
We first record the symmetry, which follows directly from the definition of special cycles.

\begin{proposition}%
\label{prop:kudla_generating_series_symmetric}
The formal Fourier series\/~$\thetaKudla_L$ defined in~\eqref{eq:def:kudla_generating_series_vector_valued} satisfies the symmetry condition in~\eqref{eq:hermitian_fourier_coefficient_symmetry} for weight\/~$p+1$ and type\/~$\wtd\rho^{(g)}_L$.
\end{proposition}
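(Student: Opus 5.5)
The plan is to verify the relation \eqref{eq:hermitian_fourier_coefficient_symmetry} directly on the Fourier coefficients of~$\thetaKudla_L$, which are the cycles~$\rmZ(\mu; t) \cap \omega^{g - \rk(t)}$. For~$a \in \GL{g}(\cOE)$ with~$\det(a) = \det(\ov{a})$, the relation to check reads
\begin{gather*}
  \sum_{\mu} \bigl( \rmZ(\mu; t[a]) \cap \omega^{g-\rk(t[a])} \bigr)\, \frake_\mu
  =
  \detF(\ov{a})^{p+1}\,
  \wtd\rho^{(g)}_L\bigl( \rot(a) \bigr)^{-1}
  \sum_{\mu} \bigl( \rmZ(\mu; t) \cap \omega^{g-\rk(t)} \bigr)\, \frake_\mu
  \tx{.}
\end{gather*}
The first step is the elementary observation that right multiplication~$\la \mapsto \la a$ on~$g$\nbd{}tuples (viewing~$\la \in L^{\vee\,g}$ as a row of vectors) is a bijection of~$L^{\vee\, g}$. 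It maps the class~$\mu$ to~$\mu a$ and the moment matrix~$\frac{1}{2}\langle \la, \la\rangle$ to~$\frac{1}{2}\langle \la a, \la a \rangle = t[a]$, with the conjugation convention matching~$x[y] = \lD{y} x y$. It also commutes with the left action of~$\Ga$, so it induces a bijection of~$\Ga$\nbd{}classes.

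Second, I will check that~$\rmZ_L(\la a) = \rmZ_L(\la)$ as subvarieties of~$\rmGrM(L_\RR^-)$. Since~$a$ is invertible over~$\cOE$, the~$E$\nbd{}spans of the components of~$\la$ and~$\la a$ coincide, so the orthogonality conditions~$W \perp \la_i$ cut out the same locus. In particular~$\rk(\la a) = \rk(\la)$, hence~$\rk(t[a]) = \rk(t)$ and the~$\omega$\nbd{}power is unchanged. Summing over classes then gives~$\rmZ(\mu a; t[a]) = \rmZ(\mu; t)$ in the Chow group.

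Third, I will compare this with the right-hand side. By the analogue of~\eqref{eq:def:weil_representation} for the incoherent representation,~$\wtd\rho^{(g)}_L(\rot(a))\, \frake_\mu = \chi(a)\, \frake_{\mu a^{-1}}$ for a character~$\chi(a)$, which for Liu's representation should be~$\detF(\ov{a})^{\rk(L)}$ up to the twist coming from incoherence. Reindexing the sum over~$\mu$ then reduces the symmetry relation to the scalar identity~$\detF(\ov{a})^{p+1} \chi(a)^{-1} = 1$. Two facts should make this hold:
\begin{itemize}
\item the rank of~$L$ is~$p+1$;
\item since~$\det(a) = \det(\ov{a})$ and~$a \in \GL{g}(\cOE)$, the quantity~$\det(a)$ is a unit of~$\cOF$ with~$\det(a)^2 = \det(a)\det(\ov a)$ a norm, so~$\detF(\ov a) = \normF(\det a) = \pm 1$.
\end{itemize}
Thus the signs match provided the exponents agree. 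The bookkeeping of whether~$\mu a$ or~$\mu a^{-1}$ appears should be fixed by the convention for the slash action of~$\rot(a)$.

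The main obstacle is the third step: extracting the precise action of~$\rot(a)$ from Liu's adelic incoherent Weil representation and matching its character and index convention, since the incoherent space is only defined adelically. My first attempt would be to compute it through the local Schrödinger model, where~$\rot(a)$ acts by~$\phi(x) \mapsto \chi_V(\det a)\,|\det a|^{\dots}\phi(xa)$. Here the unramified local characters evaluated at a global unit of~$\cOF$ multiply to a sign, and this sign should be exactly~$\detF(\ov a)^{p+1}$.
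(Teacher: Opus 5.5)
Your proposal is correct and is essentially the paper's proof: $\rmZ(\la a) = \rmZ(\la)$ gives $\rmZ(\mu a; t[a]) = \rmZ(\mu; t)$, and reindexing the sum over $\mu$ using $\wtd\rho^{(g)}_L(\rot(a))^{-1}\,\frake_\mu = \detF(\ov{a})^{-p-1}\,\frake_{\mu a}$, with $p+1 = \rk(L)$, yields the relation. The paper simply inserts this formula for $\rot(a)$ as the analogue of \eqref{eq:def:weil_representation}, with no twist from incoherence and without deriving it adelically; it also does not need your observation $\detF(\ov{a}) = \pm 1$ (it explicitly notes that $\det(a) = \det(\ov{a})$ is not used), since the character cancels the weight factor $\detF(\ov{a})^{p+1}$ exactly.
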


\begin{proof}
For simplicity, we write~$\rho$ for the incoherent Weil representation of~$L$.
For clarity, we remark that the following argument does not make use of the condition~$\det(a) = \det(\ov{a})$ included in~\eqref{eq:hermitian_fourier_coefficient_symmetry}, since~$\rho$ extends to the unitary group.
From the definition of rational Hermitian cycles, we see that~$\rmZ(\la a) = \rmZ(\la)$ for all~$a \in \GL{g}(\cOE)$, and thus~$\rmZ(\mu a; t[a]) = \rmZ(\mu; t)$.
After inserting~$\rho(\rot(a))^{-1}\, \frake_{\mu} = \detF(\ov{a})^{-p-1}\, \frake_{\mu a}$, we find that
\begin{gather*}
  \sum_{\mu}
  \big( \rmZ(\mu a; t[a]) \cap \omega^{g-\rk(t[a])} \big)\,
  \frake_{\mu a}
  =
  \sum_{\mu}
  \big( \rmZ(\mu; t) \cap \omega^{g-\rk(t)} \big)\,
  \detF(\ov{a})^{p+1} \rho(\rot(a))^{-1}\, \frake_{\mu}
  \tx{,}
\end{gather*}
and hence
\begin{gather*}
  c\big( \thetaKudla_L; t[a] \big)
  =
  \detF(\ov{a})^{p+1}\,
  \rho(\rot(a))^{-1}\,
  c\big( \thetaKudla_L; t \big)
  \tx{.}
\end{gather*}
\end{proof}

The Kudla generating series is a modular form in the case of codimension one by results of Liu~\cite{liu-2011a}.

\begin{theorem}%
[Liu]%
\label{thm:kudla_conjecture_weak_form_codim1}
For~$g = 1$, we have
\begin{gather*}
  \thetaKudla_L
  \in
  \rmM_{p+1}\big( \wtd\rho^{(1)}_L \big)
  \otimes
  \rmCH^1\big( \Ga \backslash \rmGrM(L_\RR^-) \big)
  \tx{.}
\end{gather*}
\end{theorem}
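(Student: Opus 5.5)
This is the codimension-one case, and the plan is to deduce it from Liu's modularity results in \cite{liu-2011a} rather than reprove it.

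The first step is to identify our generating series $\thetaKudla_L$ with Liu's divisor generating series. For $g = 1$, a Fourier index $t \in \frac{1}{2}\langle \mu,\mu\rangle + \cOF^\vee$ is a scalar. If $t$ is totally positive, then $\rmZ(\mu;t)$ is a divisor of rank one and no power of $\omega$ is involved. If $t = 0$, we only get a contribution when $\mu = 0$, namely $\rmZ(0;0) \cap \omega = \omega$, up to Liu's sign convention for the constant term. For all other $t$ the cycle vanishes, because the relevant vectors are negative at the indefinite place, or are not totally positive at the definite places and so do not exist. This agrees with Liu's series $\sum_t \rmZ(t,\phi)\, q^t$, evaluated at the indicator functions $\phi = \mathbf{1}_{\mu+\hat L}$ (with $\hat L = L \otimes \widehat{\ZZ}$) in the span of shifts of the indicator of $L$. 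Via the uniformization of the incoherent Shimura variety attached to $\wtd V$ by $\Ga \backslash \rmGrM(L_\RR^-)$, and the injective pullback \eqref{eq:def:kudla_conjecture:cover_pullback}, his adelic classes correspond to our classical ones.

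The second step is to invoke Liu's codimension-one modularity theorem, \cite[Theorem~3.5]{liu-2011a} in the divisor case. It asserts that for every linear functional $\ell$ on $\rmCH^1 \otimes \CC$, the series $\ell(\thetaKudla_L)$ is a holomorphic Hilbert modular form of parallel weight $p+1$ for the incoherent Weil representation. Liu proves this via a Borcherds-product or Yuan--Zhang--Zhang style argument, using the vanishing of the first Betti number for these ball quotients. Translating his $\SL{2}$/$\U{1,1}$ adelic statement to our $\SU{1,1}(\cOF)$ with type $\wtd\rho^{(1)}_L$ is then just the restriction we already used to define $\wtd\rho_L^{(1)}$.

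The third step passes from modularity under all functionals to membership in the tensor product. Since $\rmM_{p+1}(\wtd\rho^{(1)}_L)$ is finite dimensional, choose finitely many indices $t_1,\dots,t_r$ at which the coefficient functionals separate this space. Let $f_1,\dots,f_r$ be the dual basis, and put $Z \defeq \sum_j f_j \otimes c(\thetaKudla_L; t_j)$. For every functional $\ell$, both $\ell(Z)$ and $\ell(\thetaKudla_L)$ are modular forms with the same coefficients at the $t_j$, so $\ell(\thetaKudla_L - Z) = 0$. Since this holds for all $\ell$, the Fourier coefficients of $\thetaKudla_L - Z$ vanish, and hence $\thetaKudla_L = Z$ lies in the tensor product.

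I expect the main obstacle to be the bookkeeping in the first step. One has to match normalizations between the two settings: the factor $\frac12$ in the moment matrix, $\cOF^\vee$ versus the different, and the sign of $\omega$ in the constant term. One also has to check that restricting Liu's Weil representation really gives $\wtd\rho^{(1)}_L$ with weight $p+1$. I would handle this by comparing how $\trans(b)$ and $\rot(a)$ act on both sides, and relying on generation, via the proof of \fref{Proposition}{prop:siegel_and_klingen_parabolics_generate}, for the remaining generators.
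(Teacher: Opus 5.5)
Your proposal takes essentially the same route as the paper, which records this codimension-one case as Liu's theorem and cites \cite{liu-2011a} without further argument. Your matching of the series with Liu's series for $\phi = \mathbf{1}_{\mu + \hat L}$ is sound bookkeeping, as is your finite-dimensionality step from all functionals on the Chow group to membership in the tensor product (the same step used at the end of the proof of \fref{Theorem}{mainthm:kudla_conjecture}); only drop the parenthetical attributing Liu's method to vanishing of the first Betti number of these ball quotients, which fails in general (congruence quotients of this type can have positive first Betti number, by Kazhdan) and which your argument never uses.
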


\begin{remark}
Hofmann~\cite{hofmann-2014} obtained \fref{Theorem}{thm:kudla_conjecture_weak_form_codim1} in the special case~$F = \QQ$.
Note that he uses the opposite signature convention and therefore obtains a generating series for the dual of the Weil representation.
Further, in his special case the incoherent Weil representation agrees with the Weil representation on the group~$\SU{1,1}(\ZZ)$ that he considers, although it differs on~$\U{1,1}(\ZZ)$.
\end{remark}

This theorem is a key ingredient in the following proposition, whose proof follows the ideas of Zhang's thesis~\cite{zhang-2009}.

\begin{proposition}%
\label{prop:kudla_conjecture_fourier_jacobi_coefficient}
For~$m \in \MatD{g-1}(\cOE)^\vee$, let~$\phi_m$ be the~$m$\thdash{} Fourier--Jacobi coefficient of the formal Fourier series~$\thetaKudla_L(\tau)$ defined in~\eqref{eq:def:kudla_generating_series_vector_valued}.
Then
\begin{gather*}
  \phi_m
  \in
  \rmJ^{(1)}_{p+1, m}\big( \wtd\rho^{(g)}_L \big)
  \otimes
  \rmCH^g\big( \Ga \backslash \rmGrM(L_\RR^-) \big)
  \tx{.}
\end{gather*}
\end{proposition}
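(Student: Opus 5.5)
The plan is to follow Zhang's pullback argument and express $\phi_m$ as a finite sum of pushforwards of codimension-one generating series from Shimura subvarieties. By \fref{Theorem}{thm:hermitian_jacobi_forms_koecher_principle}-type considerations we first discard indices $m$ that are not totally positive semi-definite: for such $m$ no $\la$ has moment matrix with that block, so $\phi_m=0$. Write a Fourier index as $t=\begin{psmatrix} n & \lT{\ovsmash{r}} \\ r & m\end{psmatrix}$ and split $\la=(\la_1,\la')$ with $\la'=(\la_2,\ldots,\la_g)$, so that $\frac12\langle\la',\la'\rangle=m$. Then
\[
\phi_m=\sum_{[\la']}\sum_{\la_1}\bigl(\rmZ([\la_1,\la'])\cap\omega^{g-\rk(t)}\bigr)\,e\bigl(n\tau_1+\lT{r}z+\lT{\ovsmash{r}}w\bigr),
\]
where $[\la']$ runs through the finitely many $\Ga$-classes in $\mu'+L^{g-1}$ with moment matrix $m$. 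The finiteness holds because $m$ is fixed and the group is arithmetic; for singular $m$ one uses the positive part of the span.

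For a fixed $\la'$, set $U=\linspan(\la')$ and let $L'=L\cap U^\perp$. The cycle $\rmZ(\la')$ is the sub-Grassmannian $\rmGrM(L'^-_\RR)$, a unitary Shimura subvariety of signature $(p-\rk,1)$ quotiented by $\Ga_{\la'}$. Decompose $\la_1=\la_1^{U}+\la_1^{\perp}$. Then $\rmZ(\la_1,\la')=\iota_*\rmZ_{L'}(\la_1^\perp)$, and $\omega$ restricts to the Hodge bundle of the subvariety, so the excess factor $\omega^{g-\rk(t)}$ becomes $\iota_*$ of the corresponding power upstairs. Since $\la_1^\perp$ lies in a coset of $L'^\vee$ shifted by the projection of $L^\vee$, the inner sum over $\la_1$ with fixed $\la_1^U$-coset becomes a finite sum, indexed by $(L^\vee\cap(U\oplus U^\perp))\backslash L^\vee$, of expressions
\[
\iota_*\Bigl(\thetaKudla_{L',\nu}(\tau_1)\Bigr)\cdot\theta_{m,\cdot}(\tau_1,z,w),
\]
where the $\la_1^U$-part, being positive definite with Gram matrix determined by $m$, produces a Hermitian Jacobi theta series \eqref{eq:def:hermitian_jacobi_theta} of index $m$, and $\thetaKudla_{L',\nu}$ is the genus-one generating series on the subvariety. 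The identity $n=\frac12\langle\la_1\rangle=\frac12\langle\la_1^\perp\rangle+m^{-1}[r]$ gives exactly this factorization.

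By \fref{Theorem}{thm:kudla_conjecture_weak_form_codim1} applied to $L'$ (which has signature $(p-\rk(m),1)$ at the indefinite place and is definite elsewhere), each $\thetaKudla_{L'}$ lies in $\rmM_{p+1-\rk(m)}(\wtd\rho^{(1)}_{L'})\otimes\rmCH^{1}$. Pushing forward via $\iota_*$ preserves this tensor structure. The theta series has weight $\rk(m)$ and type the Weil representation of $m$. Their product, contracted through the standard intertwiner that embeds $\wtd\rho_{L'}\otimes\rho_m$ into $\wtd\rho^{(g)}_L$ restricted to $Q_{g-1}(\cOF)$ (the lattice $L'\oplus U$-part sits inside $L$ with finite index), therefore gives a Hermitian Jacobi form of weight $p+1$, index $m$, and type $\wtd\rho^{(g)}_L$. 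The growth condition for $F=\QQ$ follows from the Koecher principle together with positivity of moment matrices.

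I expect the main obstacle to be the representation-theoretic bookkeeping in this last step. One must check that the gluing map from $\CC[\disc(L')]\otimes\CC[\disc(m)]$ to $\CC[\disc(L)^g]$ intertwines the incoherent Weil representations for the full $Q_{g-1}(\cOF)$, including the $\sinv(1)$ generator. The plan for this is to verify it on the generators of $Q_{g-1}(\cOF)$ using the explicit formulas \eqref{eq:def:weil_representation}, together with the isotropic-subgroup lemma for Weil representations. Singular $m$ is handled by reducing to the positive definite part, in the same way as in \fref{Corollary}{cor:jacobi_forms_semi_definite_index}.
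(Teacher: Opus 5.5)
Your proposal is the paper's argument. You decompose $\phi_m$ over the finitely many $\Ga$-classes $[\wtd\la]$ with moment matrix $m$, write each piece as the pushforward of Liu's genus-one series $\thetaKudla_{\wtd L^\perp}$ (your $L'$) times the Hermitian Jacobi theta series of $\wtd L$, and glue via the map $\pirho_{\wtd L}\colon \wtd\rho^{(1)}_{\wtd L^\perp}\otimes\rho^{(1)}_{\wtd L}\to\wtd\rho^{(1)}_L$ coming from $\wtd L^\perp\oplus\wtd L\subseteq L\subseteq L^\vee\subseteq\wtd L^{\perp\,\vee}\oplus\wtd L^\vee$. Two small corrections: this gluing map is a surjection that kills pairs $(\ulsmash\mu_1,\ovsmash\mu_1)$ not represented by some $\la_1\in L^\vee$, not an embedding as you wrote; and for $m$ not totally positive semi-definite such $\la$ do exist, but $\rmZ(\la)=0$ because the real span of $\wtd\la$ then contains a vector that is negative at the indefinite place.
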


\begin{proof}
We will write~$\phi_m$ as a finite sum of Hermitian Jacobi forms with coefficients in the Chow group.
Each term is associated with a class~$[\wtd\la] \in \Ga \backslash L^{g-1}$.
We start by setting up notation that allows us to state this sum decomposition.

Given~$\la \in L^g$, we write~$\wtd\la = (\la_2, \ldots, \la_g)$ for its last~$g-1$ components and for convenience we write~$\la = (\la_1, \wtd\la)$.
Correspondingly, the image of~$\la$ in the discriminant group of~$L^g$ is~$\mu = (\mu_1, \wtd\mu)$.
We set~$m = \frac{1}{2} \langle \wtd\la, \wtd\la \rangle$; in the proof this will be compatible with the decomposition of Fourier indices in~\eqref{eq:hermitian_fourier_index_decomposition}, where~$m$ is the bottom right block of a Fourier index~$t$.
We let~$\wtd{L} \subseteq L$ be the~$\cOE$\nbd{}span of~$\wtd\la$ with orthogonal complement~$\wtd{L}^\perp$.
We write~$\wtd{\Ga} \subseteq \Ga$ for the stabilizer of~$\wtd\la$.

Throughout the proof, we assume that~$\rk(\wtd{L}) = \rk(m)$ and~$m$ is totally positive semi-definite, so that the cycles~$\rmZ(\la)$ and~$\rmZ(\mu; m)$ that appear later are nontrivial.
In particular, $\wtd{L}$ is a totally positive definite lattice, and we have a projection from~$L_\RR$ onto~$\wtd{L}_\RR$.
We will apply this projection to the first component~$\la_1$ of~$\la$, and write~$\ovsmash{\la}_1$ for the image and~$\ulsmash{\la}_1 = \la_1 - \ovsmash{\la}_1$ for its complement in~$\wtd{L}^\perp_\RR$.
We write~$\la_1 = (\ulsmash{\la}_1, \ovsmash{\la}_1)$ and correspondingly decompose~$\mu_1$.

Note that~$\wtd{L}$ in general does not agree with~$\ov{L} = \{ \ovsmash{\la}_1 \condsep \la_1 \in L \}$.
Instead, by a standard argument (see~\cite[Lemma~2.7]{zhang-2009}) we have the isomorphism
\begin{gather*}
  L \bigslash ( \wtd{L}^\perp \oplus \wtd{L} )
  \cong
  \ov{L} \bigslash \wtd{L}
  \tx{.}
\end{gather*}
The chain of inclusions~$\wtd{L}^\perp \oplus \wtd{L} \subseteq L \subseteq L^\vee \subseteq \wtd{L}^{\perp\,\vee} \oplus \wtd{L}^\vee$ yields a projection of (incoherent) Weil representations
\begin{align*}
  \pirho_{\wtd{L}}
  \defcol
  \wtd\rho_{\wtd{L}^\perp}^{(1)}
  \otimes
  \rho_{\wtd{L}}^{(1)}
   & \lthra
  \wtd\rho_{L}^{(1)}
  \tx{,}
  \\
  \frake_{\ulsmash{\mu}_1}
  \otimes
  \frake_{\ovsmash{\mu}_1}
   & \lmto
  \begin{cases}
    \frake_{\la_1 + L}
    \bigslash \# \bigl(L \slash (\wtd{L}^\perp \oplus \wtd{L}) \bigr)
    \tx{,}\quad
     &
    \tx{if } (\ulsmash{\mu}_1, \ovsmash{\mu}_1) \tx{ is represented by some } \la_1 \in L^\vee
    \tx{;}
    \\
    0
    \tx{,}\quad
     &
    \tx{otherwise}
    \tx{.}
  \end{cases}
\end{align*}
Here we use that, since~$\wtd{L}$ is totally positive definite, its associated Weil representation appears as a subrepresentation of the Weil representation associated to the incoherent space~$\wtd{V}$.

We also use the following map, defined via pushforward along the inclusion of varieties
\begin{gather*}
  \piCH_{\wtd{L}}
  \defcol
  \rmCH^{g - \rk(\wtd{L})}\bigl( \wtd{\Ga} \backslash \rmGrM\bigl( (\wtd{L}^\perp)_\RR^- \big) \bigr)
  \lra
  \rmCH^g\bigl( \Ga \backslash \rmGrM(L_\RR^-) \bigr)
  \tx{.}
\end{gather*}

Recall that the Hermitian Jacobi theta series in~\eqref{eq:def:hermitian_jacobi_theta} are modular.
By the Kudla Conjecture for divisors in \fref{Theorem}{thm:kudla_conjecture_weak_form_codim1}, we have
\begin{gather*}
  \thetaKudla_{\wtd{L}^\perp}(\tau_1)
  \otimes
  \theta_{\wtd{L}}(\tau_1, z, w)
  \in
  \rmJ^{(1)}_{p+1,m}\bigl( \wtd\rho^{(1)}_{\wtd{L}^\perp} \otimes \rho^{(1)}_{\wtd{L}} \bigr)
  \otimes
  \rmCH^{g - \rk(\wtd{L})}\bigl( \wtd{\Ga} \backslash \rmGrM\bigl( (\wtd{L}^\perp)_\RR^- \bigr) \bigr)
  \tx{,}
\end{gather*}
which implies that
\begin{gather*}
  \pirho_{\wtd{L}}\Bigl(
  \piCH_{\wtd{L}}\bigl(
    \thetaKudla_{\wtd{L}^\perp}(\tau_1)
    \bigr)
  \otimes
  \theta_{\wtd{L}}(\tau_1, z, w)
  \Bigr)
  \in
  \rmJ^{(1)}_{p+1,m}\bigl( \wtd\rho^{(1)}_L \bigr)
  \otimes
  \rmCH^g\bigl( \Ga \backslash \rmGrM(L_\RR^-) \bigr)
  \tx{.}
\end{gather*}
In analogy with Liu's pullback formula~\cite[Proposition~3.2]{liu-2011a}, we claim that
\begin{gather}
  \label{eq:prop:kudla_conjecture_fourier_jacobi_coefficient:sum_decomposition}
  \phi_m(\tau_1, z, w)\,
  e(m \tau_2)
  =
  \sum_{\wtd\mu}
  \sum_{\substack{
      [\wtd\la] \in \wtd\mu + \Ga \backslash L^{g-1} \\
      \frac{1}{2} \langle \wtd\la, \wtd\la \rangle = m
    }}
  \pirho_{\wtd{L}}\Bigl(
  \piCH_{\wtd{L}}\bigl(
    \thetaKudla_{\wtd{L}^\perp}(\tau_1)
    \bigr)
  \otimes
  \theta_{\wtd{L}}(\tau_1, z, w)\,
  \frake_{\wtd\mu}
  \Bigr)\,
  e(m \tau_2)
  \tx{.}
\end{gather}

For a proof, we decompose the coefficients on the left hand side.
For fixed~$\wtd\la$ we write~$[\la_1]$ for the~$\wtd{\Ga}$\nbd{}class of~$\la_1$.
Further, if~$\ovsmash{\la}_1$ is fixed, we let~$[\ulsmash{\la}_1]$ be the class of~$\ulsmash{\la}_1$ with respect to the stabilizer~$\wtd\Ga_1$ of~$\ovsmash{\la}_1$ in~$\wtd\Ga$.
In this way, it makes sense to decompose the class~$[\la_1]$ as~$([\ulsmash{\la}_1], [\ovsmash{\la}_1])$.
We will employ a special cycle with partially fixed~$\la$.
Specifically, we define
\begin{gather*}
  \rmZ\bigl( \ulsmash{\mu}_1, [\ovsmash{\la}_1], [\wtd\la]; n \bigr)
  \defeq
  \bigcup_{\substack{
      [\ulsmash{\la}_1] \in \ulsmash{\mu}_1 + \wtd\Ga_1 \backslash \wtd{L}^\perp \\
      \frac{1}{2} \langle \la_1, \la_1 \rangle = n
    }}
  \rmZ([\la])
  \tx{.}
\end{gather*}
Using notation for~$t$ as in~\eqref{eq:hermitian_fourier_index_decomposition}, we observe that the classes of~$\ovsmash{\la}_1$ and~$\wtd\la$ determine~$m$ and~$r$.
In particular, we have the finite decomposition
\begin{gather*}
  \rmZ(\mu; t)
  =
  \bigcup
  \rmZ\bigl( \ulsmash{\mu}_1, [\ovsmash{\la}_1], [\wtd\la]; n \bigr)
  \tx{,}
\end{gather*}
where the union ranges over~$[\ovsmash{\la}_1] \in \ovsmash{\mu}_1 + \wtd\Ga \backslash \wtd{L}$ with~$\frac{1}{2} \langle \wtd\la, \ovsmash{\la}_1 \rangle = r$ and $[\wtd\la] \in \wtd\mu + \Ga \backslash L^{g-1}$ with~$\frac{1}{2} \langle \wtd\la, \wtd\la \rangle = m$.
This is a decomposition of the Fourier coefficients of~$\phi_m$.

As for the right hand side of~\eqref{eq:prop:kudla_conjecture_fourier_jacobi_coefficient:sum_decomposition}, we start with the expansion
\begin{gather*}
  \thetaKudla_{\wtd{L}^\perp}(\tau_1)
  \otimes
  \theta_{\wtd{L}}(\tau_1, z, w)
  =
  \sum_{\ulsmash{\mu}_1, n', \ovsmash{\la}_1}
  \rmZ_{\wtd{L}^\perp}(\ulsmash{\mu}_1; n')\,
  e\bigl( \bigl( n' + \tfrac{1}{2} \langle \ovsmash{\la}_1, \ovsmash{\la}_1 \rangle \bigr)\, \tau_1 \bigr)\,
  e\bigl( \tfrac{1}{2} \langle \wtd{\la}, \ovsmash{\la}_1 \rangle z + \tfrac{1}{2} \ov{\langle \wtd{\la}, \ovsmash{\la}_1 \rangle} w \bigr)\,
  \frake_{(\ulsmash{\mu}_1, \ovsmash{\mu}_1)}
  \tx{.}
\end{gather*}
The Fourier coefficient of index~$n, r$ equals a sum over~$\rmZ_{\wtd{L}^\perp}(\ulsmash{\mu}_1; n - \frac{1}{2} \langle \ovsmash{\la}_1, \ovsmash{\la}_1 \rangle)$, where~$\frac{1}{2} \langle \wtd{\la}, \ovsmash{\la}_1 \rangle = r$.
The projection on Weil representations allows us to discard all~$(\ulsmash{\mu}_1, \ovsmash{\mu}_1)$ in the sum that do not lie in the image of~$L^\vee \ra \disc(\wtd{L}^\perp \oplus \wtd{L})$.
For all other~$\la_1$ we can express the projection of cycles as
\begin{gather*}
  \piCH_{\wtd{L}}\bigl(
  \rmZ_{\wtd{L}^\perp}(\ulsmash{\mu}_1; n - \tfrac{1}{2} \langle \ovsmash{\la}_1, \ovsmash{\la}_1 \rangle)
  \bigr)
  =
  \rmZ\bigl( \ulsmash{\mu}_1, [\ovsmash{\la}_1], [\wtd\la]; n \bigr)
  \tx{.}
\end{gather*}
This gives
\begin{gather*}
  c\Bigl(
  \pirho_{\wtd{L}}\Bigl(
    \piCH_{\wtd{L}}\bigl(
      \thetaKudla_{\wtd{L}^\perp}(\tau_1)
      \bigr)
    \otimes
    \theta_{\wtd{L}}(\tau_1, z, w)
    \Bigr)
  ;\,
  n, r
  \Bigr)
  =
  \sum_{\substack{
      \mu_1, [\ovsmash{\la}_1] \in \ovsmash{\mu}_1 + \wtd{\Ga} \backslash \wtd{L} \\
      \frac{1}{2} \langle \wtd{\la}, \ovsmash{\la}_1 \rangle = r
    }}
  \rmZ\bigl( \ulsmash{\mu}_1, [\ovsmash{\la}_1], [\wtd\la]; n \bigr)\,
  \frake_{\mu_1}
  \tx{.}
\end{gather*}
Comparing with the above decomposition of~$\rmZ(\mu; t)$, this establishes~\eqref{eq:prop:kudla_conjecture_fourier_jacobi_coefficient:sum_decomposition}.
\end{proof}

\begin{proof}%
[Proof of \fref{Theorem}{mainthm:kudla_conjecture}]
The case~$g = 1$ is a theorem by Liu~\cite{liu-2011a}, which we stated as \fref{Theorem}{thm:kudla_conjecture_weak_form_codim1}.
We assume that~$g > 1$.
Since Chow groups are not known to be finite dimensional, we have to proceed with care regarding the difference between formal series with coefficients in Chow groups and the tensor product of formal series with Chow groups.

Let~$f$ be any functional on~$\rmCH^g( \Ga \backslash \rmGrM(L_\RR^-) )$.
It yields a formal Fourier series~$f(\thetaKudla_L(\tau))$.
By \fref{Proposition}{prop:kudla_conjecture_fourier_jacobi_coefficient} it is a formal series of Jacobi forms and by \fref{Proposition}{prop:kudla_generating_series_symmetric}, it satisfies the symmetry condition in \fref{Definition}{def:symmetric_formal_fourier_jacobi_series}.
We conclude by \fref{Theorem}{mainthm:convergence} that
\begin{gather*}
  f\bigl( \thetaKudla_L(\tau) \bigr)
  \in
  \rmM_{p+1}\big( \wtd\rho^{(g)}_L \big)
  \quad
  \tx{ for all }
  f \in \rmCH^g\bigl( \Ga \backslash \rmGrM(L_\RR^-) \bigr)^\vee
  \tx{,}
\end{gather*}
which implies the theorem, since $\rmM_{p+1}( \wtd\rho^{(g)}_L )$ is finite dimensional.
\end{proof}

\begin{proof}%
[Proof of \fref{Corollary}{maincor:li_liu_hypothesis_removed}]
Hypothesis~4.5 in~\cite{li-liu-2021} asserts that the Kudla generating series formed with respect to an arbitrary Schwartz function on~$L^g \otimes \bbA_{F,\rmf}$ is a Chow-valued Hermitian Hilbert modular form for some congruence subgroup of the unitary group.
As~$\cOE^\times \slash \cOF^\times$ is finite, the group~$\SU{g,g}(\cOF)$ used in in \fref{Section}{sec:hermitian_modular_jacobi_forms} contains a congruence subgroup of~$\U{g,g}(\cOF)$.
\fref{Theorem}{mainthm:kudla_conjecture} establishes convergence of the Kudla generating series for translates of the characteristic function of~$L^g$ by~$L^{\vee\, g}$ for every Hermitian~$\cOE$\nbd{}lattice~$L$.
Since such characteristic functions span the space of Schwartz functions on~$L^g \otimes \bbA_{F,\rmf}$ as~$L$ varies in a fixed Hermitian space, this indeed verifies the hypothesis~\cite[Hypothesis~4.5]{li-liu-2021} for all Schwartz functions.
\end{proof}